\def\rr{{\mathbb R}}
\def\rn{{\mathbb{R}^n}}
\def\nn{{\mathbb N}}
\def\zz{{\mathbb Z}}
\def\cc{{\mathbb C}}
\def\CG{{\mathcal G}}
\def\CL{{\mathcal L}}
\def\CS{{\mathcal S}}
\def\CM{{\mathcal M}}
\def\CA{{\mathcal M}}
\def\CX{{\mathcal X}}
\def\CY{{\mathcal Y}}
\def\CB{{\mathcal B}}
\def\CJ{{\mathcal J}}
\newcommand{\cw}{\mathrm{cw}}
\newcommand{\RZ}{\mathrm Z}
\newcommand{\CN}{\mathcal{N}}
\newcommand{\CQ}{\mathcal{Q}}
\newcommand{\OH}{\mathring{H}}
\def\fz{\infty }
\def\az{\alpha}
\def\bz{\beta}
\def\dz{\delta}
\def\ez{\epsilon}
\def\kz{\kappa}
\def\thz{\theta}
\def\vz{\varphi}
\def\lf{\left}
\def\r{\right}
\def\ls{\lesssim}
\def\noz{\nonumber}
\def\wz{\widetilde}
\def\gfz{\genfrac{}{}{0pt}{}}
\newcommand{\om}{\omega/(\omega+\eta)}
\def\loc{{\mathrm{loc}}}
\DeclareMathOperator{\supp}{supp}
\def\XXint#1#2#3{{\setbox0=\hbox{$#1{#2#3}{\int}$ }
\vcenter{\hbox{$#2#3$ }}\kern-.6\wd0}}
\def\lz{{\lambda}}
\def\CG{{\mathcal G}}
\def\CA{{\mathcal A}}
\def\CS{{\mathcal S}}
\def\CY{\mathcal Y}
\def\RY{{\mathrm Y}}
\def\gz{\gamma}
\def\UC{{\mathrm{UC}}}
\DeclareMathOperator{\diam}{diam}
\newcommand{\fin}{\mathrm{fin}}
\newtheorem{theorem}{Theorem}[section]
\newtheorem{lemma}[theorem]{Lemma}
\newtheorem{proposition}[theorem]{Proposition}
\theoremstyle{definition}
\newtheorem{remark}[theorem]{Remark}
\newtheorem{definition}[theorem]{Definition}
\renewcommand{\appendix}{\par
   \setcounter{section}{0}%
   \setcounter{subsection}{0}%
   \setcounter{subsubsection}{0}%
   \gdef\thesection{\@Alph\c@section}%
   \gdef\thesubsection{\@Alph\c@section.\@arabic\c@subsection}%
   \gdef\theHsection{\@Alph\c@section.}%
   \gdef\theHsubsection{\@Alph\c@section.\@arabic\c@subsection}%
   \csname appendixmore\endcsname
 }
\newcommand{\go}[1]{\CG_0^\eta(#1)}
\newcommand{\GO}[1]{\mathring{\CG}(#1)}
\newcommand{\GOO}[1]{\mathring{\CG}^\eta_0(#1)}
\newcommand{\at}{\mathrm{at}}
\numberwithin{equation}{section}
\begin{document}
\title{\bf\Large A Complete Real-Variable Theory of Hardy Spaces on Spaces of Homogeneous Type
\footnotetext{\hspace{-0.35cm} 2010 {\it Mathematics Subject Classification}. Primary 42B30;
Secondary 42B25, 42B20, 30L99.\endgraf
{\it Key words and phrases.} space of homogeneous type, Hardy space, maximal function, atom,
Littlewood-Paley function, wavelet.\endgraf
This project is supported by the National Natural Science Foundation of China (Grant Nos.\ 11771446, 11571039, 11726621 and 11761131002). Ji Li is supported by ARC DP 160100153.}}
\author{Ziyi He, Yongsheng Han, Ji Li, Liguang Liu, Dachun Yang\footnote{Corresponding author / April 1, 2018 / newest
version.}\ \ and Wen Yuan}
\date{}
\maketitle
	
\vspace{-0.8cm}

\begin{center}
\begin{minipage}{13cm}
{\small {\bf Abstract}\quad
Let $(X,d,\mu)$ be a space of homogeneous type, with the upper dimension $\omega$, in the sense of
R. R. Coifman and G. Weiss. Assume that $\eta$ is the smoothness index of the wavelets on $X$ constructed by P.
Auscher and T. Hyt\"onen. In this article, when $p\in(\omega/(\omega+\eta),1]$, for
the atomic Hardy spaces $H_{\mathrm{cw}}^p(X)$ introduced by Coifman and Weiss, the authors establish their various
real-variable characterizations, respectively, in terms of the grand maximal function, the radial maximal
function, the non-tangential maximal functions, the various Littlewood-Paley functions and wavelet functions.
This completely answers the question of R. R. Coifman and G. Weiss by showing that
no any additional (geometrical) condition is necessary to guarantee the radial maximal function characterization
of $H_{\mathrm{cw}}^1(X)$ and even of $H_{\mathrm{cw}}^p(X)$ with $p$ as above.
As applications, the authors obtain the finite atomic characterizations of $H^p_{\mathrm{cw}}(X)$, which further 
induce some criteria for the boundedness of sublinear operators on $H^p_{\mathrm{cw}}(X)$.
Compared with the known results, the novelty of this article is that $\mu$ is not
assumed to satisfy the reverse doubling condition and $d$ is only a
quasi-metric, moreover, the range $p\in(\omega/(\omega+\eta),1]$ is natural and optimal.}
\end{minipage}
\end{center}

%\tableofcontents

%\arraycolsep=1pt

\vspace{0.2cm}

\section{Introduction}\label{intro}

The real-variable theory of Hardy spaces plays a fundamental role in harmonic analysis.
The classical Hardy space on the $n$-dimensional Euclidean space $\rn$ was initially
developed by Stein and Weiss \cite{sw60} and later by Fefferman and Stein \cite{FS72}.
Hardy spaces $H^p(\rn)$ have been proved to be a suitable substitute of Lebesgue spaces $L^p(\rn)$ with $p\in(0,1]$
in the study of the boundedness of operators.
Indeed, any element in the Hardy space can be decomposed into a sum of some basic elements
(which are called \emph{atoms}); see
Coifman \cite{Coi74} for $n=1$ and Latter \cite{Lat78} for general $n\in\nn$.
Characterizations of Hardy spaces via Littlewood-Paley functions were due to Uchiyama
\cite{Uchi85}. For more study on classical Hardy spaces on $\rn$, we refer the reader to the
well-known monographs \cite{Stein93,Lu95,gr85,Gra1,Gra2}. Modern developments regarding the real-variable
theory of Hardy spaces are so deep and vast that we can only list a few literatures here,
for example, the theory of Hardy spaces associated with operators (see
\cite{BT, BDL, hm09,dy05}), Hardy spaces with variable exponents (see \cite{NS12}),
the real-variable theory of Musielak-Orlicz Hardy spaces (see \cite{Ky14,ylk17}), and also Hardy spaces for
ball quasi-Banach spaces (see \cite{SHYY17}).

In this article, we focus on the real-variable theory of Hardy spaces on spaces of homogeneous type.
It is known that the space of homogeneous type introduced by Coifman and Weiss
\cite{CW71, CW77} provides a natural setting for
the study of both functions spaces and the boundedness of operators.
A \emph{quasi-metric space} $(X,d)$ is a non-empty set $X$ equipped with a \emph{quasi-metric} $d$,
that is, a non-negative function defined on $X\times X$, satisfying that, for any $x,\ y,\ z\in X$,
\begin{enumerate}
\item $d(x,y)=0$ if and only if $x=y$;
\item $d(x,y)=d(y,x)$;
\item there exists a  constant $A_0\in[1,\infty)$ such that $d(x,z)\le A_0[d(x,y)+d(y,z)]$.
\end{enumerate}
The \emph{ball $B$} on $X$ centered at $x_0\in X$ with radius $r\in(0,\fz)$ is defined by setting
$$
B:=B(x_0,r):=\{x\in X:\ d(x,x_0)<r\}.
$$ For any ball $B$ and $\tau\in(0,\infty)$, denote by
$\tau B$ the ball with the same center as that of $B$
but of radius $\tau$ times that of $B$.
Given a quasi-metric space $(X,d)$ and a non-negative measure $\mu$, we call $(X,d,\mu)$ a \emph{space of homogeneous type} if $\mu$ satisfies the {\it doubling condition}:
there exists a positive constant $C_{(\mu)}\in[1,\fz)$ such that,
for any ball $B\subset X$,
$$\mu(2B)\le
C_{(\mu)}\mu(B).$$
The above doubling condition is equivalent to that, for any ball $B$ and $\lz\in[1,\infty)$,
\begin{equation}\label{eq:doub}
\mu(\lz B)\le C_{(\mu)}\lz^\omega\mu(B),
\end{equation}
where $\omega:=\log_2{C_{(\mu)}}$ is called the \emph{upper dimension} of $X$.
If $A_0=1$, we call $(X, d, \mu)$ a \emph{doubling metric measure space}.

According to \cite[pp.\,587--588]{CW77}, we \emph{always make} the following assumptions throughout this article.
For any point $x\in X$, assume that the balls $\{B(x,r)\}_{r\in(0,\infty)}$ form a \emph{basis} of open
neighborhoods of $x$; assume that $\mu$ is Borel regular, which means that open sets are measurable and every
set $A\subset X$ is contained in a Borel set $E$ satisfying that $\mu(A)=\mu(E)$; we also assume that
$\mu(B(x, r))\in(0,\fz)$ for any $x\in X$ and $r\in(0,\infty)$. For the presentation concision,
we always assume that  $(X,d,\mu)$ is non-atomic [namely, $\mu(\{x\})=0$ for any
$x\in X$] and $\diam (X):=\sup\{d(x,y):\ x,\,y\in X\}=\fz$. It is known that $\diam (X)=\fz$ implies that
$\mu(X)=\fz$ (see, for example, \cite[Lemma 8.1]{AH13}).

Let us recall the notion of the atomic Hardy space on spaces of homogeneous type introduced by
Coifman and Weiss \cite{CW77}. For any $\az\in (0,\fz)$, the \emph{Lipschitz space} $\CL_\az(X)$
is defined to be the collection of all measurable functions $f$ such that
$$
\|f\|_{\CL_\az(X)}:=\sup_{x\neq y}\frac{|f(x)-f(y)|}{[\mu(B(x,d(x,y)))]^\az}<\fz.
$$
Denote by $(\CL_\az(X))'$ the \emph{dual space} of $\CL_\az(X)$ equipped with the weak-$\ast$ topology.

\begin{definition}\label{def:atom}
Let $p\in(0,1]$ and $q\in(p,\fz]\cap [1,\fz]$. A function $a$ is called a \emph{$(p,q)$-atom} if
\begin{enumerate}
\item $\supp a:=\{x\in X:\ a(x)\neq 0\}\subset B(x_0,r)$ for some $x_0\in X$ and $r\in(0,\fz)$;
\item $[\int_X |a(x)|^q\,d\mu(x)]^{\frac 1q}\le[\mu(B(x_0,r))]^{\frac 1q-\frac 1p}$;
\item $\int_X a(x)\,d\mu(x)=0$.
\end{enumerate}
The \emph{atomic Hardy space $H^{p,q}_\cw(X)$} is defined as the subspace of $(\CL_{1/p-1}(X))'$ when
$p\in(0,1)$ or of $L^1(X)$ when $p=1$, which consists of all the elements $f$ admitting an atomic decomposition
\begin{equation}\label{eq:atcw}
f=\sum_{j=0}^\fz\lz_ja_j,
\end{equation}
where $\{a_j\}^\fz_{j=0}$ are $(p,q)$-atoms, $\{\lz_j\}_{j=0}^\fz\subset\cc$ satisfies
$\sum_{j=0}^\fz|\lz_j|^p<\fz$ and the series in \eqref{eq:atcw} converges in $(\CL_{1/p-1}(X))'$ when
$p\in(0,1)$ or in $L^1(X)$ when $p=1$. Define
$$
\|f\|_{H^{p,q}_\cw(X)}:=\inf\lf\{\lf(\sum_{j=0}^\fz|\lz_j|^p\r)^{\frac 1p} \r\},
$$
where the infimum is taken over all the representations of $f$ as in \eqref{eq:atcw}.
\end{definition}

It was proved in \cite{CW77} that the atomic Hardy space $H^{p,q}_\cw(X)$ is independent of the choice of
$q$ and hence we sometimes write $H^p_\cw(X)$ for short.
It was also proved in \cite{CW77}  that the dual space of $H^{p}_\cw(X)$ is the Lipschitz space $\CL_{1/p-1}(X)$
when $p\in (0,1)$, and the space $\mathrm{BMO}(X)$  of bounded mean oscillation when $p=1$.

It is well known that the most basic result in the real-variable theory of Hardy spaces is their
characterizations in terms of maximal functions. Coifman and Weiss \cite[pp.\ 641--642]{CW77} observed that a
proof of the duality result between $H^1(\rn)$ and $\mathrm{BMO}(\rn)$ from Carleson \cite{Car76} can be
extended to the general setting of spaces of homogeneous type provided a
certain additional geometrical assumption is added, from which one can then obtain a
radial maximal function characterization of $H_\cw^1(X)$.  Coifman and Weiss \cite[p.\ 642]{CW77}  then
asked that \emph{to what extent their geometrical condition is
necessary for the validity of the radial maximal characterization of $H_\cw^1(X)$}.
Since then, lots of  efforts are made to build various real-variable characterizations of the atomic Hardy
spaces on spaces of homogeneous type with few geometrical assumptions. In this article, we completely answer
the aforementioned question of Coifman and Weiss by showing that no any additional
(geometrical) condition is necessary to guarantee the radial maximal function characterization
of $H_\cw^1(X)$ and even of $H_\cw^p(X)$ with $p\le 1$ but near to $1$.

Recall that a triple $(X,d,\mu)$ is said to be \emph{Ahlfors-$n$ regular} if $\mu(B(x,r))\sim r^n$ for any $x\in X$ and
$r\in(0,\diam X)$ with equivalent positive constants independent of $x$ and $r$. When $(X,d,\mu)$ is
Ahlfors-$n$ regular, upon assuming the quasi-metric $d$ satisfying that there exists
$\thz\in(0,1)$ such that, for any $x,\ x',\ y\in X$,
\begin{equation}\label{regular-d}
|d(x,y)-d(x',y)|\ls[d(x,x')]^{\thz}[d(x,y)+d(x',y)]^{1-\thz},
\end{equation}
Mac\'{i}as and Segovia \cite{MS79b}
characterized Hardy spaces via the grand maximal functions, and Li \cite{Li98} obtained another grand maximal
function characterization
via test functions introduced in \cite{hs94}. Also, Duong and Yan \cite{dy03} characterized Hardy spaces via
the Lusin area function associated with certain semigroup.

Recall that an RD-\emph{space} $(X,d,\mu)$
is a doubling metric measure space with the measure $\mu$ further satisfying the
\emph{reverse doubling condition}, that is, there exist a
positive constant $\wz{C}\in(0,1]$ and $\kz\in(0,\omega]$ such that, for any ball $B(x,r)$ with $x\in X$,
$r\in(0,\diam X/2)$ and $\lz\in[1,\diam X/[2r])$,
$$
\wz C\lz^\kz\mu(B(x,r))\le\mu(B(x,\lz r)).
$$
Indeed, any path connected doubling metric measure space  is an RD-space (see \cite{HMY08,YZ11}).
Characterizations of Hardy spaces on RD-spaces via various Littlewood-Paley functions were established in
\cite{HMY06, HMY08}. Also, characterizations of Hardy spaces on RD-spaces via various maximal functions can be
found in \cite{GLY08,gly09b,YZ10}. It should be mentioned that local Hardy
spaces can be used to characterize more general scale of function spaces like Besov and Triebel-Lizorkin spaces
on RD-spaces (see \cite{YZ11}). For a systematic study of Besov and Triebel-Lizorkin spaces on RD-spaces, we refer the
reader to \cite{HMY08}. More on analysis over Ahlfors-$n$ regular metric measure spaces or RD-spaces can be
found in \cite{glmy, GLY09, kyz10, kyz11, yz08, hyz09,YZ11,dh09, ZSY16}.

The main motivation of studying the real-variable theory of function spaces and the boundedness of operators on
spaces of homogeneous type comes from the celebrated work of Auscher and Hyt\"onen \cite{AH13}, in which they
constructed an orthonormal wavelet basis $\{\psi_\az^k:\ k\in\zz,\ \az\in\CG_k\}$ of $L^2(X)$
with H\"older continuity exponent $\eta\in(0,1)$ and exponential decay by
using the system of random dyadic cubes. The first creative attempt of using the idea of \cite{AH13} to
investigate the real-variable theory of Hardy spaces on spaces of homogeneous type was due to
Han et al.\ \cite{HHL16} (see also Han et al.\ \cite{hhl17}).
Indeed, in \cite{HHL16}, Hardy spaces via wavelets on spaces of homogeneous type were
introduced and then these spaces were proved to have atomic decompositions. The method used in
\cite{HHL16} is based on a new Calder\'{o}n reproducing formula on spaces of homogeneous type (see
\cite[Proposition 2.5]{HHL16}). But there exists an \emph{error} in the proof of
\cite[Proposition 2.5]{HHL16}, namely, since the regularity exponent of the approximations
of the identity in \cite[p.\ 3438]{HHL16} is
$\thz$ [indeed, $\thz$ is from the regularity of the quasi-metric $d$ in \eqref{regular-d}],
it follows that the regularity exponent in \cite[(2.6)]{HHL16} should be
$\min\{\thz,\eta\}$ and hence the correct range of $p$ in \cite[Proposition 2.5]{HHL16} (indeed, all results of
\cite{HHL16}) seems to be $(\omega/[\omega+\min\{\thz,\eta\}],1]$ which is not optimal.
Moreover, the criteria of the boundedness of Calder\'{o}n-Zygmund operators on the dual of Hardy spaces were
established in \cite{HHL16}. Also, Fu and Yang \cite{fy18} obtained an unconditional basis of  $H^1_\cw(X)$
and several equivalent characterizations of $H^1_\cw(X)$ in terms of wavelets.

Another motivation of this article comes from the Calder\'{o}n reproducing formulae established in
\cite{HLYY17}. Indeed, the work of \cite{HLYY17} was partly motivated by the wavelet theory of Auscher and
Hyt\"{o}nen in \cite{AH13} and a corresponding wavelet reproducing formula (which can converge in the
distribution space) in \cite{HLYY17}. The already existing works (see \cite{HMY06,HMY08,GLY08,YZ10,YZ11})
regarding Hardy spaces on RD-spaces show the feasibility of establishing various real-variable
characterizations of the atomic Hardy spaces on spaces of homogeneous type via the Calder\'{o}n
reproducing formulae. It should be mentioned that a characterization of the atomic Hardy spaces via the
Littlewood-Paley functions was established in \cite{HLW16} via the aforementioned wavelet reproducing formula;
see also \cite{HLW16} for some corresponding conclusions of product Hardy spaces on spaces of homogeneous type.

In this article,  motivated by \cite{HHL16, HLYY17},
for the atomic Hardy spaces $H_{\cw}^p(X)$ with any $p\in(\omega/[\omega+\eta],1]$, we establish their various
real-variable characterizations, respectively, in terms of the grand maximal function, the radial maximal
function, the non-tangential maximal function, the various Littlewood-Paley functions and wavelets.
Observe that these characterizations are true for $H_{\cw}^p(X)$ with $p\in(\omega/[\omega+\eta],1]$
and $X$ being any space of homogeneous type \emph{without any additional (geometrical) conditions},
which completely answers the aforementioned question asked by Coifman and Weiss \cite[p.\,642]{CW77}.
As an application, we obtain the finite atomic characterizations of Hardy spaces,
which further induce some criteria for the boundedness of sublinear operators
on Hardy spaces. Compared with the known results, the novelty of this article is that $\mu$ is not
assumed to satisfy the reverse doubling condition and $d$ is only a
quasi-metric. Moreover, the range of $p\in({\omega}/{(\omega+\eta)},1]$ for the various maximal function characterizations
and the Littlewood-Paley function characterizations of the atomic Hardy spaces $H^{p}_\cw(X)$
is natural and optimal. The key tool used through this article is those Calder\'on
reproducing formulae from \cite{HLYY17}.

In addition, we point out that, when $X$ is a doubling metric measure space, the finite atomic
characterizations of Hardy spaces are also useful in establishing the bilinear decomposition of the product
space $H^1_\cw(X)\times\mathop\mathrm{BMO}(X)$ and $H^{p}_\cw(X)\times \CL_{1/p-1}(X)$, with $p\in
(\omega/[\omega+\eta], 1)$ in \cite{fyl17, lyy18, fcy17, fy18},
and also in the study of the endpoint boundedness of commutators generated by
Calder\'on-Zygmund operators and $\mathop\mathrm{BMO}(X)$ functions in \cite{lcfy17, lcfy18}.

The organization of this article is as follows.

In Section \ref{pre}, we recall the notions of the space of test functions and the space of distributions
introduced in \cite{HMY06}, as well as the random dyadic cubes in \cite{AH13} and the approximation of the
identity with exponential decay introduced in \cite{HLYY17}. Then we restate the Calder\'{o}n reproducing
formulae established in \cite{HLYY17}.

Section \ref{max} concerns Hardy spaces defined via the grand maximal function, the radial maximal function and
the non-tangential maximal function. We show that these Hardy spaces are all equivalent
to the Lebesgue space $L^p(X)$ when $p\in(1,\fz]$ (see Section \ref{p>1}), and they are
all mutually equivalent when $p\in(\om,1]$ (see Section \ref{p<1}), all in the sense
of equivalent (quasi-)norms. The proof for the latter borrows some ideas from \cite{YZ10} and uses the
Calder\'{o}n reproducing formulae built in \cite{HLYY17}. Moreover, we prove that the Hardy space
$H^{*,p}(X)$ defined via the grand maximal function is independent of the choices of the distribution space
$(\go{\bz,\gz})'$ whenever $\bz,\ \gz\in(\omega[1/p-1],\eta)$; see Proposition \ref{prop:Hpin} below.

Section \ref{atom} is devoted to the atomic characterization of $H^{*,p}(X)$. Notice that, if a distribution
has an atomic decomposition, then it belongs to $H^{*,p}(X)$ obviously by the definition of atoms; see Section \ref{atsub}. All we
remain to do is to establish the converse relationship. In Section \ref{cz},  by modifying the definition of the grand maximal
function $f^*$ to $f^\star$ so that the level set $\{x\in X\!\!:\ f^\star(x)>\lambda\}$ with $\lz\in(0,\infty)$ is open,
we then apply the partition of unity
to the  open set $\Omega_\lambda$ and obtain a Calder\'{o}n-Zygmund decomposition of $f\in H^{*,p}(X)$.
This is further used in Section \ref{prat} to
construct an atomic decomposition of $f$. In Section \ref{cw}, we compare the atomic Hardy spaces
$H^{p,q}_\at(X)$ with $H^{p,q}_\cw(X)$ and prove that they are exactly the same space in the sense of equivalent
(quasi-)norms.

Section \ref{LP} deals with the Littlewood-Paley theory of Hardy spaces.
In Section \ref{LP1}, we show that the Hardy space $H^p(X)$, defined via the Lusin area function, is
independent of the choices of $\exp$-ATIs. In Section \ref{s5.2}, we use the homogeneous continuous Calder\'on
reproducing formula and the molecular characterizations of the atomic Hardy spaces (see \cite{lcfy18})
to establish the atomic decompositions of elements in
$H^p(X)$, and then we connect $H^p(X)$ with $H^{*,p}(X)$.
In Section \ref{LP2},  we characterize  Hardy spaces $H^p(X)$ via  the Lusin area function with aperture, the
Littlewood-Paley $g$-function and the Littlewood-Paley $g_\lz^*$-function.

In Section \ref{wave}, we consider the Hardy space  $H^p_w(X)$ defined
via wavelets, which was introduced in \cite{HHL16}. We improve the result of \cite[Theorem~4.3]{HLW16} and
prove that $H^p_w(X)$ coincides with  $H^p(X)$ in the sense of equivalent (quasi-)norms.

In Section \ref{bd}, as an application,
we obtain criteria of the boundedness of the sublinear operators from Hardy spaces to quasi-Banach spaces.
To this end, we first establish the finite atomic characterizations, namely, we show that, if
$q\in(p,\infty)\cap [1,\infty)$, then $\|\cdot\|_{H^{p,q}_\fin(X)}$ and  $\|\cdot\|_{H^{p,q}_\at(X)}$
are equivalent  (quasi)-norms on  a dense subspace $H^{p,q}_\fin(X)$ of  $H^{p,q}_\at(X)$;  the above equivalence also holds true on  a dense subspace $H^{p,\fz}_\fin(X)\cap\UC(X)$ of $H^{p,\fz}_\at(X)$,
where $\UC(X)$ denotes the space of all uniformly
continuous functions on $X$.

At the end of this section, we make some conventions on notation. We \emph{always assume} that $\omega$ is as
in \eqref{eq:doub} and $\eta$ is the smoothness index of wavelets
(see \cite[Theorem 7.1]{AH13} or Definition \ref{def:eti} below).
We assume that $\dz$ is a very small positive
number, for example, $\dz\le(2A_0)^{-10}$ in order to construct the dyadic cube system and the wavelet system
on $X$ (see \cite[Theorem 2.2]{HK12} or Lemma \ref{cube} below).
For any $x,\ y\in X$ and $r\in(0,\fz)$, let
$$
V_r(x):=\mu(B(x,r))\quad \mathrm{and}\quad V(x,y):=\mu(B(x,d(x,y))),
$$
where $B(x,r):=\{y\in X:\ d(x,y)<r\}$.
We always let $\nn:=\{1,2,\ldots\}$ and $\zz_+:=\nn\cup\{0\}$.
For any $p\in[1,\fz]$, we use $p'$ to denote its \emph{conjugate index}, namely, $1/p+1/p'=1$.
The symbol $C$ denotes a positive constant which is independent of the main parameters, but it may vary from
line to line. We also use $C_{(\az,\bz,\ldots)}$ to denote a positive constant depending on the indicated
parameters $\az$, $\bz$, \ldots. The symbol $A \ls B$ means that there exists a positive constant $C$ such that
$A \le CB$. The symbol $A \sim B$ is used as an abbreviation of $A \ls B \ls A$. We also use
$A\ls_{\az,\bz,\ldots}B$ to indicate that here the implicit positive constant depends on $\az$, $\bz$, $\ldots$
and, similarly, $A\sim_{\az,\bz,\ldots}B$. For any $s,\ t\in\rr$, denote the \emph{minimum} of $s$ and $t$ by
$s\wedge t$. For any finite set $\CJ$, we use $\#\CJ$ to denote its \emph{cardinality}. Also, for any set
$E$ of $X$, we use $\chi_E$ to denote its characteristic function and $E^\complement$ the set
$X\setminus E$.

\section{Calder\'{o}n reproducing formulae\label{pre}}

This section is devoted to recalling Calder\'{o}n reproducing formulae obtained in \cite{HLYY17}. To this end,
we first recall the notions of both the space of test functions and the distribution space.
\begin{definition}\label{def:test}
Let $x_1\in X$, $r\in(0,\fz)$, $\bz\in(0,1]$ and $\gz\in(0,\fz)$. A function $f$ defined on $X$ is called a
\emph{test function of type $(x_1,r,\bz,\gz)$}, denoted by $f\in\CG(x_1,r,\bz,\gz)$, if there exists a positive
constant $C$ such that
\begin{enumerate}
\item (the \emph{size condition}) for any $x\in X$,
\begin{equation*}
|f(x)|\le C\frac{1}{V_r(x_1)+V(x_1,x)}\lf[\frac r{r+d(x_1,x)}\r]^\gz;
\end{equation*}
\item (the \emph{regularity condition}) for any $x,\ y\in X$ satisfying $d(x,y)\le (2A_0)^{-1}[r+d(x_1,x)]$,
\begin{equation*}
|f(x)-f(y)|\le C\lf[\frac{d(x,y)}{r+d(x_1,x)}\r]^\bz
\frac{1}{V_r(x_1)+V(x_1,x)}\lf[\frac r{r+d(x_1,x)}\r]^\gz.
\end{equation*}
\end{enumerate}
For any $f\in\CG(x_1,r,\bz,\gz)$, define the norm
$$
\|f\|_{\CG(x_1,r,\bz,\gz)}:=\inf\{C\in(0,\fz):\ C \textup{\ satisfies (i) and (ii)}\}.
$$
Define
$$
\mathring{\CG}(x_1,r,\bz,\gz):=\lf\{f\in\CG(x_1,r,\bz,\gz):\ \int_X f(x)\,d\mu(x)=0\r\}
$$
equipped with the norm $\|\cdot\|_{\mathring{\CG}(x_1,r,\bz,\gz)}:=\|\cdot\|_{\CG(x_1,r,\bz,\gz)}$.
\end{definition}

Observe that the above version of $\CG(x_1,r,\bz,\gz)$ was originally introduced by Han et al.\ \cite{HMY08}
(see also \cite{HMY06}).

Fix $x_0\in X$. For any $x\in X$ and $r\in(0,\fz)$, we know that $\CG(x,r,\bz,\gz)=\CG(x_0,1,\bz,\gz)$
with equivalent norms, but the equivalent positive constants depend on $x$ and $r$.
Obviously, $\CG(x_0,1,\bz,\gz)$ is a Banach space. In what follows, we simply write
$\CG(\bz,\gz):=\CG(x_0,1,\bz,\gz)$ and $\GO{\bz,\gz}:=\GO{x_0,1,\bz,\gz}$.

Fix $\epsilon\in(0,1]$ and $\bz,\ \gz\in(0,\epsilon)$. Let $\CG^\epsilon_0(\bz,\gz)$ [resp.,
$\mathring\CG^\epsilon_0(\bz,\gz)$] be the completion of the set $\CG(\epsilon,\epsilon)$ [resp.,
$\mathring\CG(\epsilon,\epsilon)$] in $\CG(\bz,\gz)$, that is, if $f\in\CG_0^\ez(\bz,\gz)$ [resp.,
$f\in\mathring\CG_0^\ez(\bz,\gz)$], then there exists
$\{\phi_j\}_{j=1}^\fz\subset\CG(\ez,\ez)$ [resp., $\{\phi_j\}_{j=1}^\fz\subset\mathring\CG(\ez,\ez)$] such that
$\|\phi_j-f\|_{\CG(\bz,\gz)}\to 0$ as $j\to\fz$. If $f\in\CG^\epsilon_0(\bz,\gz)$ [resp.,
$f\in\mathring\CG^\epsilon_0(\bz,\gz)$], we then let
$$
\|f\|_{\CG^\epsilon_0(\bz,\gz)}:=\|f\|_{\CG(\bz,\gz)}\quad [\textup{resp., }
\|f\|_{\mathring\CG^\epsilon_0(\bz,\gz)}:=\|f\|_{\CG(\bz,\gz)}].
$$
The \emph{dual space} $(\CG^\epsilon_0(\bz,\gz))'$ [resp., $(\mathring{\CG}^\epsilon_0(\bz,\gz))'$] is defined
to be the set of all continuous linear functionals on
$\CG^\epsilon_0(\bz,\gz)$ [resp., $\mathring{\CG}^\epsilon_0(\bz,\gz)$] and
equipped with the weak-$*$ topology. The spaces $(\CG^\epsilon_0(\bz,\gz))'$ and
$(\mathring{\CG}^\epsilon_0(\bz,\gz))'$ are called the \emph{spaces of distributions}.

Let
$L_\loc^1(X)$ be the space of all locally integrable functions on $X$.
Denote by $\CM$ the \emph{Hardy-Littlewood maximal operator}, that is, for any  $f\in L_\loc^1(X)$ and $x\in X$,
\begin{equation*}
\CM(f)(x):=\sup_{B\ni x}\frac 1{\mu(B)}\int_{B} |f(y)|\,d\mu(y),
\end{equation*}
where the supremum is taken over all balls $B$ of $X$ that contain $x$. For any $p\in(0,\fz]$,  the
\emph{Lebesgue space $L^p(X)$} is defined to be the set of all $\mu$-measurable functions $f$ such that
$$
\|f\|_{L^p(X)}:=\lf[\int_X |f(x)|^p\,d\mu(x)\r]^{\frac 1p}<\fz
$$
with the usual modification made when $p=\fz$; the \emph{weak Lebesgue space  $L^{p,\fz}(X)$} is defined to be
the set of all $\mu$-measurable functions $f$ such that
$$
\|f\|_{L^{p,\fz}(X)}:=\sup_{\lz\in(0,\fz)}\lz[\mu(\{x\in X:\ |f(x)|>\lz\})]^{\frac 1p}<\fz.
$$
It is known (see \cite{CW77}) that $\CM$ is bounded on $L^p(X)$ when $p\in(1,\fz]$ and bounded from $L^1(X)$ to
$L^{1,\fz}(X)$. Then we state some estimates from \cite[Lemma~2.1]{HMY08}, which are proved by using
\eqref{eq:doub}.

\begin{lemma}\label{lem-add}
Let $\bz,\ \gz\in(0,\infty)$.
\begin{enumerate}
\item For any $x,\ y\in X$ and $r\in(0,\fz)$, $V(x,y)\sim V(y,x)$ and
$$
V_r(x)+V_r(y)+V(x,y)\sim  V_r(x)+V(x,y)\sim V_r(y)+V(x,y)\sim\mu(B(x,r+d(x,y))),
$$
where the equivalent positive constants are independent of $x$, $y$ and $r$.

\item There exists a positive constant $C$ such that, for any $x_1\in X$ and $r\in(0,\infty)$,
$$
\int_X\frac{1}{V_r(x_1)+V(x_1,x)}\lf[\frac r{r+d(x_1,x)}\r]^\gz\,d\mu(x)\le C.
$$

\item There exists a positive constant $C$ such that, for any $x\in X$ and $R\in(0,\infty)$,
$$
\int_{d(x,y)\le R}\frac 1{V(x,y)}\lf[\frac{d(x,y)}{R}\r]^\bz\,d\mu(y)\le C
\quad \textit{and}\quad\int_{d(x, y)\ge R}\frac{1}{V(x,y)}\lf[\frac R{d(x,y)}\r]^\bz\,d\mu(y)\le C.
$$

\item There exists a positive constant $C$ such that, for any $x_1\in X$ and $R,\ r\in(0,\infty)$,
$$
\int_{d(x, x_1)\ge R}\frac{1}{V_r(x_1)+V(x_1,x)}\lf[\frac r{r+d(x_1,x)}\r]^\gz\,d\mu(x)\
\le C\lf(\frac{r}{r+R}\r)^\gz.
$$

\item There exists a positive constant $C$ such that, for any $r\in(0,\fz)$, $f\in L^1_\loc(X)$ and $x\in X$,
$$
\int_X \frac{1}{V_r(x)+V(x,y)}\lf[\frac{r}{r+d(x,y)}\r]^\gz|f(y)|\,d\mu(y)\le C\CM(f)(x).
$$
\end{enumerate}
\end{lemma}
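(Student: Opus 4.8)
The plan is to derive all five items from the doubling condition \eqref{eq:doub} by a single device used repeatedly: a dyadic annular decomposition of $X$ (or of the relevant region) centered at the distinguished point, on each annulus of which the weight $[r/(r+d(\cdot,\cdot))]^\gz$ or $[d(\cdot,\cdot)/R]^\bz$ is essentially constant, while the reciprocal volume factor is bounded below by the reciprocal of the measure of a comparable ball. Item (i) is proved first, as it packages the bookkeeping of the constants $A_0$ and $C_{(\mu)}$ that the remaining items reuse.

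For (i), the symmetry $V(x,y)\sim V(y,x)$ follows because $B(x,d(x,y))\subset B(y,2A_0 d(x,y))$ by the quasi-triangle inequality, whence $\mu(B(x,d(x,y)))\le\mu(B(y,2A_0d(x,y)))\ls\mu(B(y,d(x,y)))$ by \eqref{eq:doub}, and symmetrically. For the chain of equivalences, $V_r(x)+V(x,y)$ is, up to a factor $2$, the larger of the two volumes, and since the larger ball contains the smaller one, $V_r(x)+V(x,y)\sim\mu(B(x,r+d(x,y)))$; the same holds with $x$ and $y$ interchanged, and $\mu(B(x,r+d(x,y)))\sim\mu(B(y,r+d(x,y)))$ by the same containment-plus-doubling argument applied to the radius $r+d(x,y)$. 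Finally $V_r(y)\le\mu(B(x,A_0(r+d(x,y))))\ls\mu(B(x,r+d(x,y)))$, which gives $V_r(x)+V_r(y)+V(x,y)\sim V_r(x)+V(x,y)$.

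For (ii), (iv) and (v), I would fix the center $x_1$ (resp.\ $x$) and write $X=\bigcup_{j\ge0}A_j$ with $A_0:=B(x_1,r)$ and $A_j:=B(x_1,2^jr)\setminus B(x_1,2^{j-1}r)$ for $j\ge1$. On $A_j$ with $j\ge1$ one has $r+d(x_1,\cdot)\sim 2^jr$, hence $[r/(r+d(x_1,\cdot))]^\gz\sim2^{-j\gz}$, while $V_r(x_1)+V(x_1,\cdot)\ge\mu(B(x_1,2^{j-1}r))\gtrsim\mu(B(x_1,2^jr))\ge\mu(A_j)$ by \eqref{eq:doub}. Thus the integral in (ii) over $A_j$ is $\ls2^{-j\gz}$, and summing the geometric series over $j\ge0$ (the term $j=0$ being $\ls1$ directly via Lemma~\ref{lem-add}(i)) gives (ii). Item (v) is identical, except that on $A_j$ one bounds $\int_{A_j}|f|\,d\mu$ by $\mu(B(x,2^jr))\CM(f)(x)$, which produces the extra factor $\CM(f)(x)$ after summation. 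For (iv), if $R\le r$ the left-hand side is $\ls1\sim(r/(r+R))^\gz$ by (ii); if $R>r$, decompose $\{d(\cdot,x_1)\ge R\}$ into the annuli $\{2^jR\le d(x_1,\cdot)<2^{j+1}R\}$, $j\ge0$, on which $[r/(r+d(x_1,\cdot))]^\gz\sim2^{-j\gz}(r/R)^\gz$ and the volume factor again dominates the annulus measure, so the sum is $\ls(r/R)^\gz\sim(r/(r+R))^\gz$.

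Finally, (iii) uses the same scheme with two one-sided decompositions: for the first integral split $\{d(x,\cdot)\le R\}$ into $\{2^{-j-1}R<d(x,\cdot)\le2^{-j}R\}$, $j\ge0$, where $[d(x,\cdot)/R]^\bz\sim2^{-j\bz}$ and $V(x,\cdot)\ge\mu(B(x,2^{-j-1}R))\gtrsim$ the annulus measure, so each term is $\ls2^{-j\bz}$; for the second integral split $\{d(x,\cdot)\ge R\}$ into $\{2^jR\le d(x,\cdot)<2^{j+1}R\}$, $j\ge0$, where $[R/d(x,\cdot)]^\bz\sim2^{-j\bz}$ and again the volume factor dominates the annulus measure. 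Summing the two geometric series finishes the proof. There is no genuine obstacle here; the only point requiring care is the uniform control of $A_0$ and $C_{(\mu)}$ through the quasi-triangle inequality and \eqref{eq:doub}, which is precisely what item (i) is designed to handle.
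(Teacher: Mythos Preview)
Your argument is correct and follows the standard route: the paper itself does not give a proof but simply cites \cite[Lemma~2.1]{HMY08}, noting that everything follows from the doubling condition \eqref{eq:doub}. Your dyadic annular decomposition is exactly the device used in that reference, so the approaches coincide.
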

Next we recall the system of dyadic cubes established in \cite[Theorem 2.2]{HK12} (see also \cite{AH13}),
which is restated in the following version.

\begin{lemma}\label{cube}
Fix constants $0<c_0\le C_0<\fz$ and $\dz\in(0,1)$ such that $12A_0^3C_0\dz\le c_0$. Assume that
a set of points, $\{z_\az^k:\ k\in\zz,\ \az\in\CA_k\}\subset X$ with $\CA_k$ for any $k\in\zz$ being a
countable set of indices, has the following properties: for any $k\in\zz$,
\begin{enumerate}
\item[\rm (i)] $d(z_\az^k,z_\bz^k)\ge c_0\dz^k$ if $\az\neq\bz$;
\item[\rm (ii)] $\min_{\az\in\CA_k} d(x,z_\az^k)\le C_0\dz^k$ for any $x\in X$.
\end{enumerate}
Then there exists a family of sets, $\{Q_\az^k:\  k\in\zz,\ \az\in\CA_k\}$, satisfying
\begin{enumerate}
\item[\rm (iii)] for any $k\in\zz$, $\bigcup_{\az\in\CA_k} Q_\az^k=X$ and $\{ Q_\az^k:\;\; {\az\in\CA_k}\}$ is disjoint;
\item[\rm (iv)] if $k,\ l\in\zz$ and $l\ge k$, then either $Q_\bz^l\subset Q_\az^k$ or
$Q_\bz^l\cap Q_\az^k=\emptyset$;
\item[\rm (v)] for any $k\in\zz$ and $\az\in\CA_k$, $B(z_\az^k,c_\natural\dz^k)\subset Q_\az^k\subset B(z_\az^k,C^\natural\dz^k)$,
where $c_\natural:=(3A_0^2)^{-1}c_0$, $C^\natural:=2A_0C_0$ and $z_\az^k$ is called ``the center'' of
$Q_\az^k$.
\end{enumerate}
\end{lemma}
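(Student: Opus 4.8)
The plan is to carry out, in the present quasi-metric setting, the dyadic decomposition of M.~Christ, in the form refined by Hyt\"onen and Kairema \cite[Theorem 2.2]{HK12}. First I would fix, once and for all, a well-order on each index set $\CA_k$ (to be used to break ties) and, by a harmless preliminary relabelling, arrange that $\CA_k\subset\CA_{k+1}$ for every $k$. For $k\in\zz$ and $\bz\in\CA_{k+1}$, define the \emph{parent} $\widehat\bz\in\CA_k$ to be the $\az\in\CA_k$ minimizing $d(z^{k+1}_\bz,z^k_\az)$ (the first such $\az$ in the fixed order, if the minimum is attained more than once), with the convention $\widehat\bz:=\bz$ when $\bz\in\CA_k$. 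Composing parent maps gives, for $l\ge k$, a map $\pi^l_k:\CA_l\to\CA_k$ (with $\pi^k_k:=\mathrm{id}$), and I write $(l,\bz)\preceq(k,\az)$ when $l\ge k$ and $\pi^l_k(\bz)=\az$; thus $\preceq$ organizes the reference points into a forest in which every $(l,\bz)$ has a unique ancestor at each level $k\le l$.

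Next I would establish two geometric facts from the hypotheses. \emph{Descendants stay near their ancestors}: hypothesis (ii) gives $d(z^{k+1}_\bz,z^k_{\widehat\bz})\le C_0\dz^k$, and iterating the quasi-triangle inequality down a chain, then summing the resulting geometric series---which converges because $12A_0^3C_0\dz\le c_0\le C_0$ forces $A_0\dz\le(12A_0^2)^{-1}<1$---yields
\begin{equation*}
(l,\bz)\preceq(k,\az)\ \Longrightarrow\ d\big(z^l_\bz,z^k_\az\big)\le\frac{A_0C_0}{1-A_0\dz}\,\dz^k<C^\natural\dz^k .
\end{equation*}
\emph{Confinement}: if $d(y,z^k_\az)\le c_\natural\dz^k$, then by hypothesis (i) and the reverse quasi-triangle inequality $\az$ is the strictly nearest point of $\CA_k$ to $y$, and---descending level by level and invoking $c_0>6A_0^2C_0\dz$ (again a consequence of $12A_0^3C_0\dz\le c_0$) at each step---the reference points at every finer scale $l\ge k$ that lie in a $c_\natural\dz^l$-neighborhood of $y$ are exactly the descendants of $(k,\az)$. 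This confinement statement is the technical heart of the argument, and it is precisely where the quantitative hypothesis $12A_0^3C_0\dz\le c_0$ is used.

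I would then define, for each $(k,\az)$, the \emph{open pre-cube} $\mathring Q^k_\az:=\bigcup_{(l,\bz)\preceq(k,\az)}B(z^l_\bz,c_\natural\dz^l)$ and the \emph{closed pre-cube} $\overline Q^k_\az:=\overline{\{z^l_\bz:\ (l,\bz)\preceq(k,\az)\}}$. By the forest structure both families are monotone along $\preceq$ and nested in $k$; the descendant estimate gives $\overline Q^k_\az\subset B(z^k_\az,C^\natural\dz^k)$, and since $B(z^k_\az,c_\natural\dz^k)$ is the $(l,\bz)=(k,\az)$ ball in the union defining $\mathring Q^k_\az$ we get $B(z^k_\az,c_\natural\dz^k)\subset\mathring Q^k_\az$; the confinement statement then shows $\mathring Q^k_\az\subset\overline Q^k_\az$, that for each fixed $k$ the sets $\{\mathring Q^k_\az\}_{\az\in\CA_k}$ are pairwise disjoint, and that $\bigcup_{\az\in\CA_k}\overline Q^k_\az=X$ (the $\CA_l$-nets becoming arbitrarily fine as $l\to\fz$). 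Finally, to obtain an \emph{exact} partition I would take $Q^k_\az$ to be $\mathring Q^k_\az$ together with those boundary points $x\in\overline Q^k_\az\setminus\bigcup_{\az'\in\CA_k}\mathring Q^k_{\az'}$ that the tie-breaking order, applied to the nearest-ancestor chain of $x$, assigns to $\az$. Since this assignment is coherent through $\preceq$, the family $\{Q^k_\az\}$ satisfies (iii) (it partitions $X$), (iv) ($Q^l_\bz\subset Q^k_\az$ when $(l,\bz)\preceq(k,\az)$, and $Q^l_\bz\cap Q^k_\az=\emptyset$ otherwise, by (iii)), and (v) (with $z^k_\az$ as ``center'', by the two ball inclusions above).

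The step I expect to be the main obstacle is purely quantitative: in a genuine metric space ($A_0=1$) this is Christ's argument and all the constants are transparent, but in the quasi-metric case every application of the quasi-triangle inequality costs a factor $A_0$, so the descendant estimate, the confinement statement, and the mutual compatibility of the inner radius $c_\natural=(3A_0^2)^{-1}c_0$ with the outer radius $C^\natural=2A_0C_0$ must be tuned simultaneously---which is exactly what the hypothesis $12A_0^3C_0\dz\le c_0$ makes possible. A secondary, more clerical difficulty is upgrading Christ's almost-everywhere partition to a genuine (everywhere) one; that is what forces the tie-breaking device and the coherent assignment of boundary points described above.
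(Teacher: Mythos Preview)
The paper does not prove this lemma; it merely cites \cite[Theorem 2.2]{HK12} (and \cite{AH13}) and restates the result in the present notation. Your proposal is essentially an outline of the Hyt\"onen--Kairema argument itself---the parent maps $\pi^l_k$, the geometric-series descendant estimate, the confinement lemma driven by the hypothesis $12A_0^3C_0\dz\le c_0$, and the tie-breaking device to upgrade Christ's almost-everywhere partition to an exact one---so you are in effect reproducing the proof that the paper defers to. The outline is correct and the constants check out (e.g.\ $A_0\dz\le(12A_0^2)^{-1}$ gives $\frac{A_0C_0}{1-A_0\dz}<2A_0C_0=C^\natural$), though if you write this up in full you should be explicit about which direction you peel the quasi-triangle inequality in the descendant chain, since the powers of $A_0$ accumulate differently depending on the order.
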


Throughout this article, we keep the notation used in Lemma \ref{cube}. Moreover, for any $k\in\zz$, let
$$\CX^k:=\{z_\az^k\}_{\az\in\CA_k},\qquad \CG_k:=\CA_{k+1}\setminus\CA_k\qquad \textup{and}\qquad
\CY^k:=\{z_\az^{k+1}\}_{\az\in\CG_k}=:\{y_\az^{k}\}_{\az\in\CG_k}.$$
Next we recall the notion of approximations of the identity with exponential decay introduced in
\cite{HLYY17}.
\begin{definition}\label{def:eti}
A sequence $\{Q_k\}_{k\in\zz}$ of bounded linear integral operators on $L^2(X)$ is called an
\emph{approximation of the identity with exponential decay} (for short, $\exp$-ATI) if there exist constants
$C,\ \nu\in(0,\fz)$, $a\in(0,1]$ and $\eta\in(0,1)$ such that, for any $k\in\zz$, the kernel of operator
$Q_k$, which is still denoted by $Q_k$, satisfying
\begin{enumerate}
\item (the \emph{identity condition}) $\sum_{k=-\fz}^\fz Q_k=I$ in $L^2(X)$, where $I$ is the identity operator
on $L^2(X)$;
\item (the \emph{size condition}) for any $x,\ y\in X$,
\begin{align}\label{eq:etisize}
|Q_k(x,y)|&\le C\frac1{\sqrt{V_{\dz^k}(x)\,V_{\dz^k}(y)}}
\exp\lf\{-\nu\lf[\frac{d(x,y)}{\dz^k}\r]^a\r\}\\
&\quad\times\exp\lf\{-\nu\lf[\frac{\max\{d(x, \CY^k),\,d(y,\CY^k)\}}{\dz^k}\r]^a\r\};\noz
\end{align}
\item (the \emph{regularity condition}) for any $x,\ x',\ y\in X$ with $d(x,x')\le\dz^k$,
\begin{align}\label{eq:etiregx}
&|Q_k(x,y)-Q_k(x',y)|+|Q_k(y,x)-Q_k(y, x')|\\
&\quad\le C\lf[\frac{d(x,x')}{\dz^k}\r]^\eta
\frac1{\sqrt{V_{\dz^k}(x)\,V_{\dz^k}(y)}} \exp\lf\{-\nu\lf[\frac{d(x,y)}{\dz^k}\r]^a\r\}\noz\\
&\qquad\times\exp\lf\{-\nu\lf[\frac{\max\{d(x, \CY^k),\,d(y,\CY^k)\}}{\dz^k}\r]^a\r\};\noz
\end{align}
\item (the \emph{second difference regularity condition}) for any $x,\ x',\ y,\ y'\in X$ with $d(x,x')\le\dz^k$
and $d(y,y')\le\dz^k$, then
\begin{align}\label{eq:etidreg}
&|[Q_k(x,y)-Q_k(x',y)]-[Q_k(x,y')-Q_k(x',y')]|\\
&\quad \le C\lf[\frac{d(x,x')}{\dz^k}\r]^\eta\lf[\frac{d(y,y')}{\dz^k}\r]^\eta
\frac1{\sqrt{V_{\dz^k}(x)\,V_{\dz^k}(y)}} \exp\lf\{-\nu\lf[\frac{d(x,y)}{\dz^k}\r]^a\r\}\noz\\
&\qquad\times\exp\lf\{-\nu\lf[\frac{\max\{d(x, \CY^k),\,d(y,\CY^k)\}}{\dz^k}\r]^a\r\};\noz
\end{align}
\item (the \emph{cancelation condition}) for any $x,\ y\in X$,
\begin{equation*}
\int_X Q_k(x,y')\,d\mu(y')=0=\int_X Q_k(x',y)\,d\mu(x').
\end{equation*}
\end{enumerate}
\end{definition}
\begin{remark}
By \cite[Remark 2.8]{HLYY17}, we know that the factor $\frac 1{\sqrt{V_{\dz^k}(x)V_{\dz^k}(y)}}$ in
\eqref{eq:etisize}, \eqref{eq:etiregx} and \eqref{eq:etidreg} can be replaced by $\frac 1{V_{\dz^k}(x)}$ or
$\frac 1{V_{\dz^k}(y)}$, and $\max\{d(x,\CY^k),d(y,\CY^k)\}$ by $d(x,\CY^k)$ or by $d(y,\CY^k)$, with
$\exp\{-\nu[\frac{d(x,y)}{\dz^k}]^a\}$ replaced by $\exp\{-\nu'[\frac{d(x,y)}{\dz^k}]^a\}$, where
$\nu'\in(0,\nu)$ only depends on $a$ and $A_0$. Moreover, the condition in Definition \ref{def:eti}(iii)
[resp., (iv)] can be replaced by $d(x,x')\le(2A_0)^{-1}[\dz^k+d(x,y)]$ (resp.,
$d(x,x')\le(2A_0)^{-2}[\dz^k+d(x,y)]$ and $d(y,y')\le(2A_0)^{-2}[\dz^k+d(x,y)]$). For their proofs, see
\cite[Proposition 2.9]{HLYY17}.
\end{remark}
With the above $\exp$-ATI, we have the following homogeneous continuous Calder\'on reproducing formula
established in \cite{HLYY17}.
\begin{theorem}\label{thm:hcrf}
Let $\{Q_k\}_{k\in\zz}$ be an $\exp$-{\rm ATI} and $\bz,\ \gz\in(0,\eta)$.
Then there exists a sequence $\{\wz{Q}_k\}_{k\in\zz}$ of bounded linear operators on $L^2(X)$
such that, for any $f\in (\GOO{\bz,\gz})'$,
\begin{equation*}
f=\sum_{k=-\fz}^\fz \wz{Q}_kQ_kf,
\end{equation*}
where the series converges in $(\mathring{\CG}^\eta_0(\bz,\gz))'$.
Moreover, there exists a positive constant $C$ such that, for any $k\in\zz$, the kernel of $\wz{Q}_k$ satisfies
the following conditions:
\begin{enumerate}
\item for any $x,\ y\in X$,
$$
\lf|\wz{Q}_k(x,y)\r|\le C\frac 1{V_{\dz^k}(x)+V(x,y)}\lf[\frac{\dz^k}{\dz^k+d(x,y)}\r]^\gz;
$$
\item for any $x,\ x',\ y\in X$ with $d(x,x')\le(2A_0)^{-1}[\dz^k+d(x,y)]$,
$$
\lf|\wz{Q}_k(x,y)-\wz{Q}_k(x',y)\r|\le C \lf[\frac{d(x,x')}{\dz^k+d(x,y)}\r]^\bz
\frac 1{V_{\dz^k}(x)+V(x,y)}\lf[\frac{\dz^k}{\dz^k+d(x,y)}\r]^\gz;
$$
\item for any $x\in X$,
\begin{equation*}
\int_X \wz{Q}_k(x,y)\,d\mu(y)=0=\int_X\wz{Q}_k(y,x)\,d\mu(y).
\end{equation*}
\end{enumerate}
\end{theorem}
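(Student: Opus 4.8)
The plan is to recover the identity by the classical Coifman-type decomposition together with a Neumann series, and then to transport the $\exp$-ATI estimates along the series to the operators $\wz{Q}_k$. Since $\sum_{j\in\zz}Q_j=I$ on $L^2(X)$ by the identity condition, we also have $\sum_{m\in\zz}Q_{k+m}=I$ for every $k\in\zz$, so, for each fixed $N\in\nn$,
\begin{equation*}
I=\sum_{k\in\zz}\lf(\sum_{m\in\zz}Q_{k+m}\r)Q_k=\sum_{k\in\zz}Q_k^{(N)}Q_k+R_N=:T_N+R_N,
\end{equation*}
where $Q_k^{(N)}:=\sum_{|m|\le N}Q_{k+m}$ and $R_N:=I-T_N=\sum_{|m|>N}\sum_{k\in\zz}Q_{k+m}Q_k$. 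Once $N$ is chosen so that $\|R_N\|_{L^2(X)\to L^2(X)}<1$, the operator $T_N=I-R_N$ is invertible on $L^2(X)$ with $T_N^{-1}=\sum_{n=0}^\fz R_N^n$ convergent in operator norm, and I would then simply set $\wz{Q}_k:=T_N^{-1}Q_k^{(N)}$, so that each $\wz{Q}_k$ is bounded on $L^2(X)$ and $f=T_N^{-1}T_Nf=\sum_{k\in\zz}\wz{Q}_kQ_kf$ for every $f\in L^2(X)$.

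The choice of $N$ rests on the almost-orthogonality estimate $\|Q_jQ_k\|_{L^2(X)\to L^2(X)}\ls\dz^{|j-k|\eta}$, proved, say for $j\ge k$, by using $\int_XQ_j(x,z)\,d\mu(z)=0$ to write $Q_jQ_k(x,y)=\int_XQ_j(x,z)[Q_k(z,y)-Q_k(x,y)]\,d\mu(z)$ and playing the H\"older regularity (exponent $\eta$) of $Q_k(\cdot,y)$ at scale $\dz^k$ against the exponential localization of $Q_j(x,\cdot)$ at the finer scale $\dz^j$, with Lemma~\ref{lem-add} supplying a Schur-type bound. The same computation also yields the pointwise bound $|Q_jQ_k(x,y)|\ls\dz^{|j-k|\eta'}\mathcal{K}_{\dz^{j\wedge k}}(x,y)$ for some $\eta'\in(\max\{\bz,\gz\},\eta)$, where $\mathcal{K}_t(x,y):=\frac1{V_t(x)+V(x,y)}[\frac t{t+d(x,y)}]^{\eta'}$, together with matching regularity and cancellation for $Q_jQ_k$. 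A Cotlar--Stein argument applied to $U_m:=\sum_kQ_{k+m}Q_k$, whose summands are almost orthogonal in $k$, then gives $\|U_m\|_{L^2(X)\to L^2(X)}\ls\dz^{|m|\eta'}$, so $\|R_N\|_{L^2(X)\to L^2(X)}\ls\sum_{|m|>N}\dz^{|m|\eta'}\ls\dz^{N\eta'}<1$ once $N$ is large. Summing the pointwise bounds over $k\in\zz$ (a convergent sum since $\sum_{j\in\zz}\mathcal{K}_{\dz^j}(x,y)\ls\frac1{V(x,y)}$ by \eqref{eq:doub}) shows in addition that $R_N$ has a Calder\'on--Zygmund kernel with small constant $\ls\dz^{N\eta'}$ and H\"older regularity of exponent $\eta'$ in each variable.

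Fix such an $N$, enlarging it if necessary. The finite sum $Q_k^{(N)}$ of $\exp$-ATI kernels at scales comparable to $\dz^k$ already satisfies conditions (i), (ii) and (iii) at scale $\dz^k$ with constants depending only on $N$ (here \eqref{eq:doub} is used to absorb factors $V_{\dz^{k+m}}(x)^{-1}\ls_N V_{\dz^k}(x)^{-1}$). Writing $\wz{Q}_k=Q_k^{(N)}+\sum_{n\ge1}R_N^nQ_k^{(N)}$, the key point is that composing $R_N$ with a kernel $M$ of the type in (i)--(iii) at scale $\dz^k$ reproduces a kernel of the same type at the same scale up to a factor $\ls\dz^{N\eta'}$: by the cancellation of $M$, $R_NM(x,y)=\int_X[R_N(x,z)-R_N(x,y)]M(z,y)\,d\mu(z)$, which one bounds using the H\"older regularity of $R_N(x,\cdot)$ and the localization of $M(\cdot,y)$, distinguishing $d(x,y)\gtrsim\dz^k$ and $d(x,y)\ls\dz^k$. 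Iterating, $R_N^nQ_k^{(N)}$ has a kernel bounded by $(C_0\dz^{N\eta'})^n$ times a scale-$\dz^k$ kernel of the required form; choosing $N$ so large that also $C_0\dz^{N\eta'}<1$, the geometric series converges and $\wz{Q}_k$ satisfies (i) and (ii). For (iii), $\int_X\wz{Q}_k(x,y)\,d\mu(y)=T_N^{-1}(Q_k^{(N)}\mathbf{1})(x)=0$ because $\int_XQ_k^{(N)}(z,y)\,d\mu(y)=0$; and since $R_N^*\mathbf{1}=\sum_{|m|>N}\sum_kQ_k^*(Q_{k+m}^*\mathbf{1})=0$ (as $\int_XQ_{k+m}(y,z)\,d\mu(y)=0$), one gets $(T_N^{-1})^*\mathbf{1}=\mathbf{1}$, whence $\int_X\wz{Q}_k(y,x)\,d\mu(y)=\int_XQ_k^{(N)}(z,x)\,d\mu(z)=0$ by the cancellation of $Q_k^{(N)}$ in its first variable.

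It remains to upgrade $f=\sum_k\wz{Q}_kQ_kf$ from $f\in L^2(X)$ to $f\in(\GOO{\bz,\gz})'$, with the series converging in $(\GOO{\bz,\gz})'$. Taking adjoints of the $L^2$ identity gives $\sum_kQ_k^*\wz{Q}_k^*=I$ on $L^2(X)\supset\GOO{\bz,\gz}$, so the crux is to show that, for each $\phi\in\GOO{\bz,\gz}$, the partial sums $\sum_{|k|\le M}Q_k^*\wz{Q}_k^*\phi$ converge to $\phi$ \emph{in the topology of} $\GOO{\bz,\gz}$, equivalently in the $\CG(\bz',\gz')$-norm for some $\bz',\gz'\in(\max\{\bz,\gz\},\eta)$; granting this, pairing with a distribution $f$ yields $\langle\sum_{|k|\le M}\wz{Q}_kQ_kf,\phi\rangle=\langle f,\sum_{|k|\le M}Q_k^*\wz{Q}_k^*\phi\rangle\to\langle f,\phi\rangle$. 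I expect the main obstacle to be exactly this kernel bookkeeping: one must control the interaction of the three competing scales $\dz^k$, $d(x,y)$ and the distance to the fixed reference point, both in the composition estimates for $R_N$ above and, more delicately, in the convergence of $\sum_kQ_k^*\wz{Q}_k^*\phi$ to $\phi$ in the test-function topology. This is also where the hypothesis $\bz,\gz\in(0,\eta)$ is genuinely used: the loss of smoothness incurred when composing $\exp$-ATI kernels, whose intrinsic regularity is only $\eta$, is precisely what forbids pushing $\bz$ or $\gz$ up to $\eta$.
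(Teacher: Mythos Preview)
The paper does not prove Theorem~\ref{thm:hcrf}: it is quoted from \cite{HLYY17} without argument. Your Neumann-series construction via $T_N=\sum_kQ_k^{(N)}Q_k$ with $Q_k^{(N)}=\sum_{|m|\le N}Q_{k+m}$, the almost-orthogonality/Cotlar--Stein bound on $R_N=I-T_N$, and $\wz{Q}_k:=T_N^{-1}Q_k^{(N)}$ is exactly the standard Coifman scheme and is, as far as one can infer, the route taken in \cite{HLYY17}; the architecture of your sketch is sound, including the iteration showing that each $R_N^nQ_k^{(N)}$ inherits size, regularity and cancellation at scale $\dz^k$ with geometrically decaying constants.

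One point should be corrected. In your last paragraph you say the partial sums $\sum_{|k|\le M}Q_k^*\wz{Q}_k^*\phi$ converge to $\phi$ ``in the topology of $\GOO{\bz,\gz}$, equivalently in the $\CG(\bz',\gz')$-norm for some $\bz',\gz'\in(\max\{\bz,\gz\},\eta)$''. This is backwards. Convergence in $\GOO{\bz,\gz}$ means convergence in the $\CG(\bz,\gz)$-norm, not in a stronger $\CG(\bz',\gz')$-norm with $\bz'>\bz$, $\gz'>\gz$. What one actually proves is convergence in the \emph{given} norm $\CG(\bz,\gz)$, but initially only for $\phi$ of \emph{higher} regularity, say $\phi\in\GO{\eta,\eta}$; there is an unavoidable loss of smoothness here, since composition with kernels of H\"older exponent $\eta$ cannot return exponent $\eta$. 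This is precisely why the distribution space is built on the completion $\GOO{\bz,\gz}$ of $\GO{\eta,\eta}$ in $\CG(\bz,\gz)$ rather than on $\CG(\bz,\gz)$ itself: density plus uniform boundedness of the partial-sum operators on $\CG(\bz,\gz)$ then transfers the convergence to all $\phi\in\GOO{\bz,\gz}$. Your diagnosis that the hypothesis $\bz,\gz\in(0,\eta)$ is genuinely used at this step is correct.
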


Next, we recall the homogeneous discrete Calder\'on reproducing formulae established in \cite{HLYY17}.
To this end, let $j_0\in\nn$ be a sufficiently large integer such that $\dz^{j_0}\le(2A_0)^{-4}C^\natural$,
where $C^\natural$ is as in Lemma \ref{cube}. Based on Lemma \ref{cube}, for any $k\in\zz$ and $\az\in\CA_k$,
we let
$$
\CN(k,\az):=\{\tau\in\CA_{k+j_0}:\ Q_\tau^{k+j_0}\subset Q_\az^k\}
$$
and $N(k,\az)$ be the cardinality of the set $\CN(k,\az)$. For any $k\in\zz$ and $\az\in\CA_k$, we rearrange
the set $\{Q_\tau^{k+j_0}:\ \tau\in\CN(k,\az)\}$ as $\{Q_\az^{k,m}\}_{m=1}^{N(k,\az)}$, whose centers are
denoted, respectively, by $\{z_\az^{k,m}\}_{m=1}^{N(k,\az)}$.

\begin{theorem}\label{thm:hdrf}
Let $\{Q_k\}_{k\in\zz}$ be an $\exp$-{\rm ATI} and $\bz,\ \gz\in(0,\eta)$.
For any $k\in\zz$, $\az\in\CA_k$ and  $m\in\{1,\ldots,N(k,\az)\}$, suppose that $y_\az^{k,m}$ is an arbitrary
point in $Q_\az^{k,m}$. Then, for any $i\in\{1,2\}$, there exists a  sequence $\{\wz{Q}_k^{(i)}\}_{k=-\fz}^\fz$
of bounded linear operators on $L^2(X)$ such that, for any $f\in(\GOO{\bz,\gz})'$,
\begin{align*}
f(\cdot)&=\sum_{k=-\fz}^\fz\sum_{\az\in\CA_k}\sum_{m=1}^{N(k,\az)}\wz{Q}^{(1)}_k\lf(\cdot,y_\az^{k,m}\r)
\int_{Q_\az^{k,m}}Q_kf(y)\,d\mu(y)\noz\\
&=\sum_{k=-\fz}^\fz\sum_{\az\in\CA_k}\sum_{m=1}^{N(k,\az)}\mu\lf(Q_\az^{k,m}\r)
\wz{Q}^{(2)}_k\lf(\cdot,y_\az^{k,m}\r)Q_kf\lf(y_\az^{k,m}\r),\noz
\end{align*}
where the equalities converge in $(\GOO{\bz,\gz})'$. Moreover, for any $k\in\zz$, the kernels of
$\wz{Q}^{(1)}_k$ and $\wz{Q}^{(2)}_k$ satisfy (i), (ii) and (iii) of Theorem \ref{thm:hcrf}.
\end{theorem}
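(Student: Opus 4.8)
The plan is to deduce Theorem~\ref{thm:hdrf} from the continuous Calder\'on reproducing formula in Theorem~\ref{thm:hcrf} by discretizing each summand $\wz Q_kQ_kf$ over the fine dyadic cubes $\{Q_\az^{k,m}\}$ and then repairing the resulting error by a Neumann series; all the series are first handled on $L^2(X)$, and the passage to the distribution space $(\GOO{\bz,\gz})'$ is carried out at the end by duality. Concretely, starting from $f=\sum_{k\in\zz}\wz Q_kQ_kf$ and splitting $X=\bigcup_{\az\in\CA_k}\bigcup_{m=1}^{N(k,\az)}Q_\az^{k,m}$, one has $\wz Q_kQ_kf(x)=\sum_{\az,m}\int_{Q_\az^{k,m}}\wz Q_k(x,y)Q_kf(y)\,d\mu(y)$. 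For $i=1$ I replace $\wz Q_k(x,y)$ by $\wz Q_k(x,y_\az^{k,m})$ on each $Q_\az^{k,m}$; for $i=2$ I replace, in addition, $Q_kf(y)$ by $Q_kf(y_\az^{k,m})$. This produces, for $i\in\{1,2\}$, a bounded operator $D^{(i)}$ on $L^2(X)$ --- namely the leading term on the right-hand side of the theorem, but with $\wz Q_k$ in place of $\wz Q_k^{(i)}$ --- together with a remainder $R^{(i)}:=I-D^{(i)}=\sum_{k\in\zz}S_k^{(i)}$, where $S_k^{(i)}$ is assembled from the differences $\wz Q_k(x,y)-\wz Q_k(x,y_\az^{k,m})$ (and, for $i=2$, also $Q_k(y,z)-Q_k(y_\az^{k,m},z)$) over $Q_\az^{k,m}$.

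\emph{Smallness of the remainder.} Using the regularity estimate Theorem~\ref{thm:hcrf}(ii) for $\wz Q_k$ (and Definition~\ref{def:eti}(iii) for $Q_k$ when $i=2$), the diameter bound $d(y,y_\az^{k,m})\le\diam Q_\az^{k,m}\lesssim\dz^{k+j_0}$ from Lemma~\ref{cube}(v), the cancellation condition Definition~\ref{def:eti}(v), and the integral estimates of Lemma~\ref{lem-add}, I would show that each $S_k^{(i)}$ carries a gain $\dz^{j_0\bz}$ (or $\dz^{j_0\eta}$) and that the family $\{S_k^{(i)}\}_{k\in\zz}$ is almost orthogonal, with $\|(S_k^{(i)})^*S_{k'}^{(i)}\|+\|S_k^{(i)}(S_{k'}^{(i)})^*\|\lesssim\dz^{j_0\bz'}\dz^{|k-k'|\bz'}$ for some $\bz'\in(0,\bz)$; the Cotlar--Stein lemma then gives $\|R^{(i)}\|_{L^2(X)\to L^2(X)}\le C\dz^{j_0\bz'}$, and enlarging $j_0$ (keeping the standing requirement $\dz^{j_0}\le(2A_0)^{-4}C^\natural$) makes this norm $<1$.

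\emph{Inversion, kernel estimates, and distributions.} Then $D^{(i)}=I-R^{(i)}$ is invertible on $L^2(X)$ with $(D^{(i)})^{-1}=\sum_{j=0}^\fz(R^{(i)})^j$, and I set $\wz Q_k^{(i)}:=(D^{(i)})^{-1}\wz Q_k$; applying $(D^{(i)})^{-1}$ to $D^{(i)}f=\sum_{k,\az,m}\wz Q_k(\cdot,y_\az^{k,m})\int_{Q_\az^{k,m}}Q_kf\,d\mu$ (respectively its point-value analogue) and interchanging $(D^{(i)})^{-1}$ with the sum gives, for $f\in L^2(X)$, the two identities of the theorem. The cancellation condition~(iii) for $\wz Q_k^{(i)}$ follows from Theorem~\ref{thm:hcrf}(iii): since $\int_X\wz Q_k(z,y)\,d\mu(y)=0$ for all $z$, the function $\wz Q_k(\cdot,y)$ integrated in $y$ vanishes identically, whence $\int_X\wz Q_k^{(i)}(x,y)\,d\mu(y)=0$; and since $\int_X(R^{(i)}g)\,d\mu=\int_Xg\,d\mu$ while $\int_X\wz Q_k(y,x)\,d\mu(y)=0$, one also gets $\int_X\wz Q_k^{(i)}(y,x)\,d\mu(y)=0$. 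For the size and regularity conditions~(i)--(ii), I note that, for fixed $y$, the function $\wz Q_k(\cdot,y)$ is (up to a uniform constant) a normalized element of $\CG(y,\dz^k,\bz,\gz)$, so it suffices to show that $(D^{(i)})^{-1}$ maps such test functions to test functions of the same type with uniformly bounded norms, preserving the mean-zero subclass; this reduces to an almost-diagonal-in-scale bound for $R^{(i)}$, namely that $S_{k'}^{(i)}$ acting on a bump adapted to scale $\dz^k$ again yields, after the $k'$-sum, a bump adapted to scale $\dz^k$ with gain $\lesssim\dz^{j_0\bz'}\dz^{|k-k'|\bz'}$, so that $\sum_{j}(R^{(i)})^j$ converges in the operator norm on each $\CG(y,\dz^k,\bz,\gz)$ uniformly in $y$ and $k$. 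For general $f\in(\GOO{\bz,\gz})'$, I then expand $f=\sum_k\wz Q_kQ_kf$ in $(\GOO{\bz,\gz})'$ via Theorem~\ref{thm:hcrf}, insert the $L^2$-level identities on the smooth pieces, and use the kernel estimates just obtained together with Lemma~\ref{lem-add} to control the pairing of the building blocks against test functions in $\GOO{\bz,\gz}$, which also yields the convergence of the series in $(\GOO{\bz,\gz})'$.

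\emph{Main obstacle.} The technical heart is the inversion step: verifying that inverting the near-identity operator $D^{(i)}$ preserves the \emph{whole} family of kernel bounds (i)--(iii), i.e., the boundedness of $(D^{(i)})^{-1}$ on the test-function spaces with the mean-zero subspace mapped to itself. Because $R^{(i)}$ mixes scales, one must track how each power $(R^{(i)})^j$ deforms a bump concentrated at a single scale and then sum the geometric series --- this is exactly the mechanism already used in \cite{HLYY17} to prove Theorem~\ref{thm:hcrf}, so the real work is to adapt those estimates to the present discretized operators rather than to invent new ones. By comparison, the $L^2$-smallness of $R^{(i)}$ is routine, but it must be carried out carefully enough to retain the decay in $|k-k'|$ needed for Cotlar--Stein, rather than only the crude bound $\|R^{(i)}\|\le\sum_k\|S_k^{(i)}\|$; and one should record that the same $j_0$, chosen large for this smallness, is what makes the point samples $y_\az^{k,m}$ sit in genuinely small cubes, which is also why the displayed geometric constraint on $j_0$ is imposed.
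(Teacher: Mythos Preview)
The paper does not contain a proof of Theorem~\ref{thm:hdrf}; the result is merely recalled from \cite{HLYY17} (see the opening sentence of Section~\ref{pre}). So there is no ``paper's own proof'' to compare against, and I comment only on the soundness of your outline.

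Your overall architecture --- discretize, bound the remainder by almost orthogonality and Cotlar--Stein, invert by a Neumann series, then pass to distributions --- is the standard one and is correct in spirit. However, there is a genuine gap at the very first step. You propose to replace $\wz Q_k(x,y)$ by $\wz Q_k(x,y_\az^{k,m})$ on each $Q_\az^{k,m}$ and to control the resulting error $|\wz Q_k(x,y)-\wz Q_k(x,y_\az^{k,m})|$ via ``the regularity estimate Theorem~\ref{thm:hcrf}(ii) for $\wz Q_k$''. But Theorem~\ref{thm:hcrf}(ii) only gives regularity of $\wz Q_k(x,y)$ in the \emph{first} variable $x$; no H\"older estimate in the second variable $y$ is asserted anywhere in this paper, and kernels produced by a Neumann-series inversion typically lose regularity in one variable. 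Without a second-variable estimate on $\wz Q_k$, you cannot extract the gain $\dz^{j_0\bz}$ from the difference $\wz Q_k(x,y)-\wz Q_k(x,y_\az^{k,m})$, so neither the smallness of $R^{(i)}$ nor the almost-orthogonality bound $\dz^{|k-k'|\bz'}$ follows, and the argument stalls before the inversion step.

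The remedy is not to route through the continuous formula at all. In \cite{HLYY17} (and in the earlier RD-space literature) one discretizes a preliminary identity built \emph{directly} from the $\exp$-ATI, for instance $I=\sum_k D_k^N Q_k + R_N$ with $D_k^N:=\sum_{|l|\le N}Q_{k+l}$, so that the kernel being frozen in the second variable is $D_k^N(x,y)$, which \emph{does} enjoy two-sided regularity by Definition~\ref{def:eti}(iii). One then chooses $N$ large to make $\|R_N\|$ small and $j_0$ large to make the discretization error small, and inverts the combined near-identity operator. After this correction the remainder of your plan --- Cotlar--Stein, the Neumann series, preservation of the kernel estimates under the inverse, and the duality passage to $(\GOO{\bz,\gz})'$ --- proceeds essentially as you describe.
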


To recall the inhomogeneous discrete Calder\'{o}n reproducing formulae established in \cite{HLYY17}, we
introduce the following $1$-$\exp$-ATI and $\exp$-IATI.
\begin{definition}\label{def:1eti}
A sequence $\{P_k\}_{k=-\fz}^\fz$ of bounded linear operators on $L^2(X)$ is called an
\emph{approximation of the identity with exponential decay and integration $1$} (for short, $1$-$\exp$-ATI) if
$\{P_k\}_{k=-\fz}^\fz$ has the following properties:
\begin{enumerate}
\item for any $k\in\zz$, $P_k$ satisfies (ii), (iii) and (iv) of Definition \ref{def:eti} but without the
exponential decay factor
$$
\exp\lf\{-\nu\lf[\frac{\max\{d(x,\CY^{k}),d(y,\CY^{k})\}}{\dz^k}\r]^a\r\};
$$
\item for any $k\in\zz$ and $x\in X$, $\int_X P_{k}(x,y)\,d\mu(y)=1=\int_X P_{k}(y,x)\,d\mu(y)$;
\item for any $k\in\zz$, letting $Q_k:=P_k-P_{k-1}$, then $\{Q_k\}_{k\in\zz}$ is an $\exp$-ATI.
\end{enumerate}
\end{definition}

\begin{remark}\label{rem:ieti}
The existence of the $1$-$\exp$-ATI is guaranteed by \cite[Lemma 10.1]{AH13}.
Moreover, by the proofs of \cite[Proposition 2.9]{HLYY17} and \cite[Proposition 2.7(iv)]{HMY08}, we know that,
for any $f\in L^2(X)$, $\lim_{k\to\fz} P_kf=f$ in $L^2(X)$.
\end{remark}

\begin{definition}\label{def:ieti}
A sequence $\{Q_k\}_{k=0}^\fz$ of bounded linear operators on $L^2(X)$ is called an \emph{inhomogeneous
approximation of the identity with exponential decay} (for short, $\exp$-IATI) if
there exists a $1$-$\exp$-ATI $\{P_k\}_{k=-\fz}^\fz$ such that $Q_0=P_0$ and $Q_k=P_k-P_{k-1}$ for any
$k\in\nn$.
\end{definition}

Next we recall the following inhomogeneous discrete reproducing formula established in \cite{HLYY17}.
\begin{theorem}\label{thm:idrf}
Let $\{Q_k\}_{k=0}^\fz$ be an {\rm $\exp$-IATI} and $\bz,\ \gz\in(0,\eta)$.
Then there exists a sequence $\{\wz{Q}_k\}_{k=0}^\fz$ of bounded
linear operators on $L^2(X)$ such that, for any $f\in(\go{\bz,\gz})'$,
\begin{align*}\label{eq:idrf}
f(\cdot)&=\sum_{k=0}^N\sum_{\az\in\CA_k}\sum_{m=1}^{N(0,\az)}
\int_{Q_\az^{k,m}}\wz{Q}_k(\cdot,y)\,d\mu(y)Q_{\az,1}^{k,m}(f)\\
&\quad+\sum_{k=1}^\fz\sum_{\az\in\CA_k}\sum_{m=1}^{N(k,\az)}\mu\lf(Q_\az^{k,m}\r)
\wz{Q}_k\lf(\cdot,y_\az^{k,m}\r)Q_kf\lf(y_\az^{k,m}\r),\noz
\end{align*}
where the equality converges in $(\go{\bz,\gz})'$, every $y_\az^{k,m}$
is an arbitrary point in $Q_\az^{k,m}$ and, for any $k\in\{0,\ldots,N\}$,
\begin{equation*}
Q_{\az,1}^{k,m}(f):=\frac 1{\mu(Q_{\az}^{k,m})}\int_{Q_\az^{k,m}}Q_kf(u)\,d\mu(u).
\end{equation*}
Moreover, for any $k\in\zz_+$, $\wz{Q}_k$ satisfies (i) and (ii) of Theorem \ref{thm:hcrf} and, for any
$x\in X$,
$$
\int_X \wz{Q}_k(x,y)\,d\mu(y)=\int_X\wz{Q}_k(y,x)\,d\mu(y)=\begin{cases}
1 & \textit{if } k\in\{0,\ldots,N\},\\
0 & \textit{if } k\in\{N+1,N+2,\ldots\},
\end{cases}
$$
where $N\in\nn$ is some fixed constant independent of $f$ and $y_\az^{k,m}$.
\end{theorem}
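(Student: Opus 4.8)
The plan is to parallel the derivation of the homogeneous discrete formula (Theorem \ref{thm:hdrf}) but keep track of the boundary term coming from the lowest scale, where the operators $P_k$ integrate to $1$ rather than $0$. Write $f = \sum_{k=0}^\fz Q_k f$ for $f\in(\go{\bz,\gz})'$, which is the starting point supplied by the $\exp$-IATI structure (here $Q_0=P_0$ and $Q_k=P_k-P_{k-1}$ for $k\ge1$); the convergence in $(\go{\bz,\gz})'$ is already available from Remark \ref{rem:ieti} together with the continuous inhomogeneous formula established in \cite{HLYY17}. First I would split the sum at a fixed large level $N\in\nn$, writing $f=\sum_{k=0}^N Q_kf+\sum_{k=N+1}^\fz Q_kf$, and treat the two pieces differently: for $k\le N$ the kernels $\wz Q_k$ built below will be required to integrate to $1$, while for $k>N$ they integrate to $0$, matching the cancellation available in $Q_k$ for $k\ge1$.

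Next I would introduce a second $\exp$-IATI-type family and compose it against $\{Q_k\}$ so that the composition is close to the identity and invertible on the relevant scales; concretely, for each $k$ and each dyadic cube $Q_\az^{k,m}$ at scale $k+j_0$ one approximates $Q_kf$ by its average $Q_{\az,1}^{k,m}(f)$ (for the low scales $k\le N$) or by a point value $Q_kf(y_\az^{k,m})$ times $\mu(Q_\az^{k,m})$ (for the high scales $k\ge N+1$), exactly as in Theorem \ref{thm:hdrf}. The error produced by replacing the integral operator by this discretized sum is controlled using the size, regularity and (for $k\ge1$) cancellation conditions of Definition \ref{def:eti}, via the standard almost-orthogonality estimates of \cite[Lemma~2.1]{HMY08} (our Lemma \ref{lem-add}), and is shown to have small operator norm on the scale of test-function spaces once $j_0$ is chosen large (recall $\dz^{j_0}\le(2A_0)^{-4}C^\natural$). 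Inverting $I+(\text{small error})$ by a Neumann series produces the operators $\wz Q_k^{(1)}=\wz Q_k$ whose kernels inherit properties (i), (ii) of Theorem \ref{thm:hcrf}; the integral values stated in the theorem follow by applying the reproducing identity to the constant function $1$ on the low scales (where $\int_X Q_k(x,y)\,d\mu(y)$ need not vanish for $k=0$) and using $\int_X Q_k(x,y)\,d\mu(y)=0$ for $k\ge1$ on the high scales.

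The main obstacle I anticipate is twofold. First, the convergence of all the triple series in $(\go{\bz,\gz})'$ — not merely in $L^2(X)$ — must be established uniformly, and this requires delicate summation of the almost-orthogonality estimates over both $k$ and the dyadic indices $\az,m$; the inhomogeneous setting makes the $k=0$ term genuinely different and one must verify that pairing against a test function in $\go{\bz,\gz}$ (with $\bz,\gz\in(0,\eta)$) still yields an absolutely convergent double series, using Lemma \ref{lem-add}(ii)--(v). Second, one must check that the Neumann-series inversion is legitimate on $(\go{\bz,\gz})'$ and that the resulting $\wz Q_k$ genuinely satisfy the size/regularity bounds with constants \emph{independent of $k$ and of the choice of points $y_\az^{k,m}$}; this uniformity, together with the clean dichotomy in the integral values of $\wz Q_k$ at the threshold $k=N$ versus $k>N$, is where the bulk of the technical work lies, and it is handled essentially as in \cite{HLYY17} by a careful bookkeeping of the dependence of all implicit constants on $j_0$, $N$, $A_0$, $\dz$, $\nu$ and $a$.
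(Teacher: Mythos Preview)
Your sketch is a reasonable outline of how such a reproducing formula is established, and it matches in spirit the argument of \cite{HLYY17}: discretize $Q_kf$ over the refined dyadic cubes $Q_\az^{k,m}$, show that the resulting discretized operator differs from the identity by an operator of small norm (via almost-orthogonality estimates depending on $j_0$ and $N$), and invert by a Neumann series to produce the $\wz Q_k$ with the stated kernel bounds and integral values. However, note that the present paper does \emph{not} supply its own proof of Theorem~\ref{thm:idrf}; the statement is merely recalled from \cite{HLYY17} as part of the preliminaries, so there is no in-paper argument to compare against beyond that citation.

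One point in your outline deserves tightening: you write the starting decomposition as $f=\sum_{k=0}^\fz Q_kf$ and then split at $N$, but the actual mechanism in \cite{HLYY17} (and reflected in the stated conclusion) is more subtle---the threshold $N$ is not chosen after the fact from a convergent series, but is fixed \emph{in advance} as part of the construction so that the remainder operator has small enough norm for the Neumann inversion to go through simultaneously on $L^2(X)$, on $\go{\bz,\gz}$, and by duality on $(\go{\bz,\gz})'$. In particular, the $\wz Q_k$ for $k\le N$ are not simply $Q_k$ post-composed with an inverse; the inversion mixes levels, and the stated integral values ($1$ for $k\le N$, $0$ for $k>N$) emerge because the Neumann series preserves the span of operators whose kernels integrate to $1$ (respectively $0$) in each variable. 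Your identification of the two main obstacles (convergence in the distribution space and uniformity of the kernel bounds in $k$ and in the sample points $y_\az^{k,m}$) is accurate, and these are precisely the technical points that \cite{HLYY17} resolves.
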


\section{Hardy spaces via various maximal functions}\label{max}

Let
$\bz,\ \gz\in(0,\eta)$ and $f\in(\go{\bz,\gz})'$. Let $\{P_k\}_{k\in\zz}$ be a $1$-$\exp$-ATI as in Definition
\ref{def:1eti}. Define the \emph{radial maximal function $\CM^+(f)$} of $f$ by setting
$$
\CM^+(f)(x):=\sup_{k\in\zz}|P_kf(x)|,\quad \forall\,x\in X.
$$
Define the \emph{non-tangential maximal function $\CM_\thz(f)$ of $f$ with aperture $\thz\in(0,\fz)$} by
setting
$$
\CM_\thz(f)(x):=\sup_{k\in\zz}\sup_{y\in B(x,\thz\dz^k)}|P_kf(y)|, \quad \forall\,x\in X.
$$
Also, define the \emph{grand maximal function $f^*$} of $f$ by setting
$$
f^*(x):=\sup\lf\{|\langle f,\vz \rangle|:\ \vz\in\go{\bz,\gz} \textup{ and } \|\vz\|_{\CG(x,r_0,\bz,\gz)}\le 1
\textup{ for some } r_0\in(0,\fz)\r\},\quad \forall\,x\in X.
$$
Correspondingly, for any $p\in(0,\fz]$, the \emph{Hardy spaces} $H^{+,p}(X)$, $H^p_\thz(X)$ with
$\thz\in(0,\fz)$ and $H^{*,p}(X)$ are defined, respectively, by setting
\begin{align*}
H^{+,p}(X)&:=\lf\{f\in\lf(\go{\bz,\gz}\r)':\ \|f\|_{H^{+,p}(X)}:=\|\CM^+(f)\|_{L^p(X)}<\fz\r\},\\
H^p_\thz(X)&:=\lf\{f\in\lf(\go{\bz,\gz}\r)':\ \|f\|_{H_\thz^{p}(X)}:=\|\CM_\thz(f)\|_{L^p(X)}<\fz\r\}
\end{align*}
and
$$
H^{*,p}(X):=\lf\{f\in\lf(\go{\bz,\gz}\r)':\ \|f\|_{H^{*,p}(X)}:=\|f^*\|_{L^p(X)}<\fz\r\}.
$$

Based on \cite[Remark 2.9(ii)]{GLY08}, we easily observe that, for any $f\in(\go{\bz,\gz})'$ and $x\in X$,
\begin{equation}\label{eq-xxx}
\CM^+f(x)\le\CM_\thz(f)(x)\le Cf^*(x),
\end{equation}
where $C$ is a positive constant only depending on $\thz$.

The aim of this section is to prove that the Hardy spaces $H^{+,p}(X)$, $H^p_\thz(X)$ and $H^{*,p}(X)$
are mutually equivalent when $p\in(\om,\infty]$ in the sense of equivalent (quasi-)norms (see Section
\ref{p<1}); in particular, they all are equivalent to the Lebesgue space $L^p(X)$ when $p\in(1,\fz]$
in the sense of equivalent norms (see Section \ref{p>1}). Moreover, we prove that
$H^{*,p}(X)$ is independent of the choices of the distribution space $(\go{\bz,\gz})'$ whenever
$\bz,\ \gz\in(\omega(1/p-1),\eta)$; see Proposition \ref{prop:Hpin} below.

%===========================================================

\subsection{Equivalence to the Lebesgue space $L^p(X)$ when $p\in(1,\fz]$}\label{p>1}

In this section, we show that the Hardy spaces $H^{+,p}(X)$, $H^p_\thz(X)$ and $H^{*,p}(X)$ are all
equivalent to the Lebesgue space $L^p(X)$, when $p\in(1,\fz]$, in the sense of both representing the same
distributions and equivalent norms. First we give some basic properties of $H^{*,p}(X)$.

\begin{proposition}\label{prop:ban}
Let $p\in(0,\fz]$. Then $H^{*,p}(X)$ is a (quasi-)Banach space, which is continuously embedded into
$(\go{\bz,\gz})'$, where $\bz,\ \gz\in(0,\eta)$.
\end{proposition}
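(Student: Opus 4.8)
The plan is to verify the two assertions separately: first that $H^{*,p}(X)$ embeds continuously into $(\go{\bz,\gz})'$, and then that it is complete under its (quasi-)norm.

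\emph{Continuous embedding.} First I would show that there is a positive constant $C$ such that, for every $f\in H^{*,p}(X)$ and every $\vz\in\go{\bz,\gz}$,
$$
|\langle f,\vz\rangle|\le C\,\|\vz\|_{\CG(\bz,\gz)}\,\|f\|_{H^{*,p}(X)}.
$$
The idea is standard: fix a reference point $x_0\in X$. For any $x$ in, say, the unit ball $B(x_0,1)$, the function $\vz/\|\vz\|_{\CG(x,1,\bz,\gz)}$ is an admissible test function in the definition of $f^*(x)$ (with $r_0=1$), so $|\langle f,\vz\rangle|\le f^*(x)\,\|\vz\|_{\CG(x,1,\bz,\gz)}\lesssim f^*(x)\,\|\vz\|_{\CG(x_0,1,\bz,\gz)}$, where the last step uses that $\CG(x,1,\bz,\gz)=\CG(x_0,1,\bz,\gz)$ with constants depending only on $x_0$ and the fixed radius $1$ (and here $x$ ranges over a fixed ball, so these constants are uniform). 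Taking the $L^p$-average of $f^*$ over $B(x_0,1)$ — using H\"older's inequality when $p>1$ and the elementary inequality $\inf\le(\text{average of }p\text{-th powers})^{1/p}$ when $p\le1$ — bounds $|\langle f,\vz\rangle|$ by $C\,\|\vz\|_{\CG(\bz,\gz)}\|f^*\|_{L^p(B(x_0,1))}\le C\,\|\vz\|_{\CG(\bz,\gz)}\|f\|_{H^{*,p}(X)}$, since $\mu(B(x_0,1))\in(0,\fz)$. This shows $f\in(\go{\bz,\gz})'$ with operator norm controlled by $\|f\|_{H^{*,p}(X)}$, which is exactly the continuous embedding.

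\emph{Completeness.} Since $\|\cdot\|_{H^{*,p}(X)}=\|f^*\|_{L^p(X)}$ and $f\mapsto f^*$ is positively homogeneous and subadditive (the latter because the supremum defining $f^*$ is over a linear pairing, so $(f+g)^*\le f^*+g^*$ pointwise), $\|\cdot\|_{H^{*,p}(X)}$ is a norm when $p\ge1$ and a $p$-norm (hence quasi-norm) when $p<1$; in particular $\|f\|_{H^{*,p}(X)}=0$ forces $f^*\equiv0$, whence $\langle f,\vz\rangle=0$ for all $\vz\in\go{\bz,\gz}$, i.e.\ $f=0$ in $(\go{\bz,\gz})'$. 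For completeness one passes to the series criterion: it suffices to show that every sequence $\{f_j\}_{j\in\nn}\subset H^{*,p}(X)$ with $\sum_j\|f_j\|_{H^{*,p}(X)}^{\underline p}<\fz$ (where $\underline p:=\min\{p,1\}$) has a sum in $H^{*,p}(X)$. By the continuous embedding just proved, $\sum_j\|f_j\|_{(\go{\bz,\gz})'}\lesssim\sum_j\|f_j\|_{H^{*,p}(X)}<\fz$, and $(\go{\bz,\gz})'$ is complete (being the dual of a normed space with the weak-$*$, or equivalently here the strong, bounded topology), so $f:=\sum_j f_j$ converges in $(\go{\bz,\gz})'$. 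Then for each fixed $x\in X$ and each admissible $\vz$ (with some $r_0$), $|\langle f,\vz\rangle|\le\sum_j|\langle f_j,\vz\rangle|\le\sum_j f_j^*(x)$, so taking the supremum over $\vz$ gives $f^*(x)\le\sum_j f_j^*(x)$ pointwise; applying the quasi-norm triangle (or, when $p\le1$, the inequality $\|\sum_j g_j\|_{L^p}^p\le\sum_j\|g_j\|_{L^p}^p$) yields $\|f\|_{H^{*,p}(X)}\le(\sum_j\|f_j\|_{H^{*,p}(X)}^{\underline p})^{1/\underline p}<\fz$, and the same estimate applied to the tails $\sum_{j\ge J}f_j$ shows the partial sums converge to $f$ in $H^{*,p}(X)$. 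This is the standard criterion that a quasi-normed space in which every absolutely ($\underline p$-)summable series converges is complete.

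\emph{Main obstacle.} The only genuinely delicate point is the continuous-embedding estimate, and within it the passage from $\|\vz\|_{\CG(x,1,\bz,\gz)}$ to $\|\vz\|_{\CG(x_0,1,\bz,\gz)}$ with a constant that is \emph{uniform} for $x$ in a fixed ball: one must track that the equivalence $\CG(x,r,\bz,\gz)=\CG(x_0,1,\bz,\gz)$, whose constants a priori depend on $x$ and $r$, is in fact locally uniform in $x$ when $r$ is fixed — this is where Lemma \ref{lem-add}(i) (the doubling-type comparisons of $V_r(x)$, $V(x,y)$ and $\mu(B(x,r+d(x,y)))$) is used. Once that uniformity is in hand, everything else is the routine functional-analytic packaging sketched above.
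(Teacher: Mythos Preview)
Your proof is correct and essentially matches the paper's: the continuous-embedding argument is identical (normalize $\vz$, observe $|\langle f,\vz\rangle|\ls f^*(x)$ for $x\in B(x_0,1)$, then average in $L^p$), and for completeness you use the equivalent series criterion with pointwise subadditivity $f^*\le\sum_j f_j^*$, whereas the paper works directly with a Cauchy sequence and the Fatou-type bound $(f-f_k)^*\le\liminf_l (f_{k+l}-f_k)^*$. One small wording issue: your parenthetical about $(\go{\bz,\gz})'$ being complete ``with the weak-$*$, or equivalently here the strong, bounded topology'' is muddled---what you need (and what holds) is simply that the dual of a normed space is complete in its operator norm.
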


\begin{proof}
Let $f\in H^{*,p}(X)$ and $\vz\in\go{\bz,\gz}$
with $\|\vz\|_{\CG(\bz,\gz)}\le 1$. For any $x\in B(x_0,1)$, by Definition \ref{def:test}, we easily know that
$\|\vz\|_{\CG(x,1,\bz,\gz)}\ls 1$  with the implicit positive constant independent of $x$ and hence
$|\langle f,\vz\rangle|\ls f^*(x)$. Therefore, for any $\vz\in\go{\bz,\gz}$ with $\bz,\ \gz\in(0,\eta)$, we
have
$$
|\langle f,\vz\rangle|^p\ls\frac 1{V_1(x_0)}\int_{B(x_0,1)}[f^*(x)]^p\,d\mu(x)\ls \|f^*\|_{L^p(X)}^p
\sim \|f\|_{H^{*,p}(X)}^p.
$$
This implies that $H^{*,p}(X)$ is continuously embedded into $(\go{\bz,\gz})'$.

To see that $H^{*,p}(X)$ is a (quasi-)Banach space, we only prove its completeness. Indeed,
suppose that $\{f_k\}_{k=1}^\fz$ in $H^{*,p}(X)$ is a Cauchy sequence, which is also a Cauchy sequence in
$(\go{\bz,\gz})'$ with $\bz,\ \gz\in(0,\eta)$. By the completeness of $(\go{\bz,\gz})'$, the sequence
$\{f_k\}_{k=1}^\fz$ converges to some element $f\in(\go{\bz,\gz})'$ as $k\to\fz$. If $\vz\in\go{\bz,\gz}$
satisfies $\|\vz\|_{\CG(x,r_0,\bz,\gz)}\le 1$ for\ some $x\in X$ and $r_0\in(0,\fz)$, then
$|\langle f_{k+l}-f_k,\vz\rangle|\le (f_{k+l}-f_k)^*(x)$ for any $k,\ l\in\nn$. Letting $l\to\fz$, we obtain
$$
|\langle f-f_k,\vz\rangle|\le \liminf_{l\to\fz}(f_{k+l}-f_k)^*(x),
$$
which further implies that, for any $x\in X$,
$$
(f-f_k)^*(x)\le \liminf_{l\to\fz}(f_{k+l}-f_k)^*(x).
$$
By the Fatou lemma, we conclude that
$$
\|(f-f_k)^*\|_{L^p(X)}\le\liminf_{l\to\fz}\|(f_{k+l}-f_k)^*\|_{L^p(X)}\to 0
$$
as $k\to\fz$, which, together with the sublinearity of $\|\cdot\|_{H^{*,p}(X)}$, further implies that
$f\in H^{*,p}(X)$ and $\lim_{k\to\fz}\|f-f_k\|_{H^{*,p}(X)}=0$. Therefore, $H^{*,p}(X)$ is complete. This
finishes the proof of Proposition \ref{prop:ban}.
\end{proof}

To show the equivalence of $H^{+,p}(X)$, $H^p_\thz(X)$ and $H^{*,p}(X)$ to the Lebesgue space $L^p(X)$
when $p\in(1,\fz]$ in the sense of both representing the same distributions and equivalent norms, we need the
following technical lemma.

\begin{lemma}\label{lem:Sklim}
Let $\{P_k\}_{k\in\zz}$ be a {\rm $1$-$\exp$-ATI} as in Definition \ref{def:1eti}. Assume that
$\bz,\ \gz\in(0,\eta)$. Then the following statements hold true:
\begin{enumerate}
\item there exists a positive constant $C$ such that, for any $k\in\zz$ and $\vz\in\CG(\bz,\gz)$,
$\|P_k\vz\|_{\CG(\bz,\gz)}\le C\|\vz\|_{\CG(\bz,\gz)}$;
\item for any $f\in\CG(\bz,\gz)$ and $\bz'\in(0,\bz)$,
$\lim_{k\to\fz}P_kf=f$ in $\CG(\bz',\gz)$;
\item if $f\in\go{\bz,\gz}$ [resp., $f\in(\go{\bz,\gz})'$], then $\lim_{k\to\fz}P_kf=f$ in
$\go{\bz,\gz}$ [resp., $(\go{\bz,\gz})'$].
\end{enumerate}
\end{lemma}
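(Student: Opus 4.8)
The plan is to prove (i), then (ii), and to deduce (iii) from (i) and (ii) together with the density of $\CG(\eta,\eta)$ in $\go{\bz,\gz}$ and a transposition argument; throughout, abbreviate $\Phi(x):=\frac1{V_1(x_0)+V(x_0,x)}[\frac1{1+d(x_0,x)}]^\gz$, the size profile of a type-$(x_0,1,\bz,\gz)$ function at $x$. For (i), recall that, by Definition \ref{def:1eti}, the kernel of each $P_k$ satisfies (ii), (iii) and (iv) of Definition \ref{def:eti} without the factor involving $\max\{d(x,\CY^k),d(y,\CY^k)\}$, together with $\int_XP_k(x,y)\,d\mu(y)=1=\int_XP_k(y,x)\,d\mu(y)$. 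Since $\exp\{-\nu[d(x,y)/\dz^k]^a\}$ dominates any negative power of $1+d(x,y)/\dz^k$, the doubling condition \eqref{eq:doub} lets us replace, at the cost of shrinking $\nu$, the size bound of $P_k$ by $|P_k(x,y)|\ls_\Gamma\frac1{V_{\dz^k}(x)+V(x,y)}[\frac{\dz^k}{\dz^k+d(x,y)}]^\Gamma$ for every $\Gamma\in(0,\fz)$, and its regularity bound by the analogue with the extra factor $[d(x,x')/(\dz^k+d(x,y))]^\eta$; thus $P_k(x,\cdot)$ behaves like a test function kernel of type $(x,\dz^k,\eta,\Gamma)$ with $\Gamma$ at our disposal. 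I would then check the size and regularity conditions of $P_k\vz$ as an element of $\CG(x_0,1,\bz,\gz)$ by writing $P_k\vz(x)=\int_XP_k(x,y)\vz(y)\,d\mu(y)$, splitting $X$ according to whether $d(x,y)\le(2A_0)^{-1}[1+d(x_0,x)]$ or not, and applying Lemma \ref{lem-add} together with the standard ``composition of two kernels'' estimates (choosing $\Gamma$ large to absorb the decay) and, for the regularity, the $\eta$-regularity of $P_k$ and the inequality $\eta>\bz$. The genuinely delicate point is the interplay between the scale $\dz^k$ of $P_k$ and the fixed scale $1$ of the ambient space: in every application of this lemma below only $k\to\fz$, hence $\dz^k\le1$, is relevant, and then $\dz^k/(\dz^k+d(x,y))\le1/(1+d(x,y))$ and $V_{\dz^k}(x)\le V_1(x)$, so the bookkeeping is routine; when $\dz^k>1$ the two scales must be tracked separately.

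For (ii), use $\int_XP_k(x,y)\,d\mu(y)=1$ to write, for $f\in\CG(\bz,\gz)$, $P_kf(x)-f(x)=\int_XP_k(x,y)[f(y)-f(x)]\,d\mu(y)$, and split the integral at $d(x,y)=(2A_0)^{-1}[1+d(x_0,x)]$. On the inner region the regularity of $f$ gives $|f(y)-f(x)|\ls\|f\|_{\CG(\bz,\gz)}[d(x,y)/(1+d(x_0,x))]^\bz\Phi(x)$, and integrating against $|P_k(x,\cdot)|$ produces a gain $[\dz^k/(1+d(x_0,x))]^\bz\le(\dz^k)^\bz$. On the outer region $d(x,y)\ge(2A_0)^{-1}$, so the Gaussian factor of $P_k$ is $\le\exp\{-\nu[(2A_0)^{-1}/\dz^k]^a\}\ls_N(\dz^k)^N$ for every $N$ (after keeping half of the exponent for integrability), and the remaining integral is controlled, via Lemma \ref{lem-add}, by a constant times $\|f\|_{\CG(\bz,\gz)}\Phi(x)$. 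Hence $|P_kf(x)-f(x)|\ls(\dz^k)^\bz\|f\|_{\CG(\bz,\gz)}\Phi(x)$; a parallel computation, now also using the $\eta$-regularity of $P_k$ ($\eta>\bz$), shows that the regularity-$\bz$ seminorm of $P_kf-f$ is $\ls\|f\|_{\CG(\bz,\gz)}$, bounded but not small. Finally, for $\bz'\in(0,\bz)$, put $g_k:=P_kf-f$ and interpolate the two bounds $|g_k(x)-g_k(x')|\le|g_k(x)|+|g_k(x')|\ls(\dz^k)^\bz\|f\|_{\CG(\bz,\gz)}\Phi(x)$ and $|g_k(x)-g_k(x')|\ls\|f\|_{\CG(\bz,\gz)}[d(x,x')/(1+d(x_0,x))]^\bz\Phi(x)$, valid when $d(x,x')\le(2A_0)^{-1}[1+d(x_0,x)]$: a H\"older (geometric-mean) inequality with exponents $1-\bz'/\bz$ and $\bz'/\bz$ yields that the regularity-$\bz'$ seminorm of $g_k$ is $\ls(\dz^k)^{\bz-\bz'}\|f\|_{\CG(\bz,\gz)}\to0$ as $k\to\fz$. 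With the size estimate this gives $\|P_kf-f\|_{\CG(\bz',\gz)}\to0$; the passage from $\bz$ to $\bz'$ is precisely what makes the interpolation exponent $\bz'/\bz$ strictly less than $1$, so it is intrinsic to this approach.

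For (iii), first let $f\in\go{\bz,\gz}=\CG_0^\eta(\bz,\gz)$ and fix $\bz_1\in(\bz,\eta)$. Given $\ez\in(0,\fz)$, choose $\phi\in\CG(\eta,\eta)$ with $\|f-\phi\|_{\CG(\bz,\gz)}<\ez$; since $\phi\in\CG(\eta,\eta)\subset\CG(\bz_1,\gz)$, part (ii) applied with the pair $(\bz_1,\gz)$ in place of $(\bz,\gz)$ and with $\bz$ in place of $\bz'$ gives $\|P_k\phi-\phi\|_{\CG(\bz,\gz)}\to0$ as $k\to\fz$, while part (i) gives $\|P_k(f-\phi)\|_{\CG(\bz,\gz)}\ls\|f-\phi\|_{\CG(\bz,\gz)}<C\ez$. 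Hence $\limsup_{k\to\fz}\|P_kf-f\|_{\CG(\bz,\gz)}\le(C+1)\ez$, and letting $\ez\to0$ yields $P_kf\to f$ in $\CG(\bz,\gz)$. Moreover, for $\dz^k\le1$ the same estimates that prove (i), now with the endpoint exponents $\bz=\gz=\eta$, show that $P_k$ maps $\CG(\eta,\eta)$ into itself; so, writing $\phi_j\to f$ with $\phi_j\in\CG(\eta,\eta)$, part (i) gives $P_kf=\lim_{j\to\fz}P_k\phi_j$ in $\CG(\bz,\gz)$ with each $P_k\phi_j\in\CG(\eta,\eta)$, whence $P_kf\in\CG_0^\eta(\bz,\gz)=\go{\bz,\gz}$; therefore $P_kf\to f$ in $\go{\bz,\gz}$. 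For $f\in(\go{\bz,\gz})'$, observe that all the conditions defining a $1$-$\exp$-ATI (Definition \ref{def:1eti}) and an $\exp$-ATI (Definition \ref{def:eti}) are symmetric under interchanging the two arguments of the kernel, so $\{P_k^{\mathrm{T}}\}_{k\in\zz}$, with $P_k^{\mathrm{T}}(x,y):=P_k(y,x)$, is again a $1$-$\exp$-ATI; hence, by the function case applied to $\{P_k^{\mathrm{T}}\}$, for $\dz^k\le1$ and any $\vz\in\go{\bz,\gz}$ we have $P_k^{\mathrm{T}}\vz\in\go{\bz,\gz}$ and $P_k^{\mathrm{T}}\vz\to\vz$ in $\go{\bz,\gz}$ as $k\to\fz$. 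Setting $\langle P_kf,\vz\rangle:=\langle f,P_k^{\mathrm{T}}\vz\rangle$ for such $k$, the continuity of $f$ on $\go{\bz,\gz}$ gives $\langle P_kf-f,\vz\rangle=\langle f,P_k^{\mathrm{T}}\vz-\vz\rangle\to0$, i.e., $P_kf\to f$ in $(\go{\bz,\gz})'$.

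The step I expect to be the main obstacle is the verification in (i): carrying out the size and regularity estimates for $P_k\vz$ while keeping the scales $\dz^k$ and $1$ cleanly separated, and correctly handling the $V_{\dz^k}(x)$ in the denominator of the kernel bound, is the most technical point; in (ii) the conceptual heart is the interpolation step, which both forces and explains the unavoidable loss from $\bz$ to $\bz'$.
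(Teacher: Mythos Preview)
Your proposal is correct and follows essentially the same route as the paper: the paper itself defers (i) to the method of \cite[Lemma 4.14]{HLYY17} and (ii) to \cite[Lemma 3.6]{GLY08}, and obtains (iii) ``directly by (i), (ii) and a standard duality argument,'' which is exactly your density-plus-transposition scheme. Your sketch in fact supplies more detail than the paper's proof; the interpolation in (ii) explaining the loss from $\bz$ to $\bz'$ and the two-step argument in (iii) (first approximating by $\phi\in\CG(\eta,\eta)$ to recover convergence in $\CG(\bz,\gz)$, then transposing for the distributional case) are precisely the standard arguments the references carry out.
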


\begin{proof}
The proof of (i) can be obtained by the method used in the proof of \cite[Lemma 4.14]{HLYY17}. The proof of
(ii) is given in \cite[Lemma 3.6]{GLY08}, whose proof does not rely on the reverse doubling condition of $\mu$
and the metric $d$. We obtain (iii) directly by (i), (ii) and a standard duality argument. This finishes the
proof of Lemma \ref{lem:Sklim}.
\end{proof}

Then we have the following proposition.

\begin{proposition}\label{prop:HsubL}
Let $p\in[1,\fz]$, $\bz,\ \gz\in(0,\eta)$ and $\{P_k\}_{k\in\zz}$ be a {\rm $1$-$\exp$-ATI}. If
$f\in(\go{\bz,\gz})'$ belongs to $H^{+,p}(X)$, then there exists $\wz{f}\in L^p(X)$ such that, for any
$\vz\in\go{\bz,\gz}$,
\begin{align}\label{eq-add1}
\langle f,\vz\rangle=\int_X\wz{f}(x)\vz(x)\,d\mu(x)
\end{align}
and $\|\wz{f}\|_{L^p(X)}\le\|\CM^+(f)\|_{L^p(X)}$; moreover, if $p\in[1,\fz)$, then, for almost every $x\in X$,
$|\wz f(x)|\le\CM^+(f)(x)$.
\end{proposition}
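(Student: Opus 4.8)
The plan is to extract the "function" $\wz f$ as a weak-$*$ limit of the net $\{P_kf\}_{k\in\zz}$ in $L^p(X)$, using the hypothesis that $\CM^+(f)=\sup_k|P_kf|$ is in $L^p(X)$. First I would fix the $1$-$\exp$-ATI $\{P_k\}_{k\in\zz}$ and note that, for each $k\in\zz$, $P_kf$ is (pointwise) a well-defined locally integrable function on $X$: indeed, for $f\in(\go{\bz,\gz})'$ and $x\in X$, the kernel $P_k(x,\cdot)$ is, up to a harmless normalization constant, a test function in $\CG(x,\dz^k,\bz,\gz)$, so $P_kf(x):=\langle f,P_k(x,\cdot)\rangle$ makes sense, and $|P_kf(x)|\le\CM^+(f)(x)$ for every $x$ by the very definition of $\CM^+$. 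Hence $\{P_kf\}_{k\in\zz}$ is a bounded family in $L^p(X)$, with $\sup_k\|P_kf\|_{L^p(X)}\le\|\CM^+(f)\|_{L^p(X)}$.

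Next I would split into two cases according to $p$. When $p\in(1,\fz]$, the space $L^p(X)$ is the dual of $L^{p'}(X)$ with $p'\in[1,\fz)$, which is separable (since $(X,d,\mu)$ is a space of homogeneous type, hence $\sigma$-finite and separable), so the Banach--Alaoglu theorem gives a sequence $k_j\to\fz$ and $\wz f\in L^p(X)$ with $P_{k_j}f\to\wz f$ in the weak-$*$ topology of $L^p(X)$ and $\|\wz f\|_{L^p(X)}\le\liminf_j\|P_{k_j}f\|_{L^p(X)}\le\|\CM^+(f)\|_{L^p(X)}$. When $p=1$, $L^1(X)$ is not a dual space, so instead I would view $\{P_{k}f\,d\mu\}_{k}$ as a bounded family of (signed) Radon measures and pass to a vaguely convergent subsequence; the uniform bound $|P_{k}f|\le\CM^+(f)\in L^1(X)$ forces uniform absolute continuity (equi-integrability) of this family, so by a Dunford--Pettis/Vitali argument the limiting measure is absolutely continuous with density $\wz f\in L^1(X)$ and $\|\wz f\|_{L^1(X)}\le\|\CM^+(f)\|_{L^1(X)}$. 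In both cases I then identify the limit: for any $\vz\in\go{\bz,\gz}$ one has $\langle P_{k_j}f,\vz\rangle=\langle f,P_{k_j}\vz\rangle$ by the symmetry $\int_X P_k(x,y)g(x)h(y)\,d\mu=\int\int P_k(y,x)\cdots$ built into Definition \ref{def:1eti}, and Lemma \ref{lem:Sklim}(iii) gives $P_{k_j}\vz\to\vz$ in $\go{\bz,\gz}$, hence $\langle f,P_{k_j}\vz\rangle\to\langle f,\vz\rangle$; on the other hand $P_{k_j}\vz$ is a test function so $\langle P_{k_j}f,\vz\rangle=\int_X P_{k_j}f(x)\vz(x)\,d\mu(x)\to\int_X\wz f(x)\vz(x)\,d\mu(x)$ by the chosen mode of convergence (here $\vz\in L^{p'}(X)$ when $p>1$, and $\vz$ is continuous and decaying when $p=1$). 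This yields \eqref{eq-add1}.

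For the last, pointwise assertion when $p\in[1,\fz)$, I would argue by a Lebesgue-differentiation-type estimate. Fix a Lebesgue point $x$ of $\wz f$ (a.e.\ $x$ qualifies, by the doubling property and the Lebesgue differentiation theorem on spaces of homogeneous type). Apply \eqref{eq-add1} with $\vz=P_k(x,\cdot)$ (again up to normalization, a legitimate test function in $\go{\bz,\gz}$ after subtracting nothing since $P_k$ already has the right properties — more precisely $P_k(x,\cdot)$ has integral $1$, so it lies in $\CG$, and one applies the identity for such $\vz$ using that $H^{+,p}(X)\subset(\go{\bz,\gz})'$ and density/approximation if needed) to get $P_kf(x)=\int_X P_k(x,y)\wz f(y)\,d\mu(y)$, i.e.\ $P_kf=P_k\wz f$ as functions. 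Then, since $|P_k(x,y)|\ls\frac1{V_{\dz^k}(x)+V(x,y)}[\frac{\dz^k}{\dz^k+d(x,y)}]^\gz$ is an approximate identity and $\int_X P_k(x,y)\,d\mu(y)=1$, standard kernel estimates (Lemma \ref{lem-add}(v) and a splitting into $d(x,y)\le\dz^k$ and dyadic annuli) give $P_k\wz f(x)\to\wz f(x)$ as $k\to\fz$ at every Lebesgue point $x$. Therefore $|\wz f(x)|=\lim_{k\to\fz}|P_kf(x)|\le\sup_k|P_kf(x)|=\CM^+(f)(x)$ for a.e.\ $x$.

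The main obstacle I anticipate is the $p=1$ endpoint: weak-$*$ compactness in $L^1$ fails, so one genuinely needs the equi-integrability coming from the uniform majorant $\CM^+(f)\in L^1(X)$ (a Dunford--Pettis argument), and one must be careful that the test functions $\vz\in\go{\bz,\gz}$, while not dense in $L^\infty$, are still a rich enough class of bounded continuous functions to identify the vague limit as $\wz f\,d\mu$ and to pin down $\wz f$ uniquely; uniqueness follows because $\go{\bz,\gz}$ separates points of $L^1(X)$ in the required sense (two $L^1$ functions with the same action on all test functions agree a.e., by the density of $\CG(\eta,\eta)$-type functions established around Lemma \ref{lem:Sklim}). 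A secondary technical point is justifying that $P_k(x,\cdot)$ may legitimately be paired against $f$ and against $\wz f$ with the "Fubini"-type identity $P_kf=P_k\wz f$; this is routine from the kernel bounds but should be stated carefully.
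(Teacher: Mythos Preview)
Your proposal is correct and essentially matches the paper's proof: weak-$*$ compactness via Banach--Alaoglu for $p\in(1,\infty]$, a Dunford--Pettis/equi-integrability argument for $p=1$ (the paper invokes \cite[Theorem III.C.12]{Woj} and Eberlein--\v{S}mulian, which amounts to the same thing), and Lemma \ref{lem:Sklim}(iii) to identify the limit with $f$ on $\go{\bz,\gz}$. For the pointwise bound the paper takes a slight shortcut---after showing $P_{k_j}f=P_{k_j}\wz f$ it uses $L^p$ convergence of $P_{k_j}\wz f$ to $\wz f$ and extracts an a.e.\ convergent subsequence via Riesz---rather than your Lebesgue-point argument, but both work; your parenthetical worry about $P_k(x,\cdot)$ is unnecessary since $P_k(x,\cdot)\in\CG(\eta,\eta)\subset\go{\bz,\gz}$ directly.
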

\begin{proof}
Let $f\in(\go{\bz,\gz})'$ and $\CM^+(f)=\sup_{k\in\zz} |P_k f|\in L^p(X)$, where $\{P_k\}_{k\in\zz}$ is a $1$-$\exp$-ATI as in Definition
\ref{def:1eti}. Then $\{P_kf\}_{k\in\zz}$ is
uniformly bounded in $L^p(X)$. If $p\in(1,\fz]$, then $p'\in[1,\fz)$ and $L^{p'}(X)$ is separable. Thus, by the
Banach-Alaoglu theorem (see, for example, \cite[Theorem 3.17]{Rudin3}), we find a function $\wz{f}\in L^p(X)$
and a sequence $\{k_j\}_{j=1}^\fz\subset\zz$ such that $k_j\to\fz$ and $P_{k_j}f\to\wz{f}$ as $j\to\fz$ in the
weak-$*$ topology of $L^p(X)$. By this and the H\"older inequality, for any $g\in L^{p'}(X)$, we have
\begin{equation*}
\lf|\int_X\wz f(x)g(x)\,d\mu(x)\r|=\lim_{j\to\fz}\lf|\int_XP_{k_j}f(x)g(x)\,d\mu(x)\r|\le
\|\CM^+(f)\|_{L^p(X)}\|g\|_{L^{p'}(X)},
\end{equation*}
which further implies that $\|\wz{f}\|_{L^p(X)}\le\|\CM^+(f)\|_{L^p(X)}$.

If $p=1$, notice that
$
\|\sup_{k\in\zz} |P_kf|\|_{L^1(X)}=\|\CM^+(f)\|_{L^1(X)}<\fz.
$
Then, by the proof of \cite[Theorem III.C.12]{Woj}, $\{P_kf\}_{k\in\zz}$ is relatively compact in $L^1(X)$.
Therefore, by the Eberlin-\u{S}mulian theorem (see \cite[II.C]{Woj}), we know that $\{P_kf\}_{k\in\zz}$
is weakly sequentially compact, that is, there exist a function $\wz{f}\in L^1(X)$ and a
subsequence $\{P_{k_j}f\}_{j=1}^\fz$ such that $\{P_{k_j}f\}_{j=1}^\fz$ converges to $\wz{f}$
weakly in $L^1(X)$. As the arguments for the case $p\in(1,\infty]$, we still have $\|\wz{f}\|_{L^1(X)}\le\|\CM^+(f)\|_{L^1(X)}$.

Moreover, for any $\vz\in\go{\bz,\gz}$, by the fact $\go{\bz,\gz}\subset L^p(X)$ for any $p\in[1,\fz]$ and
Lemma \ref{lem:Sklim}(iii), we conclude that
\begin{equation}\label{3.2x}
\langle f,\vz\rangle=\lim_{k\to\fz}\langle P_{k_j}f,\vz\rangle=\lim_{j\to\fz}\int_X P_{k_j}f(x)\vz(x)\,d\mu(x)
=\int_X\wz{f}(x)\vz(x)\,d\mu(x).
\end{equation}

Let $p\in[1,\fz)$.
For any $j\in\nn$ and $x\in X$, we have $P_{k_j}(x,\cdot)\in\CG(\eta,\eta)$ (see the proof of
\cite[Proposition 2.10]{HLYY17}), which, together with \eqref{3.2x}, implies that
$$
P_{k_j}f(x)=\langle f,P_{k_j}(x,\cdot)\rangle=\int_X P_{k_j}(x,y)\wz{f}(y)\,d\mu(y)=P_{k_j}\wz f(x).
$$
From this and \cite[Proposition 2.7(iv)]{HMY08}, we deduce that
$\{P_{k_j}f\}_{j\in\nn}$ converges to $\wz f$ in  the sense of $\|\cdot\|_{L^p(X)}$.
Then, by the Riesz theorem, we find a subsequence of
$\{P_{k_j}f\}_{j\in\nn}$, still denoted by $\{P_{k_j}f\}_{j\in\nn}$, such that  $P_{k_j}f(x)\to\wz{f}(x)$
as $k_j\to\fz$
for almost every $x\in X$. Therefore, $|\wz{f}(x)|\le\CM^{+}(f)(x)$ for almost every $x\in X$.
This finishes the proof of Proposition \ref{prop:HsubL}.
\end{proof}
Finally, we show the following main result of this section.
\begin{theorem}\label{thm:H=L}
Let $p\in(1,\fz]$ and $\bz,\ \gz\in (0,\eta)$. Then the following hold true:
\begin{enumerate}
\item if $f\in(\go{\bz,\gz})'$ belongs to $H^{+,p}(X)$, then there exists $\wz{f}\in L^p(X)$ such that \eqref{eq-add1} holds true
and
$\|\wz{f}\|_{L^p(X)}\le\|f\|_{H^{+,p}(X)}$;
\item any $f\in L^p(X)$ induces a distribution on $\go{\bz,\gz}$ as in \eqref{eq-add1}, still denoted by $f$, such that
$f\in H^{*,p}(X)$ and $\|f\|_{H^{*,p}(X)}\le C\|f\|_{L^p(X)}$, where $C$ is a positive constant independent of $f$.
\end{enumerate}
Consequently, for any fixed $\thz\in(0,\fz)$, $H^{+,p}(X)=H^p_\thz(X)=H^{*,p}(X)=L^p(X)$ in the sense of
both representing the same distributions and equivalent norms.
\end{theorem}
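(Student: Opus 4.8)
The plan is to prove the two numbered assertions and then deduce the final chain of equalities. The key point is that Proposition \ref{prop:HsubL} already does most of the work for (i), since it produces $\wz f\in L^p(X)$ satisfying \eqref{eq-add1} with $\|\wz f\|_{L^p(X)}\le\|\CM^+(f)\|_{L^p(X)}=\|f\|_{H^{+,p}(X)}$; so (i) is essentially a restatement of that proposition in the range $p\in(1,\fz]$, and nothing new needs to be done. The substance is in (ii) and in closing the loop.

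For (ii), I would start with $f\in L^p(X)$, $p\in(1,\fz]$. First, $f$ induces a distribution on $\go{\bz,\gz}$ via \eqref{eq-add1}: this is legitimate because $\go{\bz,\gz}\subset L^{p'}(X)$ (a consequence of the size condition in Definition \ref{def:test} together with Lemma \ref{lem-add}(ii), since $p'\in[1,\fz)$), so the pairing $\langle f,\vz\rangle:=\int_X f\vz\,d\mu$ is well defined and continuous on $\go{\bz,\gz}$. Next I must estimate $f^*$ pointwise. Fix $x\in X$ and $\vz\in\go{\bz,\gz}$ with $\|\vz\|_{\CG(x,r_0,\bz,\gz)}\le 1$ for some $r_0\in(0,\fz)$. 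Then
\begin{align*}
|\langle f,\vz\rangle|\le\int_X|f(y)||\vz(y)|\,d\mu(y)
\le\int_X|f(y)|\,\frac{1}{V_{r_0}(x)+V(x,y)}\lf[\frac{r_0}{r_0+d(x,y)}\r]^\gz\,d\mu(y)\ls\CM(f)(x),
\end{align*}
where the last step is exactly Lemma \ref{lem-add}(v). Taking the supremum over all such $\vz$ gives $f^*(x)\ls\CM(f)(x)$ for every $x\in X$, hence $\|f\|_{H^{*,p}(X)}=\|f^*\|_{L^p(X)}\ls\|\CM(f)\|_{L^p(X)}\ls\|f\|_{L^p(X)}$, the last inequality by the $L^p$-boundedness of $\CM$ for $p\in(1,\fz]$ recorded after the definition of $L^p(X)$. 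This proves (ii), with $f\in H^{*,p}(X)$.

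For the final conclusion I would argue as follows. By \eqref{eq-xxx} we have the pointwise bounds $\CM^+ f\le\CM_\thz(f)\le Cf^*$, hence the continuous inclusions (at the level of norms) $\|f\|_{H^{+,p}(X)}\le\|f\|_{H^p_\thz(X)}\le C\|f\|_{H^{*,p}(X)}$ for any $f$ lying in all three spaces. Combining with (ii), $L^p(X)\hookrightarrow H^{*,p}(X)\hookrightarrow H^p_\thz(X)\hookrightarrow H^{+,p}(X)$ with control of norms; and by (i), $H^{+,p}(X)\hookrightarrow L^p(X)$ with $\|\wz f\|_{L^p(X)}\le\|f\|_{H^{+,p}(X)}$. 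To see these are genuinely the \emph{same} objects and not merely norm-comparable, I note that for $f\in L^p(X)$, the distribution induced by $f$ in (ii), when fed back through Proposition \ref{prop:HsubL} (valid since $p\in[1,\fz]$), yields some $\wz f\in L^p(X)$ with $\langle f,\vz\rangle=\int_X\wz f\vz\,d\mu$ for all $\vz\in\go{\bz,\gz}$; since $\go{\bz,\gz}$ is dense in $L^{p'}(X)$ when $p'\in[1,\fz)$ (and, when $p=\fz$, one uses instead that $f$ and $\wz f$ agree as distributions and both lie in $L^\fz(X)\subset L^1_{\loc}(X)$, testing against $\go{\bz,\gz}$ which separates $L^1_{\loc}$), we get $\wz f=f$ $\mu$-a.e. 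Thus the maps $L^p(X)\to H^{*,p}(X)$ and $H^{+,p}(X)\to L^p(X)$ are mutually inverse, and all four (quasi-)norms are equivalent.

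The main obstacle I anticipate is the density/separation bookkeeping needed to identify the distributions with genuine $L^p$ functions, especially the endpoint $p=\fz$, where $L^1(X)=(L^\fz(X))'$ fails and one must argue that $\go{\bz,\gz}$ separates points of $L^\fz(X)$ (equivalently of $L^1_{\loc}(X)$); this follows from the fact that $\go{\bz,\gz}$ contains enough functions — e.g. normalized test functions supported near arbitrary balls — but it should be spelled out. Everything else is a routine assembly of \eqref{eq-xxx}, Lemma \ref{lem-add}(v), boundedness of $\CM$, and Proposition \ref{prop:HsubL}.
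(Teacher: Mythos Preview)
Your proposal is correct and follows essentially the same approach as the paper: (i) is obtained directly from Proposition \ref{prop:HsubL}, (ii) is proved via the pointwise bound $f^*(x)\ls\CM(f)(x)$ together with the $L^p$-boundedness of $\CM$, and the chain of equalities is closed using \eqref{eq-xxx}. The only differences are cosmetic: the paper cites \cite[Proposition 3.9]{GLY08} for the inequality $f^*\ls\CM(f)$ whereas you spell out the one-line computation via Lemma \ref{lem-add}(v), and the paper simply writes ``By (i), (ii) and \eqref{eq-xxx}'' for the final identification while you add the density/separation argument for $\wz f=f$; your extra care there (particularly at $p=\fz$) is more than the paper itself provides.
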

\begin{proof}
We obtain (i) directly by Proposition \ref{prop:HsubL}. Now we prove (ii). Suppose that $p\in(1,\fz]$
and $f\in L^p(X)$. Clearly, $f$ induces a distribution on $\go{\bz,\gz}$  as in \eqref{eq-add1}.
By \cite[Proposition 3.9]{GLY08}, we find that, for almost every $x\in X$,
$f^*(x)\ls\CM(f)(x)$, with the implicit positive constant independent of $f$ and $x$. Therefore, from the
boundedness of $\CM$ on $L^p(X)$, we deduce that
$\|f^*\|_{L^p(X)}\ls\|\CM(f)\|_{L^p(X)}\ls\|f\|_{L^p(X)}$. This finishes the proof of (ii).

By (i), (ii) and \eqref{eq-xxx}, we obtain
$H^{+,p}(X)=H^p_\thz(X)=H^{*,p}(X)=L^p(X)$, which completes the proof of Theorem \ref{thm:H=L}.
\end{proof}

%==============================================================

\subsection{Equivalence of Hardy spaces defined via various maximal functions}\label{p<1}

The main aim of this section concerns the equivalence of Hardy spaces defined via various maximal functions
for the case $p\in({\omega}/{(\omega+\eta)},1]$. Indeed, our goal is to show the following equivalence theorem.

\begin{theorem}\label{coro:H=}
Assume that $p\in({\omega}/{(\omega+\eta)},1]$ and
$\thz\in(0,\fz)$. Then, for any $f\in (\go{\bz,\gz})'$ with $\bz,\ \gz\in(\omega(1/p-1),\eta)$,
$$
\|f\|_{H^{+,p}(X)}\sim \|f\|_{H^p_\thz(X)}\sim \|f\|_{H^{*,p}(X)},
$$
with equivalent positive constants independent of $f$. In other words,
$H^{+,p}(X)=H^p_\thz(X)=H^{*,p}(X)$ with equivalent (quasi-)norms.
\end{theorem}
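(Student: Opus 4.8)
The plan is to prove the chain of equivalences
$$
\|f\|_{H^{+,p}(X)}\lesssim \|f\|_{H^p_\thz(X)}\lesssim \|f\|_{H^{*,p}(X)}\lesssim \|f\|_{H^{+,p}(X)}
$$
for all $f\in(\go{\bz,\gz})'$. Two of these inequalities are free: by \eqref{eq-xxx} we already have $\CM^+f(x)\le\CM_\thz(f)(x)\le Cf^*(x)$ pointwise, hence $\|f\|_{H^{+,p}(X)}\lesssim\|f\|_{H^p_\thz(X)}\lesssim\|f\|_{H^{*,p}(X)}$. So the whole content is the remaining inequality $\|f\|_{H^{*,p}(X)}\lesssim\|f\|_{H^{+,p}(X)}$, i.e.\ controlling the grand maximal function by the radial maximal function. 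First I would reduce to the case $\|f\|_{H^{+,p}(X)}<\fz$, so that by Proposition \ref{prop:HsubL} we may assume $f$ is (identified with) an $L^1_{\mathrm{loc}}$ function with $|f|\le\CM^+f$ a.e.; this puts us in a setting where $f$ can be reproduced by a Calder\'on formula.

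The heart of the argument is a good-lambda / pointwise domination of $f^*$ by the Hardy--Littlewood maximal function of $\CM^+f$ together with a dyadic-truncation tail term. Concretely, I would fix $x\in X$ and a test function $\vz\in\go{\bz,\gz}$ with $\|\vz\|_{\CG(x,r_0,\bz,\gz)}\le 1$ for some $r_0\in(0,\fz)$; choosing $k_0\in\zz$ with $\dz^{k_0}\sim r_0$, I would insert the $1$-$\exp$-ATI partition $I=\sum_k Q_k = \sum_k (P_k-P_{k-1})$ (or better, the continuous reproducing formula of Theorem \ref{thm:hcrf}) and write $\langle f,\vz\rangle = \sum_k \langle P_k f, (\text{dual kernel applied to }\vz)\rangle$. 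The estimate then splits according to whether $k\ge k_0$ or $k<k_0$. For $k\ge k_0$ (scales finer than $r_0$) one uses the smoothness/cancellation of $\vz$ against the kernel of $\wz Q_k$ to gain a factor $\dz^{(k-k_0)\bz}$; for $k<k_0$ (coarser scales) one uses the cancellation of $Q_k$ against $\vz$ (or of $\wz Q_k$) to gain a factor $\dz^{(k_0-k)\eta}$ (up to $\dz^{(k_0-k)\gz}$), both combined with Lemma \ref{lem-add}(v) to convert the resulting kernel-averages of $P_kf$ into $\CM(\CM^+f)(x)$. Summing the geometric series in $|k-k_0|$ and using $\gz,\bz>\omega(1/p-1)$ together with $p>\om$ yields
$$
f^*(x)\lesssim \CM\lf(\lf[\CM^+f\r]^{p}\r)(x)^{1/p}\qquad\text{or}\qquad f^*(x)\lesssim \CM\lf(\CM^+f\r)(x),
$$
and then the $L^{p}(X)$ estimate follows: if one lands on the first form, apply boundedness of $\CM$ on $L^{1}(X)$ noting $[\CM^+f]^p\in L^1(X)$; if on the second and $p>1$ is not available, one instead proves directly $\|f^*\|_{L^p}\lesssim\|\CM^+f\|_{L^p}$ by the standard subtlety below.

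The standard subtlety — and the step I expect to be the main obstacle — is that $\CM$ is \emph{not} bounded on $L^p(X)$ for $p\le 1$, so one cannot naively apply it; the pointwise bound must be sharpened to $f^*(x)\lesssim \big[\CM([\CM^+f]^{p_0})(x)\big]^{1/p_0}$ for some $p_0<p$ (equivalently, the geometric decay rates $\bz,\gz$ must beat $\omega(1/p_0-1)$ for a suitable $p_0\in(\om,p)$, which is exactly why the hypothesis $\bz,\gz\in(\omega(1/p-1),\eta)$ and $p>\om$ are imposed), after which $\CM$ acts on $L^{p/p_0}(X)$ with $p/p_0>1$. Actually carrying this out requires care in two places: (a) handling the ``coarse scale'' tail sum, where one has only the decay $[\dz^{k}/(\dz^k+d(x_1,x))]^\gz$ of the test function and must use Lemma \ref{lem-add}(iv) to integrate against $P_kf$, producing a term like $\sum_{k<k_0}\dz^{(k_0-k)(\gz-\omega(1/p_0-1))}[\CM([\CM^+f]^{p_0})(x)]^{1/p_0}$ that converges precisely when $\gz>\omega(1/p_0-1)$; and (b) justifying the interchange of $\langle f,\cdot\rangle$ with the reproducing-formula series, which is legitimate by Lemma \ref{lem:Sklim} and the convergence statement in Theorem \ref{thm:hcrf} since $\bz,\gz\in(0,\eta)$ and $\vz\in\go{\bz,\gz}$. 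This pointwise-domination scheme is the same one used in \cite{YZ10}, but here it must be executed without the reverse doubling condition and for a quasi-metric $d$, so all the kernel estimates are taken from the $\exp$-ATI bounds \eqref{eq:etisize}--\eqref{eq:etidreg} and Theorem \ref{thm:hcrf} rather than from smoother approximations of the identity. Once $\|f^*\|_{L^p(X)}\lesssim\|\CM^+f\|_{L^p(X)}=\|f\|_{H^{+,p}(X)}$ is established, combining with \eqref{eq-xxx} closes the circle of inequalities and proves the claimed identifications $H^{+,p}(X)=H^p_\thz(X)=H^{*,p}(X)$ with equivalent quasi-norms.
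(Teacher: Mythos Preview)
Your overall strategy is right: the only nontrivial inequality is $\|f^*\|_{L^p(X)}\lesssim\|\CM^+(f)\|_{L^p(X)}$, and the correct mechanism is a pointwise bound of the form $f^*(x)\lesssim \CM^+(f)(x)+\big\{\CM\big([\CM^+(f)]^r\big)(x)\big\}^{1/r}$ for some $r\in(\omega/(\omega+\bz'),p)$, followed by the $L^{p/r}$-boundedness of $\CM$. You have also correctly identified why the hypotheses $\bz,\gz>\omega(1/p-1)$ and $p>\om$ are exactly what is needed for the geometric sums to converge. However, there are two concrete gaps.

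First, your appeal to Proposition~\ref{prop:HsubL} is illegitimate when $p<1$: that proposition is stated and proved only for $p\in[1,\infty]$, and its conclusion (that $f$ is represented by an $L^p$ function) is generally false for $p<1$. Fortunately this step is unnecessary --- the paper's proof works entirely at the distribution level, never identifying $f$ with a function.

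Second, and more seriously, you propose to expand $\langle f,\vz\rangle$ by inserting the homogeneous reproducing formula $f=\sum_k \wz Q_k Q_k f$ of Theorem~\ref{thm:hcrf}. But that formula converges in $(\GOO{\bz,\gz})'$, i.e.\ only when paired against test functions with \emph{mean zero}, whereas the $\vz\in\go{\bz,\gz}$ appearing in the definition of $f^*$ has no cancellation in general. (Equivalently, the telescoping $\sum_k Q_k f = \lim_{N\to\infty}(P_N f - P_{-N}f)$ need not recover $f$ in $(\go{\bz,\gz})'$: consider $f\equiv 1$.) The paper fixes this with Lemma~\ref{lem:g0}: given $\phi\in\go{\bz,\gz}$ with $\|\phi\|_{\CG(x,\dz^l,\bz,\gz)}\lesssim 1$, write $\phi=\vz+\sigma P_l(x,\cdot)$ where $\sigma=\int\phi\,d\mu$, so that $\vz\in\GOO{\bz,\gz}$. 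The second piece gives $|\sigma\,\langle f,P_l(x,\cdot)\rangle|=|\sigma|\,|P_lf(x)|\lesssim \CM^+(f)(x)$ directly, and the first piece can now legitimately be fed into the reproducing formula. This mean-subtraction step is the one genuinely missing idea in your outline, and it is also the source of the extra $\CM^+(f)(x)$ term in the pointwise bound.

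A smaller point: the paper uses the \emph{discrete} reproducing formula (Theorem~\ref{thm:hdrf}) rather than the continuous one, because the freedom to choose the sample points $y_\az^{k,m}\in Q_\az^{k,m}$ nearly minimizing $|Q_kf|$ gives $|Q_kf(y_\az^{k,m})|\le 2\inf_{Q_\az^{k,m}}\CM^+(f)+\ez$, which plugs directly into Lemma~\ref{lem:max} to produce the $\{\CM([\CM^+(f)]^r)\}^{1/r}$ bound. Your continuous-formula route can be made to work, but it requires reproving an analogue of Lemma~\ref{lem:max} for integrals; the discrete formula with Lemma~\ref{lem:max} is cleaner.
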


To prove Theorem \ref{coro:H=}, we borrow some ideas from \cite{YZ10}. To this end, we need the following
two technical lemmas.

\begin{lemma}\label{lem:g0}
Assume that  $\phi\in\go{\bz,\gz}$ with $\bz,\ \gz\in(0,\eta)$. Let $\sigma:=\int_X\phi(x)\,d\mu(x)$. If
$\psi\in\CG(\eta,\eta)$ with $\int_X\psi(x)\,d\mu(x)=1$, then $\phi-\sigma\psi\in\GOO{\bz,\gz}$.
\end{lemma}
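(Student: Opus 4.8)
The plan is to show that $g:=\phi-\sigma\psi$ lies in $\GOO{\bz,\gz}$, which by definition means two things: first, that $g\in\mathring{\CG}(\bz,\gz)$, i.e.\ $g$ satisfies the size and regularity conditions of $\CG(x_0,1,\bz,\gz)$ and has vanishing integral; and second, that $g$ is actually in the \emph{completion} $\mathring{\CG}^\eta_0(\bz,\gz)$, i.e.\ $g$ can be approximated in the $\CG(\bz,\gz)$-norm by elements of $\mathring{\CG}(\eta,\eta)$. The vanishing-integral part is immediate: $\int_X g\,d\mu=\int_X\phi\,d\mu-\sigma\int_X\psi\,d\mu=\sigma-\sigma\cdot 1=0$. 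The membership $g\in\CG(\bz,\gz)$ is also easy, since $\phi\in\go{\bz,\gz}\subset\CG(\bz,\gz)$ and $\psi\in\CG(\eta,\eta)\subset\CG(\bz,\gz)$ (the latter inclusion holds because $\bz,\gz<\eta$ and the $\CG$-spaces are nested in the smoothness/decay indices), so $g$ is a linear combination of elements of $\CG(\bz,\gz)$ with $\|g\|_{\CG(\bz,\gz)}\le\|\phi\|_{\CG(\bz,\gz)}+|\sigma|\,\|\psi\|_{\CG(\bz,\gz)}<\fz$. The real content is therefore the completion statement.

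For the completion part, I would first note that $\phi\in\go{\bz,\gz}$ already equals the $\CG(\bz,\gz)$-limit of some sequence $\{\phi_j\}_{j=1}^\fz\subset\CG(\eta,\eta)$ (this is the definition of $\CG^\eta_0(\bz,\gz)$, the space $\go{\bz,\gz}$ being shorthand for it), but these $\phi_j$ need not have mean zero. The standard fix is to correct each $\phi_j$ by a multiple of $\psi$: set $\sigma_j:=\int_X\phi_j\,d\mu$ and $g_j:=\phi_j-\sigma_j\psi$. Then each $g_j\in\mathring{\CG}(\eta,\eta)$, since $g_j\in\CG(\eta,\eta)$ (both $\phi_j$ and $\psi$ are) and $\int_X g_j\,d\mu=\sigma_j-\sigma_j=0$. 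It remains to check $\|g_j-g\|_{\CG(\bz,\gz)}\to0$. We have
$$
g_j-g=(\phi_j-\phi)-(\sigma_j-\sigma)\psi,
$$
so $\|g_j-g\|_{\CG(\bz,\gz)}\le\|\phi_j-\phi\|_{\CG(\bz,\gz)}+|\sigma_j-\sigma|\,\|\psi\|_{\CG(\bz,\gz)}$. The first term tends to $0$ by hypothesis. For the second, one uses that the functional $h\mapsto\int_X h\,d\mu$ is continuous on $\CG(\bz,\gz)$: indeed, by Lemma \ref{lem-add}(ii) applied with $x_1=x_0$, $r=1$, any $h\in\CG(\bz,\gz)$ satisfies $\int_X|h|\,d\mu\le C\|h\|_{\CG(\bz,\gz)}$, whence $|\sigma_j-\sigma|=|\int_X(\phi_j-\phi)\,d\mu|\le C\|\phi_j-\phi\|_{\CG(\bz,\gz)}\to0$. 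Combining, $g_j\to g$ in $\CG(\bz,\gz)$, so $g\in\mathring{\CG}^\eta_0(\bz,\gz)=\GOO{\bz,\gz}$.

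The only mild obstacle is bookkeeping with the three nested scales of test-function spaces ($\CG(\eta,\eta)$, $\CG(\bz,\gz)$, and their mean-zero and completed variants) and making sure $\psi\in\CG(\eta,\eta)$ really does embed continuously into $\CG(\bz,\gz)$ with a uniform norm bound; this follows by comparing the defining inequalities in Definition \ref{def:test} and noting that smaller exponents give weaker conditions, so $\|\psi\|_{\CG(\bz,\gz)}\lesssim\|\psi\|_{\CG(\eta,\eta)}$. Everything else is routine linearity and the $L^1$-continuity of the integral functional from Lemma \ref{lem-add}(ii). I expect the proof to be short.
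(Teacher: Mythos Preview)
Your proposal is correct and follows essentially the same approach as the paper's proof: take an approximating sequence $\{\phi_j\}\subset\CG(\eta,\eta)$ for $\phi$, correct each term to $\phi_j-\sigma_j\psi\in\mathring{\CG}(\eta,\eta)$, and use Lemma~\ref{lem-add}(ii) to show $\sigma_j\to\sigma$ so that the corrected sequence converges to $\phi-\sigma\psi$ in $\CG(\bz,\gz)$. The paper's version is slightly terser (it omits the preliminary remarks about $g\in\CG(\bz,\gz)$ and $\int g=0$, since these are not needed once one works directly with the completion definition), but the argument is the same.
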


\begin{proof}
Since $\phi\in\go{\bz,\gz}$ with $\bz,\ \gz\in(0,\eta)$, it follows that there exists
$\{\phi_n\}_{n=1}^\fz\subset\CG(\eta,\eta)$ such that $\lim_{n\to\fz}\|\phi-\phi_n\|_{\CG(\bz,\gz)}=0$.
Letting $\sigma_n:=\int_X\phi_n(x)\,d\mu(x)$ for any $n\in\nn$, by Definition \ref{def:test} and Lemma
\ref{lem-add}(ii), we conclude that $\lim_{n\to\fz}|\sigma-\sigma_n|=0$, where $\sigma:=\int_X\phi(x)\,d\mu(x)$.
Let $\vz_n:=\phi_n-\sigma_n\psi$ for any $n\in\nn$. Then $\vz_n\in\GO{\eta,\eta}$ and
$$
\|\phi-\sigma\psi-\vz_n\|_{\CG(\bz,\gz)}\le\|\phi-\phi_n\|_{\CG(\bz,\gz)}+|\sigma-\sigma_n|
\|\psi\|_{\CG(\bz,\gz)}\to 0\quad\textup{as}\; n\to\fz.
$$
Thus, $\phi-\sigma\psi\in\GOO{\bz,\gz}$. This finishes the proof of Lemma \ref{lem:g0}.
\end{proof}

The next lemma comes from \cite[Lemma 5.3]{HMY08}, whose proof remains true for a quasi-metric $d$ and also
does not rely on the reverse doubling condition of $\mu$.

\begin{lemma}\label{lem:max}
Let all the notation be  as in Theorem \ref{thm:hdrf}. Let $k,\ k'\in\zz$,
$\{a_\az^{k,m}\}_{k\in\zz,\ \az\in\CA_k,\ m\in\{1,\ldots,N(k,\az)\}}\subset\cc$,
$\gz\in(0,\eta)$ and $r\in({\omega}/{(\omega+\gz)},1]$. Then there exists a positive constant $C$,
independent of $k$, $k'$, $y_\az^{k,m}\in Q_\az^{k,m}$ and $a_\az^{k,m}$
with $k\in\zz$, $\az\in\CA_k$ and $m\in\{1,\ldots,N(k,\az)\}$, such that, for any $x\in X$,
\begin{align*}
&\sum_{\az\in\CA_k}\sum_{m=1}^{N(k,\az)}\mu\lf(Q_\az^{k,m}\r)\frac{1}{V_{\dz^{k\wedge k'}}(x)+V(x,y_\az^{k,m})}
\lf[\frac{\dz^{k\wedge k'}}{\dz^{k\wedge k'}+d(x,y_\az^{k,m})}\r]^\gz\lf|a_\az^{k,m}\r|\\
&\quad\le C\dz^{[k-(k\wedge k')]\omega(1-\frac 1r)}\lf[\CM\lf(\sum_{\az\in\CA_k}\sum_{m=1}^{N(k,\az)}
\lf|a_\az^{k,m}\r|^r\chi_{Q_\az^{k,m}}\r)(x)\r]^{\frac 1r}.
\end{align*}
\end{lemma}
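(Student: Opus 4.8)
The plan is to dyadically decompose $X$ according to the distance from $x$ to the point $y_\az^{k,m}$ and then, on each annulus, to compare the weighted sum against an $L^r$-average (i.e., a Hardy--Littlewood maximal function of the $r$-th powers). Fix $x\in X$ and set, for each $j\in\zz_+$,
$$
A_j:=\lf\{(\az,m):\ \az\in\CA_k,\ m\in\{1,\ldots,N(k,\az)\},\ d(x,y_\az^{k,m})<\dz^{k\wedge k'-j}\ \textup{if } j=0,\ \textup{or } \dz^{k\wedge k'-j}\le d(x,y_\az^{k,m})<\dz^{k\wedge k'-j-1}\ \textup{if } j\ge1\r\}.
$$
On $A_j$ one has $\frac{\dz^{k\wedge k'}}{\dz^{k\wedge k'}+d(x,y_\az^{k,m})}\sim\dz^{j}$ (for $j\ge1$; it is $\sim1$ for $j=0$), while $V_{\dz^{k\wedge k'}}(x)+V(x,y_\az^{k,m})\sim\mu(B(x,\dz^{k\wedge k'-j}))$ by Lemma \ref{lem-add}(i). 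Hence the quantity to be estimated is controlled by
$$
\sum_{j=0}^\fz\dz^{j\gz}\frac{1}{\mu(B(x,\dz^{k\wedge k'-j}))}\sum_{(\az,m)\in A_j}\mu\lf(Q_\az^{k,m}\r)\lf|a_\az^{k,m}\r|.
$$

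Next I would apply H\"older's inequality with exponents $1/r$ and $1/(1-r)$ to the inner sum (using $r\le1$ writes the sum of products as a product of an $\ell^r$-type factor and a measure factor): by Lemma \ref{cube}(v) the cubes $Q_\az^{k,m}$ have bounded overlap and sidelength $\sim\dz^{k+j_0}$, and for $(\az,m)\in A_j$ they are all contained in a fixed dilate $B(x,C\dz^{k\wedge k'-j})$ once $j$ is large enough relative to $k-(k\wedge k')$; combined with the doubling condition \eqref{eq:doub} this yields
$$
\sum_{(\az,m)\in A_j}\mu\lf(Q_\az^{k,m}\r)\lf|a_\az^{k,m}\r|\ls\lf[\mu\lf(B(x,C\dz^{k\wedge k'-j})\r)\r]^{1-\frac1r}\lf[\sum_{(\az,m)\in A_j}\mu\lf(Q_\az^{k,m}\r)\lf|a_\az^{k,m}\r|^r\r]^{\frac1r}.
$$
Now $\sum_{(\az,m)\in A_j}\mu(Q_\az^{k,m})|a_\az^{k,m}|^r\le\int_{B(x,C\dz^{k\wedge k'-j})}\sum_{\az,m}|a_\az^{k,m}|^r\chi_{Q_\az^{k,m}}\,d\mu$, and dividing by $\mu(B(x,C\dz^{k\wedge k'-j}))$ produces exactly an average that is bounded by $\CM\big(\sum_{\az,m}|a_\az^{k,m}|^r\chi_{Q_\az^{k,m}}\big)(x)$. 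Inserting this and using the doubling bound $\mu(B(x,C\dz^{k\wedge k'-j}))\ls\dz^{-j\omega}\mu(B(x,\dz^{k\wedge k'}))$ (when $k\ge k\wedge k'$, there is an extra factor $\dz^{-[k-(k\wedge k')]\omega}$ coming from comparing radius $\dz^{k\wedge k'-j}$ with the cube scale $\dz^k$), the $j$-sum becomes
$$
\ls\dz^{[k-(k\wedge k')]\omega(1-\frac1r)}\lf[\CM\lf(\sum_{\az,m}\lf|a_\az^{k,m}\r|^r\chi_{Q_\az^{k,m}}\r)(x)\r]^{\frac1r}\sum_{j=0}^\fz\dz^{j\gz}\dz^{j\omega(\frac1r-1)}.
$$

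The last series converges precisely because $r>\omega/(\omega+\gz)$ forces $\gz-\omega(\frac1r-1)>0$, giving the claimed bound. The main obstacle I anticipate is bookkeeping the geometric factors carefully: one must track how the cube scale $\dz^{k}$, the reference scale $\dz^{k\wedge k'}$, and the annulus scale $\dz^{k\wedge k'-j}$ interact, so that the overlap of the cubes $\{Q_\az^{k,m}\}$ and their containment in the dilated balls are used with the right constants, and so that the power of $\dz^{[k-(k\wedge k')]}$ comes out exactly as $\omega(1-\frac1r)$ rather than something weaker; handling the boundary cases $j=0$ and small $j$ (where a cube may not be fully inside the annulus it is indexed by, because its diameter is comparable to the distance) requires enlarging the ball slightly and invoking doubling once more. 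Since this is essentially the argument of \cite[Lemma 5.3]{HMY08} and, as remarked there, it uses only \eqref{eq:doub} and the quasi-triangle inequality, none of these steps need the reverse doubling condition or a genuine metric, which is what we want here.
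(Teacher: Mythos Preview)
Your overall strategy --- dyadic annuli around $x$, then passage to the maximal function of $\sum|a_\az^{k,m}|^r\chi_{Q_\az^{k,m}}$, then summing a geometric series that converges exactly when $r>\omega/(\omega+\gz)$ --- is precisely the argument of \cite[Lemma~5.3]{HMY08}, which is all the paper invokes here (it does not reprove the lemma, only remarks that the proof in \cite{HMY08} needs neither reverse doubling nor a genuine metric).

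There is, however, a real slip in the step you label ``H\"older with exponents $1/r$ and $1/(1-r)$''. The displayed estimate
\[
\sum_{(\az,m)\in A_j}\mu\lf(Q_\az^{k,m}\r)\lf|a_\az^{k,m}\r|\ls\lf[\mu\lf(B\lf(x,C\dz^{k\wedge k'-j}\r)\r)\r]^{1-\frac1r}\lf[\sum_{(\az,m)\in A_j}\mu\lf(Q_\az^{k,m}\r)\lf|a_\az^{k,m}\r|^r\r]^{\frac1r}
\]
is false as written: take a single cube $Q$ with $\mu(Q)\ll\mu(B)$ and $|a|=1$; since $1-1/r<0$, the right side is then much smaller than the left. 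H\"older with those exponents would produce $|a_\az^{k,m}|^{1/r}$, not $|a_\az^{k,m}|^r$. The correct mechanism is the elementary embedding $\sum c_i\le(\sum c_i^r)^{1/r}$ (valid for $r\le1$) applied with $c_i=\mu(Q_\az^{k,m})|a_\az^{k,m}|$, followed by the doubling \emph{lower} bound $\mu(Q_\az^{k,m})\gtrsim\dz^{[k-(k\wedge k')+j]\omega}\mu(B(x,C\dz^{k\wedge k'-j}))$, which turns $\mu(Q_\az^{k,m})^r$ into $\mu(Q_\az^{k,m})$ at the cost of the factor $\dz^{[k-(k\wedge k')+j]\omega(1-1/r)}$. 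That missing factor is exactly what later splits into the prefactor $\dz^{[k-(k\wedge k')]\omega(1-1/r)}$ and the summand $\dz^{j\omega(1/r-1)}$ in your final geometric series; you land on the correct final expression, but your intermediate inequalities do not actually lead there, and the second invocation of doubling on $\mu(B(x,C\dz^{k\wedge k'-j}))$ plays no role once the $\mu(B)$ factors cancel. Rewriting that one step as above fixes the argument.
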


Now we show Theorem \ref{coro:H=}  by using the above two technical lemmas. In what follows, the \emph{symbol
$\ez\to 0^+$} means that $\ez\in(0,\fz)$ and $\ez\to 0$.

\begin{proof}[Proof of Theorem \ref{coro:H=}]
Let $f\in (\go{\bz,\gz})'$ with $\bz,\ \gz\in(\omega(1/p-1),\eta)$. Fix $\thz\in(0,\fz)$. By \eqref{eq-xxx},
we have
$$
\|\CM^+(f)\|_{L^p(X)}\le\|\CM_\thz(f)\|_{L^p(X)}\ls\|f^*\|_{L^p(X)}.
$$
Thus, the proof of Theorem \ref{coro:H=} is reduced to showing
\begin{align}\label{aim}
\|f^*\|_{L^p(X)}\ls\|\CM^+(f)\|_{L^p(X)}.
\end{align}
To obtain \eqref{aim}, it suffices to prove that, for some $r\in(0,p)$ and any $x\in X$,
\begin{equation}\label{eq:H=}
f^*(x)\ls\CM^+(f)(x)+\lf\{\CM\lf(\lf[\CM^+(f)\r]^r\r)(x)\r\}^{\frac 1r}.
\end{equation}
If \eqref{eq:H=} holds true, then, by the boundedness of $\CM$ on $L^{p/r}(X)$, we conclude that
\begin{equation*}
\|f^*\|_{L^p(X)}\ls\|\CM^+(f)\|_{L^p(X)}+\lf\|\CM\lf(\lf[\CM^+(f)\r]^r\r)\r\|_{L^{p/r}(X)}^{\frac 1r}
\sim\|\CM^+(f)\|_{L^p(X)},
\end{equation*}
which proves \eqref{aim}.

We now fix $x\in X$ and show \eqref{eq:H=}. Let $\{P_k\}_{k\in\zz}$ be a $1$-$\exp$-ATI. For any $k\in\zz$,
define $Q_k:=P_k-P_{k-1}$. Then $\{Q_k\}_{k\in\zz}$ is an $\exp$-ATI. Assume for the moment that, for
any $\vz\in\GOO{\bz,\gz}$ with $\|\vz\|_{\CG(x,\dz^l,\bz,\gz)}\le 1$ for some $l\in\zz$,
\begin{equation}\label{eq:H=2}
|\langle f,\vz\rangle|\ls\lf\{\CM\lf(\lf[\CM^+(f)\r]^r\r)(x)\r\}^{\frac 1r}.
\end{equation}
We now use \eqref{eq:H=2} to show \eqref{eq:H=}. For any $\phi\in\go{\bz,\gz}$ with
$\|\phi\|_{\CG(x,r_0,\bz,\gz)}\le 1$ for some $r_0\in(0,\fz)$, choose $l\in\zz$ such that
$\dz^{l+1}\le r_0<\dz^l$. Clearly, $\|\phi\|_{\CG(x,\dz^l,\bz,\gz)}\ls 1$. Let $\sigma:=\int_X\phi(y)\,d\mu(y)$
and $\vz:=\phi-\sigma P_l(x,\cdot)$. Notice that $\int_X P_l(x,y)\,d\mu(y)=1$ and $P_l(x,\cdot)\in\CG(\eta,\eta)$
(see the proof of \cite[Proposition 2.10]{HLYY17}). From Lemma \ref{lem:g0}, it follows  that
$\vz\in\GOO{\bz,\gz}$. Moreover, $\|\vz\|_{\CG(x,\dz^l,\bz,\gz)}\ls\|\phi\|_{\CG(x,\dz^l,\bz,\gz)}
+|\sigma|\|P_l(x,\cdot)\|_{\CG(x,\dz^l,\bz,\gz)}\ls 1$. By \eqref{eq:H=2}, we know that
\begin{align*}
|\langle f,\phi\rangle|&\le|\langle f,\vz\rangle|+|\sigma||\langle f,P_l(x,\cdot)\rangle|\\
&\ls\lf\{\CM\lf(\lf[\CM^+(f)\r]^r\r)(x)\r\}^{\frac 1r}+|P_lf(x)|
\ls\lf\{\CM\lf(\lf[\CM^+(f)\r]^r\r)(x)\r\}^{\frac 1r}+\CM^+(f)(x),
\end{align*}
which is exactly \eqref{eq:H=}.

It remains to prove \eqref{eq:H=2}. For any $\ez\in(0,\fz)$, choose $y_\az^{k,m}\in Q_\az^{k,m}$ such that
$$
\lf|Q_kf\lf(y_\az^{k,m}\r)\r|\le\inf_{z\in Q_{\az}^{k,m}}|Q_kf(z)|+\ez\le 2\inf_{z\in Q_\az^{k,m}}\CM^+(f)(z)
+\ez.
$$
Let $g:=f|_{\GOO{\bz,\gz}}$ be the restriction of $f$ on
$\GOO{\bz,\gz}$. Obviously, $g\in(\GOO{\bz,\gz})'$ and $\|g\|_{(\GOO{\bz,\gz})'}\le\|f\|_{(\go{\bz,\gz})'}$.
By Theorem \ref{thm:hdrf}, we conclude that
\begin{align*}
\langle f,\vz\rangle=\langle g,\vz\rangle
&=\sum_{k=-\fz}^\fz\sum_{\az\in\CA_k}\sum_{m=1}^{N(k,\az)}\mu\lf(Q_\az^{k,m}\r)\wz{Q}_k^*\vz\lf(y_\az^{k,m}\r)
Q_kg\lf(y_\az^{k,m}\r)\\
&=\sum_{k=-\fz}^\fz\sum_{\az\in\CA_k}\sum_{m=1}^{N(k,\az)}\mu\lf(Q_\az^{k,m}\r)\wz{Q}_k^*\vz\lf(y_\az^{k,m}\r)
Q_kf\lf(y_\az^{k,m}\r),
\end{align*}
where $\wz{Q}_k^*$ denotes the \emph{dual operator} of $\wz{Q}_k$. By the proof of \cite[(3.2)]{HMY08}, which
remains true for a quasi-metric $d$ and does not rely on the reverse doubling condition of $\mu$, we find that,
for any fixed $\bz'\in(0,\bz\wedge\gz)$ and any $k\in\zz$,
\begin{equation}\label{eq:wQ*phi}
\lf|\wz{Q}_k^*\vz\lf(y_\az^{k,m}\r)\r|\ls\dz^{|k-l|\bz'}\frac 1{V_{\dz^{k\wedge l}}(x)+V(x,y_\az^{k,m})}
\lf[\frac{\dz^{k\wedge l}}{\dz^{k\wedge l}+d(x,y_\az^{k,m})}\r]^\gz.
\end{equation}
Choose $\bz'\in(0,\bz\wedge\gz)$ such that $\omega/(\omega+\bz')<p$.
From this and Lemma \ref{lem:max}, we deduce that, for any fixed $r\in(\omega/(\omega+\bz'),p)$,
\begin{align}\label{3.5x}
|\langle f,\vz\rangle|
&\ls\sum_{k=-\fz}^\fz\dz^{|k-l|\bz'}\sum_{\az\in\CA_k}\sum_{m=1}^{N(k,\az)}\mu\lf(Q_\az^{k,m}\r)
\frac {\inf_{z\in Q_{\az}^{k,m}}\CM^+(f)(z)+\ez}{V_{\dz^{k\wedge l}}(x)+V(x,y_\az^{k,m})}
\lf[\frac{\dz^{k\wedge l}}{\dz^{k\wedge l}+d(x,y_\az^{k,m})}\r]^\gz\\
&\ls\sum_{k=-\fz}^\fz\dz^{|k-l|\bz'}\dz^{[k-(k\wedge l)]\omega(1-\frac 1r)}
\lf\{\CM\lf(\sum_{\az\in\CA_k}\sum_{m=1}^{N(k,\az)}\lf[\inf_{z\in Q_{\az}^{k,m}}\CM^+(f)(z)+\ez\r]^r
\chi_{Q_\az^{k,m}}\r)(x)\r\}^{\frac 1r}\noz\\
&\ls\sum_{k=-\fz}^\fz\dz^{|k-l|\bz'}\dz^{[k-(k\wedge l)]\omega(1-\frac 1r)}
\lf\{\CM\lf(\lf[\CM^+(f)+\ez\r]^r\r)(x)\r\}^{\frac 1r}\noz\\
&\ls\lf\{\CM\lf(\lf[\CM^+(f)\r]^r\r)(x)+\ez^r\r\}^{\frac 1r}
\to\lf\{\CM\lf(\lf[\CM^+(f)\r]^r\r)(x)\r\}^{\frac 1r}\quad\textup{as }  \ez\to 0^+.\noz
\end{align}
This proves \eqref{eq:H=2} and hence finishes the proof of Theorem \ref{coro:H=}.
\end{proof}

To conclude this section, we show that the Hardy space $H^{*,p}(X)$ is independent of the choices of
$(\go{\bz,\gz})'$ whenever $\bz,\ \gz\in(\omega(1/p-1),\eta)$.

\begin{proposition}\label{prop:Hpin}
Let $p\in(\om,1]$ and $\bz_1,\ \bz_2,\ \gz_1,\ \gz_2\in(\omega(1/p-1),\eta)$. If
$f\in(\go{\bz_1,\gz_1})'$ and $f\in H^{*,p}(X)$, then $f\in(\go{\bz_2,\gz_2})'$.
\end{proposition}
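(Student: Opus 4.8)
The plan is to use that $\CG(\eta,\eta)$ is a common dense subspace of both $\go{\bz_1,\gz_1}$ and $\go{\bz_2,\gz_2}$: since $f\in(\go{\bz_1,\gz_1})'$ already acts on $\CG(\eta,\eta)$, it suffices to prove that this action is continuous with respect to $\|\cdot\|_{\CG(\bz_2,\gz_2)}$, for then $f$ extends uniquely to an element of $(\go{\bz_2,\gz_2})'$ that agrees with the original $f$ on $\CG(\eta,\eta)$. First I would reduce to test functions with vanishing integral. Fix $\psi\in\CG(\eta,\eta)$ with $\int_X\psi\,d\mu=1$; for $w\in\CG(\eta,\eta)$ write $w=(w-\sigma\psi)+\sigma\psi$ with $\sigma:=\int_Xw\,d\mu$. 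By the size condition and Lemma \ref{lem-add}(ii), $|\sigma|\ls\|w\|_{\CG(\bz_2,\gz_2)}$, so $w-\sigma\psi\in\mathring\CG(\eta,\eta)$ with $\|w-\sigma\psi\|_{\CG(\bz_2,\gz_2)}\ls\|w\|_{\CG(\bz_2,\gz_2)}$, while $|\sigma\langle f,\psi\rangle|\ls\|w\|_{\CG(\bz_2,\gz_2)}|\langle f,\psi\rangle|$ with $|\langle f,\psi\rangle|$ finite (indeed $\ls\|f\|_{H^{*,p}(X)}$, by the $B(x_0,1)$-averaging argument from the proof of Proposition \ref{prop:ban}). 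Thus everything reduces to the estimate
\[
|\langle f,u\rangle|\ls\|u\|_{\CG(\bz_2,\gz_2)}\,\|f\|_{H^{*,p}(X)}\qquad\text{for all }u\in\mathring\CG(\eta,\eta).
\]

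For this, normalize $\|u\|_{\CG(\bz_2,\gz_2)}=\|u\|_{\CG(x_0,1,\bz_2,\gz_2)}\le1$. Since $\mathring\CG(\eta,\eta)\subset\GOO{\bz_1,\gz_1}$ and $f|_{\GOO{\bz_1,\gz_1}}\in(\GOO{\bz_1,\gz_1})'$, I would apply the homogeneous discrete Calder\'on reproducing formula (Theorem \ref{thm:hdrf}) with $\bz=\bz_1$ and $\gz=\gz_1$; for each $\ez\in(0,\fz)$, choosing $y_\az^{k,m}\in Q_\az^{k,m}$ so that $|Q_kf(y_\az^{k,m})|\le2\inf_{z\in Q_\az^{k,m}}\CM^+(f)(z)+\ez$ — possible since $Q_k:=P_k-P_{k-1}$ gives $|Q_kf|\le2\CM^+(f)$ pointwise — one obtains
\[
\langle f,u\rangle=\sum_{k\in\zz}\sum_{\az\in\CA_k}\sum_{m=1}^{N(k,\az)}\mu(Q_\az^{k,m})\,\wz{Q}_k^*u(y_\az^{k,m})\,Q_kf(y_\az^{k,m}).
\]
From here I would reproduce, essentially verbatim, the estimate in the proof of Theorem \ref{coro:H=} but with the scale index $l$ set to $0$: fix $\bz'\in(\omega(1/p-1),\min\{\bz_1,\bz_2,\gz_1,\gz_2\})$ (a nonempty interval, as all four indices exceed $\omega(1/p-1)$) and then $r\in(\omega/(\omega+\bz'),p)$; for every $x\in B(x_0,1)$ one has $\|u\|_{\CG(x,1,\bz_2,\gz_2)}\ls1$, so the almost-orthogonality bound \eqref{eq:wQ*phi} yields
\[
|\wz{Q}_k^*u(y_\az^{k,m})|\ls\dz^{|k|\bz'}\frac1{V_{\dz^{k\wedge0}}(x)+V(x,y_\az^{k,m})}\lf[\frac{\dz^{k\wedge0}}{\dz^{k\wedge0}+d(x,y_\az^{k,m})}\r]^{\gz_2};
\]
inserting this, applying Lemma \ref{lem:max} and then the bound $\sum_{\az,m}[\inf_{Q_\az^{k,m}}\CM^+(f)+\ez]^r\chi_{Q_\az^{k,m}}\le(2\CM^+(f)+\ez)^r$, exactly as in \eqref{3.5x}, and letting $\ez\to0^+$, one gets, for every $x\in B(x_0,1)$,
\[
|\langle f,u\rangle|\ls\sum_{k\in\zz}\dz^{|k|\bz'}\dz^{[k-(k\wedge0)]\omega(1-1/r)}\{\CM([\CM^+(f)]^r)(x)\}^{1/r}\ls\{\CM([\CM^+(f)]^r)(x)\}^{1/r},
\]
the geometric series converging precisely because $r>\omega/(\omega+\bz')$.

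To finish, I would take the $p$-th power, average over $B(x_0,1)$, and use that $p/r>1$ (so $\CM$ is bounded on $L^{p/r}(X)$) together with $\CM^+(f)\ls f^*\in L^p(X)$ (by \eqref{eq-xxx} and $f\in H^{*,p}(X)$), exactly as in the proof of Proposition \ref{prop:ban}:
\[
|\langle f,u\rangle|^p\ls\frac1{V_1(x_0)}\int_{B(x_0,1)}\{\CM([\CM^+(f)]^r)(x)\}^{p/r}\,d\mu(x)\ls\frac1{V_1(x_0)}\|\CM^+(f)\|_{L^p(X)}^p\ls\|f\|_{H^{*,p}(X)}^p.
\]
Combined with the reduction of the first paragraph, this gives $|\langle f,w\rangle|\ls\|w\|_{\CG(\bz_2,\gz_2)}$ for all $w\in\CG(\eta,\eta)$, and a routine density argument then produces the extension $f\in(\go{\bz_2,\gz_2})'$; running the same computation with a general center and scale would in fact show that the $H^{*,p}(X)$ quasi-norm is, up to equivalence, independent of the admissible indices $\bz,\gz$.

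The delicate part is not any single estimate but the bookkeeping created by the mismatch of parameters. One has to check that $f$ restricted to $\GOO{\bz_1,\gz_1}$ is genuinely an element of $(\GOO{\bz_1,\gz_1})'$ so that Theorem \ref{thm:hdrf} applies at the correct indices; that the almost-orthogonality estimate \eqref{eq:wQ*phi} remains valid when the test function carries the decay/smoothness of $\CG(\bz_2,\gz_2)$ while $\wz{Q}_k$ carries that of $\CG(\bz_1,\gz_1)$, with the loss exponent $\bz'$ bounded by $\min\{\bz_1,\bz_2,\gz_1,\gz_2\}$; and, most importantly, that the auxiliary exponent $r$ can still be chosen with $\omega/(\omega+\bz')<r<p$. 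This last point is exactly where the hypothesis $\bz_1,\bz_2,\gz_1,\gz_2\in(\omega(1/p-1),\eta)$ enters, since it forces $\min\{\bz_1,\bz_2,\gz_1,\gz_2\}>\omega(1/p-1)$ and hence $\omega/(\omega+\bz')<p$. Once these points are in place, every remaining step is a direct transcription of arguments already carried out for Theorem \ref{coro:H=} and Proposition \ref{prop:ban}.
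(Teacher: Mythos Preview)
Your proof is correct. It is close in spirit to the paper's but differs in one substantive choice: the paper uses the \emph{inhomogeneous} discrete Calder\'on reproducing formula (Theorem~\ref{thm:idrf}), which applies directly to every $\vz\in\CG(\eta,\eta)$ without requiring cancelation, and controls the coefficients by the non-tangential maximal function $\CM_\thz(f)$; the low-frequency part ($k\le N$) is handled by the explicit averages $Q_{\az,1}^{k,m}(f)$. You instead perform the reduction $w\mapsto w-\sigma\psi$ to manufacture cancelation, then invoke the \emph{homogeneous} formula (Theorem~\ref{thm:hdrf}) and control via $\CM^+(f)$, essentially transcribing the proof of Theorem~\ref{coro:H=} at level $l=0$. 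Both routes rest on the same mixed-parameter almost-orthogonality estimate; the paper records the decay exponent as $\bz_-:=\min\{\bz_1,\gz_1,\bz_2,\gz_2\}$ rather than your $\gz_2$, but since $\bz'<\bz_-$ this does not affect the admissible range of $r$. Your route has the virtue of recycling the proof of Theorem~\ref{coro:H=} almost verbatim and making the dependence on the hypothesis $\bz_i,\gz_i>\omega(1/p-1)$ fully explicit; the paper's route avoids the auxiliary $\psi$ and the separate treatment of $\langle f,\psi\rangle$, at the cost of invoking one more reproducing formula.
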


\begin{proof}
Let $f\in(\go{\bz_1,\gz_1})'$ with $\|f\|_{H^{*,p}(X)}<\fz$.
We first prove that there exists $\thz\in(0,\fz)$ such that, for any $\vz\in\CG(\eta,\eta)$ with
$\|\vz\|_{\CG(\bz_2,\gz_2)}\le 1$,
\begin{equation}\label{eq:dual2}
|\langle f,\vz\rangle|\ls\|\CM_\thz(f)\|_{L^p(X)}.
\end{equation}

Notice that $\vz\in\CG(\eta,\eta)\subset \go{\bz_1,\gz_1}$ and $f\in(\go{\bz_1,\gz_1})'$. With all the notation
involved as in Theorem \ref{thm:idrf}, we have
\begin{align*}
\langle f,\vz\rangle&=
\sum_{k=0}^N\sum_{\az\in\CA_0}\sum_{m=1}^{N(k,\az)}\int_{Q_\az^{k,m}}\wz{Q}_k^*\vz(y)\,d\mu(y)
Q_{\az,1}^{k,m}(f)\\
&\quad+\sum_{k=N+1}^\fz\sum_{\az\in\CA_k}\sum_{m=1}^{N(k,\az)}\mu\lf(Q_\az^{k,m}\r)\wz{Q}_k^*\vz
\lf(y_\az^{k,m}\r)Q_kf\lf(y_\az^{k,m}\r)=:{\rm Z}_1+{\rm Z}_2.
\end{align*}
 Choose $\thz:=2A_0C^\natural$ with $C^\natural$ as in Lemma \ref{cube}(v).
By the definition of $Q_\az^{k,m}$ and Lemma \ref{cube}(v), we have
$Q_{\az}^{k,m}\subset B(z_\az^{k,m},C^\natural\dz^{k+j_0})\subset B(z,2A_0C^\natural\dz^k)
= B(z,\thz\dz^k)$ for any $z\in Q_\az^{k,m}$.

Fix $x\in B(x_0,1)$.  Then $\|\vz\|_{\CG(x,1,\bz_2,\gz_2)}\sim \|\vz\|_{\CG(x_0,1,\bz_2,\gz_2)}\ls 1$.
If $k\in\{0,\ldots,N\}$, then we have $\|\vz\|_{\CG(x,\dz^k,\bz_2,\gz_2)} \sim
\|\vz\|_{\CG(x,1,\bz_2,\gz_2)}\ls 1$,
where the implicit constants are independent of $x$ but can depend on  $N$.
Let $\bz_-:=\min\{\bz_1,\gz_1,\bz_2,\gz_2\}$.
By \cite[(3.2)]{HMY08}, we conclude that, for any $y\in Q_\az^{k,m}$,
$$
\lf|\wz{Q}_k^*\vz(y)\r|\ls\frac 1{V_{1}(x)+V(x,y)}\lf[\frac{1}{1+d(x,y)}\r]^{\bz_-}
\sim\frac 1{V_{1}(x)+V(x,y_\az^{k,m})}\lf[\frac{1}{1+d(x,y_\az^{k,m})}\r]^{\bz_-}.
$$
Moreover, for any $k\in\{0,\ldots,N\}$ and $z\in Q_\az^{k,m}$, we have
$$
\lf|Q_{\az,1}^{k,m}(f)\r|\le\frac{1}{\mu(Q_\az^{k,m})}\int_{Q_\az^{k,m}}[|P_kf(y)|+|P_{k-1}f(y)|]\,d\mu(y)
\le 2\CM_\thz(f)(z).
$$
Thus, we obtain
\begin{align}\label{eq:ind1}
|{\rm Z}_1|\ls\sum_{k=0}^N\sum_{\az\in\CA_k}\sum_{m=1}^{N(k,\az)}
\frac 1{V_{1}(x)+V(x,y_\az^{k,m})}\lf[\frac{1}{1+d(x,y_\az^{k,m})}\r]^{\bz_-}
\inf_{z\in Q_\az^{k,m}}\CM_\thz(f)(z).
\end{align}
If $k\in\{N+1,N+2,\ldots\}$, then $|Q_kf(y_\az^{k,m})|\le 2\inf_{z\in Q_\az^{k,m}}\CM_\thz(f)(z)$. Again, by
$\|\vz\|_{\CG(x,1,\bz_2,\gz_2)}\ls 1$ and \cite[(3.2)]{HMY08}, we find that, for any fixed $\bz'\in(0,\bz_-)$,
\begin{align*}
\lf| \wz{Q}_k^*\vz\lf(y_\az^{k,m}\r)\r|\ls  \dz^{k\bz'}
\frac 1{V_{1}(x)+V(x,y_\az^{k,m})}\lf[\frac{1}{1+d(x,y_\az^{k,m})}\r]^{\bz_-},
\end{align*}
because now $k\in\zz_+$ and we do not need the cancelation of $\vz$. Therefore, we have
\begin{align}\label{eq:ind2}
|{\rm Z}_2|\ls\sum_{k=N+1}^\fz\dz^{k\bz'}\sum_{\az\in\CA_k}\sum_{m=1}^{N(k,\az)}
\frac 1{V_{1}(x)+V(x,y_\az^{k,m})}\lf[\frac{1}{1+d(x,y_\az^{k,m})}\r]^{\bz_-}
\inf_{z\in Q_\az^{k,m}}\CM_\thz(f)(z).
\end{align}
Following the estimation of \eqref{3.5x}, from \eqref{eq:ind1} and \eqref{eq:ind2}, we deduce that, for some
$r\in(\omega/(\omega+\eta),p)$,
$$
|\langle f,\vz\rangle|\ls\lf\{\CM\lf(\lf[\CM_\thz(f)\r]^r\r)(x)\r\}^{\frac 1r}.
$$
Notice that the above inequality holds true for any $x\in B(x_0,1)$. Then, by the boundedness of $\CM$ on
$L^{p/r}(X)$, we further conclude that
$$
|\langle f,\vz\rangle|^p\ls\frac 1{V_1(x_0)}\int_X \lf\{\CM\lf(\lf[\CM_\thz(f)\r]^r\r)(x)\r\}^{\frac pr}
\,d\mu(x)\ls\|\CM_\thz(f)\|_{L^p(X)}^p,
$$
which is exactly \eqref{eq:dual2}.

Combining \eqref{eq:dual2} and \eqref{eq-xxx}, we find that, for any $\vz\in\CG(\eta,\eta)$,
\begin{equation}\label{3.x}
|\langle f,\vz\rangle|\ls\|\CM_\thz(f)\|_{L^p(X)}\|\vz\|_{\CG(\bz_2,\gz_2)}\ls\|f\|_{H^{*,p}(X)}\|
\vz\|_{\CG(\bz_2,\gz_2)}.
\end{equation}
Now let $g\in\go{\bz_2,\gz_2}$. By the definition of $\go{\bz_2,\gz_2}$, we know that there exist
$\{\vz_j\}_{j=1}^\fz\subset\CG(\eta,\eta)$ such that $\|g-\vz_j\|_{\CG(\bz_2,\gz_2)}\to 0$ as $j\to\fz$, which
implies that $\{\vz_j\}_{j=1}^\fz$ is a Cauchy sequence in $\CG(\bz_2,\gz_2)$. By
\eqref{3.x}, we find that, for any $j,\ k\in\nn$,
$$
|\langle f,\vz_j-\vz_k\rangle|\ls \|f\|_{H^{*,p}(X)}\|\vz_j-\vz_k\|_{\CG(\bz_2,\gz_2)}.
$$
Therefore, $\lim_{j\to\fz} \langle f,\vz_j\rangle$ exists and the limit is independent of the choice of
$\{\vz_j\}_{j=1}^\fz$. Thus, it is reasonable to define
$\langle f,g\rangle:= \lim_{j\to\fz}\langle f,\vz_j\rangle$. Moreover, by \eqref{3.x}, we conclude that
$$
|\langle f,g\rangle|=\lim_{j\to\fz}|\langle f,\vz_j\rangle|\ls\|f\|_{H^{*,p}(X)}
\liminf_{j\to\fz}\|\vz_j\|_{\CG(\bz_2,\gz_2)}\sim \|f\|_{H^{*,p}(X)}\|g\|_{\go{\bz_2,\gz_2}}.
$$
This implies $f\in(\go{\bz_2,\gz_2})'$ and $\|f\|_{(\go{\bz_2,\gz_2})'}\ls \|f\|_{H^{*,p}(X)}$, which
completes the proof of Proposition \ref{prop:Hpin}.
\end{proof}

%%%%%%%%%%%%%%%%%%%%%%%%%%%%%%%%%%%%%%%%%%%%%%%%%%%%%%%%%%%%%%%%%

\section{Grand maximal function characterizations of atomic Hardy spaces}\label{atom}

In this section, we establish the atomic characterizations of $H^{*,p}(X)$ with $p\in(\om,1]$.

\begin{definition}\label{def:atH}
Let $p\in(\om,1]$, $q\in(p,\fz]\cap[1,\fz]$ and $\bz,\ \gz\in(\omega(1/p-1),\eta)$.
The \emph{atomic Hardy space $H^{p,q}_\at(X)$} is defined to be the set of all $f\in(\go{\bz,\gz})'$ such
that $f=\sum_{j=1}^\fz \lz_j a_j$, where $\{a_j\}_{j=1}^\fz$ is a sequence of $(p,q)$-atoms and
$\{\lz_j\}_{j=1}^\fz\subset\cc$ satisfies $\sum_{j=1}^\fz|\lz_j|^p<\fz$. Moreover, let
$$
\|f\|_{H^{p,q}_\at(X)}:=\inf\lf(\sum_{j=1}^\fz|\lz_j|^p\r)^{\frac 1p},
$$
where the infimum is taken over all the decompositions of $f$ as above.
\end{definition}

Observe that the difference between $H^{p,q}_\cw(X)$ and $H^{p,q}_\at(X)$ mainly lies on the choices of
distribution spaces. When $(X,d,\mu)$ is a doubling metric measure space, it was proved in
\cite[Theorem~4.4]{lyy18} that $H^{p,q}_\cw(X)$ and $H^{p,q}_\at(X)$ coincide  with equivalent (quasi-)norms.
Since now $d$ is a quasi-metric, for the completeness of this article, we include a proof of their
equivalence in Section \ref{cw} below.

The main aim in this section is to prove the following conclusion.
\begin{theorem}\label{thm:atom}
Let $p\in(\om,1]$, $q\in(p,\fz]\cap[1,\fz]$ and $\bz,\ \gz\in(\omega(1/p-1),\eta)$. As subspaces of
$(\go{\bz,\gz})'$, $H^{*,p}(X)=H^{p,q}_\at(X)$ with equivalent (quasi-)norms.
\end{theorem}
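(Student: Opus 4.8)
The plan is to prove the two-sided inclusion $H^{*,p}(X)=H^{p,q}_\at(X)$ in the sense of equivalent quasi-norms by establishing each direction separately, with the bulk of the work residing in the inclusion $H^{*,p}(X)\subset H^{p,q}_\at(X)$.

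\medskip

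\textbf{The easy direction.} First I would show $H^{p,q}_\at(X)\subset H^{*,p}(X)$ with $\|f\|_{H^{*,p}(X)}\ls\|f\|_{H^{p,q}_\at(X)}$. Since $q>p$ and $p\le1$, it suffices, by the $p$-subadditivity of $\|\cdot\|_{H^{*,p}(X)}^p=\|\cdot\|_{L^p(X)}^p$ and a routine limiting argument handling convergence of $\sum_j\lambda_ja_j$ in $(\go{\bz,\gz})'$, to prove a uniform bound $\|a^*\|_{L^p(X)}\le C$ for every $(p,q)$-atom $a$ supported in a ball $B=B(x_0,r)$, with $C$ independent of $a$. For this I would split $X=2A_0^2B\cup(2A_0^2B)^\complement$. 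On the dilated ball one uses the $L^q$-boundedness of the grand maximal function (which follows from $f^*\ls\CM(f)$ pointwise, as already recorded in the proof of Theorem \ref{thm:H=L} via \cite[Proposition 3.9]{GLY08}, together with the Fefferman--Stein/Coifman--Weiss boundedness of $\CM$ on $L^q$), the size estimate of the atom, and Hölder's inequality with exponent $q/p>1$; the measure factor $\mu(2A_0^2B)^{1-p/q}$ cancels exactly against $\mu(B)^{p/q-1}$ from the atom normalization. Away from the support one uses the cancellation $\int_X a\,d\mu=0$: for $x$ far from $x_0$ and any admissible test function $\vz$ with $\|\vz\|_{\CG(x,r_0,\bz,\gz)}\le1$, one writes $\langle a,\vz\rangle=\int_B a(y)[\vz(y)-\vz(x_1)]\,d\mu(y)$ (with $x_1$ a suitable reference point, or directly subtracting a constant) and invokes the regularity condition in Definition \ref{def:test}(ii); the resulting factor $[r/(r+d(x_0,x))]^{\bz}$ decays fast enough to be $L^p$-integrable precisely because $\bz>\omega(1/p-1)$, via Lemma \ref{lem-add}(iv). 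This is a standard atom estimate and I would present it compactly.

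\medskip

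\textbf{The hard direction.} The inclusion $H^{*,p}(X)\subset H^{p,q}_\at(X)$ is the substantial part, and it is exactly what Sections \ref{cz} and \ref{prat} are designed to supply, as the introduction indicates. The strategy is the Calder\'on--Zygmund decomposition adapted to $H^{*,p}$: given $f\in H^{*,p}(X)$, one first passes from the grand maximal function $f^*$ to a modified version $f^\star$ (defined in Section \ref{cz}) whose super-level sets $\Omega_\lambda:=\{x\in X:\ f^\star(x)>\lambda\}$ are open, with $f^\star\sim f^*$ in $L^p$. For each dyadic level $\lambda=2^j$, $j\in\zz$, one takes a Whitney-type covering of $\Omega_\lambda$ by balls together with a subordinate partition of unity, and decomposes $f$ accordingly into a "good" part plus a sum of "bad" pieces supported near the Whitney balls; iterating the good/bad splitting across all levels $j$ and telescoping produces the atomic decomposition $f=\sum_j\lambda_j a_j$. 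The key quantitative inputs are: (a) the Calder\'on reproducing formula of Theorem \ref{thm:hcrf} (or its discrete versions, Theorems \ref{thm:hdrf} and \ref{thm:idrf}), used to give the bad pieces a meaningful pointwise/integral representation so that one can estimate their $L^q$-norms and verify the atom size condition — here the smoothness $\bz,\gz\in(\omega(1/p-1),\eta)$ of the kernels $\wz Q_k$ is what forces the range $p>\omega/(\omega+\eta)$; (b) control of $\sum_j|\lambda_j|^p\ls\sum_j 2^{jp}\mu(\Omega_{2^j})\sim\|f^\star\|_{L^p(X)}^p\sim\|f\|_{H^{*,p}(X)}^p$, which is just the distribution-function identity for the $L^p$-quasi-norm; (c) the cancellation of each atom, inherited from the cancellation $\int_X\wz Q_k(x,y)\,d\mu(y)=0$ built into the reproducing formula together with the partition of unity summing to $1$ on $\Omega_\lambda$. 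One must also check convergence of the atomic series in $(\go{\bz,\gz})'$, which again uses the continuous embedding $H^{*,p}(X)\hookrightarrow(\go{\bz,\gz})'$ from Proposition \ref{prop:ban} and the independence of the ambient distribution space established in Proposition \ref{prop:Hpin}.

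\medskip

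\textbf{Main obstacle.} The delicate point — and where I expect to spend the most care — is verifying the size condition Definition \ref{def:atom}(ii) for the bad pieces uniformly, i.e. showing that each bad piece, after normalization, is a fixed multiple of a genuine $(p,q)$-atom (or at least a $(p,q)$-molecule, which by \cite{lcfy18} still lies in $H^{p,q}_\at(X)$ with controlled norm). This requires carefully exploiting the exponential/polynomial decay of the kernels $\wz Q_k$ against the geometry of the Whitney balls, summing a geometric-type series in $k$ that converges only because of the smoothness margin $\eta-\omega(1/p-1)>0$, and handling the overlap of the Whitney balls via a bounded-overlap/Vitali argument valid on any space of homogeneous type (no reverse doubling, only a quasi-metric). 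Assembling these estimates — rather than any single one of them — is the technical heart of the argument; once they are in place, combining with the easy direction and with $\|f\|_{H^{p,q}_\at(X)}\sim\|f\|_{H^{*,p}(X)}$ completes the proof of Theorem \ref{thm:atom}.
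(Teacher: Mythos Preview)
Your easy direction is correct and matches the paper's Lemma~\ref{lem:a*} essentially line for line.

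For the hard direction, your high-level architecture (modified grand maximal function $f^\star$ with open level sets, Whitney covering, partition of unity, telescoping $h^j=g^{j+1}-g^j$ across dyadic levels, and the coefficient estimate $\sum|\lambda^j_k|^p\ls\sum_j 2^{jp}\mu(\Omega^j)\sim\|f^\star\|_{L^p}^p$) is exactly what the paper does. However, you misidentify the mechanism that drives the size and cancellation conditions. The paper does \emph{not} invoke any Calder\'on reproducing formula in Section~\ref{atom}; Theorems~\ref{thm:hcrf}--\ref{thm:idrf} play no role there. Instead, the paper first reduces via density (Lemma~\ref{lem:dense}) to $f\in L^2(X)\cap H^{*,p}(X)$, so that the bad pieces $b^j_k=(f-m^j_k)\phi^j_k$ are honest functions with $m^j_k$ a $\phi^j_k$-weighted average of $f$. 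The size estimate is then an $L^\infty$ bound: one shows $\|h^j_k\|_{L^\infty(X)}\le C2^j$ directly by checking that the relevant auxiliary functions (e.g.\ $\phi^j_k\phi^{j+1}_l/\|\phi^{j+1}_l\|_{L^1}$ in Lemma~\ref{lem:Psi}) lie in $\CG(y,r,\bz,\gz)$ with controlled norm for some $y\notin\Omega^{j+1}$, and then using $f^\star(y)\le 2^{j+1}$. This yields $(p,\infty)$-atoms outright, not merely $(p,q)$-atoms or molecules. Cancellation of $h^j_k$ comes from the explicit subtraction of the constants $m^j_k$ and $L^{j+1}_{k,l}$, not from kernel cancellation. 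The constraint $\bz>\omega(1/p-1)$ enters through the off-ball decay of $(b^j_k)^*$ in Proposition~\ref{prop:bjk*}, which feeds into the $L^p$ summability of Lemma~\ref{lem:Lpest} --- again, no reproducing-formula kernels involved.

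Your proposed route via reproducing formulae and molecular estimates is essentially the strategy the paper uses later, in Section~\ref{s5.2}, to prove the Littlewood--Paley characterization (Theorem~\ref{thm:at2}); it could plausibly be adapted here, but it is more circuitous and, as you note, would pass through the molecular characterization of $H^{p,q}_\cw$ (Proposition~\ref{thm:mol}) rather than producing atoms directly. The paper's classical Mac\'ias--Segovia-type argument is both simpler and delivers the stronger conclusion of $(p,\infty)$-atoms.
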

We divide the proof of Theorem \ref{thm:atom} into three sections. In Section \ref{atsub}, we prove that
$H^{p,q}_\at(X)\subset H^{*,p}(X)$ directly by the definition of $H^{p,q}_\at(X)$. The next two sections
mainly deal with the proof of $H^{*,p}(X)\subset H^{p,q}_{\at}(X)$. In Section \ref{cz}, we obtain a
Calder\'{o}n-Zygmund decomposition for any $f\in H^{*,p}(X)$. Then, in Section \ref{prat}, we show that any
$f\in H^{*,p}(X)$ has a $(p,\fz)$-atomic decomposition. In Section \ref{cw}, we reveal the
equivalent relationship between $H^{p,q}_\at(X)$ and $H^{p,q}_\cw(X)$.

%===============================================================

\subsection{Proof of $H^{p,q}_\at(X)\subset H^{*,p}(X)$}\label{atsub}

In this section, we prove $H^{p,q}_\at(X)\subset H^{*,p}(X)$, as subspaces of $(\go{\bz,\gz})'$ with
$\bz,\ \gz\in(\omega(1/p-1),\eta)$. To do this, we need the following technical lemma.
\begin{lemma}\label{lem:a*}
Let $p\in(\om,1]$ and $q\in(p,\fz]\cap[1,\fz]$.  Then there exists a positive constant $C$ such that, for any
$(p,q)$-atom $a$ supported on $B:=B(x_B,r_B)$, with $x_B\in X$ and $r_B\in(0,\fz)$, and any $x\in X$,
\begin{align}\label{eq-add2}
a^*(x)\le C\CM(a)(x)\chi_{B(x_B,2A_0r_B)}(x)+C\lf[\frac{r_B}{d(x_B,x)}\r]^\bz
\frac{[\mu(B)]^{1-\frac 1p}}{V(x_B,x)}\chi_{[B(x_B,2A_0r_B)]^\complement}(x)
\end{align}
and
\begin{align}\label{eq-add3}
\|a^*\|_{L^p(X)}\le C,
\end{align}
where the atom $a$ is viewed as a distribution on $\go{\bz,\gz}$ with $\bz,\ \gz\in(\omega(1/p-1),\eta)$.
\end{lemma}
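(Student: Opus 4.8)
The plan is to estimate $a^*(x)=\sup|\langle a,\vz\rangle|$, where the supremum runs over $\vz\in\go{\bz,\gz}$ with $\|\vz\|_{\CG(x,r_0,\bz,\gz)}\le1$ for some $r_0\in(0,\fz)$, by splitting according to whether or not $x$ lies in $B(x_B,2A_0r_B)$; I would first establish the pointwise estimate \eqref{eq-add2} and then deduce \eqref{eq-add3} by integrating the right-hand side of \eqref{eq-add2}. That $a$, being in $L^q(X)$ with bounded support, induces an element of $(\go{\bz,\gz})'$ is routine and I would dispatch it first.

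For $x\in B(x_B,2A_0r_B)$, the size condition of Definition \ref{def:test}(i), together with $\supp a\subset B$ and Lemma \ref{lem-add}(v), gives
\[
|\langle a,\vz\rangle|\le\int_X|a(y)|\,\frac1{V_{r_0}(x)+V(x,y)}\lf[\frac{r_0}{r_0+d(x,y)}\r]^\gz\,d\mu(y)\ls\CM(a)(x),
\]
and taking the supremum over $\vz$ produces the first term in \eqref{eq-add2}. For $x$ with $d(x,x_B)\ge2A_0r_B$, I would invoke the cancellation $\int_X a\,d\mu=0$ to write $\langle a,\vz\rangle=\int_B a(y)\,[\vz(y)-\vz(x_B)]\,d\mu(y)$; since $d(x_B,y)<r_B\le(2A_0)^{-1}d(x,x_B)\le(2A_0)^{-1}[r_0+d(x,x_B)]$ for every $y\in B$, the regularity condition of Definition \ref{def:test}(ii), used with $x_B$ playing the role of the ``base'' point, yields
\[
|\vz(y)-\vz(x_B)|\ls\lf[\frac{d(x_B,y)}{d(x,x_B)}\r]^\bz\frac1{V(x,x_B)}\ls\lf[\frac{r_B}{d(x,x_B)}\r]^\bz\frac1{V(x,x_B)}.
\]
Combining this with $\int_B|a(y)|\,d\mu(y)\le[\mu(B)]^{1-1/p}$ (via the H\"older inequality and Definition \ref{def:atom}(ii), or directly when $q=\fz$) and with $V(x,x_B)\sim V(x_B,x)$ from Lemma \ref{lem-add}(i) gives the second term in \eqref{eq-add2}, completing the proof of \eqref{eq-add2}.

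To obtain \eqref{eq-add3} I integrate the two terms of \eqref{eq-add2} separately. Over $B(x_B,2A_0r_B)$ one bounds $\int[\CM(a)]^p\,d\mu$ by the H\"older inequality and the boundedness of $\CM$ on $L^q(X)$ when $q\in(1,\fz]$, and by the weak-type $(1,1)$ boundedness of $\CM$ together with a distribution function estimate when $q=1$ (in which case necessarily $p<1$); in either case \eqref{eq:doub} and Definition \ref{def:atom}(ii) give a bound $\ls1$. Over $[B(x_B,2A_0r_B)]^\complement$ I would decompose into the dyadic annuli $A_j:=\{x\in X:\ 2^j\cdot2A_0r_B\le d(x_B,x)<2^{j+1}\cdot2A_0r_B\}$, $j\in\zz_+$; on $A_j$ one has $d(x_B,x)\sim2^jr_B$ and $V(x_B,x)\ge\mu(B(x_B,2^j\cdot2A_0r_B))$, so that, using \eqref{eq:doub} to estimate $\mu(A_j)$ and to compare $\mu(B(x_B,2^j\cdot2A_0r_B))$ with $2^{j\omega}\mu(B)$,
\[
\int_{A_j}\lf[\frac{r_B}{d(x_B,x)}\r]^{p\bz}\frac{[\mu(B)]^{p-1}}{[V(x_B,x)]^p}\,d\mu(x)\ls2^{-jp\bz}\cdot2^{j\omega(1-p)}\cdot[\mu(B)]^{p-1}[\mu(B)]^{1-p}=2^{j[\omega(1-p)-p\bz]}.
\]
Summing over $j\in\zz_+$ yields a convergent geometric series precisely because $\omega(1-p)-p\bz<0$, that is, $\bz>\omega(1/p-1)$; this single inequality, guaranteed by the hypotheses $p\in(\om,1]$ and $\bz\in(\omega(1/p-1),\eta)$, is the crux of the argument, everything else being a routine (if lengthy) case analysis. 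Adding the two contributions gives $\|a^*\|_{L^p(X)}\le C$ with $C$ independent of $a$.
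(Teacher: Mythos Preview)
Your proposal is correct and follows essentially the same route as the paper's proof: the same case split at $B(x_B,2A_0r_B)$, Lemma~\ref{lem-add}(v) for the near part, cancellation plus the regularity condition of Definition~\ref{def:test}(ii) for the far part, and then the same H\"older/$L^q$-boundedness (or weak-$(1,1)$ when $q=1$) together with a dyadic-annulus sum governed by $\bz>\omega(1/p-1)$ for \eqref{eq-add3}. The only cosmetic difference is that the paper retains the full factor $[r/(r+d(x,x_B))]^\gz$ in the regularity estimate before discarding it, whereas you drop it immediately.
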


\begin{proof}
First, we show \eqref{eq-add2}.
Let $\vz\in\go{\bz,\gz}$ be such that $\|\vz\|_{\CG(x,r,\bz,\gz)}\le 1$ for some $r\in(0,\fz)$, where
$\bz,\ \gz\in(\omega(1/p-1),\eta)$.
When $x\in B(x_B,2A_0r_B)$, by Lemma \ref{lem-add}(v), we find that
$$
|\langle a,\vz\rangle|=\lf|\int_X a(y)\vz(y)\,d\mu(y)\r|\le\int_X |a(y)|\frac{1}{V_r(x)+V(x,y)}
\lf[\frac{r}{r+d(x,y)}\r]^\gz\,d\mu(y)\ls\CM(a)(x),
$$
which consequently implies that $a^*(x)\ls\CM(a)(x)$.

Let $x\notin B(x_B,2A_0r_B)$. Then, for any $y\in B$, we have
$d(x,x_B)\ge 2A_0r_B>2A_0d(x_B,y)$. Therefore, by the definition of $(p,q)$-atoms and Definition \ref{def:test}(ii),
we conclude that
\begin{align*}
|\langle a,\vz\rangle|&=\lf|\int_B a(y)\vz(y)\,d\mu(y)\r|\le\int_B |a(y)||\vz(y)-\vz(x_B)|\,d\mu(y)\\
&\le\int_B |a(y)|\lf[\frac{d(x_B,y)}{r+d(x,x_B)}\r]^\bz\frac{1}{V_r(x)+V(x,x_B)}
\lf[\frac{r}{r+d(x,x_B)}\r]^\gz\,d\mu(y)\\
&\le\lf[\frac{r_B}{d(x_B,x)}\r]^\bz\frac{1}{V(x,x_B)}\|a\|_{L^1(X)}
\ls\lf[\frac{r_B}{d(x_B,x)}\r]^\bz\frac{[\mu(B)]^{1-\frac 1p}}{V(x_B,x)}.
\end{align*}
Taking the supremum over all such $\vz\in\go{\bz,\gz}$ satisfying $\|\vz\|_{\CG(x,r,\bz,\gz)}\le 1$
for some $r\in(0,\fz)$, we obtain
\eqref{eq-add2}.

Now, we use \eqref{eq-add2} to show \eqref{eq-add3}.  When  $q\in(1,\fz]$, from the H\"{o}lder
inequality and the boundedness of $\CM$ on $L^{q}(X)$, we deduce that
\begin{align*}
\int_{B(x_B,2A_0r_B)}[\CM(a)(x)]^p\,d\mu(x)
&\le[\mu(B(x_B,2A_0r_B))]^{1-p/q}\|\CM(a)\|_{L^q(X)}^{p}\ls[\mu(B)]^{1-p/q}\|a\|_{L^q(X)}^{p}\ls 1.
\end{align*}
If $q=1$, then, by $p\in(\om,1)$ and the boundedness of $\CM$ from $L^1(X)$ to $L^{1,\fz}(X)$, we conclude that
\begin{align*}
\int_{B(x_B,2A_0r_B)}[\CM(a)(x)]^p\,d\mu(x)&=\int_0^\fz\mu(\{x\in B(x_B,2A_0r_B):\ \CM(a)(x)>\lz\})\,d\lz^p\\
&\ls\int_0^\fz\min\lf\{\mu(B),\frac{\|a\|_{L^1(X)}}{\lz}\r\}\,d\lz^p\\
&\ls\int_0^{\|a\|_{L^1(X)}/\mu(B)}\mu(B)\,d\lz^p+\int_{\|a\|_{L^1(X)}/\mu(B)}^\fz
\|a\|_{L^1(X)}\lz^{-1}\,d\lz^p\\
&\ls\|a\|_{L^1(X)}^p[\mu(B)]^{1-p}\ls 1.
\end{align*}
By the fact $\bz>\omega(1/p-1)$ and the doubling condition \eqref{eq:doub}, we have
\begin{align*}
&\int_{d(x,x_B)\ge 2A_0r_B}\lf[\frac{r_B}{d(x_B,x)}\r]^{\bz p}
\lf[\frac{1}{\mu(B)}\r]^{1-p}\lf[\frac 1{V(x_B,x)}\r]^p\,d\mu(x)\\
&\quad\ls\sum_{k=1}^\fz 2^{-k\bz p}2^{k\omega(1-p)}\int_{2^kA_0r_B\le d(x,x_B)<2^{k+1}A_0r_B}
\frac 1{V(x_B,x)}\,d\mu(x)\ls 1.
\end{align*}
Combining the last three formulae with \eqref{eq-add2}, we obtain \eqref{eq-add3}, which then completes
the proof of Lemma \ref{lem:a*}.
\end{proof}

\begin{proof}[Proof of $H^{p,q}_\at(X)\subset H^{*,p}(X)$]
Assume that  $f\in (\go{\bz,\gz})'$ is non-zero and it belongs to  $H^{p,q}_\at(X)$  with
$\bz,\ \gz\in(\omega(1/p-1),\eta)$.
Then $f=\sum_{j=1}^\fz\lz_ja_j$, where $\{a_j\}_{j=1}^\fz$ are $(p,q)$-atoms and
$\{\lz_j\}_{j=1}^\fz\subset\cc$ satisfy $\sum_{j=1}^{\fz}|\lz_j|^p\sim \|f\|_{H^{p,q}_\at(X)}^p$.
By the definition of the grand
maximal function, we conclude that, for any $x\in X$,
$$
f^*(x)\le\sum_{j=1}^\fz |\lz_j|a_j^*(x).
$$
From this and \eqref{eq-add3}, we deduce that
$$
\|f^*\|_{L^p(X)}^p\ls\sum_{j=1}^\fz|\lz_j|^p\lf\|a_j^*\r\|_{L^p(X)}\ls
\sum_{j=1}^{\fz}|\lz_j|^p \sim \|f\|_{H^{p,q}_\at(X)}^p.
$$
This finishes the proof of
$H^{p,q}_\at(X)\subset H^{*,p}(X)$.
\end{proof}

%=============================================================

\subsection{Calder\'{o}n-Zygmund decomposition of a distribution from $H^{*,p}(X)$}\label{cz}

In this section, we obtain a Calder\'{o}n-Zygmund decomposition of any $f\in H^{*,p}(X)$. First we establish
a partition of unity for an open set $\Omega$ with $\mu(\Omega)<\fz$.
\begin{proposition}\label{prop:ozdec}
Suppose $\Omega\subset X$ is a proper open set with $\mu(\Omega)\in(0,\fz)$ and $A\in[1,\fz)$. For any
$x\in\Omega$, let
$$
r(x):=\frac{d(x,\Omega^{\complement})}{2AA_0}\in(0,\fz).
$$
Then there exist $L_0\in\nn$ and a sequence $\{x_k\}_{k\in I}\subset \Omega$, where $I$ is a countable index
set, such that
\begin{enumerate}
\item $\{B(x_k,r_k/(5A_0^3))\}_{k\in I}$ is disjoint. Here and hereafter, $r_k:=r(x_k)$ for any $k\in I$;
\item $\bigcup_{k\in I} B(x_k,r_k)=\Omega$ and $B(x_k,Ar_k)\subset\Omega$;
\item for any $x\in\Omega$, $Ar_k\le d(x,\Omega^\complement)\le3AA_0^2r_k$ whenever $x\in B(x_k,r_k)$ and
$k\in I$;
\item for any $k\in I$, there exists $y_k\notin \Omega$ such that $d(x_k,y_k)<3AA_0r_k$;
\item for any given $k\in I$, the number of balls $B(x_j,Ar_j)$ that intersect $B(x_k,Ar_k)$ is at most
$L_0$;
\item if, in addition, $\Omega$ is bounded, then, for any $\sigma\in(0,\fz)$, the set $\{k\in I:\ r_k>\sigma\}$
is finite.
\end{enumerate}
\end{proposition}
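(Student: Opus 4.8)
\emph{Proof proposal.} The plan is to construct $\{x_k\}_{k\in I}$ by a Whitney-type selection and then verify (i)--(vi) by a careful tracking of the quasi-metric constants. I would begin by recording two elementary consequences of applying the quasi-triangle inequality to the function $z\mapsto d(z,\Omega^\complement)$. First, for any $x\in\Omega$ one has $B(x,Ar(x))\subset\Omega$: if $d(y,x)<Ar(x)$, then $d(y,\Omega^\complement)\ge A_0^{-1}d(x,\Omega^\complement)-d(x,y)>2Ar(x)-Ar(x)=Ar(x)>0$. Second, $r$ varies slowly: if $x,y\in\Omega$ and $d(x,y)\le\frac15[r(x)+r(y)]$, then $r(x)<2A_0r(y)$ and $r(y)<2A_0r(x)$, with constants depending only on $A_0$ and $A$ (here $A\ge1$ is used).

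Next I would apply Zorn's lemma to the family $\{B(x,r(x)/(5A_0^3)):x\in\Omega\}$ of balls to obtain a maximal pairwise-disjoint subfamily $\{B(x_k,r_k/(5A_0^3))\}_{k\in I}$ with $r_k:=r(x_k)$; this is precisely (i). Since these balls are pairwise disjoint, each of positive measure and contained in $\Omega$ with $\mu(\Omega)<\fz$, the index set $I$ is countable (for each $m\in\nn$, at most $m\mu(\Omega)$ indices $k$ satisfy $\mu(B(x_k,r_k/(5A_0^3)))>1/m$). The inclusions $B(x_k,Ar_k)\subset\Omega$ in (ii) are the first fact above; the covering $\Omega=\bigcup_{k\in I}B(x_k,r_k)$ follows because maximality forces each $B(x,r(x)/(5A_0^3))$ to intersect some $B(x_k,r_k/(5A_0^3))$, whence $d(x,x_k)<\frac1{5A_0^2}[r(x)+r_k]$, and the slow-variation bound $r(x)<2A_0r_k$ upgrades this to $d(x,x_k)<\frac{2A_0+1}{5A_0^2}r_k<r_k$. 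Property (iii) is the two-sided estimate $Ar_k\le(2A-1)r_k<d(x,\Omega^\complement)<A_0(1+2AA_0)r_k\le3AA_0^2r_k$, valid for $x\in B(x_k,r_k)$, again a direct application of the quasi-triangle inequality together with $A,A_0\ge1$; and (iv) is immediate from $d(x_k,\Omega^\complement)=2AA_0r_k<3AA_0r_k$ and the definition of the infimum.

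For the bounded overlap (v), suppose $B(x_j,Ar_j)\cap B(x_k,Ar_k)\neq\emptyset$ and fix $z$ in this intersection. Since both balls lie in $\Omega$, applying the quasi-triangle inequality to $d(\cdot,\Omega^\complement)$ at $z$ (against $x_k$ and against $x_j$) shows $r(z)\sim r_k\sim r_j$, hence $r_j\sim r_k$ and $d(x_j,x_k)\le A_0A(r_j+r_k)\ls r_k$, with constants depending only on $A$ and $A_0$. Therefore all such centers $x_j$ lie in a fixed ball $B(x_k,Cr_k)$, and the disjoint balls $B(x_j,r_j/(5A_0^3))$, each of radius comparable to $r_k$, are all contained in a fixed ball $B(x_k,C'r_k)$; by the doubling condition \eqref{eq:doub} each has measure at least a fixed positive multiple of $\mu(B(x_k,C'r_k))$, so there are at most some $L_0$ of them, with $L_0$ depending only on $A$, $A_0$ and $\omega$. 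Finally, (vi) follows by the same idea: if $\Omega$ is bounded, then $\Omega\subset B(x_0,R)$ for some $R\in(0,\fz)$, so for any $\sigma\in(0,\fz)$ the disjoint balls $\{B(x_k,\sigma/(5A_0^3)):k\in I,\ r_k>\sigma\}$ all lie in $B(x_0,2A_0R)$ and, by \eqref{eq:doub}, each has measure bounded below by a positive constant depending only on $\sigma$, $R$ and $\omega$; since $\mu(B(x_0,R))<\fz$, only finitely many such indices occur.

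The construction is the classical Whitney/Vitali covering argument transplanted to a space of homogeneous type, so nothing here is conceptually new. The point requiring care---and the step I expect to be the main obstacle---is the constant bookkeeping: the dilation factor $5A_0^3$ in the selected balls must be taken large enough, relative to the powers of $A_0$ introduced by the quasi-triangle inequality, that the covering in (ii) can be achieved with balls of radius exactly $r_k$ (not merely a constant multiple of $r_k$), which is what makes (ii) and the lower bound in (iii) hold as stated. Once the slow-variation estimate for $r$ and this choice of dilation are in place, the remaining verifications, including the doubling-based counting in (v) and (vi), are routine.
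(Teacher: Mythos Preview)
Your proposal is correct and follows essentially the same Whitney--Vitali covering approach as the paper: select a maximal disjoint subfamily of the balls $B(x,r(x)/(5A_0^3))$, then verify (i)--(vi) by quasi-triangle bookkeeping and the doubling condition. The only stylistic difference is that where the paper establishes the radius comparisons needed for (ii) and (v) by contradiction arguments that invoke (iv), you obtain them directly via your slow-variation estimate for $r$ and a comparison at an intersection point; both routes yield the same constants up to harmless factors.
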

\begin{proof}
We show this proposition by borrowing some ideas from \cite[pp.\ 15--16]{Stein93}. Let $\ez:=(5A_0^3)^{-1}$
and $\{B(x,\ez r(x))\}_{x\in\Omega}$ be a covering of $\Omega$. Now we pick the maximal disjoint
subcollection of $\{B(x,\ez r(x))\}_{x\in\Omega}$, denoted by $\{B_k\}_{k\in I}$, which is at most
countable, because of \eqref{eq:doub} and $\mu(\Omega)\in(0,\fz)$. For any $k\in I$, denote the center of $B_k$
by $x_k$ and $r(x_k)$ by $r_k$. Then we obtain (i).

Properties (iii) and (iv) can be shown by the definition of $r_k$, the details being omitted.
Now we show (ii). Obviously, $B(x_k,Ar_k)\subset\Omega$ for any $k\in I$. It suffices to prove that
$\Omega\subset\bigcup_{k\in I} B(x_k,r_k)$. For any $x\in\Omega$, since $\{B_k\}_{k\in I}$ is maximal, it
then follows that there exists $k\in I$ such that $B(x_k,\ez r_k)\cap B(x,\ez r(x))\neq\emptyset$. We claim that
$r_k\ge r(x)/(4A_0^2)$. If not, then $r_k<r(x)/(4A_0^2)$. Suppose that
$x_0\in B(x_k,\ez r_k)\cap B(x,\ez r(x))$. Then, for any $y\in B(x_k,3AA_0r_k)$, we have
\begin{align*}
d(y,x)&\le A_0[d(y,x_0)+d(x_0,x)]\le A_0^2[d(y,x_k)+d(x_k,x_0)]+A_0d(x_0,x)\le 6AA_0^3r_k+A_0\ez r(x)\\
&\le \frac 32 AA_0r(x)+\frac 15 AA_0r(x)=\frac{17}{10} AA_0r(x)
\end{align*}
and hence $B(x_k,3AA_0r_k)\subset B(x,\frac{17}{10}AA_0r(x))\subset\Omega$, which contradicts to (iv).
This proves the claim. Further, by the fact that $r(x)\le 4A_0^2r_k$, we have
$$
d(x,x_k)\le A_0[d(x,x_0)+d(x_0,x_k)]<A_0\ez r(x)+A_0\ez r_k\le 5A_0^3\ez r_k=r_k,
$$
that is, $x\in B(x_k,r_k)$. This finishes the proof of (ii).

Now we prove (v). Fix $k\in I$. Suppose that $B(x_j,Ar_j)\cap B(x_k,Ar_k)\neq\emptyset$. We claim that
$r_j\le 8A_0^2r_k$. If not, then $r_j>8A_0^2r_k$. Choose $y_0\in B(x_j,Ar_j)\cap B(x_k,Ar_k)$. For any
$y\in B(x_k,3AA_0r_k)$, we have
\begin{align*}
d(y,x_j)&\le A_0[d(y,y_0)+d(y_0,x_j)]\le A_0^2[d(y,x_k)+d(x_k,y_0)]+A_0d(y_0,x_j)\\
&\le 3AA_0^3r_k+AA_0^2r_k+AA_0r_j\le \frac 32 AA_0r_j,
\end{align*}
which further implies that $y\in B(x_j,\frac 32AA_0r_j)$. Therefore,
$B(x_k,3AA_0r_k)\subset B(x_j,\frac 3 2 AA_0r_j)\subset\Omega$, which contradicts to (iv), Thus, we have
$r_j\le 8A_0^2r_k$. By symmetry, we also have $r_k\le 8A_0^2r_j$. Let
$$
\CJ:=\{j\in I:\ B(x_j,Ar_j)\cap B(x_k,Ar_k)\neq\emptyset\}.
$$
Then, for any $j\in\CJ$, $d(x_j,x_k)<AA_0(r_j+r_k)\le 9AA_0^3r_k$, which further implies that
$$
B\lf(x_j,(5A_0^3)^{-1}r_j\r)\subset B\lf(x_k,A_0\lf[d(x_j,x_k)+(5A_0^3)^{-1}r_j\r]\r)
\subset B(x_k,11AA_0^4r_k).
$$
Then, from the fact $d(x_j,x_k)\ls\min\{r_j,r_k\}$ and \eqref{eq:doub}, we deduce that
$$
\mu\lf(B\lf(x_j,(5A_0^3)^{-1}r_j\r)\r)\sim \mu(B(x_j,r_j))\sim\mu(B(x_k,r_k))\sim\mu(B(x_k,11AA_0^4r_k))
$$
with the equivalent positive constants depending on $A$. Thus, we obtain (v) by (i).

Finally we prove (vi). Since $\Omega$ is bounded, it follows that there exist $x_0\in X$ and $R\in(0,\fz)$
such that $\Omega\subset B(x_0,R)$. If (vi) fails, then there exists $\sigma_0\in(0,\fz)$ such that
$\mathcal{K}:=\{k\in I:\ r_k>\sigma_0 R\}$ is infinite. Then, for any $k\in\mathcal K$,
$$
\mu(B(x_k,r_k/(5A_0^3)))\sim\mu(B(x_k,\ez_0 R))\gtrsim\mu(B(x_0,R))\gtrsim \mu(\Omega)>0.
$$
By this and (i), we have $\mu(\Omega)\ge\sum_{k\in\mathcal{K}} \mu(B(x_k,r_k/(5A_0^3)))=\fz$.
That is a contradiction. This proves (vi) and hence finishes the proof of Proposition \ref{prop:ozdec}.
\end{proof}

\begin{proposition}\label{prop:chidec}
Let $\Omega\subset X$ be an open set and $\mu(\Omega)<\fz$. Suppose that sequences $\{x_k\}_{k\in I}$ and
$\{r_k\}_{k\in I}$ are as in Proposition \ref{prop:ozdec} with $A:=16A_0^4$. Then there exist non-negative
functions $\{\phi_k\}_{k\in I}$ such that
\begin{enumerate}
\item for any $k\in I$, $0\le\phi_k\le 1$ and $\supp\phi_k\subset B(x_k,2A_0r_k)$;
\item $\sum_{k\in I} \phi_k=\chi_\Omega$;
\item for any $k\in I$, $\phi_k\ge L_0^{-1}$ in $B(x_k,r_k)$, where $L_0$ is as in Proposition
\ref{prop:ozdec};
\item there exists a positive constant $C$ such that, for any $k\in I$,
$\|\phi_k\|_{\CG(x_k,r_k,\eta,\eta)}\le CV_{r_k}(x_k)$.
\end{enumerate}
\end{proposition}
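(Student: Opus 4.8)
The plan is to obtain the functions $\phi_k$ by normalizing a family of non-negative \emph{bump functions}. First I would construct, for each $k\in I$, a function $\psi_k$ on $X$ with $0\le\psi_k\le1$, $\psi_k\equiv1$ on $B(x_k,r_k)$, $\supp\psi_k\subset B(x_k,2A_0r_k)$, and $\|\psi_k\|_{\CG(x_k,r_k,\eta,\eta)}\le CV_{r_k}(x_k)$ for a constant $C$ independent of $k$; then I would set $\Phi:=\sum_{j\in I}\psi_j$ and $\phi_k:=\psi_k\Phi^{-1}\chi_\Omega$. Granting such $\psi_k$, each of (i)--(iv) will follow from the finite-overlap information in Proposition \ref{prop:ozdec}.

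For the bump functions, the natural recipe is to fix a Lipschitz profile $\Psi\colon[0,\fz)\to[0,1]$ with $\Psi\equiv1$ near $0$ and $\supp\Psi$ contained in a bounded interval depending only on $A_0$, and to put $\psi_k:=\Psi(\rho(\cdot,x_k)/s_k)$ for a scale $s_k\sim r_k$, where $\rho$ is a quasi-metric on $X$, equivalent to $d$, carrying the extra (Mac\'{i}as--Segovia type) regularity that underlies the dyadic cube and wavelet systems of $X$; that regularity is exactly what makes $\rho$-Lipschitz functions lie in the H\"older class $\CG(\cdot,\cdot,\eta,\eta)$, the wavelet smoothness index $\eta$ being no larger than the metrization exponent. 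The factor $V_{r_k}(x_k)$ in the norm bound merely records that, by \eqref{eq:doub}, any function bounded by $1$ and supported in $B(x_k,2A_0r_k)$ has $\CG(x_k,r_k,\eta,\eta)$-size term comparable to $V_{r_k}(x_k)$ (on that ball $r_k+d(x_k,\cdot)\sim r_k$ and $V_{r_k}(x_k)+V(x_k,\cdot)\sim V_{r_k}(x_k)$). I expect this step --- producing compactly supported $\eta$-H\"older cut-offs when $d$ is merely a quasi-metric, with exponent matched to $\eta$ --- to be the main obstacle; everything afterwards is bookkeeping resting on Proposition \ref{prop:ozdec}.

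With the $\psi_k$ at hand I would argue as follows. Since $\supp\psi_k\subset B(x_k,2A_0r_k)$, $2A_0\le A=16A_0^4$, and $B(x_k,Ar_k)\subset\Omega$ by Proposition \ref{prop:ozdec}(ii), we have $\supp\psi_k\subset\Omega$; moreover, from $d(x_k,\Omega^\complement)=2AA_0r_k$ a short computation shows that $\supp\psi_k$ lies at $d$-distance $\gtrsim r_k$ from $\Omega^\complement$. For any fixed $x$, if $\psi_{j_0}(x)\ne0$, then every further index $j$ with $\psi_j(x)\ne0$ satisfies $B(x_j,Ar_j)\cap B(x_{j_0},Ar_{j_0})\ne\emptyset$, so Proposition \ref{prop:ozdec}(v) shows that at most $L_0$ of the $\psi_j$ are nonzero at $x$; hence $\Phi$ is a locally finite sum, $0\le\Phi\le L_0$ on $X$, and $\Phi\ge1$ on $\Omega$ because $\{B(x_j,r_j)\}_{j\in I}$ covers $\Omega$ (Proposition \ref{prop:ozdec}(ii)) and $\psi_j\equiv1$ on $B(x_j,r_j)$. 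Consequently $\phi_k$ is well defined with $0\le\phi_k\le1$ and $\supp\phi_k\subset B(x_k,2A_0r_k)$, which is (i); on $\Omega$ one has $\sum_{k\in I}\phi_k=\Phi^{-1}\sum_{k\in I}\psi_k=1$ while off $\Omega$ every $\phi_k=0$, so $\sum_{k\in I}\phi_k=\chi_\Omega$, which is (ii); and on $B(x_k,r_k)$ one has $\psi_k\equiv1$ and $\Phi\le L_0$, so $\phi_k=\Phi^{-1}\ge L_0^{-1}$, which is (iii).

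It remains to verify (iv). The size half of $\|\phi_k\|_{\CG(x_k,r_k,\eta,\eta)}\le CV_{r_k}(x_k)$ is exactly the observation recalled in the second paragraph, so the work is the H\"older estimate for $x,y$ with $d(x,y)\le(2A_0)^{-1}[r_k+d(x_k,x)]$. If $x\notin B(x_k,Kr_k)$ for a suitable $K=K(A_0)$, the smallness of $d(x,y)$ forces $d(x_k,y)>2A_0r_k$, so $\phi_k(x)=\phi_k(y)=0$. If $x\in B(x_k,Kr_k)$, then $r_k+d(x_k,x)\sim r_k$ and $V_{r_k}(x_k)+V(x_k,x)\sim V_{r_k}(x_k)$, so the target reduces to $|\phi_k(x)-\phi_k(y)|\lesssim[d(x,y)/r_k]^\eta$; when $d(x,y)\gtrsim r_k$ this is trivial since $|\phi_k(x)-\phi_k(y)|\le1$, so one may assume $d(x,y)$ small compared with $r_k$, and then, unless $\psi_k(x)=\psi_k(y)=0$ (in which case $\phi_k(x)=\phi_k(y)=0$), both $x$ and $y$ lie in $\Omega$ because $\supp\psi_k$ is at $d$-distance $\gtrsim r_k$ from $\Omega^\complement$. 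On $B(x_k,Kr_k)$ the function $\Phi$ equals a sum of at most $L_0$ of the $\psi_j$, each with $r_j\sim r_k$ (by the argument in the proof of Proposition \ref{prop:ozdec}(v)) and satisfying $|\psi_j(u)-\psi_j(v)|\lesssim[d(u,v)/r_k]^\eta$ for the pairs in play; hence $\Phi$ is $\eta$-H\"older at scale $\sim r_k$ there and $1\le\Phi\le L_0$, and since $t\mapsto t^{-1}$ is Lipschitz on $[1,L_0]$ while $\psi_k$ is bounded and $\eta$-H\"older at scale $\sim r_k$, the product $\phi_k=\psi_k\Phi^{-1}$ satisfies $|\phi_k(x)-\phi_k(y)|\lesssim[d(x,y)/r_k]^\eta$. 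Combining the size and H\"older bounds yields (iv), completing the proof.
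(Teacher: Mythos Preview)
Your proposal is correct and follows essentially the same approach as the paper: construct compactly supported $\eta$-H\"older bump functions $\psi_k$ with $\chi_{B(x_k,r_k)}\le\psi_k\le\chi_{B(x_k,2A_0r_k)}$, set $\phi_k:=\psi_k/\sum_j\psi_j$ on $\Omega$, and verify (i)--(iv) using the finite-overlap property and the fact that intersecting balls have comparable radii. The only difference is in sourcing the bump functions: the paper simply cites \cite[Corollary 4.2]{AH13}, which furnishes $\psi_k$ with $\|\psi_k\|_{\dot C^\eta(X)}\lesssim r_k^{-\eta}$ directly, whereas you sketch their construction via a Mac\'{i}as--Segovia regularized quasi-metric --- which is in fact how \cite{AH13} obtains such functions, so the two routes coincide.
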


\begin{proof}
By \cite[Corollary 4.2]{AH13}, for any $k\in I$, we find a function $\psi_k$ such that
$\chi_{B(x_k,r_k)}\le\psi_k\le\chi_{B(x_k,2A_0r_k)}$ and $\|\psi_k\|_{\dot{C}^\eta(X)}\ls r_k^{-\eta}$.
Here and hereafter, for any $s\in(0,\eta]$ and a measurable function $f$, define
$$
\|f\|_{\dot{C}^s(X)}:=\sup_{x\neq y}\frac{|f(x)-f(y)|}{[d(x,y)]^\bz}.
$$
Since
$A\ge 2A_0$, from (ii) and (v) of Proposition \ref{prop:ozdec}, it follows that, for any $x\in\Omega$,
$1\le\sum_{k\in I}\psi_k(x)\le L_0$. For any $k\in I$ and $x\in X$, let
$$
\phi_k(x):=\begin{cases}
\displaystyle\psi_k(x)\lf[\sum_{j\in I}\psi_j(x)\r]^{-1}&\textup{when } x\in\Omega,\\
0,&\textup{when } x\notin\Omega.
\end{cases}
$$
Then, for any $k\in I$, we have $0\le\phi_k\le 1$, $\supp \phi_k\subset B(x_k,2A_0r_k)$ and $\sum_{k\in I}\phi_k(x)=1$
when $x\in\Omega$. Moreover, for any $k\in I$, we  have  $\phi_k\ge L_0^{-1}$ in $B(x_k,r_k)$. Thus, we prove
(i), (ii) and (iii).

It remains to prove (iv). Fix $k\in I$. For any $y\in X$, we have
$$
|\phi_k(y)|\le\chi_{B(x_k,2A_0r_k)}(y)\ls V_{r_k}(x_k)\frac{1}{V_{r_k}(x_k)+V(x_k,y)}
\lf[\frac{r_k}{r_k+d(x_k,y)}\r]^\eta.
$$
Now we prove that $\phi_k$ satisfies the regularity condition. Suppose that
$d(y,y')\le (2A_0)^{-1}[r_k+d(x_k,y)]$.
If $|\phi_k(y)-\phi_k(y')|\neq 0$, then $d(x_k,y)< (3A_0)^2r_k$. If not, then $d(x_k,y)\ge (3A_0)^2r_k$, so that $\phi_k(y)=0$  and
$$
d(y',x_k)\ge A_0^{-1}d(x_k,y)-d(y,y')\ge (2A_0)^{-1}d(x_k,y)-(2A_0)^{-1}r_k>2A_0r_k
$$
and hence $\phi_k(y')=0$, which contradicts to $|\phi_k(y)-\phi_k(y')|\neq 0$.
Notice that $\psi_k(y')|\psi_j(y)-\psi_j(y')|\neq 0$ implies that $y'\in B(x_k,2A_0r_k)$ and
also $y$ or $y'$ belongs to $B(x_j,2A_0r_j)$, which further implies that
$B(x_k,Ar_k)\cap B(x_j,Ar_j)\neq\emptyset$. Then, by the proof of Proposition \ref{prop:ozdec}(v), the number of $j$ satisfying $\psi_k(y')|\psi_j(y)-\psi_j(y')|\neq 0$ is at most $L_0$ and $r_j\sim r_k$.
Therefore,
\begin{align*}
|\phi_k(y)-\phi_k(y')|&=\lf|\frac{\psi_k(y)}{\sum_{j\in I}\psi_j(y)}
-\frac{\psi_k(y')}{\sum_{j\in I}\psi_j(y')}\r|\\
&\le\frac{|\psi_k(y)-\psi_k(y')|}{\sum_{j\in I}\psi_j(y)}+
\frac{\psi_k(y')\sum_{j\in I}|\psi_j(y)-\psi_j(y')|}{[\sum_{j\in I}\psi_j(y)][\sum_{j\in I}\psi_j(y')]}\\
&\ls\lf[\frac{d(y,y')}{r_k}\r]^{\eta}+\sum_{\{j\in I:\ B(x_k,Ar_k)\cap B(x_j,Ar_j)\neq\emptyset\}}
\lf[\frac{d(y,y')}{r_j}\r]^\eta\\
&\ls\lf[\frac{d(y,y')}{r_k}\r]^\eta\sim V_{r_k}(x_k)\lf[\frac{d(y,y')}{r_k+d(x_k,y)}\r]^\eta
\frac{1}{V_{r_k}(x_k)+V(x_k,y)}\lf[\frac{r_k}{r_k+d(x_k,y)}\r]^\eta.
\end{align*}
Then we obtain the desired regularity condition of $\phi_k$. This finishes the proof of (iv) and hence of
Proposition \ref{prop:chidec}.
\end{proof}

Assume that $f\in (\go{\bz,\gz})'$ belongs to $f\in H^{*,p}(X)$, where $p\in (\omega/(\omega+\eta), 1]$ and $\bz,\ \gz\in(\omega(1/p-1),\eta)$.
To obtain the Calder\'{o}n-Zygmund decomposition of $f$,
we apply Propositions \ref{prop:ozdec} and \ref{prop:chidec} to the
level set $\{x\in X:\ f^*(x)>\lz\}$ with $\lz\in(0,\infty)$. The encountering problem is that such a level set
may not be  open even in the case that $d$ is a metric. To solve this problem
in the case that $d$ is a metric, a variant of the notion of the space of test functions is adopted  in
\cite[Definition 2.5]{GLY08} so that to ensure that the level set is open (see \cite[Remark 2.9]{GLY08}).
Here, we borrow some idea from \cite{GLY08}.

By the proof of
\cite[Theorem 2]{MS79}, we know that there exist $\thz\in(0,1)$ and a metric $d'$ such that $d'\sim d^\thz$.
For any $x\in X$ and $r\in(0,\fz)$, define the \emph{$d'$-ball} $B'(x,r):=\{y\in X:\ d'(x,y)<r\}$.
Then $(X,d',\mu)$ is a doubling metric measure space. Moreover, for any
$x,\ y\in X$ and $r\in(0,\fz)$, we have
$$
\mu(B(y,r+d(x,y)))\sim\mu\lf(B'\lf(y,\lf[r+d(x,y)\r]^\thz\r)\r)\sim \mu\lf(B'\lf(y,r^\thz+d'(x,y)\r)\r),
$$
where the equivalent positive constants are independent of $x$ and $r$.
Using the metric $d'$, we introduce a variant of the space of test functions as follows.

\begin{definition}
For any $x\in X,\ \rho\in(0,\fz)$ and
$\bz',\ \gz'\in(0,\fz)$,  define $G(x,\rho,\bz',\gz')$ to be the set of all functions $f$ satisfying that there
exists a positive constant $C$ such that
\begin{enumerate}
\item (the \emph{size condition}) for any $y\in X$,
$$
|f(y)|\le C\frac 1{\mu(B'(y,\rho+d'(x,y)))}\lf[\frac{\rho}{\rho+d'(x,y)}\r]^{\gz'};
$$
\item (the \emph{regularity condition}) for any $y,\ y'\in X$ satisfying $d(y,y')\le[\rho+d'(x,y)]/2$, then
$$
|f(y)-f(y')|\le C\lf[\frac{d'(y,y')}{\rho+d'(y,y')}\r]^{\bz'}\frac 1{\mu(B'(y,\rho+d'(x,y)))}
\lf[\frac{\rho}{\rho+d'(x,y)}\r]^{\gz'}.
$$
\end{enumerate}
Also, define
$$
\|f\|_{G(x,\rho,\bz',\gz')}:=\inf\{C\in(0,\fz):\ \textup{(i) and (ii) hold true}\}.
$$
\end{definition}

By the previous argument, we find that $\CG(x,r,\bz,\gz)=G(x,r^\thz,\bz/\thz,\gz/\thz)$ with equivalent
norms, where the equivalent positive constants are independent of $x$ and $r$. For any
$\bz,\ \gz\in(0,\eta)$ and $f\in(\go{\bz,\gz})'$, define the
\emph{modified grand maximal function} of $f$ by setting, for any $x\in X$,
$$
f^\star(x):=\sup\lf\{\langle f,\vz\rangle:\ \vz\in\go{\bz,\gz}\textup{ with }
\|\vz\|_{G(x,r^\thz,\bz/\thz,\gz/\thz)}\le 1\textup{ for some } r\in(0,\fz)\r\}.
$$
Then $f^\star\sim f^*$ pointwisely on $X$. For any $\lz\in(0,\fz)$ and $j\in\zz$,
define
$$
\Omega_\lz:=\{x\in X:\ f^\star(x)>\lz\}\qquad \textup{and}\qquad \Omega^j:=\Omega_{2^j}.
$$
By the argument used in \cite[Remark 2.9(ii)]{GLY08}, we find that $\Omega_\lz$ is \emph{open} under the
topology induced by $d'$, so is it under the topology induced by $d$.

Now suppose that $p\in(\om,1]$, $\bz,\ \gz\in(\omega(1/p-1),\eta)$ and $f\in H^{*,p}(X)$.
Then $f^\star\in L^p(X)$ and every $\Omega^j$ with $j\in\zz$ has finite measure. Consequently, there exist
$\{x^j_k\}_{k\in I_j}\subset X$ with $I_j$ being a countable index set, $\{r^j_k\}_{k\in I_j}\subset(0,\fz)$,
$L_0\in\nn$ and a sequence $\{\phi^j_k\}_{k\in I_j}$ of non-negative functions satisfying all the conclusions
of Propositions \ref{prop:ozdec} and \ref{prop:chidec}. For any $j\in\zz$ and $k\in I_j$, define $\Phi^j_k$ by
setting, for any $\vz\in\go{\bz,\gz}$ and $x\in X$,
$$
\Phi^j_k(\vz)(x):=\phi^j_k(x)\lf[\int_X\phi^j_k(z)\,d\mu(z)\r]^{-1}\int_X[\vz(x)-\vz(z)]\phi^j_k(z)\,d\mu(z).
$$
It can be seen that $\Phi^j_k$ is bounded on $\go{\bz,\gz}$ with operator norm depending on $j$ and
$k$; see \cite[Lemma 4.9]{GLY08}. Thus, it makes sense to define a distribution $b^j_k$ on $\go{\bz,\gz}$ by
setting, for any $\vz\in\go{\bz,\gz}$,
\begin{equation}\label{eq:defbjk}
\lf\langle b^j_k,\vz\r\rangle:=\lf\langle f,\Phi^j_k(\vz)\r\rangle.
\end{equation}
To estimate $(b^j_k)^*$, we have the following result. For its proof, see, for example, \cite[Lemma 3.7]{Li98}.
\begin{proposition}\label{prop:bjk*}
For any $j\in\zz$ and $k\in I_j$, $b^j_k$ is defined as in \eqref{eq:defbjk}. Then there exists a positive
constant $C$ such that, for any $j\in\zz$, $k\in I_j$ and $x\in X$,
$$
\lf(b^j_k\r)^*(x)\le C2^j\frac{\mu(B(x^j_k,r^j_k))}{\mu(B(x^j_k,r^j_k))+V(x^j_k,x)}
\lf[\frac{r^j_k}{r^j_k+d(x^j_k,x)}\r]^\bz\chi_{[B(x^j_k,16A_0^4r^j_k)]^\complement}(x)
+Cf^*(x)\chi_{B(x^j_k,16A_0^4r^j_k)}(x).
$$
\end{proposition}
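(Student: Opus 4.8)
The plan is to estimate, for each fixed $x\in X$, the quantity
$(b^j_k)^*(x)=\sup\{|\langle f,\Phi^j_k(\vz)\rangle|\}$, where the supremum runs over all
$\vz\in\go{\bz,\gz}$ with $\|\vz\|_{\CG(x,r,\bz,\gz)}\le1$ for some $r\in(0,\fz)$, by exhibiting
$\Phi^j_k(\vz)$ — or its two natural summands — as a fixed multiple of a \emph{normalized} test function
centered at a carefully chosen point. Write $B:=B(x^j_k,r^j_k)$, $x_B:=x^j_k$, $r_B:=r^j_k$ and
$\phi:=\phi^j_k$; recall from Propositions \ref{prop:ozdec} and \ref{prop:chidec} (with $A=16A_0^4$) that
$\supp\phi\subset2A_0B$, $0\le\phi\le1$, $\phi\ge L_0^{-1}$ on $B$, $\|\phi\|_{\CG(x_B,r_B,\eta,\eta)}\ls
V_{r_B}(x_B)$, $B(x_B,16A_0^4r_B)\subset\Omega^j$, and that there exists $y_B\notin\Omega^j$ with
$d(x_B,y_B)<48A_0^5r_B$; consequently $\int_X\phi\,d\mu\sim\mu(B)=V_{r_B}(x_B)$. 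Throughout one uses the
elementary identity
\begin{equation*}
\Phi^j_k(\vz)(y)=\phi(y)\vz(y)-\sigma_\vz\,\f{\phi(y)}{\int_X\phi\,d\mu},\qquad
\sigma_\vz:=\int_X\vz(z)\phi(z)\,d\mu(z),
\end{equation*}
together with $|\sigma_\vz|\ls1$, which is immediate from $\supp\phi\subset2A_0B$, $0\le\phi\le1$ and
Lemma \ref{lem-add}(ii), and also the equivalent form $\Phi^j_k(\vz)(y)=\f{\phi(y)}{\int_X\phi\,d\mu}
\int_X[\vz(y)-\vz(z)]\phi(z)\,d\mu(z)$, which shows $\supp\Phi^j_k(\vz)\subset2A_0B$.

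\emph{Case 1: $x\in B(x_B,16A_0^4r_B)$.} Since $\|\phi\|_{\CG(x_B,r_B,\eta,\eta)}\ls V_{r_B}(x_B)$ and
$\int_X\phi\,d\mu\sim V_{r_B}(x_B)$, the function $\phi/\int_X\phi\,d\mu$ has $\CG(x_B,r_B,\eta,\eta)$-norm
$\ls1$, hence (as $d(x,x_B)<16A_0^4r_B$ and $\bz,\gz<\eta$) also $\CG(x,r_B,\bz,\gz)$-norm $\ls1$, so
$|\sigma_\vz\langle f,\phi/\!\int\phi\,d\mu\rangle|\ls f^*(x)$. For the remaining piece $\phi\vz$, which lies
in $\go{\bz,\gz}$ (being the sum of $\Phi^j_k(\vz)$ and a multiple of $\phi\in\CG(\eta,\eta)$), I would check
that $\|\phi\vz\|_{\CG(x,\rho,\bz,\gz)}\ls1$ with $\rho:=\min\{r,r_B\}$: the size bound comes from
$|\phi\vz|\le|\vz|\chi_{2A_0B}$ together with $d(x,y)\ls r_B$ on $2A_0B$, while the regularity bound comes
from splitting $\phi(y)\vz(y)-\phi(y')\vz(y')=\phi(y)[\vz(y)-\vz(y')]+\vz(y')[\phi(y)-\phi(y')]$, estimating
the first term by the regularity of $\vz$ and the second by $|\vz(y')|\,|\phi(y)-\phi(y')|$, where
$|\phi(y)-\phi(y')|\ls[d(y,y')/r_B]^\eta$ whenever $\phi(y)\neq\phi(y')$ (combine the regularity of $\phi$
with $|\phi(y)-\phi(y')|\le2$) and the surplus $\eta-\bz>0$ is spent to pass from the scale $r_B$ to the
scale $\rho$. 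This yields $|\langle f,\phi\vz\rangle|\ls f^*(x)$, hence $(b^j_k)^*(x)\ls f^*(x)$.

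\emph{Case 2: $x\notin B(x_B,16A_0^4r_B)$.} Here $d(x,x_B)\ge16A_0^4r_B$, so for all $z,w\in2A_0B$ one has
$d(x,z)\sim d(x,x_B)$ and $d(z,w)\le(2A_0)^{-1}[r+d(x,z)]$; the regularity of $\vz$ therefore applies and,
after absorbing $[r/(r+d(x,x_B))]^\gz\le1$ and $V_r(x)+V(x,x_B)\ge V(x,x_B)$, gives $|\vz(z)-\vz(w)|\ls
[r_B/d(x,x_B)]^\bz\,(V(x,x_B))^{-1}$. Inserting this into $\Phi^j_k(\vz)(y)=\f{\phi(y)}{\int\phi\,d\mu}
\int[\vz(y)-\vz(z)]\phi(z)\,d\mu(z)$ (and, for the H\"older part of the test-function norm, also the
$\eta$-regularity of $\phi$), and using $\supp\Phi^j_k(\vz)\subset2A_0B\subset B(y_B,CA_0^6r_B)$ and
$\int\phi\,d\mu\sim\mu(B)$, I expect the estimate
\begin{equation*}
\lf\|\Phi^j_k(\vz)\r\|_{\CG(y_B,r_B,\bz,\gz)}\ls C_1:=\mu(B)\lf[\f{r_B}{d(x,x_B)}\r]^\bz\f1{V(x,x_B)}.
\end{equation*}
Consequently $|\langle f,\Phi^j_k(\vz)\rangle|\ls C_1f^*(y_B)$; since $y_B\notin\Omega^j$ one has
$f^\star(y_B)\le2^j$ and $f^*\sim f^\star$, whence $|\langle f,\Phi^j_k(\vz)\rangle|\ls 2^jC_1$. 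Finally,
using $V(x,x_B)\sim V(x_B,x)$, $d(x,x_B)\sim r_B+d(x_B,x)$ (valid since $d(x_B,x)\ge16A_0^4r_B$) and
$\mu(B)\le V(x_B,x)$,
\begin{equation*}
2^jC_1\sim2^j\,\f{\mu(B)}{\mu(B)+V(x_B,x)}\lf[\f{r_B}{r_B+d(x_B,x)}\r]^\bz.
\end{equation*}
Taking the supremum over $\vz$ and combining the two cases gives the asserted pointwise bound on
$(b^j_k)^*$.

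The step I expect to be the main obstacle is the scale-bookkeeping concealed in the words ``satisfies the
test-function estimates'': one must verify the size and, above all, the regularity inequality for
$\Phi^j_k(\vz)$ (or for $\phi\vz$) \emph{only} in the regime $d(y,y')\le(2A_0)^{-1}[\rho+d(\cdot,y)]$ but
\emph{uniformly} over the support $2A_0B$ of $\phi$, which forces a careful comparison of the two scales $r$
and $r_B$ — the choice $\rho=\min\{r,r_B\}$ in Case~1 being exactly what makes the ``spike'' of $\vz$ near
$x$ (present when $r\ll r_B$) admissible — and of the quasi-distances from $y$ to the three reference points
$x$, $x_B$, $y_B$, with the hypothesis $\bz<\eta$ used repeatedly to turn the $\eta$-regularity of $\phi$
into $\bz$-regularity of the relevant products. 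Once this is in place, everything else is a routine use of
the doubling property \eqref{eq:doub}, Lemma \ref{lem-add}, and the properties of $\phi$ recorded in
Proposition \ref{prop:chidec}.
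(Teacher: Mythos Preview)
Your proposal is correct and follows essentially the standard route: split into the ``near'' case $x\in B(x^j_k,16A_0^4r^j_k)$, where the two summands $\phi\vz$ and $\sigma_\vz\phi/\!\int\phi$ are each shown to be normalized test functions centered at $x$ (at scale $\min\{r,r_B\}$ and $r_B$, respectively), and the ``far'' case, where the cancellation built into $\Phi^j_k(\vz)$ together with the regularity of $\vz$ yields a function whose $\CG(y_B,r_B,\bz,\gz)$-norm is controlled by the small factor $C_1$, and one then uses $f^\star(y_B)\le 2^j$. This is precisely the argument in the reference the paper cites for this proposition (\cite[Lemma~3.7]{Li98}; see also \cite[Lemma~4.9]{GLY08}), so there is nothing to compare---the paper gives no independent proof.
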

The next lemma is exactly \cite[Lemma 4.10]{GLY08}. The proof remains true if $d$ is a quasi-metric and $\mu$
does not satisfy the reverse doubling condition.
\begin{lemma}\label{lem:Lpest}
Let $\bz\in(0,\fz)$, $p\in(\omega/(\omega+\bz),\fz)$, $L_0\in\nn$ and $I$ be a countable index set.
Then there exists a positive constant $C$ such that, for any sequences $\{x_k\}_{k\in I}\subset X$ and
$\{r_k\}_{k\in I}\subset(0,\fz)$ satisfying $\sum_{k\in I}\chi_{B(x_k,r_k)}\le L_0$,
$$
\int_X \lf\{\sum_{k\in I}\frac{V_{r_k}(x_k)}{V_{r_k}(x_k)+V(x_k,x)}\lf[\frac{r_k}{r_k+d(x_k,x)}\r]^\bz\r\}^p
\,d\mu(x)\le C\mu\lf(\bigcup_{k\in I}B(x_k,r_k)\r).
$$
\end{lemma}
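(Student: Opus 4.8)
Write $B_k:=B(x_k,r_k)$, $\Omega:=\bigcup_{k\in I}B_k$, $g_k(x):=\frac{V_{r_k}(x_k)}{V_{r_k}(x_k)+V(x_k,x)}\lf[\frac{r_k}{r_k+d(x_k,x)}\r]^\bz$ and $F:=\sum_{k\in I}g_k$, so that the claimed estimate reads $\int_X[F(x)]^p\,d\mu(x)\le C\mu(\Omega)$. The plan is to first isolate a \emph{single-bump estimate}: there is a positive constant $C$, depending only on $p$, $\bz$ and the doubling constant $C_{(\mu)}$, such that $\int_X[g_k(x)]^p\,d\mu(x)\le C\mu(B_k)$ for every $k\in I$. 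To prove it, I would split $X$ into $B_k$, on which $g_k\le1$ so that the contribution is at most $\mu(B_k)$, and the dyadic annuli $A_j^k:=\{x\in X:\ 2^{j-1}r_k\le d(x_k,x)<2^jr_k\}$ with $j\in\nn$. On $A_j^k$, the doubling condition \eqref{eq:doub} gives $V_{r_k}(x_k)+V(x_k,x)\gtrsim\mu(B(x_k,2^jr_k))$, hence $g_k(x)\ls 2^{-j\bz}V_{r_k}(x_k)/\mu(B(x_k,2^jr_k))$, and therefore $\int_{A_j^k}[g_k]^p\,d\mu\ls 2^{-j\bz p}[V_{r_k}(x_k)]^p[\mu(B(x_k,2^jr_k))]^{1-p}$. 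When $p\ge1$, bound $[\mu(B(x_k,2^jr_k))]^{1-p}\le[V_{r_k}(x_k)]^{1-p}$; when $p<1$, use \eqref{eq:doub} again to bound it by a constant times $2^{j\omega(1-p)}[V_{r_k}(x_k)]^{1-p}$. In both cases the $j$-sum is a convergent geometric series — it is here, and only here, that the hypothesis $p>\omega/(\omega+\bz)$ is used, namely to guarantee $\omega(1-p)-\bz p<0$ in the range $p<1$ — and the single-bump estimate follows.

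Given the single-bump estimate, the case $p\in(\omega/(\omega+\bz),1]$ is immediate. By the subadditivity of $t\mapsto t^p$ on $[0,\fz)$, one has the pointwise bound $[F(x)]^p\le\sum_{k\in I}[g_k(x)]^p$, so that, integrating and applying the single-bump estimate followed by the bounded-overlap hypothesis $\sum_{k\in I}\chi_{B_k}\le L_0$,
$$
\int_X[F(x)]^p\,d\mu(x)\le\sum_{k\in I}\int_X[g_k(x)]^p\,d\mu(x)\ls\sum_{k\in I}\mu(B_k)=\int_X\sum_{k\in I}\chi_{B_k}(x)\,d\mu(x)\le L_0\,\mu(\Omega),
$$
where the last equality uses Tonelli's theorem (all terms non-negative) and the fact that $\sum_{k\in I}\chi_{B_k}$ is supported in $\Omega$.

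The case $p\in(1,\fz)$ is the real obstacle, since $[F(x)]^p\le\sum_k[g_k(x)]^p$ is false and $F$ may be unbounded; here I would argue by duality. The crux is the estimate that, for every non-negative $h\in L^{p'}(X)$ and every $k\in I$, $\int_X g_k(x)h(x)\,d\mu(x)\ls\int_{B_k}\CM(h)(y)\,d\mu(y)$. Indeed, on $B_k$ one has $g_k\le1$ and $\frac1{\mu(B_k)}\int_{B_k}h\le\CM(h)(y)$ for every $y\in B_k$, whence $\int_{B_k}g_kh\le\int_{B_k}h\le\mu(B_k)\inf_{y\in B_k}\CM(h)(y)\le\int_{B_k}\CM(h)$; and on $A_j^k$ one has $g_k(x)\ls 2^{-j\bz}V_{r_k}(x_k)/\mu(B(x_k,2^jr_k))$ while $\frac1{\mu(B(x_k,2^jr_k))}\int_{B(x_k,2^jr_k)}h\le\CM(h)(y)$ for every $y\in B_k\subset B(x_k,2^jr_k)$, whence $\int_{A_j^k}g_kh\ls 2^{-j\bz}\int_{B_k}\CM(h)$; summing the convergent geometric series in $j$ gives the displayed estimate. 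Summing over $k$, using Tonelli and the bounded overlap, and then the H\"older inequality and the boundedness of $\CM$ on $L^{p'}(X)$ (valid since $p'\in(1,\fz)$),
$$
\int_X F(x)h(x)\,d\mu(x)=\sum_{k\in I}\int_X g_k(x)h(x)\,d\mu(x)\ls\int_X\Big(\sum_{k\in I}\chi_{B_k}(y)\Big)\CM(h)(y)\,d\mu(y)\le L_0\|\chi_\Omega\|_{L^p(X)}\|\CM(h)\|_{L^{p'}(X)}\ls L_0\,[\mu(\Omega)]^{1/p}\|h\|_{L^{p'}(X)}.
$$
Taking the supremum over all such $h$ and using $L^p(X)=(L^{p'}(X))^*$ yields $\|F\|_{L^p(X)}\ls[\mu(\Omega)]^{1/p}$, which is the claim.

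I expect the only genuinely delicate step to be the case $p>1$: the pointwise-to-$\CM$ estimate $\int_X g_kh\ls\int_{B_k}\CM(h)$ together with duality is what makes it work, in contrast to the effortless case $p\le1$. The remaining points — tracking the single place where $p>\omega/(\omega+\bz)$ is needed, and the interchanges of summation and integration — are routine, the latter by Tonelli's theorem since every quantity involved is non-negative.
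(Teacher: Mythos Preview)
Your proof is correct. The paper itself does not supply a proof of this lemma; it simply cites \cite[Lemma~4.10]{GLY08} and remarks that the argument there goes through unchanged when $d$ is only a quasi-metric and $\mu$ lacks the reverse doubling condition. Your approach --- the single-bump estimate $\int_X[g_k]^p\,d\mu\ls\mu(B_k)$, subadditivity of $t\mapsto t^p$ for $p\le1$, and duality against $L^{p'}$ combined with the pointwise-to-maximal-function bound $\int_X g_k h\,d\mu\ls\int_{B_k}\CM(h)\,d\mu$ for $p>1$ --- is exactly the standard route for such estimates and is almost certainly what \cite{GLY08} does as well; in any case it is a complete and self-contained argument, which is more than the present paper provides.
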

Then, by Proposition \ref{prop:bjk*} and Lemma \ref{lem:Lpest}, we have the following result.
\begin{proposition}\label{prop:bgj}
Let $p\in(\om,1]$. For any $j\in\zz$ and $k\in I_j$, let $b^j_k$ be as in
\eqref{eq:defbjk}. Then there exists a positive constant $C$ such that, for any $j\in\zz$,
\begin{equation}\label{4.1x}
\int_X\sum_{k\in I_j}\lf[\lf(b^j_k\r)^*(x)\r]^p\,d\mu(x)\le C\lf\|f^*\chi_{\Omega^j}\r\|_{L^p(X)}^p;
\end{equation}
moreover, there exists $b^j\in H^{*,p}(X)$ such that $b^j=\sum_{k\in I_j} b^j_k$ in $H^{*,p}(X)$ and, for any
$x\in X$,
\begin{equation}\label{eq:bj*}
(b^j)^*(x)\le C2^j\sum_{k\in I_j}\frac{\mu(B(x^j_k,r^j_k))}{\mu(B(x^j_k,r^j_k))+V(x^j_k,x)}
\lf[\frac{r^j_k}{r^j_k+d(x^j_k,x)}\r]^\bz+Cf^*(x)\chi_{\Omega^j}(x);
\end{equation}
if $g^j:=f-b^j$ for any $j\in\zz$, then, for any $x\in X$,
\begin{equation}\label{eq:gj*}
(g^j)^*(x)\le C2^j\sum_{k\in I_j}\frac{\mu(B(x^j_k,r^j_k))}{\mu(B(x^j_k,r^j_k))+V(x^j_k,x)}
\lf[\frac{r^j_k}{r^j_k+d(x^j_k,x)}\r]^\bz+Cf^*(x)\chi_{(\Omega^j)^\complement}(x).
\end{equation}
\end{proposition}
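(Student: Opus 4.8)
The plan is to establish the three displayed estimates in turn, using Proposition~\ref{prop:bjk*}, Lemma~\ref{lem:Lpest}, the doubling inequality \eqref{eq:doub}, and the completeness of $H^{*,p}(X)$ from Proposition~\ref{prop:ban}. Write $B^\sharp_k:=B(x^j_k,16A_0^4r^j_k)$. Throughout I would use the standing facts that $\bigcup_{k\in I_j}B^\sharp_k\subset\Omega^j$ and $\sum_{k\in I_j}\chi_{B^\sharp_k}\le L_0$ (Proposition~\ref{prop:ozdec}(ii) and (v)); that $\sum_{k\in I_j}V_{r^j_k}(x^j_k)=\sum_{k\in I_j}\mu(B(x^j_k,r^j_k))\ls\mu(\Omega^j)$ (from the disjointness of $\{B(x^j_k,r^j_k/(5A_0^3))\}_k$ in Proposition~\ref{prop:ozdec}(i) together with \eqref{eq:doub}); and that $2^{jp}\mu(\Omega^j)\le\int_{\Omega^j}[f^\star(x)]^p\,d\mu(x)\sim\|f^*\chi_{\Omega^j}\|_{L^p(X)}^p$, since $f^\star>2^j$ on $\Omega^j$ and $f^\star\sim f^*$ pointwise.

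To prove \eqref{4.1x}: since $p\le1$, raising the bound of Proposition~\ref{prop:bjk*} to the power~$p$, using $(s+t)^p\le s^p+t^p$, and summing over $k\in I_j$ gives, for each $x\in X$,
\begin{align*}
\sum_{k\in I_j}\lf[\lf(b^j_k\r)^*(x)\r]^p
&\ls 2^{jp}\sum_{k\in I_j}\lf[\frac{V_{r^j_k}(x^j_k)}{V_{r^j_k}(x^j_k)+V(x^j_k,x)}\r]^p\lf[\frac{r^j_k}{r^j_k+d(x^j_k,x)}\r]^{\bz p}\chi_{(B^\sharp_k)^\complement}(x)\\
&\quad+\sum_{k\in I_j}[f^*(x)]^p\chi_{B^\sharp_k}(x).
\end{align*}
Integrating the last sum and using $\sum_k\chi_{B^\sharp_k}\le L_0$ and $\bigcup_kB^\sharp_k\subset\Omega^j$ bounds its contribution by $L_0\|f^*\chi_{\Omega^j}\|_{L^p(X)}^p$. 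For the other sum, I would apply Lemma~\ref{lem:Lpest} to the one-element index family $\{k\}$---valid because $\bz>\omega(1/p-1)$ is precisely $p>\omega/(\omega+\bz)$---to get $\int_X\lf[\frac{V_{r^j_k}(x^j_k)}{V_{r^j_k}(x^j_k)+V(x^j_k,\cdot)}\r]^p\lf[\frac{r^j_k}{r^j_k+d(x^j_k,\cdot)}\r]^{\bz p}\,d\mu\ls V_{r^j_k}(x^j_k)$; summing over $k$ and multiplying by $2^{jp}$ yields $2^{jp}\sum_kV_{r^j_k}(x^j_k)\ls2^{jp}\mu(\Omega^j)\ls\|f^*\chi_{\Omega^j}\|_{L^p(X)}^p$. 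Adding the two estimates gives \eqref{4.1x}.

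Next, the existence of $b^j$, \eqref{eq:bj*}, and \eqref{eq:gj*} off $\Omega^j$. Since $(\phi+\psi)^*\le\phi^*+\psi^*$ pointwise and $0<p\le1$, the functional $\|\cdot\|_{H^{*,p}(X)}^p$ is subadditive, so \eqref{4.1x} shows $\sum_{k\in I_j}\|b^j_k\|_{H^{*,p}(X)}^p=\int_X\sum_{k\in I_j}[(b^j_k)^*]^p\,d\mu<\fz$; hence the partial sums of $\sum_{k\in I_j}b^j_k$ are Cauchy in the complete quasi-Banach space $H^{*,p}(X)$ and converge to some $b^j\in H^{*,p}(X)$. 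As $H^{*,p}(X)$ embeds continuously into $(\go{\bz,\gz})'$ (Proposition~\ref{prop:ban}), this convergence also holds there, so $\langle b^j,\vz\rangle=\sum_{k\in I_j}\langle b^j_k,\vz\rangle$ for every $\vz\in\go{\bz,\gz}$, and therefore $(b^j)^*\le\sum_{k\in I_j}(b^j_k)^*$ pointwise on $X$. Inserting Proposition~\ref{prop:bjk*}, discarding $\chi_{(B^\sharp_k)^\complement}$ in the first term and using $\sum_kf^*\chi_{B^\sharp_k}\le L_0f^*\chi_{\Omega^j}$ in the second, one obtains \eqref{eq:bj*}. For $g^j:=f-b^j$ and $x\notin\Omega^j$, the estimate \eqref{eq:gj*} follows immediately from $(g^j)^*(x)\le f^*(x)+(b^j)^*(x)$ and \eqref{eq:bj*}, whose last term vanishes there.

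The main obstacle is \eqref{eq:gj*} for $x\in\Omega^j$, where the $f^*$-term must be absent and one needs $(g^j)^*(x)\le C2^j\sum_{k\in I_j}\frac{V_{r^j_k}(x^j_k)}{V_{r^j_k}(x^j_k)+V(x^j_k,x)}\lf[\frac{r^j_k}{r^j_k+d(x^j_k,x)}\r]^\bz$. Given an admissible test function $\vz$ at $x$, I would use $\sum_{k\in I_j}\phi^j_k=\chi_{\Omega^j}$ and \eqref{eq:defbjk} to write $\langle g^j,\vz\rangle=\langle f,w\rangle$ with $w:=\vz-\sum_{k\in I_j}\Phi^j_k(\vz)\in\go{\bz,\gz}$, observing that $w=\chi_{(\Omega^j)^\complement}\vz+\sum_{k\in I_j}c^j_k(\vz)\phi^j_k$ pointwise, where $c^j_k(\vz):=[\int_X\phi^j_k]^{-1}\int_X\vz\phi^j_k$ obeys $|c^j_k(\vz)|\le\sup_{B(x^j_k,2A_0r^j_k)}|\vz|$. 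The key quantitative input is that $|\langle f,\phi^j_k\rangle|\le C2^jV_{r^j_k}(x^j_k)$: by Proposition~\ref{prop:chidec}(iv) and the norm comparability $\|\cdot\|_{\CG(y^j_k,r^j_k,\bz,\gz)}\sim\|\cdot\|_{\CG(x^j_k,r^j_k,\bz,\gz)}$ (valid since $d(x^j_k,y^j_k)\ls r^j_k$ for the exterior point $y^j_k\notin\Omega^j$ given by Proposition~\ref{prop:ozdec}(iv)), the function $\phi^j_k/(CV_{r^j_k}(x^j_k))$ is admissible at $y^j_k$, so $|\langle f,\phi^j_k\rangle|\le CV_{r^j_k}(x^j_k)f^\star(y^j_k)\le C2^jV_{r^j_k}(x^j_k)$, while $\int_X\phi^j_k\ge L_0^{-1}V_{r^j_k}(x^j_k)$ by Proposition~\ref{prop:chidec}(iii). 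It then remains to sum $\sum_kc^j_k(\vz)\langle f,\phi^j_k\rangle$ and to estimate the part of $\langle f,w\rangle$ arising from $\chi_{(\Omega^j)^\complement}\vz$; I expect this to be the most delicate part and would organize it by a case split on the width $r$ of $\vz$ versus the Whitney radius $r^j_{k_1}$ of a Whitney ball $B(x^j_{k_1},r^j_{k_1})\ni x$: when $r\gtrsim r^j_{k_1}$, comparing $\chi_{(\Omega^j)^\complement}\vz$ with a test function admissible at $y^j_{k_1}\notin\Omega^j$ via the regularity of $\vz$, and summing over $k$ through the size/decay estimates in Definition~\ref{def:test} and \eqref{eq:doub}; when $r\lesssim r^j_{k_1}$, where $\supp\vz$ lies essentially inside $\Omega^j$, treating the $\chi_{(\Omega^j)^\complement}\vz$ contribution as a tail term controlled by the decay of $\vz$, and extracting the additional decay in $k$ from the mean-zero structure of $\vz-c^j_k(\vz)$ against $\phi^j_k$ together with the H\"older regularity of both $\vz$ and the $\phi^j_k$ (Proposition~\ref{prop:chidec}(iv)). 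This follows the scheme of \cite{GLY08}, the only genuinely new point being the extra bookkeeping forced by the quasi-triangle inequality with constant $A_0$; taking the supremum over admissible $\vz$ at $x\in\Omega^j$ then yields \eqref{eq:gj*} and completes the proof.
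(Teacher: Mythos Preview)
Your treatment of \eqref{4.1x}, the construction of $b^j$, the estimate \eqref{eq:bj*}, and \eqref{eq:gj*} for $x\notin\Omega^j$ is correct and follows the paper. The gap is in your plan for \eqref{eq:gj*} when $x\in\Omega^j$.

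The issue is the splitting $w=\chi_{(\Omega^j)^\complement}\vz+\sum_{k\in I_j}c^j_k(\vz)\phi^j_k$. This identity holds pointwise, but you cannot pair $f$ with the two pieces separately: $\chi_{(\Omega^j)^\complement}\vz$ is in general not in $\go{\bz,\gz}$ (it jumps across $\partial\Omega^j$), so $\langle f,\chi_{(\Omega^j)^\complement}\vz\rangle$ is undefined for a distribution $f$. Equivalently, from $\langle g^j,\vz\rangle=\langle f,\vz\rangle-\sum_k\langle f,\phi^j_k\vz\rangle+\sum_kc^j_k(\vz)\langle f,\phi^j_k\rangle$ you cannot collapse the middle infinite sum to $\langle f,\chi_{\Omega^j}\vz\rangle$, because the partial sums do not converge in $\go{\bz,\gz}$ and the limit function is not a test function. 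Your phrases ``comparing $\chi_{(\Omega^j)^\complement}\vz$ with a test function'' and ``treating the $\chi_{(\Omega^j)^\complement}\vz$ contribution as a tail term'' presuppose exactly this illegitimate pairing.

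The paper resolves this by a \emph{local} rather than global decomposition. Having fixed $k_0$ with $x\in B(x^j_{k_0},r^j_{k_0})$, it introduces the \emph{finite} index set $\CJ:=\{n:\ B(x^j_n,16A_0^4r^j_n)\cap B(x^j_{k_0},16A_0^4r^j_{k_0})\neq\emptyset\}$ and, in the delicate case $r\le r^j_{k_0}$, writes
\[
\langle g^j,\vz\rangle=\lf\langle f,\wz\vz\r\rangle-\sum_{n\in\CJ}\lf\langle f,\wz\vz_n\r\rangle-\sum_{n\notin\CJ}\lf\langle b^j_n,\vz\r\rangle,
\]
where $\wz\vz:=(1-\sum_{n\in\CJ}\phi^j_n)\vz$ and $\wz\vz_n:=c^j_n(\vz)\phi^j_n$. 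Because $\CJ$ is finite, $\wz\vz$ and each $\wz\vz_n$ are genuine elements of $\go{\bz,\gz}$; the terms with $n\notin\CJ$ are handled directly by Proposition~\ref{prop:bjk*} (since then $x\notin B(x^j_n,16A_0^4r^j_n)$), and the main work is verifying $\|\wz\vz\|_{\CG(y^j_{k_0},r^j_{k_0},\bz,\gz)}\ls 1$ for an exterior point $y^j_{k_0}$, so that $|\langle f,\wz\vz\rangle|\ls f^\star(y^j_{k_0})\le 2^j$. Your case split on $r$ versus the local Whitney radius is the right organizing principle, but you need this finite-$\CJ$ decomposition (keeping $\langle f,\vz\rangle$ together with the nearby $\phi^j_n\vz$ terms) rather than the global one to make every pairing with $f$ legitimate.
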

\begin{proof}	
Fix $j\in\zz$. We first prove \eqref{4.1x}. Indeed, by Proposition \ref{prop:bjk*}, we find that
\begin{align*}
\int_X\sum_{k\in I_j}\lf[\lf(b^j_k\r)^*(x)\r]^p\,d\mu(x)&\ls 2^{jp}\int_X\sum_{k\in I_j}
\lf\{\frac{\mu(B(x^j_k,r^j_k))}{\mu(B(x^j_k,r^j_k))+V(x^j_k,x)}\lf[\frac{r^j_k}{r^j_k+d(x^j_k,x)}\r]^\bz\r\}^p
\,d\mu(x)\\
&\quad+\int_{\bigcup_{k\in I_j} B(x^j_k,16A_0^4r^j_k)} [f^*(x)]^p\,d\mu(x).
\end{align*}
By Proposition \ref{prop:ozdec}(ii), we have
$
\Omega^j=\bigcup_{k\in I_j} B(x^j_k,16A_0^4r^j_k).
$
Applying this and Lemma \ref{lem:Lpest}, the first term in  the right-hand side of the above formula is
bounded by a harmlessly positive constant multiple of $2^{jp}\mu(\Omega^j)$. Combining this with $f^\ast\sim
f^\star$ implies that
$$
\int_X\sum_{k\in I_j}\lf[\lf(b^j_k\r)^*(x)\r]^p\,d\mu(x)\ls 2^{jp}\mu\lf(\Omega^j\r)+\int_{\Omega^j}[f^*(x)]^p\,d\mu(x)
\ls\lf\|f^\ast \chi_{\Omega^j}\r\|_{L^p(X)}^p,
$$
which proves \eqref{4.1x}.

Next we prove \eqref{eq:bj*}. By \eqref{4.1x}, the dominated convergence theorem and the completeness of
$H^{*,p}(X)$ (see Proposition \ref{prop:ban}), we know that there exists $b^j\in H^{*,p}(X)$ such that
$b^j=\sum_{k\in I_j} b^j_k$ in $H^{*,p}(X)$. Moreover, from Proposition \ref{prop:bjk*} and
$\Omega^j=\bigcup_{k\in I_j} B(x^j_k,16A_0^4r^j_k)$,  we deduce that, for any $x\in X$,
$$
(b^j)^*(x)\le\sum_{k\in I_j}\lf(b^j_k\r)^*(x)\ls 2^j\sum_{k\in I_j}
\frac{\mu(B(x^j_k,r^j_k))}{\mu(B(x^j_k,r^j_k))+V(x^j_k,x)}\lf[\frac{r^j_k}{r^j_k+d(x^j_k,x)}\r]^\bz
+f^*(x)\chi_{\Omega^j}(x).
$$
This finishes the proof of \eqref{eq:bj*}.

It remains to prove \eqref{eq:gj*}. If $x\in(\Omega^j)^\complement$, then, by \eqref{eq:bj*}, we conclude that
$$
(g^j)^*(x)\le f^*(x)+(b^j)^*(x)\ls 2^j\sum_{k\in I_j}\frac{\mu(B(x^j_k,r^j_k))}{\mu(B(x^j_k,r^j_k))+V(x^j_k,x)}
\lf[\frac{r^j_k}{r^j_k+d(x^j_k,x)}\r]^\bz+f^*(x),
$$
as desired.

Now we consider the case $x\in\Omega^j$.  According to Proposition \ref{prop:ozdec}(v), for any $n\in I_j$, we
choose a point $y^j_n\notin\Omega^j$
satisfying $32A_0^5r^j_n\le d(x^j_n,y^j_n)<48A_0^5r^j_n$. Since $x\in\Omega^j$, it follows that there exists
$k_0\in I_j$ such that $x\in B(x^j_{k_0},r^j_{k_0})$. Let $\CJ$ be the set of all $n\in I_j$ such that
$B(x^j_n,16A_0^4r^j_n)\cap B(x^j_{k_0},16A_0^4r^j_{k_0})\neq\emptyset$. Then, by the proof of Proposition
\ref{prop:ozdec}(v), $\#\CJ\le L_0$ and $r^j_n\sim r^j_{k_0}$ whenever $n\in\CJ$.

Suppose that $\vz\in\go{\bz,\gz}$ with
$\|\vz\|_{\CG(x,r,\bz,\gz)}\le 1$ for some $r\in(0,\fz)$.
We then estimate $\langle g^j,\vz\rangle $  by considering the  cases $r\le r^j_{k_0}$ and $r> r^j_{k_0}$,
respectively.

{\it Case 1) $r\le r^j_{k_0}$.} In this case, we write
$$
\langle g^j,\vz\rangle=\langle f,\vz\rangle-\sum_{n\in I_j}\langle b^j_n,\vz\rangle
=\langle f,\vz\rangle-\sum_{n\in\CJ}\langle b^j_n,\vz\rangle-\sum_{n\notin\CJ}\langle b^j_n,\vz\rangle
=\lf\langle f,\wz{\vz}\r\rangle-\sum_{n\in\CJ}\lf\langle f,\wz{\vz}_n\r\rangle
-\sum_{n\notin\CJ}\langle b^j_n,\vz\rangle,
$$
where $\wz{\vz}:=(1-\sum_{n\in\CJ}\phi^j_n)\vz$ and, for any $n\in\CJ$,
$$
\wz{\vz}_n:=\phi^j_n\lf[\int_X\phi^j_n(z)\,d\mu(z)\r]^{-1}\int_X\vz(z)\phi^j_n(z)\,d\mu(z).
$$

We first consider the term $\sum_{n\notin\CJ}\langle b^j_n,\vz\rangle$. Indeed, from $x\in B(x^j_{k_0},r^j_{k_0})$, it follows that $x\notin B(x^j_n,16A_0^4x^j_n)$ when $n\notin\CJ$. Applying Proposition \ref{prop:bjk*} implies that
$$
\lf|\left\langle b^j_n,\vz\right\rangle\r|\le \lf|\lf(b^j_n\r)^*(x)\r|\ls 2^j
\frac{\mu(B(x^j_n,r^j_n))}{\mu(B(x^j_n,r^j_n))+V(x^j_n,x)}\lf[\frac{r^j_n}{r^j_n+d(x^j_n,x)}\r]^\bz,
$$
and hence
\begin{align*}
\sum_{n\notin\CJ}\lf|\left\langle b^j_n,\vz\right\rangle\r|\ls 2^j\sum_{n\notin\CJ}
\frac{\mu(B(x^j_n,r^j_n))}{\mu(B(x^j_n,r^j_n))+V(x^j_n,x)}\lf[\frac{r^j_n}{r^j_n+d(x^j_n,x)}\r]^\bz,
\end{align*}
 as desired.

Next we consider the term $\sum_{n\in\CJ}\lf\langle f,\wz{\vz}_n\r\rangle$.
Notice that $\|\wz\vz_n\|_{\CG(x^j_n,r^j_n,\bz,\gz)}\ls 1$. By $d(x^j_n, y^j_n)\sim r^j_n$, we then have
$\|\wz\vz_n\|_{\CG(y^j_n,r^j_n,\bz,\gz)}\ls 1$. Therefore,
$$
\lf|\lf\langle f,\wz{\vz}_n\r\rangle\r|\ls f^*\lf(y^j_n\r)\sim f^\star\lf(y^j_n\r)\ls 2^j
\sim 2^j\frac{\mu(B(x^j_n,r^j_n))}{\mu(B(x^j_n,r^j_n))+V(x^j_n,x)}\lf[\frac{r^j_n}{r^j_n+d(x^j_n,x)}\r]^\bz,
$$
where, in the last step, we used the facts that  $x\in B(x^j_{k_0},r^j_{k_0})$ and $d(x^j_n,x^j_{k_0})\ls
r^j_n+ r^j_{k_0}\sim r^j_n$ whenever $n\in\CJ$. Then, summing all $n\in\CJ$, we obtain the desired estimate.

Finally, we consider the  term $\langle f,\wz{\vz}\rangle$. Since ${\vz}\in\go{\bz,\gz}$, it is easy to see that
$\wz{\vz}\in\go{\bz,\gz}$. Once we have proved that
\begin{equation}\label{4.x}
\lf\|\wz{\vz}\r\|_{\CG(y^j_{k_0},r^j_{k_0},\bz,\gz)}\ls 1,
\end{equation}
then
$$
\lf|\lf\langle f,\wz{\vz}\r\rangle\r|\ls f^*\lf(y^j_{k_0}\r)\sim f^\star\lf(y^j_{k_0}\r)\ls 2^j
\sim 2^j\frac{\mu(B(x^j_{k_0},r^j_{k_0}))}{\mu(B(x^j_{k_0},r^j_{k_0}))+V(x^j_{k_0},x)}
\lf[\frac{r^j_{k_0}}{r^j_{k_0}+d(x^j_{k_0},x)}\r]^\bz,
$$
as desired.

To prove \eqref{4.x}, we first consider the size condition.
For any $z\in B(x^j_{k_0},16A_0^4r^j_{k_0})$, by  Proposition \ref{prop:chidec},  we have
$\sum_{n\in\CJ}\phi^j_n(z)=\sum_{n\in I_j}\phi^j_n(z)=1$ and hence $\wz{\vz}(z)=0$. When
$d(z,x^j_{k_0})\ge 16A_0^4r^j_{k_0}$, by the fact $d(x^j_{k_0},z)\ge 2A_0d(x,x^j_{k_0})$, we have
\begin{align}\label{eq-x1}
r^j_{k_0}+d\lf(z,y^j_{k_0}\r)&\le r^j_{k_0}+A_0\lf[d\lf(z,x^j_{k_0}\r)+d\lf(x^j_{k_0},y^j_{k_0}\r)\r]
\le (2A_0)^6\lf[r^j_{k_0}+d\lf(z,x^j_{k_0}\r)\r]\\
&\le(2A_0)^7d\lf(z,x^j_{k_0}\r)\le(2A_0)^8d(x,z)\le(2A_0)^8[r+d(x,z)]\notag
\end{align}
and hence $\mu(B(y^j_{k_0},r^j_{k_0}))+V(y^j_{k_0},z)\ls V_r(x)+V(x,z)$,
which, together with the size condition of $\vz$ and the fact
that $r\le r^j_{k_0}$, further implies that
$$
\lf|\wz{\vz}(z)\r|\le|\vz(z)|\le\frac 1{V_r(x)+V(x,z)}\lf[\frac r{r+d(x,z)}\r]^\gz
\ls\frac 1{\mu(B(y^j_{k_0},r^j_{k_0}))+V(y^j_{k_0},z)}\lf[\frac {r^j_{k_0}}{r^j_{k_0}+d(y^j_{k_0},z)}\r]^\gz.
$$
This finishes the proof of the size condition.

Now we consider the regularity of $\wz\vz$. Suppose that $z,\ z'\in X$ with
$d(z,z')\le (2A_0)^{-1}[r^j_{k_0}+d(z,y^j_{k_0})]$. Due to the size condition, we only need to consider the case
$d(z,z')\le (2A_0)^{-9}[r^j_{k_0}+d(z,y^j_{k_0})]$. If
$\wz\vz(z)-\wz\vz(z')\neq 0$, then either  $d(z,x^j_{k_0})\ge 16A_0^4r^j_{k_0}$ or  $d(z',x^j_{k_0})\ge 16A_0^4r^j_{k_0}$,
which always implies that  $d(z,x^j_{k_0})\ge 8A_0^3r^j_{k_0}$.

Indeed, if $d(z,x^j_{k_0})<8A_0^3r^j_{k_0}$, then $d(z,y^j_{k_0})\le A_0[d(z,x^j_{k_0})+d(x^j_{k_0},y^j_{k_0})]<(2A_0)^6r^j_{k_0}$
and hence $d(z,z')\le (2A_0)^3r^j_{k_0}$, which further implies that
$d(z',x^j_{k_0})\le A_0[d(z',z)+d(z,x^j_{k_0})]<16A_0^4r^j_{k_0}$ and it is a contraction.

Notice that $d(z,x^j_{k_0})\ge 8A_0^3r^j_{k_0}$, which, together with an argument as in the estimation of
\eqref{eq-x1}, implies $r^j_{k_0}+d(z,y^j_{k_0})\le(2A_0)^8[r+d(z,x)]$, so that $d(z,z')\le
(2A_0)^{-1}[r+d(z,x)]$. By the definition of $\wz\vz$, we find that
\begin{align*}
\lf|\wz\vz(z)-\wz\vz(z')\r|
&\le \lf(1-\sum_{n\in\CJ}\phi^j_n(z)\r)|\vz(z)-\vz(z')|+
|\vz(z')|\sum_{n\in\CJ}\lf|\phi^j_n(z)-\phi^j_n(z')\r|.
\end{align*}

Using the regularity condition of $\vz$ and the fact $d(z,z')\le (2A_0)^{-1}[r+d(z,x)]$, we obtain
\begin{align*}
\lf(1-\sum_{n\in\CJ}\phi^j_n(z)\r)|\vz(z)-\vz(z')|
&\ls \lf[\frac{d(z,z')}{r+d(z,x)}\r]^\beta\frac 1{V_r(x)+V(x,z)}\lf[\frac r{r+d(x,z)}\r]^\gz\\
&
\ls \lf[\frac{d(z,z')}{r^j_{k_0}+d(z,y^j_{k_0})}\r]^\beta\frac 1{\mu(B(y^j_{k_0},r^j_{k_0}))+V(y^j_{k_0},z)}\lf[\frac {r^j_{k_0}}{r^j_{k_0}+d(y^j_{k_0},z)}\r]^\gz,
\end{align*}
where, in the last step, we used $r^j_{k_0}+d(z,y^j_{k_0})\ls r+d(z,x)$, $r\le r^j_{k_0}$, $x\in
B(x^j_{k_0},r^j_{k_0})$ and $d(y^j_{k_0},x^j_{k_0})\sim r^j_{k_0}$.

We now estimate $|\vz(z')|\sum_{n\in\CJ}|\phi^j_n(z)-\phi^j_n(z')|$.
If $\vz(z')|\phi^j_n(z)-\phi^j_n(z')|\neq 0$, then $z'\notin B(x^j_{k_0},16A_0^4r^j_{k_0})$ and
$z$ or $z'$ belongs to $B(x^j_n,2A_0r^j_n)$.
When $n\in\CJ$, we have $r^j_n\sim r^j_{k_0}\sim r^j_{k_0}+d(y^j_{k_0},z)$.
Also, $r^j_{k_0}+d(z,y^j_{k_0})\ls r+d(z,x)\sim r+d(z',x)$.
By these, $\#\CJ\le L_0$ and $r\le r^j_{k_0}$, we
conclude that
\begin{align*}
|\vz(z')|\sum_{n\in\CJ}\lf|\phi^j_n(z)-\phi^j_n(z')\r|
&\ls \frac{1}{V_r(x)+V(x,z')}\lf[\frac{r}{r+d(z,x)}\r]^\gz\sum_{n\in\CJ}
\lf[\frac{d(z,z')}{r^j_n}\r]^\eta\\
&\ls\lf[\frac{d(z,z')}{r^j_{k_0}+d(y^j_{k_0},z)}\r]^\bz\frac 1{\mu(B(y^j_{k_0},r^j_{k_0}))+V(y^j_{k_0},z)}
\lf[\frac {r^j_{k_0}}{r^j_{k_0}+d(y^j_{k_0},z)}\r]^\gz.
\end{align*}
This finishes the proof of the regularity condition  and hence of \eqref{4.x}. Thus, we complete the proof of
Case 1).

{\it Case 2) $r>r^j_{k_0}$.} In this case, we write
$$
\lf|\left\langle g^j,\vz\right\rangle\r|\le|\langle f,\vz\rangle|
+\sum_{n\in\CJ}\lf|\left\langle b^j_n,\vz\right\rangle \r|
+\sum_{n\notin\CJ}\lf|\left\langle b^j_n,\vz\right\rangle \r|.
$$
The estimation of $\sum_{n\notin\CJ}|\langle b^j_n,\vz\rangle |$ has already been given in Case 1).

From  $x\in B(x^j_{k_0},r^j_{k_0})$ and $d(y^j_{k_0},x^j_{k_0})\sim r^j_{k_0}\ls r$, it follows that
$\|\vz\|_{\CG(y^j_{k_0},r,\bz,\gz)}\ls 1$ and hence
$$
|\langle f,\vz\rangle|\ls f^*\lf(y^j_{k_0}\r)\ls 2^j\sim 2^j
\frac{\mu(B(x^j_{k_0},r^j_{k_0}))}{\mu(B(x^j_{k_0},r^j_{k_0}))+V(x^j_{k_0},x)}
\lf[\frac{r^j_{k_0}}{r^j_{k_0}+d(x^j_{k_0},x)}\r]^\bz.
$$

If $n\in\CJ$, then $r^j_n\sim r^j_{k_0}$ and hence $d(y^j_n,x^j_{k_0})\ls r^j_{k_0}$. This,
together with the fact $r^j_{k_0}<r$ and $x\in B(x^j_{k_0},r^j_{k_0})$, implies that
$\|\vz\|_{\CG(y^j_n,r,\bz,\gz)}\ls 1$. Thus, by Proposition \ref{prop:bjk*}, we have
\begin{equation*}
\sum_{n\in\CJ}\lf|\lf\langle b^j_n,\vz\r\rangle\r|\ls\sum_{n\in\CJ}\lf(b^j_n\r)^*\lf(y^j_n\r)
\ls 2^j\sum_{n\in\CJ}\frac{\mu(B(x^j_n,r^j_n))}{\mu(B(x^j_n,r^j_n))+V(x^j_n,x)}
\lf[\frac{r^j_n}{r^j_n+d(x^j_n,x)}\r]^\bz.
\end{equation*}
Then we obtain the desired estimate for  $\langle g^j,\vz\rangle $  in the case  $r>r^j_{k_0}$.

Combining the two cases above, we find that, for any $x\in\Omega^j$,
$$
(g^j)^*(x)\ls 2^j\sum_{k\in I_j}\frac{\mu(B(x^j_k,r^j_k))}{\mu(B(x^j_k,r^j_k))+V(x^j_k,x)}
\lf[\frac{r^j_k}{r^j_k+d(x^j_k,x)}\r]^\bz.
$$
Thus, \eqref{eq:gj*} holds true. This finishes the proof of Proposition \ref{prop:bgj}.
\end{proof}

%================================================

\subsection{Atomic characterization of $H^{*,p}(X)$}\label{prat}

In this section, we prove $H^{*,p}(X)\subset H^{p,q}_\at(X)$ and complete the proof of Theorem \ref{thm:atom}.
First, we obtain dense subspaces of $H^{*,p}(X)$.
\begin{lemma}[{\cite[Proposition 4.12]{GLY08}}]\label{lem:dense}
Let $p\in(\om,1]$, $\bz,\ \gz\in(\omega(1/p-1),\eta)$ and $q\in[1,\fz)$.
If regard $H^{*,p}(X)$ as a subspace of $(\go{\bz,\gz})'$, then $L^q(X)\cap H^{*,p}(X)$ is dense in
$H^{*,p}(X)$.
\end{lemma}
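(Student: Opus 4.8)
The plan is to build, for each $j\in\nn$, a Calder\'on--Zygmund decomposition $f=g^j+b^j$ of a given $f\in H^{*,p}(X)$ via Proposition \ref{prop:bgj}, to show that $b^j\to 0$ in $H^{*,p}(X)$, and to check that the good parts $g^j$ already lie in $L^q(X)$; this exhibits $\{g^j\}_{j\in\nn}\subset L^q(X)\cap H^{*,p}(X)$ converging to $f$ in $H^{*,p}(X)$. Fix $f\in H^{*,p}(X)$, so $f^\star\sim f^*\in L^p(X)$; by the Chebyshev inequality, $\mu(\Omega^j)=\mu(\{f^\star>2^j\})\le 2^{-jp}\|f^\star\|_{L^p(X)}^p<\fz$ for every $j\in\zz$, and since the sets $\Omega^j$ are nested decreasing in $j$ with $\bigcap_{j}\Omega^j\subset\{f^\star=\fz\}$ a $\mu$-null set, we get $\mu(\Omega^j)\to 0$ and $\chi_{\Omega^j}\to 0$ $\mu$-almost everywhere as $j\to\fz$. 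For each $j\in\nn$, Proposition \ref{prop:bgj} provides $b^j=\sum_{k\in I_j}b^j_k\in H^{*,p}(X)$ satisfying \eqref{4.1x}, \eqref{eq:bj*} and \eqref{eq:gj*}; since $H^{*,p}(X)$ is a (quasi-)Banach space by Proposition \ref{prop:ban}, $g^j:=f-b^j\in H^{*,p}(X)$.

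First I would show $b^j\to 0$ in $H^{*,p}(X)$. The grand maximal function is sublinear, so $(b^j)^*\le\sum_{k\in I_j}(b^j_k)^*$ pointwise on $X$; since $p\le 1$, this gives $[(b^j)^*]^p\le\sum_{k\in I_j}[(b^j_k)^*]^p$, and integrating and using \eqref{4.1x} yields $\|b^j\|_{H^{*,p}(X)}^p=\|(b^j)^*\|_{L^p(X)}^p\le C\|f^*\chi_{\Omega^j}\|_{L^p(X)}^p$. As $f^*\in L^p(X)$ and $\chi_{\Omega^j}\to 0$ $\mu$-a.e., the dominated convergence theorem forces $\|b^j\|_{H^{*,p}(X)}\to 0$, hence $g^j\to f$ in $H^{*,p}(X)$ as $j\to\fz$.

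Next I would verify that each $g^j$ is (represented by) an element of $L^q(X)$. Put
$S_j(x):=\sum_{k\in I_j}\frac{\mu(B(x^j_k,r^j_k))}{\mu(B(x^j_k,r^j_k))+V(x^j_k,x)}\lf[\frac{r^j_k}{r^j_k+d(x^j_k,x)}\r]^\bz$,
so that \eqref{eq:gj*} reads $(g^j)^*\le C2^jS_j+Cf^*\chi_{(\Omega^j)^\complement}$. Since $q\ge 1>\omega/(\omega+\bz)$, Lemma \ref{lem:Lpest} applied with exponent $q$ to the bounded-overlap family from Proposition \ref{prop:ozdec} gives $\|S_j\|_{L^q(X)}^q\le C\mu(\Omega^j)<\fz$; and because $f^\star\sim f^*$ forces $f^*(x)\le C2^j$ for $x\in(\Omega^j)^\complement$ while $q\ge p$, one has $\int_X[f^*\chi_{(\Omega^j)^\complement}]^q\,d\mu\le (C2^j)^{q-p}\|f^*\|_{L^p(X)}^p<\fz$. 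Hence $(g^j)^*\in L^q(X)$, and \eqref{eq-xxx} gives $\CM^+(g^j)\le C(g^j)^*\in L^q(X)$, i.e.\ $g^j\in H^{+,q}(X)$. By Theorem \ref{thm:H=L}(i) when $q\in(1,\fz)$, and by Proposition \ref{prop:HsubL} when $q=1$, there exists $\wz{g^j}\in L^q(X)$ with $\langle g^j,\vz\rangle=\int_X\wz{g^j}(x)\vz(x)\,d\mu(x)$ for all $\vz\in\go{\bz,\gz}$; that is, $g^j\in L^q(X)\cap H^{*,p}(X)$.

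Combining the two previous steps, $\{g^j\}_{j\in\nn}\subset L^q(X)\cap H^{*,p}(X)$ and $g^j\to f$ in $H^{*,p}(X)$, which is exactly the asserted density. I expect the main obstacle to be the $L^q$-integrability of $g^j$ in the third step: it genuinely requires the spatial decay encoded in \eqref{eq:gj*}, used together with the tail estimate of Lemma \ref{lem:Lpest} \emph{and} the a.e.\ bound $f^*\ls 2^j$ off $\Omega^j$. A secondary, routine care point is that $H^{*,p}(X)$, $H^{+,q}(X)$ and $L^q(X)$ must all be viewed as acting on the same test-function space $\go{\bz,\gz}$, so that the identifications supplied by Theorem \ref{thm:H=L} and Proposition \ref{prop:HsubL} are consistent with the distribution $g^j$.
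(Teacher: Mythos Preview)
Your proof is correct and follows exactly the natural route: the paper itself gives no argument here, merely citing \cite[Proposition~4.12]{GLY08}, and your Calder\'on--Zygmund approach via Proposition~\ref{prop:bgj}, Lemma~\ref{lem:Lpest}, and Proposition~\ref{prop:HsubL}/Theorem~\ref{thm:H=L} is precisely what that reference does. The key points---that $\|b^j\|_{H^{*,p}(X)}^p\ls\|f^*\chi_{\Omega^j}\|_{L^p(X)}^p\to 0$ by dominated convergence, and that $(g^j)^*\in L^q(X)$ by combining the tail estimate of Lemma~\ref{lem:Lpest} (applicable since $q\ge 1>\omega/(\omega+\bz)$) with the bound $f^*\ls 2^j$ off $\Omega^j$---are handled cleanly.
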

In the next two lemmas, we suppose that $f\in L^2(X)\cap H^{*,p}(X)$. Based on Proposition
\ref{prop:HsubL} and \eqref{eq-xxx}, we may follow \cite[Remark 4.14]{GLY08} and assume that there exists a
positive constant $C$ such that, for any $x\in X$, $|f(x)|\le Cf^*(x)$.
With all the notation as in the previous section, for any $j\in\zz$ and $k\in I_j$, define
\begin{equation}\label{x}
m^j_k:=\frac 1{\|\phi^j_k\|_{L^1(X)}}\int_X f(\xi)\phi^j_k(\xi)\,d\mu(\xi)\quad\textup{and}\quad
b^j_k:=\lf(f-m^j_k\r)\phi^j_k.
\end{equation}
Then we have the following technical lemma.
\begin{lemma}[{\cite[Proposition 4.13]{GLY08}}]\label{lem:funbgj}
For any $j\in\zz$ and $k\in I_j$, let $m^j_k$ and $b^j_k$ be  as in \eqref{x}. Then
\begin{enumerate}
\item there exists a positive constant $C$, independent of $j$ and $k\in I_j$, such that $|m^j_k|\le C2^j$;
\item $b^j_k$ induces the same distribution as defined in \eqref{eq:defbjk};
\item $\sum_{k\in I_j} b^j_k$ converges to some function $b^j$ in $L^2(X)$, which induces a distribution that
coincides with $b^j$ as in Proposition \ref{prop:bgj};
\item let $g^j:=f-b^j$. Then $g^j=f\chi_{(\Omega^j)^\complement}+\sum_{k\in I_j}m^j_k\phi^j_k$. Moreover,
there exists a positive constant $C$, independent of $j$, such that, for any $x\in X$, $|g^j(x)|\le C2^j$.
\end{enumerate}
\end{lemma}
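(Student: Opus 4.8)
The plan is to verify the four assertions in the order stated, in each case using only the geometry of the Whitney-type covering $\{B(x^j_k,r^j_k)\}_{k\in I_j}$ and the partition of unity $\{\phi^j_k\}_{k\in I_j}$ recorded in Propositions \ref{prop:ozdec} and \ref{prop:chidec}, together with the pointwise bound $|f|\le Cf^*$ available for $f\in L^2(X)\cap H^{*,p}(X)$. The scheme follows \cite[Proposition 4.13]{GLY08}, whose argument uses neither that $d$ is a metric nor the reverse doubling condition of $\mu$.

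For (i), fix $j\in\zz$ and $k\in I_j$. Since $\phi^j_k\ge L_0^{-1}$ on $B(x^j_k,r^j_k)$ and $\supp\phi^j_k\subset B(x^j_k,2A_0r^j_k)$, the doubling condition \eqref{eq:doub} gives $\|\phi^j_k\|_{L^1(X)}\sim\mu(B(x^j_k,r^j_k))\sim V_{r^j_k}(x^j_k)$. Hence, by Proposition \ref{prop:chidec}(iv), the normalized bump $\psi^j_k:=\phi^j_k/\|\phi^j_k\|_{L^1(X)}$ satisfies $\|\psi^j_k\|_{\CG(x^j_k,r^j_k,\eta,\eta)}\ls1$, and therefore also $\|\psi^j_k\|_{\CG(y^j_k,r^j_k,\bz,\gz)}\ls1$, where $y^j_k\notin\Omega^j$ with $d(x^j_k,y^j_k)\ls r^j_k$ is the point furnished by Proposition \ref{prop:ozdec}(iv) (recall $\bz,\gz<\eta$ and that the test-function norms at the centers $x^j_k$ and $y^j_k$ are comparable at the common scale $r^j_k$). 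Writing $m^j_k=\langle f,\psi^j_k\rangle$ and using $f^*\sim f^\star$ together with $f^\star(y^j_k)\le2^j$, we conclude $|m^j_k|\ls f^*(y^j_k)\ls2^j$.

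For (ii), for $f\in L^2(X)$ and $\vz\in\go{\bz,\gz}\subset L^2(X)$, unwinding \eqref{eq:defbjk} and applying the Fubini theorem (legitimate since $f\phi^j_k\in L^1(X)$ and $\vz$ is bounded) gives
\[
\langle f,\Phi^j_k(\vz)\rangle=\int_X f\phi^j_k\vz\,d\mu-m^j_k\int_X\phi^j_k\vz\,d\mu=\int_X(f-m^j_k)\phi^j_k\vz\,d\mu,
\]
which is precisely the action of the function $b^j_k=(f-m^j_k)\phi^j_k$. For (iii), since $2A_0\le16A_0^4$, Proposition \ref{prop:ozdec}(v) yields $\sum_{k\in I_j}\chi_{B(x^j_k,2A_0r^j_k)}\le L_0$, so that by the Cauchy--Schwarz inequality applied pointwise,
\[
\lf\|\sum_{k\in F}b^j_k\r\|_{L^2(X)}^2\le L_0\sum_{k\in F}\lf\|b^j_k\r\|_{L^2(X)}^2\ls\sum_{k\in F}\int_{B(x^j_k,2A_0r^j_k)}|f|^2\,d\mu+\sum_{k\in F}\lf|m^j_k\r|^2\mu\lf(B(x^j_k,2A_0r^j_k)\r)
\]
for every finite $F\subset I_j$; using the bounded overlap once more, (i) and $\mu(\Omega^j)<\fz$, the right-hand side is at most $L_0\|f\chi_{\Omega^j}\|_{L^2(X)}^2+C2^{2j}\mu(\Omega^j)<\fz$, whence $\sum_{k\in I_j}b^j_k$ converges in $L^2(X)$ to some $b^j\in L^2(X)$. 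As $L^2(X)$ and (by Proposition \ref{prop:ban}) $H^{*,p}(X)$ both embed continuously into $(\go{\bz,\gz})'$, this $L^2$-limit and the $H^{*,p}$-limit from Proposition \ref{prop:bgj} have the same partial sums (identified via (ii)) and so induce the same distribution. For (iv), $\sum_{k\in I_j}\phi^j_k=\chi_{\Omega^j}$ (Proposition \ref{prop:chidec}(ii)) and the pointwise finiteness of this sum give $g^j=f-\sum_{k\in I_j}(f-m^j_k)\phi^j_k=f\chi_{(\Omega^j)^\complement}+\sum_{k\in I_j}m^j_k\phi^j_k$; on $(\Omega^j)^\complement$ one has $|g^j|=|f|\le Cf^*\sim f^\star\le C2^j$, while on $\Omega^j$, using $0\le\phi^j_k\le1$, $\sum_{k\in I_j}\phi^j_k=1$ there and (i), $|g^j|\le\sum_{k\in I_j}|m^j_k|\phi^j_k\le C2^j$.

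The only genuinely delicate step is (i): one must check that, after the sharp $L^1$-normalization, $\psi^j_k$ is a bounded multiple of a test function centered at the escape point $y^j_k$ at scale $r^j_k$, which rests on the equivalence $\|\phi^j_k\|_{L^1(X)}\sim V_{r^j_k}(x^j_k)$ and on the comparability of $\|\cdot\|_{\CG(x^j_k,r^j_k,\bz,\gz)}$ with $\|\cdot\|_{\CG(y^j_k,r^j_k,\bz,\gz)}$ forced by $d(x^j_k,y^j_k)\ls r^j_k$; the remaining assertions amount to bookkeeping with the finite-overlap property and Fubini's theorem, and the argument is insensitive to $d$ being merely a quasi-metric and to $\mu$ failing reverse doubling.
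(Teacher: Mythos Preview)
Your proof is correct and follows the standard approach from \cite[Proposition 4.13]{GLY08}, which is precisely what the paper invokes (without supplying its own proof). The key steps---normalizing $\phi^j_k$ to a test function centered at the escape point $y^j_k$, the Fubini identification in (ii), the bounded-overlap $L^2$ estimate in (iii), and the pointwise split of $g^j$ in (iv)---match the cited argument and, as you note, use neither reverse doubling nor that $d$ is a genuine metric.
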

For any $j\in\zz$, $k\in I_j$ and $l\in I_{j+1}$, define
\begin{equation}\label{x1}
 L^{j+1}_{k,l}:=\frac 1{\|\phi^{j+1}_l\|_{L^1(X)}}\int_X \lf[f(\xi)-m^{j+1}_l\r]\phi^j_k(\xi)
\phi^{j+1}_l(\xi)\,d\mu(\xi)
\end{equation}
Then $ L^{j+1}_{k,l}$ has the following properties.
\begin{lemma}\label{lem:Psi}
For any $j\in\zz$, $k\in I_j$ and $l\in I_{j+1}$, let $ L^{j+1}_{k,l}$ be as in \eqref{x1}. Then
\begin{enumerate}
\item there exists a positive constant $C$, independent of $j$, $k$ and $l$, such that
$$
\sup_{x\in X}\lf| L^{j+1}_{k,l}\phi^{j+1}_l(x)\r|\le C2^j;
$$
\item $\sum_{k\in I_j}\sum_{l\in I_{j+1}} L^{j+1}_{k,l}\phi^{j+1}_l=0$ both in $(\go{\bz,\gz})'$ and
everywhere.
\end{enumerate}
\end{lemma}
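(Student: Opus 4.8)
The plan is to establish the two assertions separately, reducing each to a direct manipulation of the defining integral \eqref{x1}, the geometric properties of the Whitney-type decomposition recorded in Propositions \ref{prop:ozdec} and \ref{prop:chidec}, and the bound $|m^{j}_k|\le C2^{j}$ from Lemma \ref{lem:funbgj}(i). Assertion (ii) is the easy part and does not use (i).

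To prove (ii), I would fix $l\in I_{j+1}$ and sum \eqref{x1} over $k\in I_j$. Since $\supp\phi^{j+1}_l\subset B(x^{j+1}_l,2A_0r^{j+1}_l)\subset\Omega^{j+1}\subset\Omega^j$ and $\sum_{k\in I_j}\phi^j_k=\chi_{\Omega^j}$ by Proposition \ref{prop:chidec}(ii), we have $\sum_{k\in I_j}\phi^j_k\equiv1$ on $\supp\phi^{j+1}_l$; moreover, since $0\le\phi^j_k\le1$, the series $\sum_{k\in I_j}L^{j+1}_{k,l}$ converges absolutely because $\sum_{k\in I_j}|L^{j+1}_{k,l}|\le\|\phi^{j+1}_l\|_{L^1(X)}^{-1}\int_X|f-m^{j+1}_l|\,\phi^{j+1}_l\,d\mu<\fz$ (recall $f\in L^2(X)$ and $\supp\phi^{j+1}_l$ has finite measure). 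Interchanging $\sum_{k\in I_j}$ and $\int_X$ and invoking the definition of $m^{j+1}_l$ in \eqref{x} then gives $\sum_{k\in I_j}L^{j+1}_{k,l}=\|\phi^{j+1}_l\|_{L^1(X)}^{-1}\int_X[f-m^{j+1}_l]\phi^{j+1}_l\,d\mu=0$, whence $\sum_{k\in I_j}L^{j+1}_{k,l}\phi^{j+1}_l=\lf(\sum_{k\in I_j}L^{j+1}_{k,l}\r)\phi^{j+1}_l=0$ pointwise, and its partial sums, being scalar multiples of $\phi^{j+1}_l$, converge to $0$ uniformly, hence in $L^1(X)$, hence in $(\go{\bz,\gz})'$. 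Summing over $l\in I_{j+1}$ --- a locally finite sum by the bounded overlap in Proposition \ref{prop:ozdec}(v) --- yields (ii).

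For (i), I would first note that $L^{j+1}_{k,l}\ne0$ forces $\supp\phi^j_k\cap\supp\phi^{j+1}_l\ne\emptyset$ (otherwise the bound is trivial), that is, $B(x^j_k,2A_0r^j_k)\cap B(x^{j+1}_l,2A_0r^{j+1}_l)\ne\emptyset$; a short quasi-metric computation, using $\Omega^{j+1}\subset\Omega^j$, the definition $r(x)=d(x,\Omega^\complement)/(2AA_0)$ with $A=16A_0^4$, and the point $y^j_k\notin\Omega^j$ with $d(x^j_k,y^j_k)<3AA_0r^j_k$ from Proposition \ref{prop:ozdec}(iv), then yields $r^{j+1}_l\lesssim r^j_k$ and $d(x^j_k,x^{j+1}_l)\lesssim r^j_k$ in this case. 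Next I would split
\[
L^{j+1}_{k,l}=\frac{1}{\|\phi^{j+1}_l\|_{L^1(X)}}\lf[\lf\langle f,\phi^j_k\phi^{j+1}_l\r\rangle-m^{j+1}_l\int_X\phi^j_k\phi^{j+1}_l\,d\mu\r].
\]
The second term is at most $|m^{j+1}_l|\,\|\phi^{j+1}_l\|_{L^1(X)}\lesssim2^j\|\phi^{j+1}_l\|_{L^1(X)}$ by Lemma \ref{lem:funbgj}(i) and $0\le\phi^j_k\le1$. For the first term, I would check that $\phi^j_k\phi^{j+1}_l/(CV_{r^{j+1}_l}(x^{j+1}_l))$ is a test function of type $(y^{j+1}_l,r^{j+1}_l,\eta,\eta)$ with norm at most $1$: the function $\phi^j_k\phi^{j+1}_l$ is supported in $B(x^{j+1}_l,2A_0r^{j+1}_l)$, bounded by $1$, and $\eta$-H\"older with seminorm $\lesssim(r^{j+1}_l)^{-\eta}$ (here the comparison $r^{j+1}_l\lesssim r^j_k$ absorbs the seminorm $\|\phi^j_k\|_{\dot{C}^\eta(X)}\lesssim(r^j_k)^{-\eta}$ coming from Proposition \ref{prop:chidec}(iv)), so it is a test function of type $(x^{j+1}_l,r^{j+1}_l,\eta,\eta)$ with the claimed normalisation by the argument of Proposition \ref{prop:chidec}(iv), and the centre may be moved from $x^{j+1}_l$ to $y^{j+1}_l\notin\Omega^{j+1}$ because $d(x^{j+1}_l,y^{j+1}_l)\lesssim r^{j+1}_l$. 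Since $\bz,\gz<\eta$ this function lies in $\go{\bz,\gz}$ and is admissible in the definition of $f^*$, so $|\langle f,\phi^j_k\phi^{j+1}_l\rangle|\lesssim V_{r^{j+1}_l}(x^{j+1}_l)f^*(y^{j+1}_l)\lesssim V_{r^{j+1}_l}(x^{j+1}_l)2^{j+1}$ (using $f^*\sim f^\star$ and $y^{j+1}_l\notin\Omega^{j+1}$). Dividing by $\|\phi^{j+1}_l\|_{L^1(X)}\ge L_0^{-1}V_{r^{j+1}_l}(x^{j+1}_l)$ (Proposition \ref{prop:chidec}(iii)) gives $|L^{j+1}_{k,l}|\lesssim2^j$, and hence $\sup_{x\in X}|L^{j+1}_{k,l}\phi^{j+1}_l(x)|\le|L^{j+1}_{k,l}|\lesssim2^j$, which is (i).

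I expect the main obstacle to be the verification, in the proof of (i), that the normalised product $\phi^j_k\phi^{j+1}_l/(CV_{r^{j+1}_l}(x^{j+1}_l))$ is a genuine test function of type $(y^{j+1}_l,r^{j+1}_l,\eta,\eta)$ in the sense of Definition \ref{def:test}: this is exactly where the radius comparison $r^{j+1}_l\lesssim r^j_k$ is essential (so that the seminorm $\|\phi^j_k\|_{\dot{C}^\eta(X)}\lesssim(r^j_k)^{-\eta}$ does not spoil the regularity estimate at the finer scale $r^{j+1}_l$), and where one must verify the size and regularity conditions for a product of two cutoffs rather than a single one, together with the legitimacy of recentring the test function at $y^{j+1}_l$.
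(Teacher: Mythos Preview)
Your proposal is correct and, for part (i), follows essentially the same route as the paper: establish the radius comparison $r^{j+1}_l\lesssim r^j_k$ from the intersection of supports, split $L^{j+1}_{k,l}$ into the $m^{j+1}_l$-piece (handled by Lemma \ref{lem:funbgj}(i)) and the pairing $\langle f,\phi^j_k\phi^{j+1}_l\rangle$, and then verify that the normalised product is a test function of type $(y^{j+1}_l,r^{j+1}_l,\eta,\eta)$ so that $f^*(y^{j+1}_l)\lesssim 2^{j+1}$ applies. The paper normalises by $\|\phi^{j+1}_l\|_{L^1(X)}$ rather than $V_{r^{j+1}_l}(x^{j+1}_l)$, but these are comparable, so this is cosmetic.

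For part (ii) there is a minor organisational difference worth noting. The paper \emph{does} use (i): it first derives the pointwise bound $\sum_{l}|L^{j+1}_{k,l}\phi^{j+1}_l|\lesssim 2^j\chi_{B(x^j_k,16A_0^4r^j_k)}$ (this is \eqref{xx}, which is also reused later to locate $\supp h^j_k$), sums over $k$ to get the global dominating function $2^j\chi_{\Omega^j}\in L^1(X)$, and then invokes dominated convergence to pass from pointwise vanishing to convergence in $L^1(X)$ and hence in $(\go{\bz,\gz})'$. Your route, fixing $l$ first and summing over $k$, reaches the same identity $\sum_k L^{j+1}_{k,l}=0$ and avoids appealing to (i). However, your last step --- ``summing over $l$ \ldots\ yields (ii)'' --- is too quick for the distributional convergence: you have shown each $l$-block vanishes, but to conclude that the full double sum converges to $0$ in $(\go{\bz,\gz})'$ you still need an integrable majorant for $\sum_k\sum_l|L^{j+1}_{k,l}\phi^{j+1}_l|$. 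This can be supplied without (i) (for instance, $\sum_l\|\phi^{j+1}_l\|_{L^1}^{-1}\bigl(\int_X|f-m^{j+1}_l|\phi^{j+1}_l\,d\mu\bigr)\phi^{j+1}_l\in L^1(X)$ since $f\in L^2(X)$, $\mu(\Omega^{j+1})<\infty$, and $|m^{j+1}_l|\lesssim 2^{j+1}$), but you should make this explicit.
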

\begin{proof}
We first show (i). Indeed, for any $j\in\zz$, $k\in I_j$, $l\in I_{j+1}$ and $x\in X$,
$$
\lf| L^{j+1}_{k,l}\phi^{j+1}_l(x)\r|\le\lf|m^{j+1}_l\r|\phi^{j+1}_l(x)
+\phi^{j+1}_l(x)\lf|\int_Xf(\xi)\frac{\phi^j_k(\xi)\phi^{j+1}_l(\xi)}{\|\phi^{j+1}_l\|_{L^1(X)}}\,d\mu(\xi)\r|
=:\RY_1+\RY_2.
$$
By Lemma \ref{lem:funbgj}(i) and the definition of $\phi^{j+1}_l$, it is easy to obtain $\RY_1\ls 2^j$.

Now we consider $\RY_2$. If $\phi^j_k\phi^{j+1}_l$ is a non-zero function, then $B(x^j_k,2A_0r^j_k)\cap
B(x^{j+1}_l,2A_0r^{j+1}_l)\neq\emptyset$, which further implies that $r^{j+1}_l\le 3A_0r^j_k$. Otherwise, if
$r^{j+1}_l>3A_0r^j_k$,
then, for any $y\in B(x^j_k,48A_0^5r^j_k)$,
\begin{align*}
d\lf(y,x^{j+1}_l\r)&\le A_0\lf[d\lf(y,x^{j}_k\r)+d\lf(x^{j}_k,x^{j+1}_l\r)\r]
<48A_0^6r^j_k+A_0^2\lf(2A_0r^j_k+2A_0r^{j+1}_l\r)\\
&<16A_0^5r^{j+1}_l+\frac 23 A_0^2r^{j+1}_l+2A_0^3r^{j+1}_l<20A_0^5r^{j+1}_l,
\end{align*}
which implies that $B(x^j_k,48A_0^5r^j_k)\subset B(x^{j+1}_l,20A_0^5r^{j+1}_l)\subset \Omega^{j+1}\subset
\Omega^j$ and hence contradicts to Proposition \ref{prop:ozdec}(v).

Define $\vz:=\phi^j_k\phi^{j+1}_l/\|\phi^{j+1}_l\|_{L^1(X)}$.
According to Proposition \ref{prop:ozdec}(iv) with $A:=16A_0^4$, we can choose
$y^{j+1}_l\in(\Omega^{j+1})^\complement$ such that $d(y^{j+1}_l,x^{j+1}_l)\le 48A_0^5r^{j+1}_l$.
We now show $\vz\in\CG(y^{j+1}_l,r^{j+1}_l,\eta,\eta)$ and $\|\vz\|_{\CG(y^{j+1}_l,r^{j+1}_l,\eta,\eta)}\ls 1$.
Notice that $\supp \vz\subset B(x^{j+1}_l,2A_0r^{j+1}_l)$.
Moreover, by this and the choice of $y^{j+1}_l$, we conclude that, for any $x\in B(x^{j+1}_l,2A_0r^{j+1}_l)$,
\begin{align*}
|\vz(x)|\ls |\phi^{j+1}_l(x)|&\ls \frac 1{\mu(B(x^{j+1}_l,r^{j+1}_l))+V(x^{j+1}_l,x)}\lf[\frac{r^{j+1}_l}
{r^{j+1}_l+d(x^{j+1}_l,x)}\r]^\eta\\
&\sim\frac{1}{\mu(B(y^{j+1}_l,r^{j+1}_l))+V(y^{j+1}_l,x)}
\lf[\frac{r^{j+1}_l}{r^{j+1}_l+d(y^{j+1}_l,x)}\r]^\eta.
\end{align*}
This shows the size condition of $\vz$.

To consider the regularity condition of $\vz$, we suppose that $x,\ x'\in X$ satisfying
$d(x,x')\le(2A_0)^{-1}[r^{j+1}_l+d(y^{j+1}_l,x)]$. Due to the size condition, we may assume
$d(x,x')\le(2A_0)^{-3}[r^{j+1}_l+d(y^{j+1}_l,x)]$. We claim that $\vz(x)-\vz(x')\neq 0$ implies that
$d(x,x^{j+1}_l)\le 96A_0^6r^{j+1}_l$.

Indeed, if $d(x,x^{j+1}_l)>96A_0^6r^{j+1}_l$, then $\vz(x)=0$.
By
$d(x^{j+1}_l,y^{j+1}_l)\le 48A_0^5r^{j+1}_l$, we find that $d(x,y^{j+1}_l)>48A_0^5r^{j+1}_l$ and hence
$d(x,x')\le (2A_0)^{-2}d(x,y^{j+1}_l)\le(2A_0)^{-1}d(x,x^{j+1}_l)$.
Consequently, $d(x',x^{j+1}_l)\ge A_0^{-1}d(x,x^{j+1}_l)-d(x,x')> 48A_0^5r^{j+1}_l$ and $\vz(x')=0$. This contradicts to $\vz(x)-\vz(x')\neq 0$.

By the above claim, $r^{j+1}_l\le 3A_0r^j_k$ and $d(y^{j+1}_l,x^{j+1}_j)\sim r^{j+1}_l$, we know that
\begin{align*}
|\vz(x)-\vz(x')|&\ls\frac{1}{\mu(B(x^{j+1}_l,r^{j+1}_l))}\lf[\phi^j_k(x)\lf|\phi^{j+1}_l(x)-\phi^{j+1}_l(x')\r|
+\lf|\phi^{j}_k(x)-\phi^{j}_k(x')\r|\phi^{j+1}_l(x')\r]\\
&\ls\frac{1}{\mu(B(x^{j+1}_l,r^{j+1}_l))}\lf\{\lf[\frac{d(x,x')}{r^{j+1}_l}\r]^\eta
+\lf[\frac{d(x,x')}{r^j_k}\r]^\eta\r\}\\
&\sim\lf[\frac{d(x,x')}{r^{j+1}_l+d(y^{j+1}_l,x)}\r]^\eta
\frac{1}{\mu(B(y^{j+1}_l,r^{j+1}_l))+V(y^{j+1}_l,x)}
\lf[\frac{r^{j+1}_l}{r^{j+1}_l+d(y^{j+1}_l,x)}\r]^\eta.
\end{align*}
Thus, we obtain $\vz\in\CG(y^{j+1}_l,r^{j+1}_l,\eta,\eta)$ and $\|\vz\|_{\CG(y^{j+1}_l,r^{j+1}_l,\eta,\eta)}\ls
1$, which further implies that $\|\vz\|_{\CG(y^{j+1}_l,r^{j+1}_l,\bz,\gz)}\ls 1$ and hence
$$
\RY_2=|\langle f,\vz\rangle|\ls f^*\lf(y^{j+1}_l\r)\ls 2^j.
$$
This finishes the proof of (i).

Next we prove (ii). If $ L^{j+1}_{k,l}\neq 0$, then the proof in (i) implies $B(x^j_k,2A_0r^j_k)\cap
B(x^{j+1}_l,2A_0r^{j+1}_l)\neq\emptyset$ and $r^{j+1}_l\le 3A_0r^j_k$.  Further, for any  $y\in B(x^{j+1}_l,2A_0r^{j+1}_l)$, we have
\begin{align*}
d\lf(y,x^{j}_k\r)&\le A_0\lf[d\lf(y,x^{j+1}_l\r)+d\lf(x^{j}_k,x^{j+1}_l\r)\r]
<2A_0^2r^{j+1}_l+A_0^2\lf(2A_0r^j_k+2A_0r^{j+1}_l\r)\\
&<6A_0^3r^{j}_k+2A_0^3r^{j}_k+6A_0^4r^{j}_k\le 14A_0^4r^{j}_k<16A_0^4r^j_k,
\end{align*}
which implies that $B(x^{j+1}_l,2A_0r^{j+1}_l)\subset B(x^j_k,16A_0^4r^j_k)\subset\Omega^j$ by
Proposition \ref{prop:ozdec}(v). Thus, for any $k\in I_j$ and $x\in X$, we find that
\begin{equation}\label{xx}
\sum_{l\in I_{j+1}}\lf| L^{j+1}_{k,l}\phi^{j+1}_l\r|\ls 2^j\chi_{B(x^j_k,16A_0^4r^j_k)}(x)
\end{equation}
and hence
$$
\sum_{k\in I_j}\sum_{l\in I_{j+1}}\lf| L^{j+1}_{k,l}\phi^{j+1}_l(x)\r|\ls 2^j
\sum_{k\in I_j}\chi_{B(x^j_k,16A_0^4r^j_k)}(x)\ls 2^j\chi_{\Omega^j}(x).
$$
Consequently,
\begin{align*}
\sum_{k\in I_j}\sum_{l\in I_{j+1}} L^{j+1}_{k,l}\phi^{j+1}_l
&=\sum_{l\in I_{j+1}}\lf(\sum_{k\in I_j} L^{j+1}_{k,l}\r)\phi^{j+1}_l\\
&=\sum_{l\in I_{j+1}}\frac{\phi^{j+1}_l}{\|\phi^{j+1}_l\|_{L^1(X)}}\int_X
\lf[f(\xi)-m^{j+1}_l\r]\phi^{j+1}_l(\xi)\sum_{k\in I_j}\phi^j_k(\xi)\,d\mu(\xi)\\
&=\sum_{l\in I_{j+1}}\frac{\phi^{j+1}_l}{\|\phi^{j+1}_l\|_{L^1(X)}}
\int_X \lf[f(\xi)-m^{j+1}_l\r]\phi^{j+1}_l(\xi)\,d\mu(\xi)\\
&=\sum_{l\in I_{j+1}}\frac{\phi^{j+1}_l}{\|\phi^{j+1}_l\|_{L^1(X)}}\int_X b^{j+1}_l(\xi)\,d\mu(\xi)=0.
\end{align*}
By the fact that
$\sum_{k\in I_j}\sum_{l\in I_{j+1}}\int_X | L^{j+1}_{k,l}\phi^{j+1}_l(\xi)|\,d\mu(\xi)\ls 2^j\mu(\Omega^j)<\fz$
and the dominated convergence theorem, we find that
$\sum_{k\in I_j}\sum_{l\in I_{j+1}} L^{j+1}_{k,l}\phi^{j+1}_l=0$ in $L^1(X)$ and hence in $(\go{\bz,\gz})'$.
This finishes the proof of Lemma \ref{lem:Psi}.
\end{proof}

Now we show another side of Theorem \ref{thm:atom}.
\begin{proof}[Proof of $H^{*,p(X)}\subset H^{p,q}_\at(X)$]
By Lemma \ref{lem:dense}, we first suppose $f\in L^2(X)\cap H^{*,p}(X)$. We may also assume
$|f(x)|\ls f^*(x)$ for any $x\in X$. We use the same notation as in Lemmas \ref{lem:funbgj} and \ref{lem:Psi}.
For any $j\in\nn$, let $h^j:=g^{j+1}-g^j=b^j-b^{j+1}$. Then $f-\sum_{j=-m}^m h^j=b^{m+1}-g^m$. For any
$m\in\zz$, by Lemma \ref{lem:funbgj}, we conclude that $\|g^{-m}\|_{L^\fz(X)}\ls 2^{-m}$. Moreover, by
\eqref{eq:bj*}, we find
that $\|(b^{m+1})^*\|_{L^p(X)}\ls\|f^*\chi_{(\Omega^{m+1})^\complement}\|_{L^p(X)}\to 0$ as $m\to\fz$. Thus,
$f=\sum_{j=-\fz}^\fz h^j$ in $(\go{\bz,\gz})'$. Besides, by the definition of $b^m_k$, we know that
$\supp b^{m+1}\subset\Omega^{m+1}$, which then implies that $\sum_{j=-\fz}^\fz h^j$ converges almost
everywhere.
Notice that, by Lemma \ref{lem:Psi}(ii), for any $j\in\zz$, we have
\begin{align}\label{xxx}
h^j&=b^j-b^{j+1}=\sum_{k\in I_j}b^j_k-\sum_{l\in I_{j+1}}b^{j+1}_l+\sum_{k\in I_j}\sum_{l\in I_{j+1}}
 L^{j+1}_{k,l}\phi^{j+1}_l\\
&=\sum_{k\in I_j}\lf[b^j_k-\sum_{l\in I_{j+1}}\lf(b^{j+1}_l\phi^j_k
-L^{j+1}_{k,l}\phi^{j+1}_l\r)\r]=:\sum_{k\in I_j} h^j_k\noz,
\end{align}
which converges in $(\go{\bz,\gz})'$ and almost everywhere. Moreover, for any $j\in\zz$ and $k\in\nn$,
\begin{align*}
h^j_k&=b^j_k-\sum_{l\in I_{j+1}}\lf(b^{j+1}_l\phi^j_k- L^{j+1}_{k,l}\phi^{j+1}_l\r)
=\lf(f-m^j_k\r)\phi^j_k-\sum_{l\in I_{j+1}}\lf[\lf(f-m^{j+1}_l\r)\phi^j_k- L^{j+1}_{k,l}\r]\phi^{j+1}_l\\
&=f\phi^j_k\chi_{(\Omega^{j+1})^\complement}-m^j_k\phi^j_k+\phi^j_k\sum_{l\in I_{j+1}} m^{j+1}_l\phi^{j+1}_l
+\sum_{l\in I_{j+1}} L^{j+1}_{k,l}\phi^{j+1}_l.
\end{align*}
The fourth term is supported on $B^j_k:=B(x^j_k,16A_0^4r^j_k)$, which is deduced from \eqref{xx}. Thus,
$\supp h^j_k\subset B^j_k$. Moreover, by Lemmas \ref{lem:funbgj}(i) and
\ref{lem:Psi}(i), we conclude that there exists a positive constant $C$, independent of $j$ and $k$, such that
$\|h^j_k\|_{L^\fz(X)}\le C2^j$. Now, let
\begin{equation}\label{x4}
\lz^j_k:=C2^j\lf[\mu\lf(B^j_k\r)\r]^{\frac 1p}\quad\textup{and}\quad a^j_k:=\lf(\lz^j_k\r)^{-1}h^j_k.
\end{equation}
Then $a^j_k$ is a $(p,\fz)$-atom supported on $B^j_k$ and $f=\sum_{j=-\fz}^\fz\sum_{k\in I_j}\lz^j_ka^j_k$
in $(\go{\bz,\gz})'$. Moreover, we have
$$
\sum_{j=-\fz}^\fz\sum_{k\in I_j}\lf|\lz^j_k\r|^p
\ls\sum_{j=-\fz}^\fz 2^{-jp}\sum_{k\in I_j}\mu\lf(B^j_k\r)\ls\sum_{j=-\fz}^\fz 2^{-jp}\mu\lf(\Omega^j\r)
\sim \lf\|f^\star\r\|_{L^p(X)}^p\sim\lf\|f^*\r\|_{L^p(X)}^p,
$$
which further implies that $\|f\|_{H^{p,\fz}_\at(X)}\ls\|f\|_{H^{*,p}(X)}$.

When $f\in H^{*,p}(X)$, using Lemma \ref{lem:dense} and a standard density argument and following the proof in
\cite[pp.\ 301--302]{MS79b}, we obtain the atomic decomposition of $f$, the details being omitted. This
finishes the proof of $H^{*,p}(X)\subset H^{p,q}_\at(X)$ and hence of Theorem \ref{thm:atom}.
\end{proof}

\begin{remark}\label{rem:r4.2}
By the argument used in the proof of $H^{*,p}(X)\subset H^{p,q}_\at(X)$, we find that, if
$f\in L^q(X)\cap H^{*,p}(X)$ with $q\in[1,\fz]$, then $f=\sum_{j=1}^\fz\sum_{k\in I_j} h^j_k$ in
$(\go{\bz,\gz})'$ and almost everywhere, where, for any $j\in\zz$ and $k\in I_j$, $h^j_k$ is as in \eqref{xxx}.
\end{remark}

%======================================================

\subsection{Relationship between $H^{p,q}_\at(X)$ and $H^{p,q}_\cw(X)$}\label{cw}

In this section, we consider the relationship between $H^{p,q}_\at(X)$ and $H^{p,q}_\cw(X)$.
To see this, we need the following two technical lemmas.
\begin{lemma}[{\cite[p.\ 592]{CW77}}]\label{lem:lipd}
Let $p\in(0,1)$, $q\in(p,\fz]\cap[1,\fz]$ and $a$ be a $(p,q)$-atom. Then,
for any $\vz\in\CL_{1/p-1}(X)$, $|\langle a,\vz\rangle|\le \|\vz\|_{\CL_{1/p-1}(X)}$.
\end{lemma}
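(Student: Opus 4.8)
The plan is to deduce this from the cancellation of the atom together with the Lipschitz regularity of $\vz$, following the classical argument of Coifman and Weiss. Let $a$ be a $(p,q)$-atom supported in $B:=B(x_0,r)$. First I would check that the pairing $\langle a,\vz\rangle=\int_X a(x)\vz(x)\,d\mu(x)$ is meaningful: for any $x\in B$ we have $d(x_0,x)<r$, so the definition of $\CL_{1/p-1}(X)$ gives $|\vz(x)|\le|\vz(x_0)|+\|\vz\|_{\CL_{1/p-1}(X)}[\mu(B)]^{1/p-1}<\fz$; hence $\vz$ is bounded on $B$, while $a$ has support in $B$ with $\mu(B)<\fz$ and lies in $L^q(X)$ with $q\ge1$, so $a\vz\in L^1(X)$ and the integral converges absolutely.

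Next, using the vanishing moment $\int_X a\,d\mu=0$, I would write
\begin{equation*}
\langle a,\vz\rangle=\int_B a(x)\lf[\vz(x)-\vz(x_0)\r]\,d\mu(x)
\end{equation*}
and then apply the Lipschitz estimate \emph{with $x_0$ as the base point}: for any $x\in B$,
\begin{equation*}
|\vz(x)-\vz(x_0)|\le\|\vz\|_{\CL_{1/p-1}(X)}\lf[\mu\lf(B(x_0,d(x_0,x))\r)\r]^{1/p-1}.
\end{equation*}
Since $d(x_0,x)<r$ forces $B(x_0,d(x_0,x))\subset B(x_0,r)=B$ and $1/p-1>0$, the right-hand side is at most $\|\vz\|_{\CL_{1/p-1}(X)}[\mu(B)]^{1/p-1}$, with \emph{no} structural constant; this is the crucial observation, and it relies on choosing the first slot of the Lipschitz quotient to be the center $x_0$, so that the relevant ball is genuinely contained in $B$. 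Therefore
\begin{equation*}
|\langle a,\vz\rangle|\le\|\vz\|_{\CL_{1/p-1}(X)}[\mu(B)]^{1/p-1}\int_B|a(x)|\,d\mu(x).
\end{equation*}

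Finally I would bound $\int_B|a(x)|\,d\mu(x)\le[\mu(B)]^{1-1/p}$ using the size condition of the atom: this is immediate when $q=1$; when $q\in(1,\fz)$ it follows from the H\"older inequality together with $\|a\|_{L^q(X)}\le[\mu(B)]^{1/q-1/p}$; and when $q=\fz$ it follows from $\|a\|_{L^\fz(X)}\le[\mu(B)]^{-1/p}$. Since $(1/p-1)+(1-1/p)=0$, combining the last two displays yields $|\langle a,\vz\rangle|\le\|\vz\|_{\CL_{1/p-1}(X)}$, which is the desired estimate. I do not anticipate any genuine obstacle: the proof is short and self-contained, the only points requiring a little care being the correct choice of base point in the Lipschitz estimate (ensuring that the ball $B(x_0,d(x_0,x))$ lies inside $B$) and the routine case analysis on $q$ in the bound for $\|a\|_{L^1(X)}$.
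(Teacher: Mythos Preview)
Your argument is correct and is precisely the classical Coifman--Weiss computation: the paper does not supply its own proof of this lemma but merely cites \cite[p.\ 592]{CW77}, and what you have written is exactly that argument. There is nothing further to compare.
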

\begin{lemma}\label{lem:gsublip}
Let $\bz\in(0,\eta]$ and $\gz\in(0,\fz)$. If $\vz\in\CG(\bz,\gz)$, then $\vz\in\CL_{\bz/\omega}(X)$ and there
exists a positive constant $C$, independent of $\vz$, such that
$\|\vz\|_{\CL_{\bz/\omega}(X)}\le C\|\vz\|_{\CG(\bz,\gz)}$.
\end{lemma}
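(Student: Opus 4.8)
The plan is to prove the pointwise estimate $|\vz(x)-\vz(y)|\lesssim[\mu(B(x,d(x,y)))]^{\bz/\omega}$ for all $x\ne y$, with an implicit constant depending only on $A_0$, $C_{(\mu)}$ and the fixed quantity $V_1(x_0)$; taking the supremum over $x\ne y$ then gives $\|\vz\|_{\CL_{\bz/\omega}(X)}\lesssim\|\vz\|_{\CG(\bz,\gz)}$. By homogeneity we may assume $\|\vz\|_{\CG(\bz,\gz)}=1$ (the case $\vz=0$ being trivial). Fix $x\ne y$ and write $r:=d(x,y)$ and $R:=1+d(x_0,x)\ge1$. A preliminary fact I would record is that $\mu(B(x,R))\gtrsim V_1(x_0)$: the quasi-triangle inequality gives $B(x_0,1)\subset B(x,A_0R)$, whence $V_1(x_0)\le\mu(B(x,A_0R))\le C_{(\mu)}A_0^\omega\mu(B(x,R))$ by \eqref{eq:doub}. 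I then split according to whether $r\le(2A_0)^{-1}R$ or $r>(2A_0)^{-1}R$.

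In the first case the regularity condition in Definition \ref{def:test} applies (exactly because $d(x,y)\le(2A_0)^{-1}[1+d(x_0,x)]$); discarding the factor $R^{-\gz}\le1$ and using Lemma \ref{lem-add}(i) to replace $V_1(x_0)+V(x_0,x)$ by $\mu(B(x,R))$, it yields $|\vz(x)-\vz(y)|\lesssim(r/R)^\bz[\mu(B(x,R))]^{-1}$. Since $r\le R$, \eqref{eq:doub} gives $(r/R)^\omega\lesssim\mu(B(x,r))/\mu(B(x,R))$ and hence $(r/R)^\bz\lesssim[\mu(B(x,r))/\mu(B(x,R))]^{\bz/\omega}$; plugging this in and using the preliminary lower bound $\mu(B(x,R))\gtrsim V_1(x_0)$ produces $|\vz(x)-\vz(y)|\lesssim[\mu(B(x,r))]^{\bz/\omega}[\mu(B(x,R))]^{-1-\bz/\omega}\lesssim[\mu(B(x,r))]^{\bz/\omega}$.

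In the second case, $r>(2A_0)^{-1}R$ forces $d(x_0,x)<2A_0r$ and $r>(2A_0)^{-1}$. From $d(x_0,x)<2A_0r$ and the quasi-triangle inequality one checks that $B(x,r)$ and $B(x_0,r)$ are each contained in a fixed dilate of the other, so \eqref{eq:doub} gives $\mu(B(x,r))\sim\mu(B(x_0,r))$ with constants depending only on $A_0$ and $C_{(\mu)}$; combined with $r>(2A_0)^{-1}$ and \eqref{eq:doub} once more, $\mu(B(x_0,r))\ge\mu(B(x_0,(2A_0)^{-1}))\gtrsim V_1(x_0)$, hence $\mu(B(x,r))\gtrsim V_1(x_0)$. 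On the other hand, the size condition gives $|\vz(z)|\le[V_1(x_0)+V(x_0,z)]^{-1}\le V_1(x_0)^{-1}$ for every $z\in X$, so $|\vz(x)-\vz(y)|\le|\vz(x)|+|\vz(y)|\lesssim V_1(x_0)^{-1}=V_1(x_0)^{-1-\bz/\omega}\cdot V_1(x_0)^{\bz/\omega}\lesssim[\mu(B(x,r))]^{\bz/\omega}$, using the lower bound just obtained.

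The only point that I expect to require care is the second (``far'') case: in a general space of homogeneous type, measures of balls of a fixed radius need not be bounded below, so one cannot merely bound $|\vz(x)-\vz(y)|$ by an absolute constant and compare. The mechanism that saves the argument is that $r>(2A_0)^{-1}R$ automatically places $x$ within distance $\lesssim r$ of the fixed base point $x_0$ and forces $r\gtrsim1$, which makes $B(x,d(x,y))$ comparable to $B(x_0,d(x,y))$ and therefore of measure $\gtrsim V_1(x_0)$; this is precisely what renders the positive power $[\mu(B(x,d(x,y)))]^{\bz/\omega}$ on the right-hand side large enough. Everything else is a routine application of the doubling condition \eqref{eq:doub} and Lemma \ref{lem-add}.
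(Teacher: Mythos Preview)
Your proof is correct and follows essentially the same approach as the paper's own proof: the identical case split according to whether $d(x,y)\le(2A_0)^{-1}[1+d(x_0,x)]$, the regularity condition plus doubling in the near case, and the size condition plus the observation that $\mu(B(x,d(x,y)))\gtrsim V_1(x_0)$ in the far case. You are a bit more explicit than the paper about how the fixed constant $V_1(x_0)$ enters the implicit constants (the paper simply writes $1\sim[\mu(B(x_0,1))]^{\bz/\omega}$ in the far case), but the logic is the same.
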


\begin{proof}
Suppose that $\|\vz\|_{\CG(\bz,\gz)}\le 1$.
If $d(x,y)\le(2A_0)^{-1}[1+d(x_0,x)]$, then, by the regularity condition of $\vz$ and \eqref{eq:doub}, we have
\begin{align*}
|\vz(x)-\vz(y)|&\le \lf[\frac{d(x,y)}{1+d(x_0,x)}\r]^\bz\frac{1}{V_1(x_0)+V(x_0,x)}
\lf[\frac{1}{1+d(x_0,x)}\r]^\gz\\
&\ls\lf[\frac{\mu(B(x,d(x,y)))}{\mu(B(x,1+d(x_0,x)))}\r]^{\bz/\omega}\ls[V(x,y)]^{\bz/\omega}.
\end{align*}
If $d(x,y)>(2A_0)^{-1}[1+d(x_0,x)]$, then, from the size condition of $\vz$, we deduce that
\begin{align*}
|\vz(x)-\vz(y)|&\ls 1\sim[\mu(B(x_0,1))]^{\bz/\omega}\ls[\mu(B(x_0,1+d(x_0,x)))]^{\bz/\omega}\\
&\sim[\mu(B(x,1+d(x_0,x)))]^{\bz/\omega}\ls[V(x,y)]^{\bz/\omega}.
\end{align*}
Thus, for any $x,\ y\in X$, we always have $|\vz(x)-\vz(y)|\ls\|\vz\|_{\CG(\bz,\gz)}[V(x,y)]^{\bz/\omega}$.
This implies $\vz\in\CL_{\bz/\omega}(X)$ and $\|\vz\|_{\CL_{\bz/\omega}(X)}\ls\|\vz\|_{\CG(\bz,\gz)}$, which
completes the proof of Lemma \ref{lem:gsublip}.
\end{proof}

Now we establish the relationship between two kinds of atomic Hardy spaces.
\begin{theorem}\label{thm:cw}
Let $p\in(\om,1]$, $q\in(p,\fz]\cap[1,\fz]$ and $\bz,\ \gz\in(\omega(1/p-1),\eta)$. If regard $H^{p,q}_\at(X)$
as a subspace of $(\go{\bz,\gz})'$, then $H^{p,q}_\cw(X)=H^{p,q}_\at(X)$ with equal (quasi-)norms.
\end{theorem}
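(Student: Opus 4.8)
The plan is to compare the two spaces directly at the level of atomic decompositions: for a given $f\in(\go{\bz,\gz})'$ I want to show that the decompositions $f=\sum_j\lz_j a_j$ admissible in the definition of $H^{p,q}_\at(X)$ are exactly those admissible in the definition of $H^{p,q}_\cw(X)$, once the latter space is identified (via restriction of functionals) with its image in $(\go{\bz,\gz})'$; since both (quasi-)norms equal the infimum of $(\sum_j|\lz_j|^p)^{1/p}$ over the corresponding families, they will then coincide. The first ingredient is the continuous embedding $\go{\bz,\gz}\hookrightarrow\CL_{1/p-1}(X)$ when $p\in(\om,1)$. Indeed, for $\vz\in\go{\bz,\gz}\subset\CG(\bz,\gz)$, Lemma \ref{lem:gsublip} gives $\|\vz\|_{\CL_{\bz/\omega}(X)}\ls\|\vz\|_{\CG(\bz,\gz)}$, while the size condition in Definition \ref{def:test}(i) gives $|\vz(x)-\vz(y)|\le|\vz(x)|+|\vz(y)|\ls\|\vz\|_{\CG(\bz,\gz)}$ with implicit constant depending only on $x_0$; combining these yields, for all $x,\,y\in X$,
$$
|\vz(x)-\vz(y)|\ls\|\vz\|_{\CG(\bz,\gz)}\min\lf\{[V(x,y)]^{\bz/\omega},\,1\r\}
\ls\|\vz\|_{\CG(\bz,\gz)}[V(x,y)]^{1/p-1},
$$
because $\bz>\omega(1/p-1)$ forces $0<1/p-1<\bz/\omega$ and $\min\{t^{\bz/\omega},1\}\le t^{1/p-1}$ for every $t\in(0,\fz)$; thus $\vz\in\CL_{1/p-1}(X)$ with comparable norm. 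Hence every functional on $\CL_{1/p-1}(X)$ restricts to one on $\go{\bz,\gz}$, and (using the $L^1(X)$ representative obtained below together with the fact that $\CG(\eta,\eta)\subset\go{\bz,\gz}$ separates $L^1(X)$) this restriction is injective on $H^{p,q}_\cw(X)$, so the identification is unambiguous. For $p=1$ no Lipschitz space is needed, since $H^{1,q}_\cw(X)\subset L^1(X)$ by definition and the test functions in $\go{\bz,\gz}$ are bounded.

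The second ingredient is the elementary inclusion $\ell^p\subset\ell^1$: from $\sum_j|\lz_j|^p<\fz$ with $p\in(0,1]$ we have $|\lz_j|\le|\lz_j|^p$ for all large $j$, hence $\sum_j|\lz_j|<\fz$. Since a $(p,q)$-atom $a$ satisfies $\|a\|_{L^1(X)}\le[\mu(B)]^{1-1/q}\|a\|_{L^q(X)}\le1$ by H\"older's inequality and, for $p<1$, also $|\langle a,\vz\rangle|\le\|\vz\|_{\CL_{1/p-1}(X)}$ for all $\vz\in\CL_{1/p-1}(X)$ by Lemma \ref{lem:lipd}, any series $\sum_j\lz_j a_j$ with $\sum_j|\lz_j|<\fz$ converges absolutely in $L^1(X)$ and, for $p<1$, absolutely in the Banach dual $(\CL_{1/p-1}(X))'$; by the embedding above it then also converges in $(\go{\bz,\gz})'$. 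Now the two inclusions follow. If $f=\sum_j\lz_j a_j$ is admissible for $H^{p,q}_\cw(X)$ (convergence in $(\CL_{1/p-1}(X))'$, resp.\ in $L^1(X)$ when $p=1$), the same series converges in $(\go{\bz,\gz})'$, and testing against $\vz\in\go{\bz,\gz}$ identifies its limit there with the restriction of $f$; so that restriction lies in $H^{p,q}_\at(X)$ with the same coefficients, giving $\|f\|_{H^{p,q}_\at(X)}\le\|f\|_{H^{p,q}_\cw(X)}$ after taking the infimum. Conversely, if $f=\sum_j\lz_j a_j$ is admissible for $H^{p,q}_\at(X)$ (convergence in $(\go{\bz,\gz})'$), the series also converges absolutely in $(\CL_{1/p-1}(X))'$ (resp.\ in $L^1(X)$) to some $\wz f$, and comparing pairings against $\vz\in\go{\bz,\gz}\subset\CL_{1/p-1}(X)$ shows $\wz f$ restricts to $f$; hence $\wz f\in H^{p,q}_\cw(X)$ with the same coefficients, giving $\|f\|_{H^{p,q}_\cw(X)}\le\|f\|_{H^{p,q}_\at(X)}$. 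Together these yield $H^{p,q}_\cw(X)=H^{p,q}_\at(X)$ with equal (quasi-)norms.

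I expect the only genuinely delicate point to be the embedding $\go{\bz,\gz}\hookrightarrow\CL_{1/p-1}(X)$, which is the bridge between the two different ambient distribution spaces and is exactly what pins down the hypothesis $\bz,\gz\in(\omega(1/p-1),\eta)$, together with the routine but fiddly check that the restriction map is well defined and injective on $H^{p,q}_\cw(X)$. Once $\ell^p\subset\ell^1$ is invoked, the matching of the two notions of convergence is immediate, and no subtlety concerning completeness of $H^{p,q}_\cw(X)$ or conditional versus absolute convergence arises. When $d$ is a metric this is \cite[Theorem~4.4]{lyy18}, and nothing in the argument uses the triangle inequality for $d$.
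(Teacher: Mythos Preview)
Your proof of the inclusion $H^{p,q}_\cw(X)\subset H^{p,q}_\at(X)$ via the embedding $\go{\bz,\gz}\hookrightarrow\CL_{1/p-1}(X)$ is essentially the same as the paper's; your interpolation inequality $\min\{t^{\bz/\omega},1\}\le t^{1/p-1}$ is a minor variant of the paper's route through $\CG(\bz,\gz)\subset\CG(\omega(1/p-1),\gz)$ followed by Lemma~\ref{lem:gsublip}. For the reverse inclusion you take a genuinely different approach. The paper invokes the duality $(H^{p,q}_\at(X))'=\CL_{1/p-1}(X)$ from \cite{CW77} to show that the pairing $\langle f,g\rangle:=\sum_j\lz_j\int_X a_j g\,d\mu$ is independent of the atomic decomposition of $f$, thereby canonically extending $f$ to an element of $(\CL_{1/p-1}(X))'$. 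You instead observe that $\ell^p\subset\ell^1$ and use absolute convergence together with the uniform bound from Lemma~\ref{lem:lipd} to construct an extension directly, and then rely on injectivity of the restriction map to make the identification well defined.

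The gap is in that injectivity step when $p<1$. You correctly show that the $L^1(X)$ representative $F=\sum_j\lz_j a_j$ vanishes (since functions in $\CG(\eta,\eta)\subset\go{\bz,\gz}$ are bounded and separate $L^1(X)$), but the conclusion ``$f=0$ in $(\CL_{1/p-1}(X))'$'' does not follow from $F=0$ alone. The obstruction is that, for an unbounded $g\in\CL_{1/p-1}(X)$, the functional $a\mapsto\langle a,g\rangle$ on atoms is \emph{not} continuous for the $L^1$-norm: Lemma~\ref{lem:lipd} gives $|\langle a,g\rangle|\le\|g\|_{\CL_{1/p-1}(X)}$, a bound in terms of the $H^p$-size of $a$, not $\|a\|_{L^1(X)}$. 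Concretely, the partial sums $f_N=\sum_{j\le N}\lz_j a_j$ satisfy $\|f_N\|_{L^1(X)}\to 0$, but to pass from this to $\langle f_N,g\rangle\to 0$ you would need precisely the statement that $g$ acts boundedly on $H^{p,q}_\at(X)$ independently of the decomposition---which is the Coifman--Weiss duality. So your more elementary route does not actually avoid the duality; it relocates it to the injectivity claim, where it remains unproved. (For $p=1$ your argument is complete, since there $H^{1,q}_\cw(X)\subset L^1(X)$ by definition and injectivity reduces to the separation statement itself.)
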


\begin{proof}
We only consider the case $p\in(\om,1)$. The proof of $p=1$ is similar and the details are omitted.

We first prove $H^{p,q}_\cw(X)\subset H^{p,q}_\at(X)$. By Lemma \ref{lem:gsublip}, we have
$\go{\bz,\gz}\subset\CG(\omega(1/p-1),\gz)\subset \CL_{1/p-1}(X)$ and hence $(\CL_{1/p-1}(X))'\subset
(\go{\bz,\gz})'$. For any $f\in H^{p,q}_\cw(X)$, by Definition \ref{def:atom}, we know that there exist
$(p,q)$-atoms $\{a_j\}_{j=1}^\fz$ and $\{\lz_j\}_{j=1}^\fz\subset\cc$ with $\sum_{j=1}^\fz|\lz_j|^p<\fz$ such
that $f=\sum_{j=1}^\fz\lz_j a_j$ in $(\CL_{1/p-1}(X))'$ and hence in $(\go{\bz,\gz})'$.
Let $g:=f|_{\go{\bz,\gz}}$. Then,  for any $\vz\in\go{\bz,\gz}\subset \CL_{1/p-1}(X)$, we have
$$
\langle g,\vz\rangle=\langle f,\vz\rangle=\sum_{j=1}^\fz\lz_j\langle a_j,\vz\rangle.
$$
Thus, $g=\sum_{j=1}^\fz\lz_j a_j$ in $(\go{\bz,\gz})'$ and
$\|g\|_{H^{p,q}_\at(X)}\le(\sum_{j=1}^\fz|\lz_j|^p)^{\frac 1p}$. If we take the infimum over all the atomic
decompositions of $f$ as above, we obtain $\|g\|_{H^{p,q}_\at(X)}\le\|f\|_{H^{p,q}_\cw(X)}$.
Thus, $H^{p,q}_\cw(X)\subset H^{p,q}_\at(X)$.

To show  $H^{p,q}_\cw(X)\supset H^{p,q}_\at(X)$, following the proof of \cite[p.\ 593, Theorem B]{CW77}, we
conclude that the dual space of
$H^{p,q}_\at(X)$ is $\CL_{1/p-1}(X)$ in the following sense: each bounded linear functional on $H^{p,q}_\at(X)$
is a mapping of the form
$$
f\mapsto\sum_{j=1}^\fz\lz_j\int_X a_j(x)g(x)\,d\mu(x),
$$
where $g\in\CL_{1/p-1}(X)$ and $f$ has an atomic decomposition
\begin{equation}\label{eq:atom}
f=\sum_{j=1}^\fz\lz_ja_j
\end{equation}
in $(\go{\bz,\gz})'$ with $(p,q)$-atoms $\{a_j\}_{j=1}^\fz$ and $\{\lz_j\}_{j=1}^\fz\subset\cc$ satisfying
$\sum_{j=1}^\fz|\lz_j|^p<\fz$. Therefore, it is reasonable to define the pair $\langle f,g\rangle$ as follows:
$$
\langle f,g\rangle:=\sum_{j=1}^\fz\lz_j\int_X a_j(x)g(x)\,d\mu(x).
$$
In this way, we find that \eqref{eq:atom} also converges in $(\CL_{1/p-1}(X))'$, and hence
$f\in H^{p,q}_\cw(X)$ and $\|f\|_{H^{p,q}_\cw(X)}\le(\sum_{j=1}^\fz|\lz_j|^p)^{\frac 1p}$. Taking the infimum
over all the atomic decompositions of $f$ as above, we obtain
$\|f\|_{H^{p,q}_\cw(X)}\le\|f\|_{H^{p,q}_\at(X)}$. Thus, $H^{p,q}_\at(X)\subset H^{p,q}_\cw(X)$, which completes
the proof of Theorem \ref{thm:cw}.
\end{proof}

%%%%%%%%%%%%%%%%%%%%%%%%%%%%%%%%%%%%%%%%%%%%%%%%%%%%%%%%%%%%%%%%%%

\section[Littlewood-Paley function characterizations of atomic Hardy spaces]
{Littlewood-Paley function characterizations of atomic Hardy \\ spaces} \label{LP}

In this section, we consider the Littlewood-Paley function characterizations of Hardy spaces. Differently from
Sections \ref{max} and \ref{atom}, we use $(\GOO{\bz,\gz})'$ as underlying spaces to introduce Hardy spaces. Let
$p\in(\om,1]$, $\bz,\ \gz\in(\omega(1/p-1),\eta)$, $f\in(\GOO{\bz,\gz})'$ and $\{Q_k\}_{k\in\zz}$ be an
$\exp$-ATI.
For any $\thz\in(0,\fz)$, define the \emph{Lusin area function of $f$, with aperture $\thz$, $\CS_\thz(f)$,} by
setting, for any $x\in X$,
\begin{equation}\label{5.z}
\CS_\thz(f)(x):=\lf[\sum_{k=-\fz}^\fz\int_{B(x,\thz\dz^k)}|Q_kf(y)|^2\,\frac{d\mu(y)}{V_{\thz\dz^k}(x)}\r]
^{\frac 12}.
\end{equation}
In particular, when $\thz=1$, we write $\CS_\thz$ simply as $\CS$. Define the \emph{Hardy space} $H^{p}(X)$ via
the Lusin area function by setting
$$
H^p(X):=\lf\{f\in\lf(\GOO{\bz,\gz}\r)':\ \|f\|_{H^p(X)}:=\|\CS(f)\|_{L^p(X)}<\fz\r\}.
$$
In Section \ref{LP1}, we show that $H^p(X)$ is independent of the choices of $\exp$-ATIs.
In Section \ref{s5.2}, we connect $H^p(X)$ with $H^{*,p}(X)$ by considering the molecular and the atomic
characterizations of elements in $H^p(X)$.
Section \ref{LP2} deals with equivalent characterizations of $H^p(X)$ via  the \emph{Littlewood-Paley $g$-function}
\begin{equation}\label{eq:defg}
g(f)(x):=\lf[\sum_{k=-\fz}^\fz|Q_kf(x)|^2\r]^{\frac 12}
\end{equation}
and the \emph{Littlewood-Paley $g_\lz^*$-function}
\begin{equation}\label{5.y}
g_\lz^*(f)(x):=\lf\{\sum_{k=-\fz}^\fz\int_{X}|Q_kf(y)|^2\lf[\frac{\dz^k}{\dz^k+d(x,y)}\r]^\lz
\,\frac{d\mu(y)}{V_{\dz^k}(x)+V_{\dz^k}(y)}\r\}^{\frac 12}.
\end{equation}
where $f\in (\GOO{\bz,\gz})'$ with $\bz,\ \gz\in(\omega(1/p-1),\eta)$, $x\in X$ and $\lz\in(0,\fz)$.

%=========================================================

\subsection{Independence of $\exp$-ATIs }\label{LP1}

In this section,
we show that $H^p(X)$ is independent of the choices of $\exp$-ATIs.
If $\mathcal E:=\{E_k\}_{k\in\zz}$ and $\CQ:=\{Q_k\}_{k\in\zz}$ are two $\exp$-ATIs, then we denote by
$\CS_{\mathcal E}$ and $\CS_\CQ$ the Lusin area functions via ${\mathcal E}$ and $\CQ$, respectively.

\begin{theorem}\label{thm:LA=}
Let ${\mathcal E}:=\{E_k\}_{k\in\zz}$ and $\CQ:=\{Q_k\}_{k\in\zz}$ be two {\rm $\exp$-ATIs}. Suppose that $p\in(\om,1]$
and $\bz,\ \gz\in(\omega(1/p-1),\eta)$. Then there exists a positive constant $C$ such that, for any
$f\in(\GOO{\bz,\gz})'$,
\begin{equation*}
C^{-1}\|\CS_\CQ(f)\|_{L^p(X)}\le\|\CS_{\mathcal E}(f)\|_{L^p(X)}\le C\|\CS_\CQ(f)\|_{L^p(X)}.
\end{equation*}
\end{theorem}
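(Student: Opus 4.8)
The plan is to reduce, by the symmetry of the statement, to proving $\|\CS_{\mathcal E}(f)\|_{L^p(X)}\lesssim\|\CS_\CQ(f)\|_{L^p(X)}$ for every $f\in(\GOO{\bz,\gz})'$, and then to run the standard discretization built on Theorem \ref{thm:hdrf}. First I would expand $f$ via the first homogeneous discrete Calder\'on reproducing formula of Theorem \ref{thm:hdrf} associated with $\CQ$,
$$
f=\sum_{k'\in\zz}\sum_{\az\in\CA_{k'}}\sum_{m=1}^{N(k',\az)}\wz{Q}^{(1)}_{k'}\lf(\cdot,y_\az^{k',m}\r)
\int_{Q_\az^{k',m}}Q_{k'}f(z)\,d\mu(z)\qquad\text{in }(\GOO{\bz,\gz})',
$$
where each $y_\az^{k',m}\in Q_\az^{k',m}$ is at our disposal; applying $E_k$ and using that $E_k(y,\cdot)$ is a test function gives a pointwise series for $E_kf(y)$ with kernel $E_k\wz{Q}^{(1)}_{k'}$. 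The first real step is the almost-orthogonality estimate: fixing $\bz'\in(\omega(1/p-1),\bz\wedge\gz)$ (a nonempty range, since $\bz\wedge\gz>\omega(1/p-1)$), the size, regularity and cancellation of $E_k$ together with (i)--(iii) of Theorem \ref{thm:hcrf} for $\wz{Q}^{(1)}_{k'}$ should yield, for all $y,\,z\in X$ and all $k,\,k'\in\zz$,
$$
\lf|E_k\wz{Q}^{(1)}_{k'}(y,z)\r|\lesssim\dz^{|k-k'|\bz'}\,\frac1{V_{\dz^{k\wedge k'}}(y)+V(y,z)}
\lf[\frac{\dz^{k\wedge k'}}{\dz^{k\wedge k'}+d(y,z)}\r]^{\gz},
$$
this being the usual second-difference computation, carried out essentially as in the proof of \cite[(3.2)]{HMY08} (compare \eqref{eq:wQ*phi}).

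Next, I would fix $x\in X$ and note that, for $y\in B(x,\dz^k)$ (so $d(x,y)<\dz^k\le\dz^{k\wedge k'}$), Lemma \ref{lem-add}(i) together with \eqref{eq:doub} makes the right-hand side above comparable to the same expression with $y$ replaced by $x$; hence, writing $a_\az^{k',m}:=(\frac1{\mu(Q_\az^{k',m})}\int_{Q_\az^{k',m}}|Q_{k'}f(z)|^2\,d\mu(z))^{1/2}$ and using $|\int_{Q_\az^{k',m}}Q_{k'}f\,d\mu|\le\mu(Q_\az^{k',m})\,a_\az^{k',m}$, the quantity $|E_kf(y)|$ is controlled, uniformly over such $y$, by $\sum_{k'}\dz^{|k-k'|\bz'}$ times
$$
\sum_{\az\in\CA_{k'}}\sum_{m=1}^{N(k',\az)}\mu\lf(Q_\az^{k',m}\r)\frac{a_\az^{k',m}}{V_{\dz^{k\wedge k'}}(x)+V(x,y_\az^{k',m})}
\lf[\frac{\dz^{k\wedge k'}}{\dz^{k\wedge k'}+d(x,y_\az^{k',m})}\r]^{\gz}.
$$
Choosing $r\in(\omega/(\omega+\bz'\wedge\gz),p)$ --- nonempty because $\bz',\,\gz>\omega(1/p-1)$ and $p\in(\om,1]$ --- Lemma \ref{lem:max} (applied with its running level equal to $k'$) bounds this double sum by $\dz^{[k'-(k\wedge k')]\omega(1-1/r)}[\CM(g_{k'})(x)]^{1/r}$, where $g_{k'}:=\sum_{\az,m}(a_\az^{k',m})^r\chi_{Q_\az^{k',m}}$. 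With $c_{k,k'}:=\dz^{|k-k'|\bz'}\dz^{[k'-(k\wedge k')]\omega(1-1/r)}$ one has, thanks to $r>\omega/(\omega+\bz')$, that $\sup_k\sum_{k'}c_{k,k'}<\fz$ and $\sup_{k'}\sum_kc_{k,k'}<\fz$; so the Cauchy--Schwarz inequality in $k'$ followed by summation over $k$ gives
$$
[\CS_{\mathcal E}(f)(x)]^2=\sum_{k\in\zz}\int_{B(x,\dz^k)}|E_kf(y)|^2\,\frac{d\mu(y)}{V_{\dz^k}(x)}
\lesssim\sum_{k'\in\zz}\lf[\CM(g_{k'})(x)\r]^{2/r}.
$$

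Now I would apply the Fefferman--Stein vector-valued maximal inequality in $L^{p/r}(X;\ell^{2/r})$, legitimate since $p/r>1$ and $2/r>1$, to obtain $\|\CS_{\mathcal E}(f)\|_{L^p(X)}\lesssim\|(\sum_{k'}[\CM(g_{k'})]^{2/r})^{1/2}\|_{L^p(X)}\lesssim\|(\sum_{k'}g_{k'}^{2/r})^{1/2}\|_{L^p(X)}$. By the disjointness of $\{Q_\az^{k',m}\}_{\az,m}$ for fixed $k'$, $g_{k'}^{2/r}(x)=\frac1{\mu(Q_\az^{k',m})}\int_{Q_\az^{k',m}}|Q_{k'}f(z)|^2\,d\mu(z)$ for the unique cube $Q_\az^{k',m}\ni x$; upon taking $j_0$ sufficiently large --- permissible, since Theorem \ref{thm:hdrf} is available for every sufficiently large $j_0$ --- so that $Q_\az^{k',m}\subset B(x,\dz^{k'})$ whenever $x\in Q_\az^{k',m}$, while $\mu(Q_\az^{k',m})\sim\mu(B(x,\dz^{k'}))$ by Lemma \ref{cube}(v) and \eqref{eq:doub}, this last quantity is $\lesssim\frac1{V_{\dz^{k'}}(x)}\int_{B(x,\dz^{k'})}|Q_{k'}f(z)|^2\,d\mu(z)$, so that $\sum_{k'}g_{k'}^{2/r}(x)\lesssim[\CS_\CQ(f)(x)]^2$ by \eqref{5.z}. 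Combining the displays yields the desired inequality; as usual, the interchanges of sums are first justified for $f$ in a dense subclass and the estimate is then passed to the limit.

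I expect the main obstacle to be the composed-kernel almost-orthogonality estimate together with the simultaneous calibration of exponents: $\bz'$ and $r$ must be chosen so that the off-diagonal decay in $|k-k'|$ survives the Hardy--Littlewood maximal step (which forces $r>\omega/(\omega+\bz')$) while still $r<p$ (needed for the Fefferman--Stein inequality), and these constraints are compatible precisely because $\bz,\,\gz\in(\omega(1/p-1),\eta)$ and $p\in(\om,1]$. A subsidiary delicate point is arranging the final passage from the discretized square function $(\sum_{k'}g_{k'}^{2/r})^{1/2}$ back to $\CS_\CQ(f)$ at aperture $1$; this is exactly what dictates the use of the $L^2$-averaged coefficients $a_\az^{k',m}$ and the requirement that the level-$(k'+j_0)$ cubes be small compared with $\dz^{k'}$.
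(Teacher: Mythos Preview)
Your proposal is correct and follows essentially the same route as the paper: reduce by symmetry, expand $f$ via the homogeneous discrete Calder\'on reproducing formula (Theorem~\ref{thm:hdrf}) for $\CQ$, invoke the almost-orthogonality estimate $|E_k\wz{Q}_{k'}(y,z)|\lesssim\dz^{|k-k'|\bz'}$ times a standard size factor (as in \eqref{eq:wQ*phi}), replace $y\in B(x,\dz^k)$ by $x$, apply Lemma~\ref{lem:max} with $r\in(\omega/(\omega+\bz'),p)$, and finish with Cauchy--Schwarz in $k'$ plus the Fefferman--Stein inequality (Lemma~\ref{lem:FSin}). Your $L^2$-averaged coefficients $a_\az^{k',m}$ and the function $g_{k'}$ are a minor notational repackaging of the paper's quantity $m_k(f)(z):=[\frac1{V_{\dz^k}(z)}\int_{B(z,\dz^k)}|Q_kf|^2\,d\mu]^{1/2}$, and your final passage $g_{k'}^{2/r}(x)\lesssim[m_{k'}(f)(x)]^2$ is exactly what the paper uses to land on $\CS_\CQ(f)$.
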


To show  Theorem \ref{thm:LA=}, the Fefferman-Stein vector-valued maximal inequality is necessary.

\begin{lemma}[{\cite[Theorem 1.2]{GLY09}}]\label{lem:FSin}
Suppose that $p\in(1,\fz)$ and $u\in(1,\fz]$. Then there exists a positive constant $C$ such that, for any
sequence $\{f_j\}_{j=1}^\fz$ of measurable functions,
\begin{equation*}
\lf\|\lf\{\sum_{j=1}^\fz[\CM(f_j)]^u\r\}^{\frac 1u}\r\|_{L^p(X)}\le
C\lf\|\lf(\sum_{j=1}^\fz|f_j|^u\r)^{\frac 1u}\r\|_{L^p(X)}
\end{equation*}
with the usual modification made when $u=\fz$.
\end{lemma}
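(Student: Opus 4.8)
The final statement is the Fefferman--Stein vector-valued maximal inequality on $(X,d,\mu)$, and the plan is to bootstrap from the scalar mapping properties of $\CM$ recalled before Lemma~\ref{lem-add} (boundedness on $L^s(X)$ for $s\in(1,\fz]$ and weak type $(1,1)$) to its $\ell^u$-valued extension. I would organize the argument into four steps: first dispose of the two degenerate configurations $u=\fz$ and $p=u$; then prove a vector-valued weak type $(1,1)$ inequality; then interpolate to obtain the range $p\in(1,u]$; and finally dualize to reach $p\in(u,\fz)$.

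For $u=\fz$ the monotonicity of $\CM$ yields $\sup_j\CM(f_j)\le\CM(\sup_j|f_j|)$ pointwise, so the inequality reduces to the scalar $L^p$ bound applied to $\sup_j|f_j|$. For the diagonal case $p=u\in(1,\fz)$, the Tonelli theorem rewrites the left-hand side as $[\sum_j\|\CM(f_j)\|_{L^u(X)}^u]^{1/u}$, which the scalar $L^u$ bound controls by $[\sum_j\|f_j\|_{L^u(X)}^u]^{1/u}$, i.e.\ the right-hand side. This strong $(u,u)$ estimate furnishes one endpoint for interpolation.

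The crux is the vector-valued weak type $(1,1)$ bound for $\vec\CM(\{f_j\}):=\{\CM(f_j)\}$ from $L^1(X;\ell^u)$ to $L^{1,\fz}(X;\ell^u)$ with $u\in(1,\fz)$. Here I would run a Calder\'on--Zygmund decomposition of the scalar function $g:=(\sum_j|f_j|^u)^{1/u}$ at height $\lz$, built from the dyadic cubes of Lemma~\ref{cube} (maximal cubes on which the average of $g$ exceeds $\lz$), the doubling condition \eqref{eq:doub} supplying the volume comparisons that replace the Euclidean ones. Splitting each $f_j=g_j+b_j$ into a good part, whose $\ell^u$-norm $(\sum_j|g_j|^u)^{1/u}$ is pointwise $\ls\lz$ by Minkowski's inequality and hence is handled by the strong $(u,u)$ bound, and a bad part supported on the selected cubes, whose dilated union has measure $\ls\lz^{-1}\|g\|_{L^1(X)}$, yields the weak type estimate; the contribution of $\vec\CM$ of the bad part off a fixed dilate of the cubes is controlled using doubling alone. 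This is the step I expect to be the main obstacle, precisely because $\CM$ is only sublinear and carries no cancellation, so the bad-part estimate must rest entirely on the geometry (bounded overlap and doubling) rather than on oscillation.

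With weak type $(1,1)$ and strong $(u,u)$ in hand, the vector-valued Marcinkiewicz interpolation theorem delivers the strong $(p,p)$ bound for every $p\in(1,u]$, and the same machine run with $u'$ in place of $u$ gives it for every exponent in $(1,u']$. For the remaining range $p\in(u,\fz)$ I would use duality together with a linearization: since $(L^p(X;\ell^u))'=L^{p'}(X;\ell^{u'})$ with $p'\in(1,u')$, it suffices to test against nonnegative $\{h_j\}$ with $\|\{h_j\}\|_{L^{p'}(X;\ell^{u'})}\le1$. Choosing, for each $j$ and measurably in $x$, a ball nearly attaining $\CM(f_j)(x)$ and transferring the average onto $h_j$ by Tonelli, one bounds $\sum_j\int_X\CM(f_j)h_j$ by a doubling-constant multiple of $\sum_j\int_X|f_j|\,\CM(h_j)$; H\"older's inequality and the already-established bound for $\vec\CM$ on $L^{p'}(X;\ell^{u'})$ then close the estimate. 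The only care needed is that the adjoint of the linearized averaging operator is again dominated pointwise by $\CM$, which is exactly where the doubling property of $\mu$ re-enters.
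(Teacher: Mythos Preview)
The paper does not supply a proof; the lemma is quoted from \cite[Theorem~1.2]{GLY09}. Your first four steps---the degenerate cases $u=\fz$ and $p=u$, the vector-valued weak type $(1,1)$ via a Calder\'on--Zygmund decomposition of the scalar function $(\sum_j|f_j|^u)^{1/u}$ built on the dyadic cubes of Lemma~\ref{cube}, and Marcinkiewicz interpolation to cover $p\in(1,u]$---are the standard route and are correctly outlined.

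The duality step for $p\in(u,\fz)$, however, contains a genuine gap. The asserted bound $\int_X\CM(f)\,h\,d\mu\ls\int_X|f|\,\CM(h)\,d\mu$, equivalently the claim that the adjoint of the linearized averaging operator is pointwise dominated by $\CM$, is \emph{false}. Already on $\rr$ with Lebesgue measure, take $f_\ez:=\ez^{-1}\chi_{(-\ez/2,\,\ez/2)}$ and $h:=\chi_{(-1,1)}$: then $\int_\rr\CM(f_\ez)\,h\sim\log(1/\ez)$ while $\int_\rr|f_\ez|\,\CM(h)\le 1$, so no uniform constant can exist. The near-optimal balls for $\CM(f_\ez)(x)$ all have radius comparable to $|x|$ and all contain the origin, which is exactly the configuration that makes the adjoint diverge. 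The correct substitute is the Fefferman--Stein \emph{weighted} inequality
\[
\int_X[\CM(f)(x)]^u\,w(x)\,d\mu(x)\le C_u\int_X|f(x)|^u\,\CM(w)(x)\,d\mu(x),\qquad u\in(1,\fz),
\]
itself obtained from the weighted weak $(1,1)$ bound $\int_{\{\CM f>\lz\}}w\,d\mu\ls\lz^{-1}\int_X|f|\,\CM(w)\,d\mu$ (a covering argument using only doubling) interpolated against the trivial $L^\fz$ estimate. One then dualizes $\|\sum_j[\CM(f_j)]^u\|_{L^{p/u}(X)}$ against a single nonnegative scalar weight $w$ with $\|w\|_{L^{(p/u)'}(X)}\le 1$, sums the weighted inequality over $j$, applies H\"older, and closes with the scalar boundedness of $\CM$ on $L^{(p/u)'}(X)$.
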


\begin{proof}[Proof of Theorem \ref{thm:LA=}]
By symmetry, we only need to prove $\|\CS_{\mathcal E}(f)\|_{L^p(X)} \ls\|\CS_\CQ(f)\|_{L^p(X)}$. For any
$k\in\zz$, $f\in(\GOO{\bz,\gz})'$ with $\bz,\ \gz$ as in Theorem \ref{thm:LA=}, and $z\in X$, define
$$
m_k(f)(z):=\lf[\frac 1{V_{\dz^k}(z)}\int_{B(z,\dz^k)} |Q_kf(u)|^2\,d\mu(u)\r]^{\frac 12}.
$$

Now suppose that $l\in\zz$, $x\in X$ and $y\in B(x,\dz^l)$. By Theorem \ref{thm:hdrf}, we conclude that
$$
E_lf(y)=\sum_{k=-\fz}^\fz\sum_{\az\in\CA_k}\sum_{m=1}^{N(k,\az)}E_l\wz{Q}_k\lf(y,y_\az^{k,m}\r)
\int_{Q_\az^{k,m}}Q_kf(u)\,d\mu(u),
$$
where all the notation is as in Theorem \ref{thm:hdrf} and $\{\wz{Q}_k\}_{k=-\fz}^\fz$ satisfy the conditions
of Theorem \ref{thm:hdrf}. Notice that, if $z\in Q_\az^{k,m}$, then
$Q_\az^{k,m}\subset B(z,\dz^k)$ and $\mu(Q_\az^{k,m})\sim V_{\dz^k}(z)$. Therefore, we have
$$
\lf|\frac 1{\mu(Q_\az^{k,m})}\int_{Q_\az^{k,m}}Q_kf(u)\,d\mu(u)\r|
\ls\lf[\frac 1{V_{\dz^k}(z)}\int_{B(z,\dz^k)}|Q_kf(u)|^2\,d\mu(y)\r]^{\frac 12}\sim m_k(f)(z),
$$
which further implies that
$$
\lf|\frac 1{\mu(Q_\az^{k,m})}\int_{Q_\az^{k,m}}Q_kf(u)\,d\mu(u)\r|
\ls\inf_{z\in Q_\az^{k,m}}m_k(f)(z).
$$
Moreover, by the proof of \eqref{eq:wQ*phi}, we find that, for any fixed $\bz'\in(0,\bz)$,
\begin{align*}
\lf|E_l\wz{Q}_k\lf(y,y_\az^{k,m}\r)\r|&\ls\dz^{|k-l|\bz'}\frac 1{V_{\dz^{k\wedge l}}(y)+V(y,y_\az^{k,m})}
\lf[\frac{\dz^{k\wedge l}}{\dz^{k\wedge l}+d(y,y_\az^{k,m})}\r]^\gz\\
&\sim\dz^{|k-l|\bz'}\frac 1{V_{\dz^{k\wedge l}}(x)+V(x,y_\az^{k,m})}
\lf[\frac{\dz^{k\wedge l}}{\dz^{k\wedge l}+d(x,y_\az^{k,m})}\r]^\gz,
\end{align*}
where only the regularity condition of $\wz{Q}_k$ on the first variable is used. Therefore, by
Lemma \ref{lem:max}, for any fixed $r\in(\omega/(\omega+\gz),1]$, we have
\begin{align*}
|E_lf(y)|&\ls\sum_{k=-\fz}^\fz\dz^{|k-l|\bz'}\sum_{\az\in\CA_k}\sum_{m=1}^{N(k,\az)}\mu\lf(Q_\az^{k,m}\r)
\frac 1{V_{\dz^{k\wedge l}}(x)+V(x,y_\az^{k,m})}
\lf[\frac{\dz^{k\wedge l}}{\dz^{k\wedge l}+d(x,y_\az^{k,m})}\r]^\gz\inf_{z\in Q_\az^{k,m}}m_k(f)(z)\\
&\ls\sum_{k=-\fz}^\fz\dz^{|k-l|\bz'}\dz^{[k-(k\wedge l)]\omega(1-\frac 1r)}
\lf\{\CM\lf(\sum_{\az\in\CA_k}\sum_{m=1}^{N(k,\az)}\inf_{z\in Q_\az^{k,m}}[m_k(f)(z)]^r\chi_{Q_\az^{k,m}}\r)
(x)\r\}^{\frac 1r}.
\end{align*}
Choose $\bz'$ and $r$ such that $r\in(\omega/(\omega+\bz'),p)$. Then, by the H\"{o}lder inequality, we
conclude that
\begin{align*}
\lf[\CS_{\mathcal E}(f)(x)\r]^2&=\sum_{l=-\fz}^\fz\int_{B(x,\dz^l)}|E_lf(y)|^2\,\frac{dy}{V_{\dz^l}(x)}\\
&\ls\sum_{l=-\fz}^\fz\lf[\sum_{k=-\fz}^\fz\dz^{|k-l|\bz'}\dz^{[k-(k\wedge l)]\omega(1-\frac 1r)}
\lf\{\CM\lf(\sum_{\az\in\CA_k}\sum_{m=1}^{N(k,\az)}\inf_{z\in Q_\az^{k,m}}[m_k(f)(z)]^r\chi_{Q_\az^{k,m}}\r)
(x)\r\}^{\frac 1r}\r]^2\\
&\ls\sum_{l=-\fz}^\fz\sum_{k=-\fz}^\fz\dz^{|k-l|\bz'}\dz^{[k-(k\wedge l)]\omega(1-\frac 1r)}
\lf\{\CM\lf(\sum_{\az\in\CA_k}\sum_{m=1}^{N(k,\az)}\inf_{z\in Q_\az^{k,m}}[m_k(f)(z)]^r\chi_{Q_\az^{k,m}}\r)
(x)\r\}^{\frac 2r}\\
&\ls\sum_{k=-\fz}^\fz\lf\{\CM\lf(\sum_{\az\in\CA_k}
\sum_{m=1}^{N(k,\az)}\inf_{z\in Q_\az^{k,m}}[m_k(f)(z)]^r\chi_{Q_\az^{k,m}}\r)(x)\r\}^{\frac 2r}
\ls\sum_{k=-\fz}^\fz\lf\{\CM\lf([m_k(f)]^r\r)(x)\r\}^{\frac 2r}.
\end{align*}
Therefore, from Lemma \ref{lem:FSin}, we deduce that
\begin{equation*}
\|\CS_{\mathcal E}(f)\|_{L^p(X)}\ls\lf\|\lf(\sum_{k=-\fz}^\fz\lf\{\CM\lf([m_k(f)]^r\r)\r\}
^{\frac 2r}\r)^{\frac r2}\r\|_{L^{p/r}(X)}^{\frac 1r}
\ls\lf\|\lf\{\sum_{k=-\fz}^\fz[m_k(f)]^2\r\}^{\frac 12}\r\|_{L^p(X)}
\sim\|\CS_\CQ(f)\|_{L^p(X)}.
\end{equation*}
This finishes the proof of Theorem \ref{thm:LA=}.
\end{proof}

\subsection{Atomic characterizations of $H^p(X)$}\label{s5.2}

The main aim of this section is to obtain  the atomic characterizations of $H^p(X)$ when $p\in(\om,1]$.

For any
$p\in(\om,1]$, $q\in(p,\fz]\cap[1,\fz]$ and $\bz,\ \gz\in(\omega(1/p-1),\eta)$,  we define the
\emph{homogeneous atomic Hardy space} $\OH^{p,q}_\at(X)$ in the same way of $H^{p,q}_\at(X)$,
but with the distribution space
$(\go{\bz,\gz})'$ replaced by $(\GOO{\bz,\gz})'$.
Then the following relationship between $H^{p,q}_\at(X)$ and $\OH^{p,q}_\at(X)$ can be found in
\cite[Theorem 5.4]{GLY08}.

\begin{proposition}\label{prop:h=oh}
Suppose $p\in(\om,1]$, $\bz,\ \gz\in(\omega(1/p-1),\eta)$ and
$q\in(p,\fz]\cap[1,\fz]$. Then $\mathring{H}^{p,q}_\at(X)=H^{p,q}_\at(X)$ with equivalent (quasi)-norms.
More precisely, if $f\in H^{p,q}_\at(X)$, then the restriction of $f$ on $\GOO{\bz,\gz}$ belongs to
$\mathring{H}^{p,q}_\at(X)$; Conversely, if $f\in \OH^{p,q}_\at(X)$, then there exists a unique
$\wz{f}\in H^{p,q}_\at(X)$ such that $\wz{f}=f$ in $(\GOO{\bz,\gz})'$.
\end{proposition}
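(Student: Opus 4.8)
The plan is to prove the two containments $H^{p,q}_\at(X)\hookrightarrow\OH^{p,q}_\at(X)$ and $\OH^{p,q}_\at(X)\hookrightarrow H^{p,q}_\at(X)$, together with the matching (quasi-)norm bounds and the uniqueness of the extension. The forward containment is a soft consequence of the inclusion $\GOO{\bz,\gz}\subset\go{\bz,\gz}$; the reverse one rests on a single quantitative fact, namely that every $(p,q)$-atom is \emph{uniformly} bounded in $(\go{\bz,\gz})'$, which is already contained in Lemma~\ref{lem:a*} and the proof of Proposition~\ref{prop:ban}; and the uniqueness reduces, via Theorem~\ref{thm:atom}, to showing that a distribution in $H^{*,p}(X)$ annihilating $\GOO{\bz,\gz}$ must vanish.

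First I would treat $H^{p,q}_\at(X)\hookrightarrow\OH^{p,q}_\at(X)$. Let $f\in H^{p,q}_\at(X)$, so $f=\sum_j\lz_j a_j$ in $(\go{\bz,\gz})'$ with $(p,q)$-atoms $\{a_j\}_j$ and $\{\lz_j\}_j\subset\cc$ satisfying $\sum_j|\lz_j|^p<\fz$. Since every $\vz\in\GOO{\bz,\gz}$ also lies in $\go{\bz,\gz}$, the partial sums $\sum_{j\le N}\lz_j\langle a_j,\vz\rangle$ converge to $\langle f,\vz\rangle$; hence $f|_{\GOO{\bz,\gz}}=\sum_j\lz_j a_j$ in $(\GOO{\bz,\gz})'$, so $f|_{\GOO{\bz,\gz}}\in\OH^{p,q}_\at(X)$, and taking the infimum over all atomic decompositions of $f$ gives $\|f|_{\GOO{\bz,\gz}}\|_{\OH^{p,q}_\at(X)}\le\|f\|_{H^{p,q}_\at(X)}$.

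Next I would treat $\OH^{p,q}_\at(X)\hookrightarrow H^{p,q}_\at(X)$. Given $f\in\OH^{p,q}_\at(X)$, write $f=\sum_j\lz_j a_j$ in $(\GOO{\bz,\gz})'$. Each $a_j$, viewed via integration as a distribution on $\go{\bz,\gz}$, satisfies $\|a_j^*\|_{L^p(X)}\le C$ by Lemma~\ref{lem:a*}, in particular \eqref{eq-add3}; plugging this into the embedding estimate from the proof of Proposition~\ref{prop:ban} yields $|\langle a_j,\vz\rangle|\le C\|\vz\|_{\CG(\bz,\gz)}$ for all $\vz\in\go{\bz,\gz}$, with $C$ independent of $j$. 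Because $p\le1$ forces $\sum_j|\lz_j|\le(\sum_j|\lz_j|^p)^{1/p}<\fz$, the series $\sum_j\lz_j\langle a_j,\vz\rangle$ converges absolutely and uniformly over $\{\vz:\|\vz\|_{\CG(\bz,\gz)}\le1\}$; hence $\wz f:=\sum_j\lz_j a_j$ converges in $(\go{\bz,\gz})'$ and defines an element of $H^{p,q}_\at(X)$ with $\|\wz f\|_{H^{p,q}_\at(X)}\le(\sum_j|\lz_j|^p)^{1/p}$, which restricts to $f$ on $\GOO{\bz,\gz}$. Taking infima over atomic decompositions of $f$ finishes the norm equivalence.

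The hard part will be the uniqueness of $\wz f$, equivalently the injectivity of $f\mapsto f|_{\GOO{\bz,\gz}}$ on $H^{p,q}_\at(X)$. Suppose $g\in H^{p,q}_\at(X)$ and $g|_{\GOO{\bz,\gz}}=0$; by Theorem~\ref{thm:atom} one has $g\in H^{*,p}(X)$, so $g^*\in L^p(X)$, and by \eqref{eq-xxx} the radial maximal function $\CM^+(g)$ associated with a $1$-$\exp$-ATI $\{P_k\}_{k\in\zz}$ lies in $L^p(X)$; in particular $P_kg\in L^p(X)$ for each $k$. Fix $\psi\in\CG(\eta,\eta)$ with $\int_X\psi\,d\mu=1$. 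For any $\vz\in\go{\bz,\gz}$, set $\sigma:=\int_X\vz\,d\mu$; by Lemma~\ref{lem:g0}, $\vz-\sigma\psi\in\GOO{\bz,\gz}$, whence $\langle g,\vz\rangle=\sigma\langle g,\psi\rangle$. Applying this to $\vz=P_k(x,\cdot)$ (for which $\sigma=1$) shows that $P_kg(x)=\langle g,\psi\rangle$ is constant in both $x$ and $k$; since $\mu(X)=\fz$, a nonzero constant is not in $L^p(X)$, so $\langle g,\psi\rangle=0$ and therefore $\langle g,\vz\rangle=0$ for every $\vz\in\go{\bz,\gz}$, i.e.\ $g=0$. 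Combining the four steps gives $\OH^{p,q}_\at(X)=H^{p,q}_\at(X)$ with equivalent (quasi-)norms, together with the precise correspondence in the statement.
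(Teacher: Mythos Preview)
The paper does not actually prove Proposition~\ref{prop:h=oh}; it simply attributes the result to \cite[Theorem~5.4]{GLY08}. Your argument, by contrast, is a correct and fully self-contained proof built entirely from tools already developed in the present paper.

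Your forward containment is immediate, and your reverse containment is clean: Lemma~\ref{lem:a*} together with the embedding estimate in the proof of Proposition~\ref{prop:ban} gives a uniform bound $|\langle a_j,\vz\rangle|\le C\|\vz\|_{\CG(\bz,\gz)}$, and then $\sum_j|\lz_j|\le(\sum_j|\lz_j|^p)^{1/p}$ (valid for $p\le1$) forces convergence of the atomic series in $(\go{\bz,\gz})'$. Note only that the infimum step at the end of this paragraph tacitly presupposes that $\wz f$ is independent of the chosen atomic decomposition of $f$; this is exactly what your subsequent uniqueness argument establishes, so the logic is complete once the paragraphs are read together.

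Your uniqueness argument is the most interesting part and is genuinely different from merely quoting \cite{GLY08}: using Lemma~\ref{lem:g0} to split an arbitrary $\vz\in\go{\bz,\gz}$ as $(\vz-\sigma\psi)+\sigma\psi$ reduces the problem to showing $\langle g,\psi\rangle=0$, and evaluating at $\vz=P_k(x,\cdot)$ (which lies in $\CG(\eta,\eta)\subset\go{\bz,\gz}$ and has integral $1$) forces $P_kg$ to be the constant $\langle g,\psi\rangle$. Since $g\in H^{*,p}(X)$ by Theorem~\ref{thm:atom}, one has $P_kg\in L^p(X)$ via \eqref{eq-xxx}, and the standing assumption $\mu(X)=\fz$ then kills the constant. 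This is an elegant and short route that avoids appealing to the RD-space machinery of \cite{GLY08}.
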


Due to the fact that the kernels $\wz Q_k$ in the homogeneous continuous Calder\'on formula in Theorem \ref{thm:hcrf}
has no compact support, we can only use  Theorem \ref{thm:hcrf} to decompose an element of $H^p(X)$ into a
linear combination of the following \emph{molecules}.

\begin{definition}\label{def:mol}
Suppose that $p\in(0,1]$, $q\in(p,\fz]\cap[1,\fz]$ and $\vec{\ez}:=\{\ez_m\}_{m=1}^\fz\subset [0,\fz)$
satisfying
\begin{equation}\label{eq:epcon}
\sum_{m=1}^\fz m[\ez_m]^p<\fz.
\end{equation}
A function $M\in L^q(X)$ is called a \emph{$(p,q,\vec{\ez})$-molecule} centered at a ball $B:=B(x_0,r_0)$
for some $x_0\in X$ and $r\in(0,\fz)$ if $m$ has the following properties:
\begin{enumerate}
\item $\|M\chi_B\|_{L^q(X)}\le [\mu(B)]^{\frac 1q-\frac 1p}$;
\item for any $m\in\nn$,
$\|M\chi_{B(x_0,\dz^{-m}r_0)\setminus B(x_0,\dz^{-m+1}r_0)}\|_{L^q(X)}\le
\ez_m[\mu(B(x_0,\dz^{-m}r_0))]^{\frac 1q-\frac 1p}$;
\item $\int_X M(x)\,d\mu(x)=0$.
\end{enumerate}
\end{definition}

By (i) and (ii) of Definition \ref{def:mol}, the
H\"{o}lder inequality, \eqref{eq:epcon} and the fact $p\in(0,1]$, we find that, if $M$ satisfies (i) and (ii)
of Definition \ref{def:mol}, then $M\in L^1(X)$ and hence Definition \ref{def:mol}(iii) makes sense.

After carefully checking the proof of \cite[Theorem 3.4]{lcfy18}, we obtain the following molecular
characterization of the atomic Hardy space $H^{p,q}_\cw(X)$ of Coifman and Weiss \cite{CW77}, the details being
omitted.

\begin{proposition}\label{thm:mol}
Suppose that $p\in(0,1]$, $q\in(p,\fz]\cap[1,\fz]$ and
$\vec{\ez}:=\{\ez_l\}_{l=1}^\fz$ satisfying \eqref{eq:epcon}. Then $f\in H^{p,q}_\cw(X)$ if and only if there
exist $(p,q,\vec{\ez})$-molecules $\{M_j\}_{j=1}^\fz$ and $\{\lz_j\}_{j=1}^\fz\subset\cc$, with
$\sum_{j=1}^\fz|\lz_j|^p<\fz$, such that
\begin{equation}\label{eq:mold}
f=\sum_{j=1}^\fz\lz_jM_j
\end{equation}
converges in $(\CL_{1/p-1}(X))'$ when $p\in(0,1)$ or in $L^1(X)$ when $p=1$. Moreover, there exists a positive
constant $C$, independent of $f$, such that, for any $f\in H^{p,q}_\cw(X)$,
$$
C^{-1}\|f\|_{H^{p,q}_\cw(X)}\le\inf \lf(\sum_{j=1}^\fz|\lz_j|^p\r)^{\frac 1p}\le C\|f\|_{H^{p,q}_\cw(X)},
$$
where the infimum is taken over all the molecular decompositions of $f$ as in \eqref{eq:mold}.
\end{proposition}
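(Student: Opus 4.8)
The plan is to prove the two implications of Proposition \ref{thm:mol} separately. The ``only if'' part is immediate and the ``if'' part reduces to one self-contained lemma: \emph{every $(p,q,\vec\ez)$-molecule is a sum of $(p,q)$-atoms whose coefficients are $\ell^p$-summable with $\ell^p$-norm bounded by a constant depending only on $\vec\ez$}, through the finite quantity $\Lambda:=\sum_{l=1}^\fz l[\ez_l]^p$ supplied by \eqref{eq:epcon}. For the ``only if'' direction I would observe that a $(p,q)$-atom supported in a ball $B=B(x_0,r_0)$ is trivially a $(p,q,\vec\ez)$-molecule centered at $B$ for any admissible $\vec\ez$: conditions (i) and (iii) of Definition \ref{def:mol} are precisely the defining properties of an atom (Definition \ref{def:atom}), and (ii) holds vacuously because the restrictions to the annuli $B(x_0,\dz^{-m}r_0)\setminus B(x_0,\dz^{-m+1}r_0)$ all vanish. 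Hence any near-optimal atomic decomposition of $f\in H^{p,q}_\cw(X)$ is already a molecular decomposition, giving $\inf(\sum_j|\lz_j|^p)^{1/p}\ls\|f\|_{H^{p,q}_\cw(X)}$.

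For the ``if'' direction I would fix a $(p,q,\vec\ez)$-molecule $M$ centered at $B=B(x_0,r_0)$, set $B_0:=B$ and $B_m:=B(x_0,\dz^{-m}r_0)$ for $m\in\nn$, and write $M=\sum_{m=0}^\fz M_m$ with $M_0:=M\chi_{B_0}$ and $M_m:=M\chi_{B_m\setminus B_{m-1}}$; by the H\"older inequality and Definition \ref{def:mol}(i)--(ii) one has $\|M_m\|_{L^1(X)}\le\ez_m[\mu(B_m)]^{1-1/p}$ with the convention $\ez_0:=1$. Putting $c_m:=\int_X M_m\,d\mu$ and $N_m:=\sum_{k=0}^m c_k$, the cancellation $\int_X M\,d\mu=0$ forces $N_m\to0$, and, since the balls are nested and $1-1/p\le0$, $|N_m|\le[\mu(B_m)]^{1-1/p}\sum_{k\ge m+1}\ez_k$. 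One then forms the mean-zero building blocks
$$
b_m:=M_m-\frac{N_m}{\mu(B_m)}\chi_{B_m}+\frac{N_{m-1}}{\mu(B_{m-1})}\chi_{B_{m-1}}
$$
(the last term omitted when $m=0$), so that $\sum_{m=0}^L b_m=M\chi_{B_L}-N_L[\mu(B_L)]^{-1}\chi_{B_L}\to M$ in $L^1(X)$ (and in $L^q(X)$ when $q<\fz$), while $\supp b_m\subset B_m$ and $\int_X b_m\,d\mu=0$. Using the doubling condition \eqref{eq:doub} to compare $\mu(B_{m-1})$ with $\mu(B_m)$, one obtains $\|b_m\|_{L^q(X)}\ls\delta_m[\mu(B_m)]^{1/q-1/p}$ with $\delta_m:=\sum_{k\ge m}\ez_k$, so each $b_m$ is $C\delta_m$ times a $(p,q)$-atom, and the subadditivity of $t\mapsto t^p$ gives the coefficient bound
$$
\sum_{m=0}^\fz\delta_m^p\le\sum_{m=0}^\fz\sum_{k\ge m}[\ez_k]^p=\sum_{k=0}^\fz(k+1)[\ez_k]^p\ls 1+\Lambda<\fz .
$$
Substituting these atomic expansions into $f=\sum_j\lz_jM_j$ yields $f=\sum_{j}\sum_m\lz_jC\delta_{j,m}a_{j,m}$, a series of $(p,q)$-atoms whose coefficients are $\ell^p$-summable with $\ell^p$-norm $\ls(1+\Lambda)^{1/p}(\sum_j|\lz_j|^p)^{1/p}$; the pairing bound $|\langle a,\vz\rangle|\le\|\vz\|_{\CL_{1/p-1}(X)}$ for atoms $a$ (Lemma \ref{lem:lipd}, together with the embedding $\ell^p\subset\ell^1$ valid for $p\le1$), respectively the obvious bound against $L^\fz(X)$ when $p=1$, shows this rearranged series converges in $(\CL_{1/p-1}(X))'$ for $p\in(0,1)$ and in $L^1(X)$ for $p=1$; hence $f\in H^{p,q}_\cw(X)$ with $\|f\|_{H^{p,q}_\cw(X)}\ls(\sum_j|\lz_j|^p)^{1/p}$, and taking the infimum finishes the proof.

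The main obstacle is twofold. First, because $\mu$ is \emph{not} assumed to be reverse doubling, every estimate above must use only the upper bound \eqref{eq:doub}: the comparison $\mu(B_{m-1})\sim\mu(B_m)$, the convergence $N_L[\mu(B_L)]^{1/q-1}\to0$ (which relies on $\mu(B_L)\ge\mu(B_0)$ and $\sum_{k>L}\ez_k\to0$, not on any growth of $\mu(B_L)$), and the harmlessness of annuli of measure zero all need to be checked in this generality. Second, the modes of convergence must be reconciled---the molecule-to-atom step converges in $L^q(X)\cap L^1(X)$ whereas the Hardy-space decomposition must converge in $(\CL_{1/p-1}(X))'$ or $L^1(X)$---and the case $q=\fz$ is absorbed into the case $q\in(p,\fz)$ via the elementary observation that a $(p,\fz,\vec\ez)$-molecule is a $(p,q,\vec\ez)$-molecule for every $q\in(p,\fz)$. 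Apart from these points the argument follows that of \cite[Theorem 3.4]{lcfy18} and \cite[p.\ 593, Theorem B]{CW77}, the one genuinely new ingredient being the extra bookkeeping forced by $d$ being merely a quasi-metric, which affects constants but not the structure of the proof.
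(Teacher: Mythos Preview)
The paper does not actually give a proof of this proposition: it states that the result follows by ``carefully checking the proof of \cite[Theorem 3.4]{lcfy18}'' and omits the details. Your proposal correctly reconstructs the standard molecular-to-atomic argument (annular cut-off plus telescoping mean-value corrections), which is precisely the approach of \cite{lcfy18}, so your write-up is in fact more complete than what the paper provides. The only point worth making explicit in your final version is the observation that $\sum_{m\ge1}\ez_m<\fz$ (needed for $N_L\to0$): since $m[\ez_m]^p\to0$ forces $\ez_m<1$ for large $m$, and $p\le1$ gives $\ez_m\le[\ez_m]^p$ whenever $\ez_m\le1$, the tail $\sum_{m\ge M}\ez_m\le\sum_{m\ge M}[\ez_m]^p$ is controlled by \eqref{eq:epcon}. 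With that clarified, your argument is complete.
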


Let $p\in(\om,1]$ and $q\in(p,\fz]\cap[1,\fz]$.
By Proposition \ref{prop:h=oh},  $\OH^{p,q}_\at(X)=H^{p,q}_{\rm cw}(X)$ and the already known fact that
$H^{p,q}_{\rm cw}(X)$ is independent of the choice of $q\in(p,\fz]\cap[1,\fz]$,
we know that  $\mathring{H}^{p,q}_\at(X)=\mathring H^{p,2}_\at(X)$. With this observation, we show
$\OH^{p,q}_\at(X)\subset H^p(X)$ as follows.

\begin{proposition}\label{p1-add}
Let $p\in(\om,1]$, $\bz,\ \gz\in(\omega(1/p-1),\eta)$, $q\in(p,\fz]\cap[1,\fz]$
and $\{Q_k\}_{k\in\zz}$ be an {\rm $\exp$-ATI}. Let $\thz\in(0,\infty)$ and $\CS_\thz$ be as in \eqref{5.z}.
Then there exists a positive constant $C$, independent of $\thz$, such that, for any distribution
$f\in (\GOO{\bz,\gz})'$ belonging to  $\mathring{H}^{p,2}_\at(X)$,
\begin{equation}\label{eq-x2}
\|\CS_\thz(f)\|_{L^p(X)}\le C\max\lf\{\thz^{-\omega/2},\thz^{\omega/p}\r\}\|f\|_{\mathring{H}^{p,2}_\at(X)}.
\end{equation}
In particular, $\mathring{H}^{p,q}_\at(X)=\mathring{H}^{p,2}_\at(X)\subset H^p(X)$.
\end{proposition}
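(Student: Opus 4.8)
The plan is to reduce \eqref{eq-x2} to a uniform-in-$\thz$ estimate for a single atom, and then to extract the dependence on $\thz$ by an elementary rescaling when $\thz$ is small and by a near/far (``tent'') decomposition when $\thz$ is large.

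First I would reduce to atoms. Given $f\in\OH^{p,2}_\at(X)$, write $f=\sum_j\lz_j a_j$ in $(\GOO{\bz,\gz})'$ with $(p,2)$-atoms $a_j$ and $\sum_j|\lz_j|^p\sim\|f\|_{\OH^{p,2}_\at(X)}^p$. Since, for each $k\in\zz$ and $y\in X$, the kernel $Q_k(y,\cdot)$ of an $\exp$-ATI belongs to $\mathring{\CG}(y,\dz^k,\eta,\eta)\subset\GOO{\bz,\gz}$ (its exponential decay dominates any polynomial decay, it satisfies the $\eta$-regularity in the second variable of Definition \ref{def:eti}(iii), and it has vanishing integral by Definition \ref{def:eti}(v)), the distributional convergence of the atomic series yields $Q_kf(y)=\sum_j\lz_jQ_ka_j(y)$ for every $k$ and $y$; hence, by the triangle inequality in the $\ell^2$-$L^2$ norm defining $\CS_\thz$ together with $p\le1$, $\|\CS_\thz(f)\|_{L^p(X)}^p\le\sum_j|\lz_j|^p\|\CS_\thz(a_j)\|_{L^p(X)}^p$. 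Thus it suffices to prove, for every $(p,2)$-atom $a$ supported in a ball $B:=B(x_0,r_0)$, that $\|\CS_\thz(a)\|_{L^p(X)}\le C\max\{\thz^{-\omega/2},\thz^{\omega/p}\}$ with $C$ independent of $a$ and $\thz$. For $\thz\in(0,1]$ this will be immediate: since $B(x,\thz\dz^k)\subset B(x,\dz^k)$ and, by \eqref{eq:doub}, $V_{\dz^k}(x)\le C_{(\mu)}\thz^{-\omega}V_{\thz\dz^k}(x)$, one obtains the pointwise bound $\CS_\thz(a)(x)\le C\thz^{-\omega/2}\CS_1(a)(x)$, so the case $\thz\le1$ follows from the case $\thz=1$.

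For $\thz\ge1$ (which also covers $\thz=1$) the plan is to split $X=B^\ast\cup(B^\ast)^\complement$ with $B^\ast:=B(x_0,2A_0\thz r_0)$. On $B^\ast$ I would use the H\"older inequality with exponent $2/p$, the uniform-in-$\thz$ bound $\|\CS_\thz(g)\|_{L^2(X)}\ls\|g\|_{L^2(X)}$ (proved by Tonelli, the doubling comparison $V_{\thz\dz^k}(x)\sim V_{\thz\dz^k}(y)$ for $d(x,y)<\thz\dz^k$ with constants independent of $\thz$, and the $L^2$ almost-orthogonality $\sum_k\|Q_kg\|_{L^2(X)}^2\ls\|g\|_{L^2(X)}^2$ of an $\exp$-ATI), the size condition $\|a\|_{L^2(X)}\le[\mu(B)]^{1/2-1/p}$, and $\mu(B^\ast)\ls\thz^\omega\mu(B)$, to obtain $\int_{B^\ast}[\CS_\thz(a)(x)]^p\,d\mu(x)\ls\thz^{\omega(1-p/2)}$, that is, $\|\CS_\thz(a)\chi_{B^\ast}\|_{L^p(X)}\ls\thz^{\omega(1/p-1/2)}\le\thz^{\omega/p}$. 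On $(B^\ast)^\complement$ I would establish a pointwise bound on $\CS_\thz(a)(x)$: combining the vanishing integral and the regularity of $a$ with the size/regularity estimates of $Q_k$ (Definition \ref{def:eti}, the remark following it, and Lemma \ref{lem-add}), the scales $\dz^k$ much smaller than $r_0$ contribute negligibly because of the exponential decay, while the remaining scales give $\CS_\thz(a)(x)\ls[\mu(B)]^{1-1/p}\,[V(x_0,x)]^{-1}\,[r_0/d(x_0,x)]^\eta$; integrating the $p$-th power over $d(x,x_0)\ge2A_0\thz r_0$ by a dyadic annular decomposition and \eqref{eq:doub} produces a geometric series of the form $\sum_{j\ge0}(2^j\thz)^{\omega(1-p)-\eta p}$, which converges precisely because $p\in(\om,1]$ and is $\ls\thz^{\omega(1-p)-\eta p}$; hence $\|\CS_\thz(a)\chi_{(B^\ast)^\complement}\|_{L^p(X)}\ls\thz^{\omega(1/p-1)-\eta}\le1$. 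Adding the two parts gives $\|\CS_\thz(a)\|_{L^p(X)}\ls\thz^{\omega/p}$ for $\thz\ge1$, which together with the $\thz\le1$ case proves \eqref{eq-x2}.

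The step I expect to be the main obstacle is the far-part pointwise estimate: one must combine the moment condition and the regularity of the atom with the $\exp$-ATI kernel bounds simultaneously over all scales $k\in\zz$, organize the resulting double summation (over the scales $k$ and over the dyadic annuli around $x_0$) so that the geometric series converges exactly in the range $p\in(\om,1]$, and keep the dependence on $\thz$ explicit throughout (the same care is needed in the uniform $L^2$-bound used on $B^\ast$). Finally, for the ``in particular'' assertion I would invoke the fact recalled just before the proposition that $\OH^{p,q}_\at(X)=H^{p,q}_\cw(X)$ (via Proposition \ref{prop:h=oh} and Theorem \ref{thm:cw}) is independent of $q\in(p,\fz]\cap[1,\fz]$, so $\OH^{p,q}_\at(X)=\OH^{p,2}_\at(X)$; then \eqref{eq-x2} with $\thz=1$ shows $\OH^{p,2}_\at(X)\subset H^p(X)$, which completes the proof.
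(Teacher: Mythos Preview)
Your overall strategy coincides with the paper's: reduce $\thz<1$ to $\thz=1$ via $\CS_\thz\ls\thz^{-\omega/2}\CS_1$, and for $\thz\ge1$ split into $B^\ast:=B(x_0,c\thz r_0)$ and its complement, using H\"older plus the (uniform-in-$\thz$) $L^2$-boundedness of $\CS_\thz$ on $B^\ast$ and a pointwise decay estimate outside; the paper does exactly this with $c=4A_0^2$ and proves $\|g\|_{L^2\to L^2}\ls1$ via the Cotlar--Stein lemma.

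There is, however, a real gap in your far-part estimate. First a slip: a $(p,2)$-atom has no regularity; what is used is the cancellation of $a$ combined with the regularity of $Q_k(y,\cdot)$. More importantly, your $\thz$-free pointwise bound $\CS_\thz(a)(x)\ls[\mu(B)]^{1-1/p}[V(x_0,x)]^{-1}[r_0/d(x_0,x)]^\eta$ fails. The trouble lies in the scales $\dz^k\gtrsim d(x_0,x)/\thz$ (which, since $x\notin B^\ast$, satisfy $\dz^k\gtrsim r_0$ and hence fall among your ``remaining scales''): for such $k$ the cone $B(x,\thz\dz^k)$ contains points $y$ close to $x_0$, where $|Q_ka(y)|\sim(r_0/\dz^k)^\eta[\mu(B)]^{1-1/p}/V_{\dz^k}(x_0)$, and doubling only gives $V_{\dz^k}(x_0)\gtrsim\thz^{-\omega}V(x_0,x)$. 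The paper therefore splits the far-part sum at $\dz^k\sim d(x_0,x)/\thz$ and obtains instead
\[
\CS_\thz(a)(x)\ls\thz^{\omega+\eta}\,[\mu(B)]^{1-1/p}\,\frac{1}{V(x_0,x)}\,\lf[\frac{r_0}{d(x_0,x)}\r]^\eta,\qquad d(x,x_0)\ge4A_0^2\thz r_0.
\]
With this corrected bound your dyadic-annulus integration gives $\int_{(B^\ast)^\complement}[\CS_\thz(a)]^p\,d\mu\ls\thz^\omega$ (not $\thz^{\omega(1-p)-\eta p}$), so $\|\CS_\thz(a)\|_{L^p(X)}\ls\thz^{\omega/p}$, and the proposition follows.
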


\begin{proof}
Let $\bz,\ \gz\in(\omega(1/p-1),\eta)$.
It suffices to show \eqref{eq-x2} for the case $\thz\in[1,\infty)$, because both \eqref{eq-x2} with
$\thz=1$ and $\CS_\thz (f)\ls \thz^{-\omega/2}\CS (f)$ for any $f\in(\GOO{\bz,\gz})'$  whenever $\thz\in(0,1)$
imply that \eqref{eq-x2} also holds true for any $\thz\in (0,1)$.

We start with the proof of the fact that the Littlewood-Paley $g$-function as in \eqref{eq:defg} is bounded on
$L^2(X)$. Indeed, for any $h\in L^2(X)$, we write
$$
\lf\| g(h)\r\|_{L^2(X)}^2=\sum_{k=-\fz}^\fz\int_X \lf|Q_kh(z)\r|^2\,d\mu(z)
=\sum_{k=-\fz}^\fz \lf\langle Q_k^\ast Q_k h,h \r\rangle.
$$
By Theorem \ref{thm:hcrf} and the proof of \cite[(3.2)]{HMY08}, we find that, for any fixed
$\bz'\in(0,\bz\wedge\gz)$, any $k_1,\ k_2\in\zz$ and $x,\ y\in X$, we have
\begin{equation}\label{eq:g1}
\lf|Q_{k_1}Q_{k_2}^\ast(x,y)\r|\ls\dz^{|k_1-k_2|\bz'}\frac{1}{V_{\dz^{k_1\wedge k_2}}(x)+V(x,y)}
\lf[\frac{\dz^{k_1\wedge k_2}}{\dz^{k_1\wedge k_2}+V(x,y)}\r]^\gz.
\end{equation}
Notice that, in \eqref{eq:g1}, only the regularity of $Q_k$ with respect to the second variable is used.
Thus, by Lemma \ref{lem-add}(v) and the boundedness of $\CM$ on $L^2(X)$, we conclude that, for any
$k_1,\ k_2\in\zz$,
$$
\lf\|\lf(Q_{k_1}^\ast{Q}_{k_1}\r)\lf(Q_{k_2}^\ast Q_{k_2}\r)\r\|_{L^2(X)\to L^2(X)}
\ls\lf\|Q_{k_1}Q_{k_2}^\ast\r\|_{L^2(X)\to L^2(X)}\ls\dz^{|k_1-k_2|\bz'}.
$$
Therefore, by the fact that $ Q_k^*Q_k$ is self-adjoint and the Cotlar-Stein lemma
(see \cite[pp.\ 279--280]{Stein93} and \cite[Lemma 4.5]{HLYY17}), we
obtain the boundedness of $\sum_{k=-\fz}^\fz Q_k^\ast Q_k$ on $L^2(X)$ and hence the boundedness of
$g$ on $L^2(X)$.

Suppose that $a$ is a $(p,2)$-atom supported on a ball $B:=B(x_0,r_0)$
with $x_0\in X$ and $r_0\in(0,\fz)$.
By the Fubini theorem and the boundedness of $g$ on $L^2(X)$,
we find that
$$
\|\CS_\thz(a)\|_{L^2(X)}\ls \lf\| \lf\{\sum_{k\in\zz} |Q_k a|^2\r\}^{1/2}\r\|_{L^2(X)}\sim \|g(a)\|_{L^2(X)}
\ls \|a\|_{L^2(X)}\ls [\mu(B)]^{\frac 12-\frac 1p},
$$
which further implies that
\begin{equation}\label{eq:Sa1}
\int_{B(x_0,4A_0^2\thz r_0)}[\CS_\thz(a)(x)]^p\,d\mu(x)\le\|\CS_\thz(a)\|_{L^2(X)}^p
\lf[\mu\lf(B\lf(x_0,4A_0^2\thz r_0\r)\r)\r]^{1-\frac p2}\ls\thz^{\omega(1-\frac p2)}.
\end{equation}

Let $x\notin B(x_0,4A_0^2\thz r_0)$ and $y\in B(x,\thz\dz^k)$.
Since now $\thz\in[1,\infty)$, for any $u\in B=B(x_0,r_0)$, we have
$d(u,x_0)<(4A_0^2\thz)^{-1}d(x_0,x)<(2A_0)^{-1}[\dz^k+d(x_0,y)]$ and hence
\begin{align*}
|Q_ka(y)|&=\lf|\int_X Q_k(y,u)a(u)\,d\mu(u)\r|\le
\int_{B}|Q_k(y,u)-Q_k(y,x_0)||a(u)|\,d\mu(u)\\
&\ls\int_{B}\lf[\frac{d(x_0,u)}{\dz^k+d(x_0,y)}\r]^\eta\frac 1{V_{\dz^k}(x_0)+V(x_0,y)}
\lf[\frac{\dz^k}{\dz^k+d(x_0,y)}\r]^\gz|a(u)|\,d\mu(u)\\
&\ls[\mu(B)]^{1-\frac 1p}\lf[\frac{r_0}{\dz^k+d(x_0,y)}\r]^\eta\frac 1{V_{\dz^k}(x_0)+V(x_0,y)}
\lf[\frac{\dz^k}{\dz^k+d(x_0,y)}\r]^\gz.
\end{align*}
On the one hand, if $\dz^k<(4A_0^2\thz)^{-1}d(x_0,x)$, then
$d(x_0,y)\ge(4A_0)^{-1}d(x_0,x)$ and hence
$$
|Q_ka(y)|\ls[\mu(B)]^{1-\frac 1p}\lf[\frac{r_0}{d(x_0,x)}\r]^\eta\frac 1{V(x_0,x)}
\lf[\frac{\dz^k}{d(x_0,x)}\r]^\gz,
$$
which further implies that
\begin{align*}
&\sum_{\dz^k<(4A_0^2\thz)^{-1}d(x_0,x)}\int_{d(x,y)<\thz\dz^k}|Q_ka(y)|^2\,\frac{d\mu(y)}{V_{\thz\dz^k}(x)}\\
&\quad\ls[\mu(B)]^{2-\frac 2p}\lf[\frac{r_0}{d(x_0,x)}\r]^{2\eta}\lf[\frac 1{V(x_0,x)}\r]^2
\sum_{\dz^k<(4A_0^2\thz)^{-1}d(x_0,x)}\lf[\frac{\dz^k}{d(x_0,x)}\r]^{2\gz}\\
&\quad\ls [\mu(B)]^{2-\frac 2p}\lf[\frac{r_0}{d(x_0,x)}\r]^{2\eta}\lf[\frac 1{V(x_0,x)}\r]^2.
\end{align*}
On the other hand, if $\dz^k\ge(4A_0^2\thz)^{-1}d(x_0,x)$, then $V(x_0,x)\ls \mu(B(x_0, \thz\dz^k))\ls \thz^\omega V_{\dz^k}(x_0)$ and
$$
|Q_ka(y)|\ls \thz^\omega [\mu(B)]^{1-\frac 1p}\lf(\frac{r_0}{\dz^k}\r)^\eta\frac 1{V(x_0,x)},
$$
which further implies that
\begin{align*}
\sum_{\dz^k\ge(4A_0^2\thz)^{-1}d(x_0,x)}\int_{d(x,y)<\thz \dz^k}|Q_ka(y)|^2\,\frac{d\mu(y)}{V_{\thz \dz^k}(x)}
&\ls\thz^{2\omega}[\mu(B)]^{2-\frac 2p}\lf[\frac 1{V(x_0,x)}\r]^2\sum_{\dz^k\ge(4A_0^2\thz)^{-1}d(x_0,x)}
\lf(\frac{r_0}{\dz^k}\r)^{2\eta}\\
&
\sim \thz^{2\omega+2\eta}[\mu(B)]^{2-\frac 2p}\lf[\frac{r_0}{d(x_0,x)}\r]^{2\eta}\lf[\frac 1{V(x_0,x)}\r]^2.
\end{align*}
Therefore, when $x\notin B(x_0,4A_0^2\thz r_0)$, we have
$$
\CS_\thz(a)(x)\ls \thz^{\omega+\eta}[\mu(B)]^{1-\frac 1p}\lf[\frac{r_0}{d(x_0,x)}\r]^\eta\frac 1{V(x_0,x)}.
$$
Consequently, using $p\in(\eta/(\omega+\eta),1]$, $B=B(x_0, r_0)$ and \eqref{eq:doub}, we obtain
\begin{align}\label{5.x2}
&\int_{[B(x_0,4A_0^2\thz r_0)]^\complement}[\CS_\thz(a)(x)]^p\,d\mu(x)\\
&\quad\ls\thz^{(\omega+\eta)p}[\mu(B)]^{p-1}\int_{[B(x_0,4A_0^2\thz r_0)]^\complement}
\lf[\frac{r_0}{d(x_0,x)}\r]^{p\eta}\lf[\frac 1{V(x_0,x)}\r]^p\,d\mu(x)\noz\\
&\quad\ls\thz^{p\omega}[\mu(B)]^{p-1}\sum_{j=2}^\fz 2^{-jp\eta}\int_{(2A_0)^j\thz  r_0\le d(x_0,x)<(2A_0)^{j+1}\thz r_0}
\lf[\frac 1{\mu(B(x_0,(2A_0)^j\thz r_0))}\r]^p\,d\mu(x)\noz\\
&\quad\ls\thz^\omega\sum_{j=2}^\fz 2^{-j[p\eta-(1-p)\omega]}\ls \thz^\omega.\noz
\end{align}
Combining \eqref{eq:Sa1} and \eqref{5.x2} implies that, when $\thz\in[1,\fz)$,
\begin{equation}\label{eq:Sa}
\|\CS_\thz(a)\|_{L^p(X)}\ls \thz^{\omega/p}.
\end{equation}

Let $f\in\OH^{p,2}_\at(X)$. By the definition of $\OH^{p,2}_\at(X)$, we know that, for any $\ez\in(0,\fz)$,
there exist $(p,2)$-atoms $\{a_j\}_{j=1}^\fz$ and $\{\lz_j\}_{j=1}^\fz\subset\cc$ such that
$f=\sum_{j=1}^\fz\lz_ja_j$ in $(\GOO{\bz,\gz})'$ and
$\sum_{j=1}^\fz|\lz_j|^p\le\|f\|_{\OH^{p,2}_\at(X)}^p+\ez$. By \eqref{eq:Sa} and the fact
$\CS_\thz(f)\le\sum_{j=1}^\fz|\lz_j|\CS_\thz(a_j)$,
we conclude that
\begin{align*}
\|\CS_\thz(f)\|_{L^p(X)}^p&\le\sum_{j=1}^\fz|\lz_j|^p\|\CS_\thz(a_j)\|_{L^p(X)}^p\ls\thz^{\omega}
\sum_{j=1}^\fz|\lz_j|^p
\ls\thz^{\omega}[\|f\|_{\OH^{p,2}_\at(X)}^p+\ez]\to\thz^{\omega}\|f\|_{\OH^{p,2}_\at(X)}^p
\end{align*}
as $\ez\to 0^+$. This finishes the proof of \eqref{eq-x2} and hence of Proposition \ref{p1-add}.
\end{proof}

Next, we use Proposition \ref{thm:mol} to show the following converse of Proposition \ref{p1-add}.

\begin{proposition}\label{p2-add}
Let $p\in (\om,1]$, $\bz,\ \gz\in(\omega(1/p-1),\eta)$ and $f\in(\GOO{\bz,\gz})'$ belong to $H^p(X)$.
Then there exist a sequence $\{a_j\}_{j=1}^\fz$ of $(p,2)$-atoms and
$\{\lz_j\}_{j=1}^\fz\subset\cc$ such that $f=\sum_{j=1}^\fz \lz_j a_j$ in $(\GOO{\bz,\gz})'$
and  $\sum_{j=1}^\fz|\lz_j|^p\le C \|f\|_{H^p(X)}^p$,
where $C$ is a positive constant independent of $f$. Consequently, $H^p(X)\subset \OH^{p,2}_\at(X)$.
\end{proposition}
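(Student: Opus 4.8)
The plan is to prove $H^p(X)\subset\OH^{p,2}_\at(X)$ in two stages: first decompose any $f\in H^p(X)$ into a sum of $(p,2,\vec\ez)$-molecules with $\ell^p$-summable coefficients controlled by $\|\CS(f)\|_{L^p(X)}$, and then resolve each molecule into a uniformly norm-bounded superposition of $(p,2)$-atoms. The second stage is short and only chains together earlier results. Indeed, if $M$ is a $(p,2,\vec\ez)$-molecule, then its trivial one-term molecular decomposition together with Proposition \ref{thm:mol} gives $M\in H^{p,2}_\cw(X)$ with $\|M\|_{H^{p,2}_\cw(X)}\le C$; by Theorem \ref{thm:cw} (and Proposition \ref{prop:h=oh}) $M$ therefore admits a $(p,2)$-atomic decomposition $M=\sum_l\nu_lb_l$ with $\sum_l|\nu_l|^p\le C_0$ for a universal constant $C_0$, the series converging in $(\CL_{1/p-1}(X))'$ when $p\in(\om,1)$ (or in $L^1(X)$ when $p=1$), hence, since $\GOO{\bz,\gz}\subset\CL_{1/p-1}(X)$ by Lemma \ref{lem:gsublip}, also in $(\GOO{\bz,\gz})'$. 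Substituting these into the molecular decomposition of $f$ and relabelling the doubly indexed family yields the required $(p,2)$-atomic decomposition of $f$; the interchange of the double series is legitimate because the full coefficient family lies in $\ell^p\subset\ell^1$ and, by Lemma \ref{lem:a*}, $|\langle b,\vz\rangle|\ls\|\vz\|_{\CG(\bz,\gz)}$ uniformly over all $(p,2)$-atoms $b$ and all $\vz\in\GOO{\bz,\gz}$, and the norm count $\sum_{\text{atoms}}|\text{coeff}|^p\le C_0\sum_{\text{molecules}}|\text{coeff}|^p\ls\|f\|_{H^p(X)}^p$ follows.

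The core is thus the molecular decomposition, which I would carry out by the tent‑space/Whitney‑cube method. Fix a constant aperture $\thz$ (replacing $\CS$ by $\CS_\thz$ does not change the $L^p$ class, by a routine comparison of area functions of different apertures via the Fefferman--Stein inequality in Lemma \ref{lem:FSin}) large enough that $Q_\az^{k,m}\subset B(x,\thz\dz^k)$ for every $x\in Q_\az^{k,m}$ and every cube occurring in Theorem \ref{thm:hdrf}, and write $\CS:=\CS_\thz$. Apply the homogeneous discrete Calder\'on reproducing formula (first identity of Theorem \ref{thm:hdrf}) to get $f=\sum_{k\in\zz}\sum_{\az\in\CA_k}\sum_{m=1}^{N(k,\az)}\wz{Q}_k^{(1)}(\cdot,y_\az^{k,m})\int_{Q_\az^{k,m}}Q_kf(y)\,d\mu(y)$ in $(\GOO{\bz,\gz})'$; since $\mu(Q_\az^{k,m})\sim V_{\thz\dz^k}(x)$ for $x\in Q_\az^{k,m}$, the Cauchy--Schwarz inequality yields $|\int_{Q_\az^{k,m}}Q_kf\,d\mu|\ls\mu(Q_\az^{k,m})\inf_{x\in Q_\az^{k,m}}\CS(f)(x)$. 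For $j\in\zz$ put $\Omega^j:=\{x\in X:\ \CS(f)(x)>2^j\}$, open with $\mu(\Omega^j)\le 2^{-jp}\|\CS(f)\|_{L^p(X)}^p<\fz$, and assign to each cube (discarding those on which $\CS(f)$ vanishes on more than half its mass, whose contribution is zero) the unique generation $j$ with $\mu(Q_\az^{k,m}\cap\Omega^j)>\frac12\mu(Q_\az^{k,m})$ but $\mu(Q_\az^{k,m}\cap\Omega^{j+1})\le\frac12\mu(Q_\az^{k,m})$. Passing to the enlargement $\wz\Omega^j:=\{x:\ \CM(\chi_{\Omega^j})(x)>c\}$ with $c$ small and fixed, every generation‑$j$ cube lies in $\wz\Omega^j$ and $\mu(\wz\Omega^j)\ls\mu(\Omega^j)$ by the weak‑type $(1,1)$ bound for $\CM$; applying the Whitney‑type covering of Proposition \ref{prop:ozdec} to each $\wz\Omega^j$ and distributing the generation‑$j$ cubes among the resulting balls $\{B_i^j\}_i$, the $i$‑th group, times a normalizing factor, is a candidate molecule $M_i^j$ centered at a fixed dilate of $B_i^j$, with coefficient $\lz_i^j:=C2^j[\mu(B_i^j)]^{1/p}$. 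Bounded overlap of the Whitney balls together with the elementary layer‑cake equivalence $\sum_j2^{jp}\mu(\Omega^j)\sim\|\CS(f)\|_{L^p(X)}^p$ gives $\sum_{i,j}|\lz_i^j|^p\ls\sum_j2^{jp}\mu(\wz\Omega^j)\ls\|\CS(f)\|_{L^p(X)}^p=\|f\|_{H^p(X)}^p$.

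It then remains to check that each $M_i^j$ is genuinely a $(p,2,\vec\ez)$‑molecule for one fixed sequence $\vec\ez$ satisfying \eqref{eq:epcon}, and that the regrouped series still equals $f$ in $(\GOO{\bz,\gz})'$. The cancellation condition is immediate from $\int_X\wz{Q}_k^{(1)}(z,y_\az^{k,m})\,d\mu(z)=0$, inherited from Theorem \ref{thm:hcrf}(iii). The $L^2(X)$‑size condition on the central ball follows from the $L^2(X)$‑boundedness of the operators involved (obtained, as in the proof of Proposition \ref{p1-add}, from the kernel estimates for $\wz{Q}_k$ and the Cotlar--Stein lemma) and the bound $|\int_{Q_\az^{k,m}}Q_kf|\ls2^j\mu(Q_\az^{k,m})$ valid in generation $j$. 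For the off‑ball annuli one uses the size and regularity estimates for $\wz{Q}_k$ in Theorem \ref{thm:hcrf}(i), (ii): the factor $[\dz^k/(\dz^k+d(x,y_\az^{k,m}))]^\gz$ produces, after summation over scales and over cubes via Lemma \ref{lem:max} (which forces an exponent $r\in(\omega/(\omega+\gz),p)$ and so uses exactly the hypothesis $\gz>\omega(1/p-1)$), a bound $\ls\ez_m[\mu(B_i^j)]^{1/2-1/p}$ for the $L^2(X)$‑norm of $M_i^j$ over the $m$‑th annulus of $B_i^j$, with $\ez_m$ of geometric decay in $m$, whence $\sum_m m[\ez_m]^p<\fz$. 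I expect this tail/decay computation—carried out over a quasi‑metric space without reverse doubling, using the paper's own versions of the kernel and maximal‑function estimates—to be the main obstacle. The regrouped double series then converges to $f$ in $(\GOO{\bz,\gz})'$ because each $M_i^j\in L^2(X)$ (so each within‑group sum converges in $L^2(X)$, a fortiori in $(\GOO{\bz,\gz})'$), the coefficient family lies in $\ell^p\subset\ell^1$, and molecules pair boundedly and uniformly with test functions, which makes the rearrangement of the $(\GOO{\bz,\gz})'$‑convergent triple sum of Theorem \ref{thm:hdrf} legitimate. Feeding this molecular decomposition into the molecule‑to‑atom step of the first paragraph completes the proof.
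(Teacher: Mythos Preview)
Your overall strategy---decompose $f$ into molecules with $\ell^p$-summable coefficients and then resolve each molecule into atoms via Proposition~\ref{thm:mol}---is exactly the one the paper takes, and your molecules-to-atoms paragraph is correct. The differences are in how the molecular decomposition is organized and in the tools proposed for the $L^2$ estimates.

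The paper uses the \emph{continuous} reproducing formula (Theorem~\ref{thm:hcrf}) rather than the discrete one, and groups cubes by \emph{maximal dyadic cubes} within each generation $\mathcal D_k$ rather than by Whitney balls of an enlarged level set. The dyadic grouping has the advantage that the scales of the sub-cubes are automatically bounded by the level of the maximal cube (so $\dz^l\ls\diam Q_{\tau,k}^j$), which your Whitney-ball distribution also needs but would have to establish separately. More importantly, the paper does \emph{not} use Lemma~\ref{lem:max} for the annular $L^2$ bound. Lemma~\ref{lem:max} absorbs the decay factor $[\dz^k/(\dz^k+d(x,y))]^\gz$ into a maximal function, and without reverse doubling the ratio $\mu(B)/\mu(\dz^{-m}B)$ need not be small, so the maximal function of $\chi_B$ on the $m$-th annulus need not decay in $m$; you would lose the $\dz^{m\gz'}$ factor. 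Instead, the paper (Lemma~\ref{lem:bmol}) applies Cauchy--Schwarz to split the sum defining the molecule into two factors $Y(x)$ and $Z(x)$: the first retains the $\gz'$-decay and yields $\dz^{m\gz'}[\mu(B)/\mu(\dz^{-m}B)]^{1/2}$ pointwise on $R_m$, while the second has $\|Z\|_{L^2(R_m)}$ controlled by the Carleson-type square sum $\sum_{Q}\int_Q|E_Qf|^2$, itself bounded by $2^{2k}\mu(Q_{\tau,k}^j)$ via the stopping condition and Fubini. For the \emph{central} $L^2$ bound, the paper likewise argues by duality (pairing with $h\in L^2$ and invoking the $L^2$-boundedness of the square function $\wz g$), not by the crude pointwise bound $|\int_QQ_kf|\ls2^j\mu(Q)$ you mention, which alone does not control the $L^2$ norm of an infinite superposition. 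Your route can be made to work, but the specific tool you name for the tail (Lemma~\ref{lem:max}) is the wrong one here; the Cauchy--Schwarz splitting is the missing idea.
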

\begin{proof}
Assume that $f\in(\GOO{\bz,\gz})'$ belongs to $H^p(X)$.
To avoid the confusion of notation, we use $\{E_k\}_{k\in\zz}$ to denote an $\exp$-ATI and then define
$\CS(f)$ as in \eqref{5.z} but with $Q_k$ therein replaced by $E_k$. Denote by ${\mathcal D}$  the set of all dyadic cubes.
For any $k\in\zz$, we define $\Omega_k:=\{x\in X:\ \CS(f)(x)>2^k\}$ and
$$
{\mathcal D}_k:=\lf\{Q\in{\mathcal D}:\ \mu(Q\cap\Omega_k)>\frac 12\mu(Q)\;\textup{and}\;
\mu(Q\cap\Omega_{k+1})\le\frac 12\mu(Q)\r\}.
$$
It is easy to see that, for any $Q\in{\mathcal D}$, there exists a unique $k\in\zz$ such that $Q\in{\mathcal D}_k$. A dyadic cube
$Q\in{\mathcal D}_k$ is called a \emph{maximal cube in ${\mathcal D}_k$} if $Q'\in{\mathcal D}$ and
$Q'\supset Q$, then
$Q'\notin{\mathcal D}_k$. Denote the set of all maximal cubes in ${\mathcal D}_k$ at level $j\in\zz$ by
$\{Q_{\tau,k}^j\}_{\tau\in I_{j,k}}$, where $I_{j,k}\subset \CA_j$ may be empty.
The center of $Q_{\tau,k}^j$ is denoted by $z_{\tau,k}^j$. Then
${\mathcal D}=\bigcup_{j,\ k\in\zz}\bigcup_{\tau\in I_{j,k}}\{Q\in{\mathcal D}_k:\ Q\subset Q_{\tau,k}^j \}$.

From now on, we adopt the notation $E_Q:=E_l$ and $\wz E_Q:=\wz E_l$ whenever $Q=Q_\az^{l+1}$ for some $l\in\zz$ and $\az\in\CA_{l+1}$.
Then, by Theorem \ref{thm:hcrf}, we find that
\begin{align}\label{eq-x4}
f(\cdot)
=\sum_{l=-\fz}^\fz \wz E_lE_lf(\cdot)
&=\sum_{l=-\fz}^\fz\sum_{\az\in\CA_{l+1}}\int_{Q_\az^{l+1}}\wz E_l(\cdot,y)E_lf(y)\,d\mu(y)\\
&=\sum_{Q\in{\mathcal D}}\int_{Q}\wz E_Q(\cdot,y)E_Qf(y)\,d\mu(y)\notag\\
&=\sum_{k=-\fz}^\fz\sum_{j=-\fz}^\fz\sum_{\tau\in I_{j,k}}
\sum_{Q\in{\mathcal D}_k,\ Q\subset Q_{\tau,k}^j}\int_{Q}\wz E_Q(\cdot,y)E_Qf(y)\,d\mu(y) \notag\\
&
=:\sum_{k=-\fz}^\fz\sum_{j=-\fz}^\fz\sum_{\tau\in I_{j,k}}\lz_{\tau,k}^j b_{\tau,k}^j(\cdot), \notag
\end{align}
where all the equalities converge in $(\GOO{\bz,\gz})'$,
\begin{equation*}
\lz_{\tau,k}^j:=\lf[\mu\lf(Q_{\tau,k}^j\r)\r]^{\frac 1p-\frac 12}
\lf[\sum_{Q\in{\mathcal D}_k,\ Q\subset Q_{\tau,k}^j}\int_{Q}|E_Qf(y)|^2\,d\mu(y)\r]^{\frac 12}
\end{equation*}
and
\begin{equation}\label{eq:defbB}
b_{\tau,k}^j(\cdot):=\frac 1{\lz_{\tau,k}^j}\sum_{Q\in{\mathcal D}_k,\ Q\subset Q_{\tau,k}^j}
\int_{Q}\wz E_Q(\cdot,y)E_Qf(y)\,d\mu(y).
\end{equation}

For any $Q\in{\mathcal D}_k$ and $Q\subset Q_{\tau,k}^j$, assume that $Q=Q_\az^{l+1}$
for some $l\in\zz$ and $\az\in\CA_{l+1}$.
Since $\dz$ is assumed to satisfy $\dz<(2A_0)^{-10}$, it then follows that $2A_0C^\natural \dz<1$ so that
 $Q=Q_\az^{l+1}\subset B(y,\dz^l)$ for any $y\in Q$.
 By this and the fact that
$\mu(Q\cap\Omega_{k+1})\le\frac 12\mu(Q)$, we obtain
$$
\mu(B(y,\dz^l)\cap[Q_{\tau,k}^j\setminus\Omega_{k+1}])\ge \mu(B(y,\dz^l)\cap[Q\setminus\Omega_{k+1}]) =
\mu(Q\setminus\Omega_{k+1}) \ge \frac 12\mu(Q)\sim  V_{\dz^l}(y).
$$
Thus, we have
\begin{align*}
&\sum_{Q\in{\mathcal D}_k,\ Q\subset Q_{\tau,k}^j}\int_{Q}|E_Qf(y)|^2\,d\mu(y)\\
&\quad\ls \sum_{l=j-1}^\fz\;\sum_{\az\in\CA_{l+1},\;{\mathcal D}_k\ni Q_\az^{l+1}\subset Q_{\tau,k}^j}\;
\int_{Q_\az^{l+1}}\frac{\mu(B(y,\dz^l)\cap(Q_{\tau,k}^j\setminus\Omega_{k+1}))}{V_{\dz^l}(y)}
|E_lf(y)|^2\,d\mu(y)\\
&\quad\ls \sum_{l=j-1}^\fz\int_{Q_{\tau,k}^j}\frac{\mu(B(y,\dz^l)\cap(Q_{\tau,k}^j\setminus\Omega_{k+1}))}{V_{\dz^l}(y)}
|E_lf(y)|^2\,d\mu(y)\\
&\quad\sim\int_X\sum_{l=j-1}^\fz\int_{B(y,\dz^l)\cap(Q_{\tau,k}^j\setminus \Omega_{k+1})}
|E_lf(y)|^2\,\frac{d\mu(x)}{V_{\dz^l}(y)}\,d\mu(y)\\
&\quad\ls\int_{Q_{\tau,k}^j\setminus\Omega_{k+1}}[\CS(f)(x)]^2\,d\mu(x)\ls 2^{2k}\mu\lf(Q_{\tau,k}^{j}\r).
\end{align*}
From  this and the fact $\mu(Q_{\tau,k}^j)<2 \mu(Q_{\tau,k}^j\cap\Omega_k)$, it follows that
\begin{align}\label{eq-add4}
\sum_{k=-\fz}^{\fz}\sum_{j=-\fz}^\fz\sum_{\tau\in I_{j,k}}\lf(\lz_{\tau,k}^j\r)^p
&\ls\sum_{k=-\fz}^\fz 2^{kp}\sum_{j=-\fz}^\fz\sum_{\tau\in I_{j,k}}\mu\lf(Q_{\tau,k}^j\r)\\
&\ls\sum_{k=-\fz}^\fz 2^{kp}\sum_{j=-\fz}^\fz\sum_{\tau\in I_{j,k}}\mu\lf(Q_{\tau,k}^j\cap\Omega_k\r)
\ls\sum_{k=-\fz}^\fz 2^{kp}\mu\lf(\Omega_k\r)\sim\|\CS(f)\|_{L^p(X)}^p.\notag
\end{align}

Choose $\gz'\in(\omega(1/p-1),\gz)$ and let $\vec\ez:=\{\dz^{m[\gz'-\omega(1/p-1)]}\}_{m\in\nn}$.
Assume for the moment that every $b_{\tau, k}^j$ as in \eqref{eq:defbB} is a $(p,2,\vec \ez)$-molecule
centered at a ball $B_{\tau,k}^j:=B(z_{\tau,k}^j,4A_0^2\dz^{j-1})$, whose proof is given in Lemma \ref{lem:bmol} below.
Further, applying Proposition \ref{thm:mol}, we conclude that
$\|b_{\tau, k}^j \|_{H_\cw^{p, 2}(X)}\ls 1$. Thus, $b_{\tau, k}^j$ can be written as a linear combination of
$(p,2)$-atoms which converges in $(\CL_{1/p-1}(X))'$ when $p\in(\omega/(\omega+\eta), 1)$ or in $L^1(X)$ when
$p=1$, and hence converges in $(\GOO{\bz,\gz})'$ because $\GOO{\bz,\gz}\subset \CL_{1/p-1}(X)$ (see Lemma
\ref{lem:gsublip}). Invoking this, \eqref{eq-x4} and \eqref{eq-add4}, we find that $f\in \OH_\at^{p,2}(X)$
and $\|f\|_{\OH_\at^{p,2}(X)}\ls \|\CS(f)\|_{L^p(X)}$. This finishes the proof of Proposition \ref{p2-add}.
\end{proof}

\begin{lemma}\label{lem:bmol}
Let all the notation be as in the proof of  Proposition \ref{p2-add}.
Then every $b_{\tau, k}^j$ as in \eqref{eq:defbB} is a harmlessly positive constant multiple of a
$(p,2,\vec \ez)$-molecule centered at the ball $B_{\tau,k}^j:=B(z_{\tau,k}^j,4A_0^2\dz^{j-1})$, where
$\vec\ez:=\{\dz^{m[\gz'-\omega(1/p-1)]}\}_{m\in\nn}$ and $\gz'\in(\omega(1/p-1),\gz)$.
\end{lemma}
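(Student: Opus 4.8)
The plan is to verify that $b_{\tau,k}^j$, after multiplication by a harmless positive constant, satisfies the three conditions of Definition \ref{def:mol} (with $q=2$) of a $(p,2,\vec\ez)$-molecule centered at $B:=B_{\tau,k}^j=B(x_0,r_0)$, where I write $x_0:=z_{\tau,k}^j$ and $r_0:=4A_0^2\dz^{j-1}$, and where we may assume $\lz_{\tau,k}^j>0$ (otherwise $b_{\tau,k}^j\equiv0$). Two facts will be used repeatedly. First, every $Q\in{\mathcal D}_k$ with $Q\subset Q_{\tau,k}^j$ has the form $Q_\az^{l+1}$ with $l\ge j-1$ (since $Q_{\tau,k}^j$ is a cube of generation $j$), so $\wz E_Q=\wz E_l$ and $E_Q=E_l$. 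Second, for each fixed such $l$ these cubes are pairwise disjoint subsets of $Q_{\tau,k}^j$, whence, for any non-negative $F$,
$$
\sum_{Q\in{\mathcal D}_k,\ Q\subset Q_{\tau,k}^j}\int_Q F\,d\mu\le\sum_{l\ge j-1}\int_{Q_{\tau,k}^j}F\,d\mu.
$$

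For the $L^2$-type bounds in Definition \ref{def:mol}(i), (ii) I would argue by duality. Given $g\in L^2(X)$ with $\|g\|_{L^2(X)}\le1$, moving $\wz E_Q$ onto $g$ (rigorously, first for the finite partial sums of \eqref{eq:defbB}, then passing to the limit) and applying the Cauchy--Schwarz inequality together with the definition of $\lz_{\tau,k}^j$,
\begin{align*}
\lf|\int_X b_{\tau,k}^j(x)g(x)\,d\mu(x)\r|
&=\frac1{\lz_{\tau,k}^j}\lf|\sum_{Q\in{\mathcal D}_k,\ Q\subset Q_{\tau,k}^j}\int_Q E_Qf(y)\,\wz E_Q^\ast g(y)\,d\mu(y)\r|\\
&\le\lf[\mu\lf(Q_{\tau,k}^j\r)\r]^{1/2-1/p}
\lf[\sum_{Q\in{\mathcal D}_k,\ Q\subset Q_{\tau,k}^j}\int_Q\lf|\wz E_Q^\ast g(y)\r|^2\,d\mu(y)\r]^{1/2}.
\end{align*}
When $\supp g\subset B$, the last bracket is at most $\sum_{l\ge j-1}\|\wz E_l^\ast g\|_{L^2(X)}^2\le\langle(\sum_{l\in\zz}\wz E_l\wz E_l^\ast)g,g\rangle\ls1$, because $\sum_{l\in\zz}\wz E_l\wz E_l^\ast$ is bounded on $L^2(X)$ via the Cotlar--Stein lemma, exactly as for the Littlewood--Paley $g$-function in the proof of Proposition \ref{p1-add} (the almost-orthogonality estimate of type \eqref{eq:g1}, which uses only the regularity of $\wz E_l$ in one variable, applies verbatim to $\wz E_{l_1}^\ast\wz E_{l_2}$). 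Since $Q_{\tau,k}^j$ and $B$ are comparable balls, \eqref{eq:doub} and $1/2-1/p<0$ give $[\mu(Q_{\tau,k}^j)]^{1/2-1/p}\sim[\mu(B)]^{1/2-1/p}$; taking the supremum over such $g$ yields Definition \ref{def:mol}(i) up to a constant.

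For Definition \ref{def:mol}(ii), fix $m\in\nn$ and take $g$ with $\|g\|_{L^2(X)}\le1$ supported in $A_m:=B(x_0,\dz^{-m}r_0)\setminus B(x_0,\dz^{-m+1}r_0)$; set $B_m:=B(x_0,\dz^{-m}r_0)$. If $y\in Q=Q_\az^{l+1}\subset Q_{\tau,k}^j$ (so $l\ge j-1$ and $d(x_0,y)\le C^\natural\dz^j$) and $x\in A_m$, then, because $\dz$ is small, $d(x,y)\sim d(x,x_0)\sim\dz^{j-m}$, so Theorem \ref{thm:hcrf}(i) gives
$$
\lf|\wz E_l(x,y)\r|\ls\frac1{\mu(B(x_0,\dz^{j-m}))}\lf[\frac{\dz^l}{\dz^{j-m}}\r]^\gz.
$$
Applying the Cauchy--Schwarz inequality in $x$, using $\mu(A_m)\le\mu(B(x_0,\dz^{j-m}))\sim\mu(B_m)$, then the second fact above to sum over $y$, and then a geometric series in $l\ge j-1$, I obtain
$$
\sum_{Q\in{\mathcal D}_k,\ Q\subset Q_{\tau,k}^j}\int_Q\lf|\wz E_Q^\ast g(y)\r|^2\,d\mu(y)\ls\frac{\mu(Q_{\tau,k}^j)}{\mu(B_m)}\,\dz^{2m\gz}.
$$
Substituting into the displayed duality bound and then using $1-1/p\le0$, $\mu(Q_{\tau,k}^j)\le\mu(B_m)$ and $\mu(B_m)/\mu(Q_{\tau,k}^j)\ls\dz^{-m\omega}$ (both from \eqref{eq:doub}) yields
$$
\lf|\int_X b_{\tau,k}^j g\,d\mu\r|\ls[\mu(B_m)]^{1/2-1/p}\,\dz^{m[\gz-\omega(1/p-1)]}\le[\mu(B_m)]^{1/2-1/p}\,\dz^{m[\gz'-\omega(1/p-1)]}=\ez_m[\mu(B_m)]^{1/2-1/p},
$$
where the middle inequality uses $\gz'<\gz$ and $\dz\in(0,1)$; the supremum over $g$ gives Definition \ref{def:mol}(ii) up to a constant, and summing the $L^2$ bounds over $B$ and all annuli $A_m$ also shows $b_{\tau,k}^j\in L^2(X)$ (here $1/2-1/p<0$ and $\sum_m\ez_m^2<\fz$ make the series converge).

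Finally, the cancellation in Definition \ref{def:mol}(iii) follows from $\int_X\wz E_l(x,y)\,d\mu(x)=0$ (Theorem \ref{thm:hcrf}(iii)): it holds for every term, hence for every finite partial sum of \eqref{eq:defbB}, and since those partial sums converge to $b_{\tau,k}^j$ in $L^1(X)$ (the $L^1$ control being a consequence of conditions (i), (ii), $1-1/p\le0$ and $\sum_m\ez_m<\fz$), the vanishing integral passes to the limit. The step I expect to be the main obstacle is the annular estimate just carried out: one must weigh the kernel decay $\dz^{m\gz}$ against the doubling growth $\dz^{-m\omega}$ and the exponent $1/p-1$ so that the resulting sequence $\ez_m=\dz^{m[\gz'-\omega(1/p-1)]}$ still obeys the summability \eqref{eq:epcon}; this is precisely where the room $\gz'\in(\omega(1/p-1),\gz)$ is spent, and the existence of such a $\gz'$ below $\gz$ is exactly what the hypothesis $\gz\in(\omega(1/p-1),\eta)$ — equivalently, $p>\om$ — provides.
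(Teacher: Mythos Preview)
Your proof is correct and follows the same overall strategy as the paper: duality against $L^2$ test functions for the size conditions, the $L^2$-boundedness of the square function $\wz g(h):=[\sum_l|\wz E_l^\ast h|^2]^{1/2}$ via Cotlar--Stein for condition (i), and the cancellation of $\wz E_l$ for condition (iii).

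The one substantive difference is in the annular estimate (ii). The paper works pointwise: for $x$ in the annulus $R_m$ it applies a H\"older split with exponents $2\gz'$ and $2(\gz-\gz')$ to write $|b_{\tau,k}^j(x)|\ls(\lz_{\tau,k}^j)^{-1}\RY(x)\RZ(x)$, estimates $\RY(x)$ pointwise to extract the factor $\dz^{m\gz'}[\mu(B)/\mu(\dz^{-m}B)]^{1/2}$, and then takes the $L^2$ norm of $\RZ$ over $R_m$ using Fubini and Lemma \ref{lem-add}(ii). You instead stay within the duality framework, taking $g$ supported in the annulus and using the pointwise kernel bound $|\wz E_l(x,y)|\ls[\mu(B_m)]^{-1}(\dz^l/\dz^{j-m})^\gz$ together with Cauchy--Schwarz in $x$ to control $\sum_Q\int_Q|\wz E_Q^\ast g|^2\,d\mu$ directly. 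Your route is arguably cleaner: it keeps (i) and (ii) in the same framework, avoids the auxiliary split into $\RY\cdot\RZ$, and postpones the passage from $\gz$ to $\gz'$ to a single monotonicity step at the end. The paper's route, by contrast, introduces $\gz'$ at the H\"older stage, which makes more transparent exactly where the slack $\gz-\gz'>0$ is consumed. Both yield the same decay rate $\ez_m=\dz^{m[\gz'-\omega(1/p-1)]}$.
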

\begin{proof}
Let  $b_{\tau, k}^j$ be as in \eqref{eq:defbB}. For any $h\in L^{2}(X)$ with
$\|h\|_{L^{2}(X)}\le 1$, by the Fubini theorem and the H\"{o}lder inequality, we
conclude that
\begin{align*}
&\lf|\int_X b_{\tau,k}^j(x)h(x)\,d\mu(x)\r|\\
&\quad\le\frac 1{\lz_{\tau,k}^j}\sum_{Q\in{\mathcal D}_k,\ Q\subset Q_{\tau,k}^j}\int_{Q} |E_Qf(y)|
\lf|\int_X\wz E_Q(x,y)h(x)\,d\mu(x)\r|\,d\mu(y)\\
&\quad\le\frac 1{\lz_{\tau,k}^j}\lf[\sum_{Q\in{\mathcal D}_k,\ Q\subset Q_{\tau,k}^j}
\int_Q|E_Qf(y)|^2\,d\mu(y)\r]^{\frac 12}\lf[\sum_{Q\in{\mathcal D}_k,\ Q\subset Q_{\tau,k}^j}
\int_X \lf|\wz E_Q^\ast h(y)\r|^2\,d\mu(y)\r]^{\frac 12}\\
&\quad\le\lf[\mu\lf(Q_{\tau,k}^j\r)\r]^{\frac 12-\frac 1p}
\|\wz g(h)\|_{L^2(X)},
\end{align*}
where $\wz g(h):=[\sum_{l=-\fz}^\fz |\wz E_l^*h|^2]^{1/2}$.
Noticing that the kernel of $\wz E_l^*$ has the regularity with respect to the second variable, we follow the
argument used in the beginning of the proof of Proposition \ref{p1-add} to deduce that
$\wz{g}$ is bounded on $L^2(X)$. Thus, we have
$$
\lf|\int_X b_{\tau,k}^j(x)h(x)\,d\mu(x)\r|\ls \lf[\mu\lf(Q_{\tau,k}^j\r)\r]^{\frac 12-\frac 1p}
\|h\|_{L^2(X)} \ls
\lf[\mu\lf(B_{\tau,k}^j\r)\r]^{\frac 12-\frac 1p}.
$$
Taking supremum over all $h\in L^{2}(X)$ with $\|h\|_{L^{2}(X)}\le 1$, we further find that
$$
\lf\|b_{\tau,k}^j\r\|_{L^2(X)}\ls\lf[\mu\lf(B_{\tau,k}^j\r)\r]^{\frac 12-\frac 1p}.
$$

Let $\gz'\in(\omega(1/p-1),\gz)$. Fix $m\in\nn$ and let
$R_m:= (\dz^{-m}B_{\tau,k}^j)\setminus(\dz^{-m+1}B_{\tau,k}^j)$. Then, for any $x\in R_m$, by the H\"{o}lder
inequality and the size condition of $\{\wz E_l\}_{l\in\zz}$, we conclude that
\begin{align*}
\lf|b_{\tau,k}^j(x)\r|&\le\frac 1{\lz_{\tau,k}^j}\sum_{Q\in{\mathcal D}_k,\ Q\subset Q_{\tau,k}^j}
\int_{Q}\lf|\wz E_Q(x,y)E_Qf(y)\r|\,d\mu(y)\\
&\ls\frac 1{\lz_{\tau,k}^j}\sum_{l=j-1}^\fz\sum_{\az\in\CA_{l+1},\ \mathcal D_k\ni Q_\az^{l+1}\subset
Q_{\tau,k}^j}\int_{Q_\az^{l+1}}
\frac{1}{V_{\dz^{l}}(x)+V(x,y)}\lf[\frac{\dz^{l}}{\dz^{l}+d(x,y)}\r]^{\gz}
\lf|E_lf(y)\r|\,d\mu(y)\\
&\ls\frac 1{\lz_{\tau,k}^j}\lf\{\sum_{l=j-1}^\fz\sum_{\az\in\CA_{l+1},\ Q_\az^{l+1}\subset Q_{\tau,k}^j}
\int_{Q_\az^{l+1}}\frac{1}{V_{\dz^{l}}(x)+V(x,y)}\lf[\frac{\dz^{l}}{\dz^{l}+d(x,y)}\r]^{2\gz'}
\,d\mu(y)\r\}^{\frac 12}\\
&\quad\times\lf\{\sum_{l=j-1}^\fz\sum_{\gfz{\az\in\CA_{l+1}}{\mathcal D_k\ni Q_\az^{l+1}\subset Q_{\tau,k}^j}}
\int_{Q_\az^{l+1}}\frac{1}{V_{\dz^{l}}(x)+V(x,y)}\lf[\frac{\dz^{l}}{\dz^{l}+d(x,y)}\r]^{2(\gz-\gz')}
\lf|E_lf(y)\r|^2\,d\mu(y)\r\}^{\frac 12}\\
&=:\frac 1{\lz_{\tau,k}^j}\RY(x)\RZ(x).
\end{align*}
Notice that, for any
$x\in R_m$,
we have $4A_0^2\dz^{j-m-1}\le d(x,z_{\tau,k}^j)<4A_0^2\dz^{j-m-2}$ and,
for any $y\in Q_\az^{l+1}\subset Q_{\tau,k}^j$, we have
$\dz^l+d(x,y)\sim d(x,y)\sim\dz^{-m+j}$ and hence
\begin{align*}
\RY(x)&\ls\lf[\sum_{l=j-1}^\fz\;\sum_{{\az\in\CA_{l+1}},\,{Q_\az^{l+1}\subset Q_{\tau,k}^j}}
\int_{Q_\az^{l+1}}\frac{1}{\mu(B(y, \dz^{-m+j}))}\lf(\frac{\dz^{l}}{\dz^{-m+j}}\r)^{2\gz'}
\,d\mu(y)\r]^{\frac 12}\\
&\ls\lf[\sum_{l=j-1}^\fz\lf(\frac{\dz^{l}}{\dz^{-m+j}}\r)^{2\gz'}
\int_{Q_{\tau,k}^j}\frac{1}{\mu(B(z_{\tau,k}^j, \dz^{-m+j}))}\,d\mu(y)\r]^{\frac 12}
\ls\dz^{m\gz'}\lf[\frac{\mu(B_{\tau,k}^j)}{\mu(\dz^{-m}B_{\tau,k}^j)}\r]^{\frac 12}.
\end{align*}
Thus, for any $x\in R_m$, we have
$$
\lf|b_{\tau,k}^j(x)\r|\ls\frac 1{\lz_{\tau,k}^j}
\dz^{m\gz'}\lf[\frac{\mu(B_{\tau,k}^j)}{\mu(\dz^{-m}B_{\tau,k}^j)}\r]^{\frac 12}\RZ(x),
$$
which, together with the Fubini theorem and Lemma \ref{lem-add}(ii), implies
that
\begin{align*}
\lf\|b_{\tau,k}^j\chi_{R_m}\r\|_{L^2(X)}&\ls\frac 1{\lz_{\tau,k}^j}
\dz^{m\gz'}\lf[\frac{\mu(B_{\tau,k}^j)}{\mu(\dz^{-m}B_{\tau,k}^j)}\r]^{\frac 12}
\lf\{\int_{R_m}[\RZ(x)]^2\,d\mu(x)\r\}^{\frac 12}\\
&\ls\frac 1{\lz_{\tau,k}^j}
\dz^{m\gz'}\lf[\frac{\mu(B_{\tau,k}^j)}{\mu(\dz^{-m}B_{\tau,k}^j)}\r]^{\frac 12}
\lf\{\sum_{Q\in{\mathcal D}_k,\ Q\subset Q_{\tau,k}^j}\int_Q
\lf|E_Qf(y)\r|^2\,d\mu(y)\r\}
^{\frac 12}\\
&\ls\dz^{m\gz'}\lf[\frac{\mu(B_{\tau,k}^j)}{\mu(\dz^{-m}B_{\tau,k}^j)}\r]^{\frac 12}
\lf[\mu\lf(B_{\tau,k}^j\r)\r]^{\frac 12-\frac 1p}
\ls \dz^{m[\gz'-\omega(\frac 1p-1)]}\lf[\mu\lf(\dz^{-m}B_{\tau,k}^j\r)\r]^{\frac 12-\frac 1p}.
\end{align*}
The cancelation of $b_{\tau,k}^j$ follows directly from that of $\wz{E}_l$, the details being omitted.

Letting $\ez_m:=\dz^{m[\gz'-\omega(\frac 1p-1)]}$ for any $m\in\nn$, we find that $\{\ez_m\}_{m=1}^\fz$
satisfies \eqref{eq:epcon} and  $b_{\tau,k}^j$ is a harmlessly positive constant multiple of a
$(p,2,\vec{\ez})$-molecule. This finishes the proof of Lemma \ref{lem:bmol}.
\end{proof}

Combining Propositions \ref{p1-add} and \ref{p2-add}, we immediately obtain the following main result of this
section, the details being omitted.
\begin{theorem}\label{thm:at2}
Suppose that $p\in(\om,1]$, $\bz,\ \gz\in(\omega(1/p-1),\eta)$
and $q\in(p,\fz]\cap[1,\fz]$. As subspaces of $(\GOO{\bz,\gz})'$, it holds true that $\mathring{H}^{p,q}_\at(X)=H^p(X)$ with
equivalent (quasi-)norms.
\end{theorem}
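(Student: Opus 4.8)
The plan is to deduce Theorem~\ref{thm:at2} by combining the two inclusions already isolated in Propositions~\ref{p1-add} and \ref{p2-add}, together with the bridging results of this section. First I would fix $p\in(\om,1]$, $\bz,\ \gz\in(\omega(1/p-1),\eta)$ and $q\in(p,\fz]\cap[1,\fz]$, and regard every object as a subspace of $(\GOO{\bz,\gz})'$. By Proposition~\ref{prop:h=oh} and Theorem~\ref{thm:cw} (via the identification $\OH^{p,q}_\at(X)=H^{p,q}_\cw(X)$, which follows from combining $\OH^{p,q}_\at(X)=H^{p,q}_\at(X)$ with $H^{p,q}_\at(X)=H^{p,q}_\cw(X)$) we know that $\OH^{p,q}_\at(X)$ is independent of the choice of $q\in(p,\fz]\cap[1,\fz]$; in particular $\OH^{p,q}_\at(X)=\OH^{p,2}_\at(X)$ with equivalent (quasi-)norms. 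This reduces everything to identifying $\OH^{p,2}_\at(X)$ with $H^p(X)$.

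Next I would invoke Proposition~\ref{p1-add}, which (taking $\thz=1$ there) gives $\OH^{p,2}_\at(X)\subset H^p(X)$ together with the norm estimate $\|f\|_{H^p(X)}=\|\CS(f)\|_{L^p(X)}\ls\|f\|_{\OH^{p,2}_\at(X)}$; note the constant there involving $\max\{\thz^{-\omega/2},\thz^{\omega/p}\}$ equals $1$ when $\thz=1$, so no aperture subtlety arises. For the reverse inclusion I would apply Proposition~\ref{p2-add}, which produces, for any $f\in(\GOO{\bz,\gz})'$ belonging to $H^p(X)$, a decomposition $f=\sum_{j=1}^\fz\lz_ja_j$ into $(p,2)$-atoms converging in $(\GOO{\bz,\gz})'$ with $\sum_{j=1}^\fz|\lz_j|^p\ls\|\CS(f)\|_{L^p(X)}^p=\|f\|_{H^p(X)}^p$; hence $H^p(X)\subset\OH^{p,2}_\at(X)$ with $\|f\|_{\OH^{p,2}_\at(X)}\ls\|f\|_{H^p(X)}$. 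Together with Theorem~\ref{thm:LA=}, which guarantees that $H^p(X)$ (and thus the statement) does not depend on the particular $\exp$-ATI used to define $\CS$, these two chains of inequalities yield $\OH^{p,q}_\at(X)=\OH^{p,2}_\at(X)=H^p(X)$ with equivalent (quasi-)norms, which is exactly Theorem~\ref{thm:at2}.

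Since the two propositions do all the analytic work, the ``proof'' here is essentially a bookkeeping argument, and I would simply assemble the equivalences in the right order and remark that the details are routine. The only point deserving a line of care is the transfer of the $q$-independence: one must pass through $H^{p,q}_\cw(X)$ (whose $q$-independence is classical, from \cite{CW77}) rather than trying to prove $\OH^{p,q}_\at(X)=\OH^{p,2}_\at(X)$ directly, because the homogeneous atomic space is defined with a smaller distribution class and the cleanest route to its $q$-independence is via Proposition~\ref{prop:h=oh} plus Theorem~\ref{thm:cw}. The genuine obstacles — the $L^2$-boundedness of $g$ and $\wz g$ via Cotlar--Stein, the molecular characterization (Proposition~\ref{thm:mol}), the verification that the $b^j_{\tau,k}$ are molecules (Lemma~\ref{lem:bmol}), and the Fefferman--Stein maximal inequality in Theorem~\ref{thm:LA=} — have all been dispatched earlier, so no new difficulty remains at this stage.
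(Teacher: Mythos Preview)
Your proposal is correct and matches the paper's own proof, which simply states that Theorem~\ref{thm:at2} follows immediately by combining Propositions~\ref{p1-add} and~\ref{p2-add}, with the $q$-independence of $\OH^{p,q}_\at(X)$ already noted (via Proposition~\ref{prop:h=oh} and the classical $q$-independence of $H^{p,q}_\cw(X)$) in the paragraph preceding Proposition~\ref{p1-add}. The extra remarks you make about Theorem~\ref{thm:LA=} and the location of the analytic work are accurate but not needed for the argument itself.
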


%=========================================================

\subsection{Hardy spaces via various Littlewood-Paley functions}\label{LP2}

In this section, we characterize  Hardy spaces $H^p(X)$ via  the Lusin area functions with apertures, the
Littlewood-Paley $g$-functions and the Littlewood-Paley $g_\lz^*$-functions, respectively.

\begin{theorem}\label{thm:gl*}
Let $p\in(\om,1]$ and $\bz,\ \gz\in(\omega(1/p-1),\eta)$. Assume that $\thz\in(0,\infty)$ and
$\lz\in(\omega[1+2/p],\infty)$.
Then, for any
$f\in(\GOO{\bz,\gz})'$, it holds true that
\begin{align}\label{eq-x5}
\|f\|_{H^p(X)} \sim \|\CS_\thz (f)\|_{L^p(X)} \sim \|g_\lz^*(f)\|_{L^p(X)} \sim \|g(f)\|_{L^p(X)},
\end{align}
provided that either one in \eqref{eq-x5} is finite. Here, the positive equivalent constants in
\eqref{eq-x5} are independent of $f$.
\end{theorem}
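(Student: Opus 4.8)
The plan is to deduce \eqref{eq-x5} from two families of inequalities: the \emph{upper} bounds $\|g(f)\|_{L^p(X)},\,\|g_\lz^*(f)\|_{L^p(X)},\,\|\CS_\thz(f)\|_{L^p(X)}\ls\|f\|_{H^p(X)}$, valid for every $f\in H^p(X)$, and the \emph{lower} bounds $\|f\|_{H^p(X)}\ls\|g(f)\|_{L^p(X)}$, $\|f\|_{H^p(X)}\ls\|g_\lz^*(f)\|_{L^p(X)}$ and $\|f\|_{H^p(X)}\ls\|\CS_\thz(f)\|_{L^p(X)}$. Since $\|f\|_{H^p(X)}=\|\CS_1(f)\|_{L^p(X)}$ these give \eqref{eq-x5}, and the finiteness clause is automatic: if any one of the four quantities is finite, a lower bound forces $f\in H^p(X)$, and then the upper bounds make the remaining three finite. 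For the upper bounds I would pass through the atomic characterization $\mathring H^{p,2}_\at(X)=H^p(X)$ of Theorem \ref{thm:at2}: by the sublinearity of each square function and $p\le1$ it suffices to bound $g(a),\,g_\lz^*(a),\,\CS_\thz(a)$ in $L^p(X)$ by a constant for an arbitrary $(p,2)$-atom $a$. The bound for $\CS_\thz(a)$ is Proposition \ref{p1-add}; the bound for $g(a)$ is entirely analogous, using the $L^2(X)$-boundedness of $g$ (Cotlar--Stein together with the almost-orthogonality estimate \eqref{eq:g1}) near the supporting ball of $a$ and the cancellation and regularity of the $\exp$-{\rm ATI} kernels away from it. Alternatively, $\|g(f)\|_{L^p(X)}\ls\|\CS_1(f)\|_{L^p(X)}$ can be proved directly from the discrete Calder\'on reproducing formula of Theorem \ref{thm:hdrf}, Lemma \ref{lem:max} and Lemma \ref{lem:FSin}, exactly as in the proof of Theorem \ref{thm:LA=}.

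The constraint $\lz>\omega(1+2/p)$ enters only in the bound for $g_\lz^*$. Splitting $X$ into $B(x,\dz^k)$ and the annuli $B(x,\dz^{k-j})\setminus B(x,\dz^{k-j+1})$ with $j\in\nn$, and using \eqref{eq:doub} to replace $V_{\dz^k}(x)+V_{\dz^k}(y)$ by $\dz^{-j\omega}V_{\dz^{k-j}}(x)$ and $[\dz^k/(\dz^k+d(x,y))]^\lz$ by $\dz^{j\lz}$ on the $j$-th annulus, one obtains the pointwise domination $g_\lz^*(f)(x)^2\ls\sum_{j=0}^\fz\dz^{j(\lz-\omega)}\CS_{\dz^{-j}}(f)(x)^2$. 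Since $p/2\le1$ this yields $\|g_\lz^*(f)\|_{L^p(X)}^p\ls\sum_{j=0}^\fz\dz^{j(\lz-\omega)p/2}\|\CS_{\dz^{-j}}(f)\|_{L^p(X)}^p$, and combining with $\|\CS_{\dz^{-j}}(a)\|_{L^p(X)}\ls\dz^{-j\omega/p}$ from Proposition \ref{p1-add} gives $\|g_\lz^*(a)\|_{L^p(X)}^p\ls\sum_{j=0}^\fz\dz^{j[(\lz-\omega)p/2-\omega]}$, a convergent series precisely when $(\lz-\omega)p/2>\omega$, i.e., $\lz>\omega(1+2/p)$. This proves $\|g_\lz^*(f)\|_{L^p(X)}\ls\|f\|_{H^p(X)}$ for $\lz$ in the stated range.

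For the lower bounds, the $g_\lz^*$-direction is immediate: restricting the defining integral of $g_\lz^*(f)(x)$ to $B(x,\dz^k)$ and using \eqref{eq:doub} gives $\CS_1(f)(x)\ls_\lz g_\lz^*(f)(x)$ pointwise, hence $\|f\|_{H^p(X)}\ls\|g_\lz^*(f)\|_{L^p(X)}$. For $\thz\ge1$ the definitions and \eqref{eq:doub} give $\CS_1(f)(x)\ls\thz^{\omega/2}\CS_\thz(f)(x)$ pointwise. The remaining (and more substantial) cases, $\|f\|_{H^p(X)}\ls_\thz\|\CS_\thz(f)\|_{L^p(X)}$ for $\thz<1$ and $\|f\|_{H^p(X)}\ls\|g(f)\|_{L^p(X)}$, I would obtain by re-running the stopping-time Calder\'on--Zygmund construction of Section \ref{s5.2} (Proposition \ref{p2-add} and Lemma \ref{lem:bmol}) with the level sets $\{x\in X:\ \CS_1(f)(x)>2^k\}$ replaced by $\{x\in X:\ \CS_\thz(f)(x)>2^k\}$, respectively $\{x\in X:\ g(f)(x)>2^k\}$, and then invoking Proposition \ref{thm:mol}. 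For $\CS_\thz$ with $\thz<1$ one refines the underlying dyadic system so that the relevant dyadic cubes have radius at most $\thz\dz^l$; the Carleson-type estimate $\sum_{Q\in{\mathcal D}_k,\,Q\subset Q_{\tau,k}^j}\int_Q|Q_kf(y)|^2\,d\mu(y)\ls 2^{2k}\mu(Q_{\tau,k}^j)$ then follows as in Proposition \ref{p2-add}, up to a power of $\thz$. For $g$ one uses the pointwise bound $|Q_kf(y)|\le g(f)(y)$ together with the near-constancy of $Q_kf$ on dyadic cubes of scale $\le\dz^k$: by Theorem \ref{thm:hcrf} and the $\eta$-regularity of the $\exp$-{\rm ATI} kernels, the oscillation of $Q_kf$ over such a cube is at most $\sim\dz^\eta$ times its supremum plus a maximal-function term, which can be summed geometrically, thereby converting the pointwise control of $g(f)$ into the required Carleson-type bound.

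The hardest step is precisely the lower bound for $g$, namely $\|f\|_{H^p(X)}\ls\|g(f)\|_{L^p(X)}$: since the Fefferman--Stein vector-valued maximal inequality (Lemma \ref{lem:FSin}) fails for $p\le1$, there is no soft passage from $g(f)$ to the Lusin area function, and the full molecular/Calder\'on--Zygmund machinery of Section \ref{s5.2} must be deployed. The delicate point is that $g(f)$ records only the pointwise size of $Q_kf$, whereas the estimate for the molecule coefficients requires $\int_Q|Q_kf|^2$ over dyadic cubes $Q$ that are only \emph{mostly} outside the level set, so one must quantify the near-constancy of $Q_kf$ at scale $\dz^k$ sharply enough that the resulting loss stays summable; the rest of the proof is a bookkeeping of geometric series, in which the exact threshold $\lz>\omega(1+2/p)$ for the $g_\lz^*$-function is the only place where optimality of the range is at stake.
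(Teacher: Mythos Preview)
Your overall architecture matches the paper for $\CS_\thz$ and $g_\lz^*$: the upper bounds come from Proposition \ref{p1-add} via atoms, the $g_\lz^*$ upper bound from the annular decomposition $g_\lz^*(f)^2\ls\sum_j 2^{j(\omega-\lz)}\CS_{2^j}(f)^2$ (the paper uses base $2$ rather than $\dz^{-1}$, but this is cosmetic), and the $\CS_\thz$ lower bound by rerunning Proposition \ref{p2-add} on the level sets of $\CS_\thz$.

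The real divergence is in the $g$ lower bound $\|\CS(f)\|_{L^p(X)}\ls\|g(f)\|_{L^p(X)}$. You declare this the hardest step, assert that Fefferman--Stein is unavailable for $p\le1$, and propose instead to repeat the molecular Calder\'on--Zygmund construction on the level sets of $g(f)$, confronting the ``near-constancy of $Q_kf$'' issue you correctly flag as delicate. The paper avoids all of this. It proves $\|\CS(f)\|_{L^p(X)}\ls\|g(f)\|_{L^p(X)}$ \emph{exactly} by the method you mentioned only in passing for the opposite inequality: the discrete reproducing formula (Theorem \ref{thm:hdrf}), the almost-orthogonality bound for $Q_k\wz Q_{k'}$, Lemma \ref{lem:max}, and Fefferman--Stein (Lemma \ref{lem:FSin}). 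The point you missed is the $r$-trick, already used in Theorem \ref{thm:LA=}: choose $r\in(\omega/(\omega+\bz'),p)$, apply Lemma \ref{lem:max} with exponent $r$, and then Fefferman--Stein in $L^{p/r}(X)$ with $p/r>1$ and inner exponent $2/r>1$. This yields
\[
\|\CS(f)\|_{L^p(X)}\ls\Bigl\|\Bigl[\sum_{k'}\sum_{\az'}\sum_{m'}\bigl|Q_{k'}f(y_{\az'}^{k',m'})\bigr|^2\chi_{Q_{\az'}^{k',m'}}\Bigr]^{1/2}\Bigr\|_{L^p(X)},
\]
and since each $y_{\az'}^{k',m'}\in Q_{\az'}^{k',m'}$ is \emph{arbitrary} in Theorem \ref{thm:hdrf}, one replaces $|Q_{k'}f(y_{\az'}^{k',m'})|$ by $\inf_{z\in Q_{\az'}^{k',m'}}|Q_{k'}f(z)|\le g(f)(z)$ for $z\in Q_{\az'}^{k',m'}$, giving $\|\CS(f)\|_{L^p(X)}\ls\|g(f)\|_{L^p(X)}$ directly. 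No stopping-time, no near-constancy, no molecules.

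So your proposal is not wrong in principle, but you misidentified the obstacle: Fefferman--Stein \emph{is} available via the $r$-trick, and the argument you sketched for the upper bound of $g$ (``exactly as in Theorem \ref{thm:LA=}'') is in fact the paper's route for the lower bound as well. Your molecular alternative might be made to work, but the near-constancy step you worry about is genuinely awkward and entirely unnecessary here.
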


\begin{proof}
Let $f\in(\GOO{\bz,\gz})'$ with $\bz,\ \gz\in(\omega(1/p-1),\eta)$.
With $\{Q_k\}_{k\in\zz}$ being an $\exp$-{\rm ATI}, we  define $\CS_\thz(f)$, $g_\lz^*(f)$ and $g(f)$,
respectively, as in \eqref{5.z}, \eqref{eq:defg} and \eqref{5.y}, where $\thz\in(0,\fz)$ and
$\lz\in(\omega[1+2/p],\infty)$.

By Proposition \ref{p1-add} and Theorem \ref{thm:at2}, if $f\in H^p(X)$, then
$\|\CS_\thz(f)\|_{L^p(X)}\ls\|f\|_{\OH^{p,2}_\at(X)}\sim \|f\|_{L^p(X)}$. Conversely, if
$\|\CS_\thz(f)\|_{L^p(X)}<\fz$, then we proceed as the proof of Proposition \ref{p2-add}
to deduce that $f=\sum_{j=1}^\fz\lz_ja_j$ in
$(\GOO{\bz,\gz})'$, where $\{a_j\}_{j=1}^\fz$ are
$(p,2)$-atoms  and $\{\lz_j\}_{j=1}^\fz\subset\cc$ satisfying
$\sum_{j=1}^\fz|\lz_j|^p\ls\|\CS_\thz(f)\|_{L^p(X)}^p$. Combining this with Theorem \ref{thm:at2} implies that
$$
\|f\|_{H^p(X)}=\|\CS(f)\|_{L^p(X)}\sim \|f\|_{\OH^{p,2}_\at(X)}\ls\|\CS_\thz(f)\|_{L^p(X)}.
$$
Therefore, we have $\|f\|_{H^p(X)} \sim \|\CS_\thz (f)\|_{L^p(X)}$ whenever $\|f\|_{H^p(X)}$ or $\|\CS_\thz (f)\|_{L^p(X)}$ is finite.

Noticing that
$
\CS(f)\ls g_\lz^*(f)\ls\sum_{j=1}^\fz 2^{j(\omega-\lz)/2}\CS_{2^j}(f),
$
we then apply \eqref{eq-x2} and $\lz\in(\omega[1+2/p],\infty)$ to obtain
\begin{align*}
\|\CS(f)\|_{L^p(X)}^p &\ls \|g_\lz^*(f)\|_{L^p(X)} ^p
\ls\sum_{j=1}^\fz 2^{j(\omega-\lz)p/2}\|\CS_{2^j}(f)\|_{L^p(X)}^p\\
&\ls \sum_{j=1}^\fz 2^{j(\omega-\lz)p/2}2^{j\omega} \|f\|_{\OH^{p,2}_\at(X)}^p\ls  \|f\|_{\OH^{p,2}_\at(X)}^p.
\end{align*}
Invoking Theorem \ref{thm:at2}, we then obtain $\|f\|_{H^p(X)}\sim\|g_\lz^*(f)\|_{L^p(X)}$ whenever
$\|f\|_{H^p(X)}$ or $\|g_\lz^*(f)\|_{L^p(X)}$ is finite.

If $f\in H^p(X)=\OH^{p,2}_\at(X)$, then, by following the proof of  \eqref{eq-x2}, we also obtain
$$\|g(f)\|_{L^p(X)}\ls \|f\|_{\OH^{p,2}_\at(X)} \sim \|f\|_{H^p(X)}.$$
To finish the proof of \eqref{eq-x5}, it remains to prove $\|f\|_{H^p(X)} \ls \|g(f)\|_{L^p(X)}$. Indeed,  for any $x\in X$, we have
\begin{align}\label{5.14x}
\CS(f)(x)&=\lf[\sum_{k\in\zz}\sum_{\az\in\CA_k}\sum_{m=1}^{N(k,\az)}\int_{d(x,y)<\dz^k}|Q_kf(y)|^2
\chi_{Q_\az^{k,m}}(x)\,\frac{d\mu(y)}{V_{\dz^k}(x)}\r]^{\frac 12}\\
&\ls\lf\{\sum_{k\in\zz}\sum_{\az\in\CA_k}\sum_{m=1}^{N(k,\az)}\lf[\sup_{z\in B(z_\az^{k,m},\dz^{k-1})}
|Q_kf(z)|^2\r]\chi_{Q_\az^{k,m}}(x)\r\}^{\frac 12},\noz
\end{align}
where $Q_\az^{k,m}$ is as in Section \ref{pre} and $z_\az^{k,m}$ the center of $Q_\az^{k,m}$.
With all the notation as in Theorem \ref{thm:hdrf}, we know that, for any $z\in B(z_\az^{k,m},\dz^{k-1})$,
$$
Q_kf(z)=\sum_{k'\in\zz}\sum_{\az'\in\CA_{k'}}\sum_{m'=1}^{N(k',\az')}\mu\lf(Q_{\az'}^{k',m'}\r)
Q_k\wz{Q}_{k'}\lf(z,y_{\az'}^{k',m'}\r)Q_{k'}f\lf(y_{\az'}^{k',m'}\r),
$$
where $y_{\az'}^{k',m'}$ is an arbitrary point in $Q_{\az'}^{k',m'}$. Fix $\bz'\in(0,\bz\wedge\gz)$.
Then, similarly to the proof of \eqref{eq:wQ*phi} (see also \cite[(3.2)]{HMY08}), we conclude that, for any
$z\in B(z_\az^{k,m},\dz^{k-1})$
\begin{equation}\label{5.14z}
\lf|Q_k\wz{Q}_{k'}\lf(z,y_{\az'}^{k',m'}\r)\r|\ls\dz^{|k-k'|\bz'}
\frac{1}{V_{\dz^{k\wedge k'}}(z)+V(z,y_{\az'}^{k',m'})}
\lf[\frac 1{\dz^{k\wedge k'}+d(z,y_{\az'}^{k',m'})}\r]^\gz.
\end{equation}
The variable $z$ in \eqref{5.14z} can be replaced by any $x\in Q_\az^{k,m}$, because
$\max\{d(z,x),d(z,z_\az^{k,m})\}\ls\dz^k\ls\dz^{k\wedge k'}$.
Further, from Lemma \ref{lem:max}, we deduce that, for any fixed $r\in(\om,1]$, any $k'\in\zz$
and $z\in B(z_\az^{k,m},\dz^{k-1})$,
\begin{align*}
&\lf|\sum_{\az'\in\CA_{k'}}\sum_{m'=1}^{N(k',\az')}\mu\lf(Q_{\az'}^{k',m'}\r)
Q_k\wz{Q}_{k'}\lf(z,y_{\az'}^{k',m'}\r)Q_{k'}f\lf(y_{\az'}^{k',m'}\r)\r|\\
&\quad\ls\dz^{(k\wedge k'-k)\omega(\frac 1r-1)}
\lf[\CM\lf(\sum_{\az'\in\CA_{k'}}\lf|Q_{k'}f\lf(y_{\az'}^{k',m'}\r)\r|^r
\chi_{Q_{\az'}^{k',m'}}\r)(x)\r]^{\frac 1r}
\end{align*}
and hence
\begin{equation}\label{5.14y}
|Q_kf(z)|\ls\sum_{k'\in\zz}\dz^{|k-k'|\bz'}\dz^{(k\wedge k'-k)\omega(\frac 1r-1)}
\lf[\CM\lf(\sum_{\az'\in\CA_{k'}}\sum_{m'=1}^{N(k',\az')}\lf|Q_{k'}f\lf(y_{\az'}^{k',m'}\r)\r|^r
\chi_{Q_{\az'}^{k',m'}}\r)(x)\r]^{\frac 1r}.
\end{equation}
Combining \eqref{5.14x} and \eqref{5.14y}, choosing $r$ and $\bz'$ such that $r\in(\omega/(\omega+\bz'),p)$ and
applying the H\"older inequality, we further conclude that, for any $x\in X$,
\begin{align*}
[\CS(f)(x)]^2&\ls\sum_{k\in\zz}\sum_{\az\in\CA_k}\sum_{m=1}^{N(k,\az)}
\lf\{\sum_{k'\in\zz}\dz^{|k-k'|\bz'}\dz^{(k\wedge k'-k)\omega(\frac 1r-1)}
\vphantom{\times\lf[\CM\lf(\sum_{\az'\in\CA_{k'}}\sum_{m'=1}^{N(k',\az')}
	\lf|Q_{k'}f\lf(y_{\az'}^{k',m'}\r)\r|^r\chi_{Q_{\az'}^{k',m'}}\r)(x)\r]^{\frac 1r}\chi_{Q_\az^{k,m}}(x)}\r.\\
&\quad\lf.\times\lf[\CM\lf(\sum_{\az'\in\CA_{k'}}\sum_{m'=1}^{N(k',\az')}
\lf|Q_{k'}f\lf(y_{\az'}^{k',m'}\r)\r|^r\chi_{Q_{\az'}^{k',m'}}\r)(x)\r]^{\frac 1r}\chi_{Q_\az^{k,m}}(x)\r\}^2\\
&\ls\sum_{k\in\zz}\sum_{\az\in\CA_k}\sum_{m=1}^{N(k,\az)}
\sum_{k'\in\zz}\dz^{|k-k'|\bz'}\dz^{(k\wedge k'-k)\omega(\frac 1r-1)}\\
&\quad\times
\lf[\CM\lf(\sum_{\az'\in\CA_{k'}}\sum_{m'=1}^{N(k',\az')}
\lf|Q_{k'}f\lf(y_{\az'}^{k',m'}\r)\r|^r\chi_{Q_{\az'}^{k',m'}}\r)(x)\r]^{\frac 2r}\chi_{Q_\az^{k,m}}(x)\\
&\ls\sum_{k\in\zz}\sum_{k'\in\zz}\dz^{|k-k'|[\bz'-\omega(\frac 1r-1)]}
\lf[\CM\lf(\sum_{\az'\in\CA_{k'}}\sum_{m'=1}^{N(k',\az')}
\lf|Q_{k'}f\lf(y_{\az'}^{k',m'}\r)\r|^r\chi_{Q_{\az'}^{k',m'}}\r)(x)\r]^{\frac 2r}\\
&\ls\sum_{k'\in\zz}\lf[\CM\lf(\sum_{\az'\in\CA_{k'}}\sum_{m'=1}^{N(k',\az')}
\lf|Q_{k'}f\lf(y_{\az'}^{k',m'}\r)\r|^r\chi_{Q_{\az'}^{k',m'}}\r)(x)\r]^{\frac 2r}.
\end{align*}
From this and Lemma \ref{lem:FSin}, we deduce that
\begin{align*}
\|f\|_{H^{p}(X)}&=\|[\CS(f)]^r\|_{L^{p/r}(X)}^{\frac 1r}
\ls\lf\|\lf\{\sum_{k'\in\zz}
\lf[\CM\lf(\sum_{\az'\in\CA_{k'}}\sum_{m'=1}^{N(k',\az')}\lf|Q_{k'}f\lf(y_{\az'}^{k',m'}\r)\r|^r
\chi_{Q_{\az'}^{k',m'}}\r)\r]^{\frac 2r}\r\}^{\frac r2}\r\|_{L^{p/r}(X)}^{\frac 1r}\\
&\ls\lf\|\lf\{\sum_{k'\in\zz}\lf[\sum_{\az'\in\CA_{k'}}\sum_{m'=1}^{N(k',\az')}
\lf|Q_{k'}f\lf(y_{\az'}^{k',m'}\r)\r|^r
\chi_{Q_{\az'}^{k',m'}}\r]^{\frac 2r}\r\}^{\frac r2}\r\|_{L^{p/r}(X)}^{\frac 1r}\\
&\sim\lf\|\lf[\sum_{k'\in\zz}\sum_{\az'\in\CA_{k'}}\sum_{m'=1}^{N(k',\az')}
\lf|Q_{k'}f\lf(y_{\az'}^{k',m'}\r)\r|^2\chi_{Q_{\az'}^{k',m'}}\r]^{\frac 12}\r\|_{L^p(X)}.
\end{align*}
By this and the arbitrariness of $y_{\az'}^{k',m'}$, we finally conclude that
$$
\|f\|_{H^{p}(X)}\ls\lf\|\lf[\sum_{k'\in\zz}\sum_{\az'\in\CA_{k'}}\sum_{m'=1}^{N(k',\az')}
\inf_{z\in Q_{\az'}^{k',m'}}\lf|Q_{k'}f(z)\r|^2\chi_{Q_{\az'}^{k',m'}}\r]^{\frac 12}\r\|_{L^p(X)}
\ls\|g(f)\|_{L^p(X)}.
$$
This finishes the proof of $\|f\|_{H^p(X)}\ls \|g(f)\|_{L^p(X)}$ and hence of Theorem \ref{thm:gl*}.
\end{proof}

\begin{remark}\label{rem5.11x}
If $X$ is a homogeneous group, Folland and Stein \cite{fs82} showed that, for any given $p\in(0,2]$ and any
$f\in\mathscr S'(X)$, $\|g_\lz^*(f)\|_{L^p(X)}\ls\|\CS(f)\|_{L^p(X)}$ whenever $\lz\in(2\omega/p,\fz)$, where
$\mathscr S'(X)$ denotes the space of tempered distributions on
$X$ (see \cite[Corollary 7.4]{fs82} by observing that $\lz$ in \eqref{5.y}
be equal to $2\lz$ with $\lz$ as in
the Littlewood-Paley $g_\lz^*$-function in \cite{fs82}).
Comparing with this, the range of $\lz$ in Theorem \ref{thm:gl*}
is narrower, this is because it was proved in \cite[Theorem 7.1]{fs82} that, for any given $p\in(0,2]$, any
$\thz\in[1,\fz)$ and $f\in\mathscr S'(X)$,
\begin{equation}\label{eq:group}
\|\CS_\thz(f)\|_{L^p(X)}\ls_p \thz^{\omega(1/p-1/2)}\|\CS(f)\|_{L^p(X)}
\end{equation}
while, in the proof of Theorem \ref{thm:gl*}, we only show that \eqref{eq:group}
for an arbitrary space of homogeneous type $X$ holds true,
with $\omega(1/p-1/2)$ replaced by $\omega/p$, when $p\in (\om,1]$
and $f\in(\GOO{\bz,\gz})'$ with $\bz,\ \gz\in(\omega(1/p-1),\eta)$.
However, it is still \emph{unclear} whether or not \eqref{eq:group}
for an arbitrary space of homogeneous type $X$ (and hence Theorem \ref{thm:gl*}
with $\lz\in(2\omega/p,\omega(1+2/p)]$) holds true.
\end{remark}

%=======================================================

\section{Wavelet characterizations of Hardy spaces}\label{wave}

In this section, we characterize the Hardy space via the wavelet orthogonal system
$\{\psi_\az^k:\ k\in\zz,\ \az\in\CG_k\}$ introduced in \cite[Theorem 7.1]{AH13}.
The sequence  $\{D_k\}_{k\in\zz}$ of operators on $L^2(X)$ associated with integral kernels
\begin{equation}\label{eq:Dk}
D_k(x,y):=\sum_{\az\in\CG_k}\psi_\az(x)\psi_\az(y),\qquad\forall\; x,\ y\in X
\end{equation}
turns out to be an $\exp$-ATI; see \cite{HLW16, HLYY17}.
Thus, all the conclusions in Section \ref{LP} hold true for $\{D_k\}_{k\in\zz}$.

For any $f\in(\GOO{\bz,\gz})'$ with $\bz,\ \gz\in(0,\eta)$, define the \emph{wavelet Littlewood-Paley
function} $S(f)$ by setting, for any $x\in X$,
\begin{equation*}
S(f)(x):=\lf\{\sum_{k\in\zz}\sum_{\az\in\CG_k}\lf[\mu\lf(Q_\az^{k+1}\r)\r]^{-1}\lf|\lf<\psi_\az^k,f\r>\r|^2
\chi_{Q_\az^{k+1}}(x)\r\}^{\frac 12}.
\end{equation*}
For any $p\in(0,\fz)$, define the corresponding \emph{wavelet Hardy space} $H^p_w(X)$ by
$$
H^p_w(X):=\lf\{f\in\lf(\GOO{\bz,\gz}\r)':\ \|f\|_{H^p_w(X)}:=\|S(f)\|_{L^p(X)}<\fz\r\}.
$$

For any $p\in(\om,\fz)$, the $L^p(X)$-norm equivalence between the wavelet Littlewood-Paley function
$S(f)$ and the Littlewood-Paley $g$-function
$g(f)$ was proved in \cite[Theorem 4.3]{HLW16} whenever $f$ is a distribution.
The proof of \cite[Theorem 4.3]{HLW16} seems \emph{problematic} because the authors therein used an unknown
fact that, when $f\in(\mathring\CG(\bz,\gz))'$ and $n\in\nn$,
\begin{align}\label{eq-x6}
\sum_{|k|\le n}\sum_{\az\in\CG_k}\lf\langle f,\psi_\az^k\r\rangle\psi_\az^k \in L^2(X).
\end{align}
Although \eqref{eq-x6} may not be true for distributions, it is obviously true when $f\in L^2(X)$.
Indeed, the argument used in the proof of \cite[Theorem 4.3]{HLW16} proves
the following result.

\begin{theorem}\label{thm:wLP}
Suppose $p\in(\om,\fz)$ and $\bz,\ \gz\in(0,\eta)$. Then there exists a positive
constant $C$ such that, for any $f\in(\GOO{\bz,\gz})'$,
\begin{equation}\label{eq:less}
\|\CG(f)\|_{L^p(X)}\le C\|S(f)\|_{L^p(X)}
\end{equation}
and, if $f\in L^2(X)$, then
\begin{equation}\label{eq:sim}
C^{-1}\|S(f)\|_{L^p(X)}\le\|\CG(f)\|_{L^p(X)}\le C\|S(f)\|_{L^p(X)}.
\end{equation}
Here and hereafter, $\CG(f)$ is defined as in \eqref{eq:defg}, but with $Q_k$ therein replaced by $D_k$ in
\eqref{eq:Dk}.
\end{theorem}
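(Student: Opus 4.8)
The plan is to follow the argument in the proof of \cite[Theorem 4.3]{HLW16}, comparing the two square functions $\CG(f)$ and $S(f)$ \emph{scale by scale}: at a fixed level $k$ the quantities $D_kf$ and $\{\langle f,\psi_\az^k\rangle\}_{\az\in\CG_k}$ are directly related, so no cross-scale Calder\'on reproducing formula is needed, and one only has to establish two pointwise estimates and then invoke the Fefferman--Stein inequality (Lemma \ref{lem:FSin}). Throughout, for $k\in\zz$ and $\az\in\CG_k$ set $c_\az^k:=[\mu(Q_\az^{k+1})]^{-1/2}|\langle f,\psi_\az^k\rangle|$ and, for $x\in X$,
$$
m_k(f)(x):=\lf[\sum_{\az\in\CG_k}\lf(c_\az^k\r)^2\chi_{Q_\az^{k+1}}(x)\r]^{\frac 12},
$$
so that $S(f)(x)=[\sum_{k\in\zz}m_k(f)(x)^2]^{1/2}$ and, since $\{Q_\az^{k+1}\}_{\az\in\CG_k}$ is a disjoint family of dyadic cubes, $m_k(f)(x)^r=\sum_{\az\in\CG_k}(c_\az^k)^r\chi_{Q_\az^{k+1}}(x)$ for every $r\in(0,\fz)$. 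Recall from \cite{HLW16, HLYY17} that $\{D_k\}_{k\in\zz}$ from \eqref{eq:Dk} is an $\exp$-ATI, that $D_k(x,\cdot)$ and $\psi_\az^k$ all belong to $\GOO{\bz,\gz}$, that by Lemma \ref{cube}(v) and \eqref{eq:doub} one has $\mu(Q_\az^{k+1})\sim V_{\dz^k}(y_\az^k)$ where $y_\az^k$ is the center of $Q_\az^{k+1}$, and that $D_kf(x)=\langle f,D_k(x,\cdot)\rangle=\sum_{\az\in\CG_k}\psi_\az^k(x)\langle f,\psi_\az^k\rangle$ for any $f\in(\GOO{\bz,\gz})'$ (the series converging absolutely by the exponential decay of the wavelets).

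For \eqref{eq:less}, fix $r\in(\om,1]$ with $r<p$; this is possible since $p>\om$. Using the $L^2$-normalization and exponential decay of $\psi_\az^k$, the comparison $\mu(Q_\az^{k+1})\sim V_{\dz^k}(y_\az^k)$, and \eqref{eq:doub} to turn exponential decay into polynomial decay, one bounds, for arbitrarily large $N\in(0,\fz)$,
$$
\lf|\psi_\az^k(x)\r|\lf[\mu\lf(Q_\az^{k+1}\r)\r]^{\frac 12}\ls\mu\lf(Q_\az^{k+1}\r)\frac 1{V_{\dz^k}(x)+V(x,y_\az^k)}\lf[\frac{\dz^k}{\dz^k+d(x,y_\az^k)}\r]^N,
$$
so that $|D_kf(x)|\ls\sum_{\az\in\CG_k}\mu(Q_\az^{k+1})[V_{\dz^k}(x)+V(x,y_\az^k)]^{-1}[\dz^k/(\dz^k+d(x,y_\az^k))]^N c_\az^k$. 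Taking $N$ large and applying the fixed-scale version of the estimate in Lemma \ref{lem:max} (which at a single scale is \cite[Lemma 5.3]{HMY08} and uses only \eqref{eq:doub}) yields $|D_kf(x)|\ls[\CM(m_k(f)^r)(x)]^{1/r}$ for every $k$. Squaring, summing over $k$, and invoking Lemma \ref{lem:FSin} with exponents $p/r>1$ and $u:=2/r>1$ gives
$$
\|\CG(f)\|_{L^p(X)}\ls\lf\|\lf\{\sum_{k\in\zz}\lf[\CM\lf(m_k(f)^r\r)\r]^{\frac 2r}\r\}^{\frac r2}\r\|_{L^{p/r}(X)}^{\frac 1r}\ls\lf\|\lf(\sum_{k\in\zz}m_k(f)^2\r)^{\frac 12}\r\|_{L^p(X)}=\|S(f)\|_{L^p(X)},
$$
which is \eqref{eq:less}; this step uses only $f\in(\GOO{\bz,\gz})'$.

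For the reverse inequality in \eqref{eq:sim}, take $f\in L^2(X)$. Since $\{\psi_\az^k\}_{k\in\zz,\,\az\in\CG_k}$ is an orthonormal basis of $L^2(X)$, the level-$k$ wavelets are orthonormal, $D_k\psi_\az^k=\psi_\az^k$ and $D_k$ is self-adjoint, whence $\langle f,\psi_\az^k\rangle=\langle D_kf,\psi_\az^k\rangle$; this is precisely where $L^2(X)$-membership replaces the (generally false for distributions) assertion \eqref{eq-x6}, as for $f\in L^2(X)$ the partial sums $\sum_{|k|\le n}\sum_{\az\in\CG_k}\langle f,\psi_\az^k\rangle\psi_\az^k$ are genuine $L^2$-projections of $f$. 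From $\langle f,\psi_\az^k\rangle=\int_X D_kf(y)\psi_\az^k(y)\,d\mu(y)$, the exponential decay and $L^2$-normalization of $\psi_\az^k$, and $\mu(Q_\az^{k+1})\sim V_{\dz^k}(y_\az^k)$, one obtains, for arbitrarily large $N$,
$$
c_\az^k\ls\int_X\frac 1{V_{\dz^k}(y_\az^k)+V(y_\az^k,y)}\lf[\frac{\dz^k}{\dz^k+d(y_\az^k,y)}\r]^N|D_kf(y)|\,d\mu(y),
$$
and then, by the power-$r$ variant of Lemma \ref{lem-add}(v) (again a consequence of \eqref{eq:doub}), $c_\az^k\ls[\CM(|D_kf|^r)(x)]^{1/r}$ for every $x\in Q_\az^{k+1}$. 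Since the cubes $\{Q_\az^{k+1}\}_{\az\in\CG_k}$ are disjoint, this gives $m_k(f)(x)\ls[\CM(|D_kf|^r)(x)]^{1/r}$ pointwise, and Lemma \ref{lem:FSin} (with $p/r>1$, $u=2/r>1$) yields $\|S(f)\|_{L^p(X)}\ls\|\CG(f)\|_{L^p(X)}$; together with \eqref{eq:less} this proves \eqref{eq:sim}.

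The main work lies in the two pointwise passages $|D_kf(x)|\ls[\CM(m_k(f)^r)(x)]^{1/r}$ and $c_\az^k\ls[\CM(|D_kf|^r)(x)]^{1/r}$ on $Q_\az^{k+1}$: when $p\le 1$ these genuinely require the power-$r$ trick with $r\in(\om,1]$, $r<p$, and a careful use of \eqref{eq:doub} to convert the exponential wavelet decay into a polynomial kernel of the form appearing in Lemmas \ref{lem-add}(v) and \ref{lem:max}; one must also check that the fixed-scale maximal estimate applies to the \emph{sub}family $\{Q_\az^{k+1}\}_{\az\in\CG_k}$ rather than to a full dyadic decomposition, which it does, since its proof uses only disjointness at a fixed scale and \eqref{eq:doub}. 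The remaining (conceptual rather than computational) point is to pin down that $L^2(X)$ is used \emph{only} through the identity $\langle f,\psi_\az^k\rangle=\langle D_kf,\psi_\az^k\rangle$, so that \eqref{eq:less} survives for arbitrary $f\in(\GOO{\bz,\gz})'$ while \eqref{eq:sim} is asserted only for $f\in L^2(X)$.
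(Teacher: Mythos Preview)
Your approach is exactly the paper's: the paper does not give an independent proof but simply states that the argument of \cite[Theorem~4.3]{HLW16} (with the observation that $f\in L^2(X)$ makes \eqref{eq-x6} legitimate) establishes the result, and you sketch precisely that scale-by-scale comparison. Your treatment of \eqref{eq:less} is correct: since $D_kf(x)=\sum_{\az\in\CG_k}\psi_\az^k(x)\langle f,\psi_\az^k\rangle$ is already a \emph{discrete} sum over a disjoint family of level-$(k+1)$ cubes, Lemma~\ref{lem:max} applies directly and yields the pointwise bound $|D_kf(x)|\ls[\CM(m_k(f)^r)(x)]^{1/r}$, after which Lemma~\ref{lem:FSin} finishes.

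There is, however, a genuine gap in your reverse bound when $p\le 1$. You pass from the continuous integral
\[
c_\az^k\ls\int_X\frac 1{V_{\dz^k}(y_\az^k)+V(y_\az^k,y)}\lf[\frac{\dz^k}{\dz^k+d(y_\az^k,y)}\r]^N|D_kf(y)|\,d\mu(y)
\]
to $c_\az^k\ls[\CM(|D_kf|^r)(x)]^{1/r}$ by invoking ``the power-$r$ variant of Lemma~\ref{lem-add}(v)''. No such variant holds for a general integrand when $r<1$: the discrete estimate in Lemma~\ref{lem:max} works because disjointness of cubes gives the pointwise identity $\bigl(\sum_\az a_\az\chi_{Q_\az}\bigr)^r=\sum_\az a_\az^r\chi_{Q_\az}$, and there is no continuous analogue. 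Concretely, on $\rr$ with $\dz^k=1$ and $x=y_\az^k=0$, take $g=M\chi_{[L,L+1/M]}$; the kernel integral is $\sim L^{-1-N}$ independently of $M$, while $[\CM(|g|^r)(0)]^{1/r}\sim M^{1-1/r}L^{-1/r}\to 0$ as $M\to\infty$, so the claimed inequality fails. For $p>1$ you may simply take $r=1$ and use Lemma~\ref{lem-add}(v) as stated; for $p\in(\om,1]$ one must exploit the additional structure of $D_kf$ (it is itself a discrete level-$k$ wavelet combination, so one can discretize before applying Lemma~\ref{lem:max}) or argue via a discrete reproducing formula as in the proof of Theorem~\ref{thm:gl*}, rather than appeal to a continuous power-$r$ estimate that does not exist.
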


To show that \eqref{eq:sim} holds true for all distributions, we need the following basic property of
$H^p_w(X)$.

\begin{proposition}\label{prop:bwave}
Let $p\in(\om,1]$ and $\bz,\ \gz\in(\omega(1/p-1),\eta)$. Then $H^p_w(X)$ is a (quasi-) Banach space
that can be continuously embedded into $(\GOO{\bz,\gz})'$.
\end{proposition}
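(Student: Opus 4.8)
The plan is to follow the template of Proposition \ref{prop:ban} (the analogue for $H^{*,p}(X)$): first I would establish that $H^p_w(X)$ embeds continuously into $(\GOO{\bz,\gz})'$, and then I would deduce completeness by a Fatou-lemma argument applied to the functional $f\mapsto\|S(f)\|_{L^p(X)}$.

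For the continuous embedding, I would detour through the Littlewood--Paley theory of Section \ref{LP}. The sequence $\{D_k\}_{k\in\zz}$ from \eqref{eq:Dk} is an $\exp$-ATI, so \eqref{eq:less} of Theorem \ref{thm:wLP} gives $\|\CG(f)\|_{L^p(X)}\ls\|S(f)\|_{L^p(X)}$ for every $f\in(\GOO{\bz,\gz})'$, where $\CG(f)$ is the Littlewood--Paley $g$-function built from $\{D_k\}$. Hence, if $f\in H^p_w(X)$, then $\|\CG(f)\|_{L^p(X)}<\fz$, and Theorem \ref{thm:gl*} (used with the $\exp$-ATI $\{D_k\}$, so that its $g$-function is precisely $\CG(f)$) yields $f\in H^p(X)$ with $\|f\|_{H^p(X)}\sim\|\CG(f)\|_{L^p(X)}\ls\|S(f)\|_{L^p(X)}$. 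By Theorem \ref{thm:at2}, $H^p(X)=\OH^{p,2}_\at(X)$ with equivalent quasi-norms, so $\|f\|_{\OH^{p,2}_\at(X)}\ls\|f\|_{H^p_w(X)}$. It then remains to note that $\OH^{p,2}_\at(X)$ embeds continuously into $(\GOO{\bz,\gz})'$: given a decomposition $f=\sum_j\lz_ja_j$ into $(p,2)$-atoms with $\sum_j|\lz_j|^p\sim\|f\|_{\OH^{p,2}_\at(X)}^p$ and any $\vz\in\GOO{\bz,\gz}$ with $\|\vz\|_{\CG(\bz,\gz)}\le1$, for $x\in B(x_0,1)$ one has $\|\vz\|_{\CG(x,1,\bz,\gz)}\ls1$, so $|\langle a_j,\vz\rangle|\ls a_j^*(x)$; averaging over $B(x_0,1)$ and using $\|a_j^*\|_{L^p(X)}\ls1$ from \eqref{eq-add3} of Lemma \ref{lem:a*} gives $|\langle a_j,\vz\rangle|\ls1$ uniformly in $j$, whence $|\langle f,\vz\rangle|\le\sum_j|\lz_j|\,|\langle a_j,\vz\rangle|\ls(\sum_j|\lz_j|^p)^{1/p}\ls\|f\|_{\OH^{p,2}_\at(X)}$ since $p\le1$ (alternatively, combine Proposition \ref{prop:h=oh} with $H^{p,2}_\at(X)\subset H^{*,p}(X)\hookrightarrow(\go{\bz,\gz})'$). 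Chaining these estimates gives $|\langle f,\vz\rangle|\ls\|f\|_{H^p_w(X)}\|\vz\|_{\CG(\bz,\gz)}$, which is the desired continuous embedding.

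For completeness, I would mimic Proposition \ref{prop:ban}. Each wavelet $\psi_\az^k$ belongs to $\GO{\eta,\eta}\subset\GOO{\bz,\gz}$ (exponential decay, $\eta$-H\"older regularity, vanishing integral), so $f\mapsto\langle\psi_\az^k,f\rangle$ is a continuous linear functional on $(\GOO{\bz,\gz})'$. If $\{f_n\}_n$ is Cauchy in $H^p_w(X)$, then by the embedding just established it is Cauchy in $(\GOO{\bz,\gz})'$, hence converges there to some $f$. For each fixed pair $(k,\az)$, $\langle\psi_\az^k,f-f_n\rangle=\lim_{l\to\fz}\langle\psi_\az^k,f_{n+l}-f_n\rangle$; summing over any finite set of indices and then taking the supremum over all such finite sets gives $S(f-f_n)(x)\le\liminf_{l\to\fz}S(f_{n+l}-f_n)(x)$ for every $x\in X$. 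The Fatou lemma then yields $\|S(f-f_n)\|_{L^p(X)}\le\liminf_{l\to\fz}\|S(f_{n+l}-f_n)\|_{L^p(X)}\to0$ as $n\to\fz$, which, together with the pointwise subadditivity of $S$ and $p\le1$, gives $f\in H^p_w(X)$ and $\|f-f_n\|_{H^p_w(X)}\to0$; so $H^p_w(X)$ is complete.

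The step requiring the most attention is the continuous embedding: it is not computationally involved but is obtained by stitching together Theorems \ref{thm:wLP}, \ref{thm:gl*} and \ref{thm:at2}, so I would verify that none of these relies on Proposition \ref{prop:bwave} itself — which holds, since all three are proved earlier in the paper. The remainder is routine and parallels Proposition \ref{prop:ban}.
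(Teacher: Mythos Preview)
Your proposal is correct and follows essentially the same approach as the paper: both first chain \eqref{eq:less}, Theorem \ref{thm:gl*} and Theorem \ref{thm:at2} to obtain $\|f\|_{\OH^{p,2}_\at(X)}\ls\|f\|_{H^p_w(X)}$, then bound $|\langle a_j,\vz\rangle|$ uniformly to get the embedding, and finish completeness by the same Fatou-lemma argument on $S(f-f_n)$. The only minor difference is that the paper bounds $|\langle a_j,\vz\rangle|\ls\|\vz\|_{\CL_{1/p-1}(X)}\ls\|\vz\|_{\GOO{\bz,\gz}}$ via Lemmas \ref{lem:lipd} and \ref{lem:gsublip}, whereas you use the grand-maximal estimate \eqref{eq-add3} from Lemma \ref{lem:a*}; both routes yield the same uniform bound.
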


\begin{proof}
Assume that  $f\in(\GOO{\bz,\gz})'$ belongs to $H^p_w(X)$. By \eqref{eq:less}, Theorems \ref{thm:gl*} and \ref{thm:at2}, we have
$\|f\|_{\OH_\at^{p,2}(X)}\ls \|f\|_{H^p_w(X)}$.
Consequently,
for
any $\ez\in(0,\fz)$, there exist $(p,2)$-atoms $\{a_j\}_{j=1}^\fz$ and $\{\lz_j\}_{j=1}^\fz\subset\cc$
satisfying $(\sum_{j=1}^\fz|\lz_j|^p)^{\frac 1p}\le\|f\|_{\OH_\at^{p,2}(X)}+\ez$ such that
$f=\sum_{j=1}^\fz\lz_ja_j$ in $(\GOO{\bz,\gz})'$. Combining this with Lemmas \ref{lem:lipd} and
\ref{lem:gsublip}, we find that, for any $\vz\in\GOO{\bz,\gz}$,
\begin{align*}
|\langle f,\vz\rangle|&\le\sum_{j=1}^\fz |\lz_j||\langle a_j,\vz\rangle|
\ls\sum_{j=1}^\fz |\lz_j|\|\vz\|_{\CL_{1/p-1}(X)}\ls\|\vz\|_{\GOO{\bz,\gz}}
\lf[\sum_{j=1}^\fz |\lz_j|^p\r]^{1/p}\\
&\ls\|\vz\|_{\GOO{\bz,\gz}}[\|f\|_{H^p_w(X)}+\ez].
\end{align*}
Letting $\ez\to 0^+$, we obtain $\|f\|_{(\GOO{\bz,\gz})'}\ls \|f\|_{H^p_w(X)}$. Thus, $H^p_w(X)$ can be
continuously embedded into $(\GOO{\bz,\gz})'$.

To prove that $H^p_w(X)$ is a (quasi-)Banach space, we only prove its completeness.
Let   $\{f_n\}_{n=1}^\fz$  be a Cauchy
sequence in $H^p_w(X)$. Then  $\{f_n\}_{n=1}^\fz$  is also a Cauchy sequence in $(\GOO{\bz,\gz})'$, so  it converges to
some element $f$ in $(\GOO{\bz,\gz})'$.
For any $n\in\nn$ and $x\in X$, applying the Fatou lemma twice, we conclude that
\begin{align*}
S(f-f_n)(x)&=S\lf(\lim_{m\to\fz}[f_m-f_n]\r)(x)
=\lf[\sum_{k\in\zz}\sum_{\az\in\CG_k}\lf|\lf<\psi_\az^k,\lim_{m\to\fz}[f_m-f_n]\r>
\wz{\chi}_{Q_\az^{k+1}}(x)\r|^2\r]^{\frac 12}\\
&=\lf[\sum_{k\in\zz}\sum_{\az\in\CG_k}\lim_{m\to\fz}\lf|\lf<\psi_\az^k,f_m-f_n\r>
\wz{\chi}_{Q_\az^{k+1}}(x)\r|^2\r]^{\frac 12}\\
&\le\liminf_{m\to\fz}\lf[\sum_{k\in\zz}\sum_{\az\in\CG_k}\lf|\lf<\psi_\az^k,f_m-f_n\r>
\wz{\chi}_{Q_\az^{k+1}}(x)\r|^2\r]^{\frac 12}=\liminf_{m\to\fz} S(f_m-f_n)(x)
\end{align*}
and hence
\begin{align*}
\|f-f_n\|_{H^p_w(X)}^p&=\int_X \lf[S(f-f_n)(x)\r]^p\,d\mu(x)\\
&\le \int_X\liminf_{m\to\fz}\lf[S(f_m-f_n)(x)\r]^p\,d\mu(x)\\
&\le \liminf_{m\to\fz}\int_X\lf[S(f_m-f_n)(x)\r]^p\,d\mu(x)=\liminf_{m\to\fz}\|f_m-f_n\|_{H^p_w(X)}^p.
\end{align*}
Letting $n\to\fz$, we find that $f\in H^p_w(X)$ and $\lim_{n\to\fz}\|f-f_n\|_{H^p_w(X)}=0$. Therefore,
$H^p_w(X)$ is complete. This finishes the proof of Proposition \ref{prop:bwave}.
\end{proof}

Applying Theorem \ref{thm:wLP} and Proposition \ref{prop:bwave}, we show the following wavelet characterizations of Hardy spaces.

\begin{theorem}\label{thm:H=wH}
Suppose $p\in(\om,1]$ and $\bz,\ \gz\in(\omega(1/p-1),\eta)$. As subspaces of $(\GOO{\bz,\gz})'$,
$H^p(X)=H^p_w(X)$ with equivalent (quasi-)norms.
\end{theorem}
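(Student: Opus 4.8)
The plan is to establish the two inclusions $H^p(X)\subset H^p_w(X)$ and $H^p_w(X)\subset H^p(X)$ separately, exploiting the fact that the operators $\{D_k\}_{k\in\zz}$ defined in \eqref{eq:Dk} form an $\exp$-ATI, so that $H^p(X)$ (defined via the Lusin area function built from an arbitrary $\exp$-ATI, which by Theorem \ref{thm:LA=} is independent of that choice) may be identified with the Hardy space $H^p(X)$ built from $\{D_k\}_{k\in\zz}$, whose associated $g$-function is precisely the $\CG(f)$ appearing in Theorem \ref{thm:wLP}. By Theorem \ref{thm:gl*}, $\|f\|_{H^p(X)}\sim\|\CG(f)\|_{L^p(X)}$ for any $f\in(\GOO{\bz,\gz})'$ (with the understanding that either side being finite forces the equivalence), and by Theorem \ref{thm:at2} both spaces coincide with $\mathring H^{p,2}_\at(X)$. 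So the whole theorem reduces to showing $\|\CG(f)\|_{L^p(X)}\sim\|S(f)\|_{L^p(X)}$ for every distribution $f\in(\GOO{\bz,\gz})'$; the inequality $\|\CG(f)\|_{L^p(X)}\le C\|S(f)\|_{L^p(X)}$ is already \eqref{eq:less} in Theorem \ref{thm:wLP}, valid for all distributions.

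First I would prove $H^p_w(X)\subset H^p(X)$. Let $f\in(\GOO{\bz,\gz})'$ with $\|S(f)\|_{L^p(X)}<\fz$. By \eqref{eq:less}, $\|\CG(f)\|_{L^p(X)}\le C\|S(f)\|_{L^p(X)}<\fz$; since $\{D_k\}_{k\in\zz}$ is an $\exp$-ATI, the $g$-function characterization in Theorem \ref{thm:gl*} gives $\|f\|_{H^p(X)}\sim\|\CG(f)\|_{L^p(X)}<\fz$, so $f\in H^p(X)$ and $\|f\|_{H^p(X)}\ls\|S(f)\|_{L^p(X)}=\|f\|_{H^p_w(X)}$. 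This direction is painless.

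The converse, $H^p(X)\subset H^p_w(X)$, is the heart of the matter and where the care is needed, because \eqref{eq:sim} is only known for $f\in L^2(X)$. The plan is a density-plus-completeness argument. Let $f\in H^p(X)=\mathring H^{p,2}_\at(X)$. Decompose $f=\sum_{j=1}^\fz\lz_j a_j$ in $(\GOO{\bz,\gz})'$ with $(p,2)$-atoms $a_j\in L^2(X)$ and $\sum_j|\lz_j|^p\ls\|f\|_{H^p(X)}^p$; set $f_N:=\sum_{j=1}^N\lz_j a_j\in L^2(X)\cap H^p(X)$. Then $f_N\to f$ in $(\GOO{\bz,\gz})'$, and, using $p\le1$, $\|f_N-f_M\|_{H^p(X)}^p\le\sum_{j=M+1}^{N}|\lz_j|^p\|a_j\|_{H^p(X)}^p\ls\sum_{j>M}|\lz_j|^p\to0$, so $\{f_N\}$ is Cauchy in $H^p(X)$ with limit $f$. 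For each $N$, since $f_N\in L^2(X)$, \eqref{eq:sim} applies: $\|S(f_N)\|_{L^p(X)}\sim\|\CG(f_N)\|_{L^p(X)}\sim\|f_N\|_{H^p(X)}$. Hence $\|f_N-f_M\|_{H^p_w(X)}=\|S(f_N-f_M)\|_{L^p(X)}\sim\|f_N-f_M\|_{H^p(X)}\to0$ (here one must check $f_N-f_M\in L^2(X)$ to invoke \eqref{eq:sim}, which holds), so $\{f_N\}$ is Cauchy in $H^p_w(X)$. By Proposition \ref{prop:bwave}, $H^p_w(X)$ is complete and embeds continuously into $(\GOO{\bz,\gz})'$, so $f_N$ converges in $H^p_w(X)$ to some $g\in H^p_w(X)$; the continuous embedding forces $g=\lim_N f_N=f$ in $(\GOO{\bz,\gz})'$, whence $f\in H^p_w(X)$ and $\|f\|_{H^p_w(X)}=\lim_N\|f_N\|_{H^p_w(X)}\ls\lim_N\|f_N\|_{H^p(X)}=\|f\|_{H^p(X)}$. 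Combining the two inclusions and the quantitative bounds yields $H^p(X)=H^p_w(X)$ with equivalent quasi-norms.

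The main obstacle is the bookkeeping around \eqref{eq:sim} being restricted to $L^2$ functions: one must run the whole argument through the $L^2\cap H^p$ approximants $f_N$ and then transfer the conclusion to $f$ using \emph{both} the completeness of $H^p_w(X)$ and its continuous embedding into the distribution space (so that the two limits — in $H^p_w(X)$ and in $(\GOO{\bz,\gz})'$ — agree). A secondary point to verify is that $\{a_j\}\subset L^2(X)$ for $(p,2)$-atoms and that finite linear combinations of such atoms lie in $L^2(X)\cap H^p(X)$, together with the elementary estimate $\|a\|_{H^p(X)}\ls1$ for a $(p,2)$-atom $a$ (which follows from Proposition \ref{p1-add} with $\thz=1$, since $\CS(a)=\CS_1(a)$ and $\|\CS_1(a)\|_{L^p(X)}\ls1$). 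None of these involve new ideas, but they must be assembled correctly for the density argument to close.
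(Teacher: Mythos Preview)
Your proof is correct and follows essentially the same approach as the paper: use \eqref{eq:less} together with Theorem \ref{thm:gl*} for the inclusion $H^p_w(X)\subset H^p(X)$, and for the converse run a density argument through $L^2(X)\cap H^p(X)$, transferring the Cauchy property to $H^p_w(X)$ via \eqref{eq:sim} and identifying the limits using the completeness and the continuous embedding of $H^p_w(X)$ into $(\GOO{\bz,\gz})'$ from Proposition \ref{prop:bwave}. The only cosmetic difference is that you build the approximants $f_N$ explicitly as partial sums of an atomic decomposition, whereas the paper simply invokes the density of $L^2(X)\cap H^p(X)$ in $H^p(X)$ (which follows from Theorem \ref{thm:at2}); one very minor point is that your ``$\|f\|_{H^p_w(X)}=\lim_N\|f_N\|_{H^p_w(X)}$'' should be the inequality $\|f\|_{H^p_w(X)}^p\le\|f-f_N\|_{H^p_w(X)}^p+\|f_N\|_{H^p_w(X)}^p$ (as the paper writes), since quasi-norms need not be continuous.
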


\begin{proof}
Due to  \eqref{eq:less}, Theorems \ref{thm:gl*} and \ref{thm:at2}, we obtain $H^p_w(X)\subset H^p(X)$
and $\|\cdot\|_{H^p(X)}\ls \|\cdot\|_{H^p_w(X)}$.

It remains to show $H^p(X)\subset H_w^p(X)$. To this end, by Theorem \ref{thm:at2}, we conclude that
$L^2(X)\cap H^p(X)$ is dense in $H^p(X)$. Thus, for any $f\in H^p(X)$, there exist
$\{f_n\}_{n=1}^\fz\subset L^2(X)\cap H^p(X)$ such that $\lim_{n\to\fz}\|f-f_n\|_{H^p(X)}=0$.
Obviously, $\{f_n\}_{n=1}^\fz$ is a Cauchy sequence of $H^p(X)$.
Noticing that $\{f_n\}_{n=1}^\fz\subset L^2(X)$, we use
\eqref{eq:sim} and Theorem \ref{thm:gl*} to conclude that
$$
\|f_m-f_n\|_{H^p_w(X)}=\|S(f_m-f_n)\|_{L^p(X)}\sim \|\CG(f_m-f_n)\|_{L^p(X)}\sim\|f_m-f_n\|_{H^p(X)}\to 0
$$
as $m,\ n\to\fz$, so that $\{f_n\}_{n=1}^\fz$ is also a Cauchy sequence of $H^p_w(X)$. By Proposition
\ref{prop:bwave}, there exists $\wz{f}\in H^p_w(X)$ such that $f_n\to \wz f$ as $n\to\fz$ in $H^p_w(X)$, also in
$(\GOO{\bz,\gz})'$. Meanwhile, $f_n\to f$ as $n\to\fz$ in $H^p(X)$, also in $(\GOO{\bz,\gz})'$. Therefore,
$\wz{f}=f$ in $(\GOO{\bz,\gz})'$ and $f\in H^p_w(X)$. Moreover,
$$
\|f\|_{H^p_w(X)}^p\le\|f-f_n\|_{H^p_w(X)}^p+\|f_n\|_{H^p_w(X)}^p\sim \|f-f_n\|_{H^p_w(X)}^p+\|f_n\|_{H^p(X)}^p
\ls\|f\|_{H^p(X)}^p
$$
when $n$ is sufficiently large. Thus, we obtain $H^p(X)\subset H^p_w(X)$ and
$\|\cdot\|_{H^p_w(X)}\ls \|\cdot\|_{H^p(X)}$. This finishes the proof of Theorem \ref{thm:H=wH}.
\end{proof}

%%%%%%%%%%%%%%%%%%%%%%%%%%%%%%%%%%%%%%%%%%%%%%%%%

\section{Criteria of the boundedness of sublinear operators}\label{bd}

Let $p\in(\omega/(\omega+\eta),1]$.
By the argument used in Sections \ref{max} through \ref{wave}, we conclude that
the Hardy spaces $H^{+,p}(X)$, $H^p_\thz(X)$
with $\thz\in(0,\fz)$, $H^{*,p}(X)$, $H^{p,q}_\at(X)$, $H^{p,q}_\cw(X)$, $\OH^{p,q}_\at(X)$ with
$q\in(p,\fz]\cap[1,\fz]$ and $H^p_w(X)$ are essentially the same space in the sense of equivalent (quasi-)norms.
From now on, we simply use $H^p(X)$ to denote either one of them if there is no confusion.
In this section, we give criteria of the boundedness of sublinear operators on Hardy spaces via first
establishing finite atomic characterizations of $H^p(X)$.

%============================================================

\subsection{Finite atomic characterizations of Hardy spaces}\label{fin}

For any $p\in(\om,1]$ and $q\in(p,\fz]\cap[1,\fz]$, we say
$f\in H^{p,q}_\fin(X)$
if there exist $N\in\nn$, a sequence $\{a_j\}_{j=1}^N$ of $(p,q)$-atoms and $\{\lz_j\}_{j=1}^N\subset\cc$ such
that
$$
f=\sum_{j=1}^N\lz_ja_j.
$$
Also, define
$$
\|f\|_{H^{p,q}_\fin(X)}:=\inf\lf(\sum_{j=1}^N\lf|\lz_j\r|^p\r)^{\frac 1p},
$$
where the infimum is taken over all the decompositions of $f$ above. It is easy to see that
$H^{p,q}_\fin(X)$  is a  dense subset of $H^{p,q}_\at(X)$ and
$\|\cdot\|_{H^{p,q}_\at(X)}\le\|\cdot\|_{H^{p,q}_\fin(X)}$. Denote by the \emph{symbol $\UC(X)$} the space of
all uniformly continuous functions on $X$, that is, a function $f\in\UC(X)$
if and only if, for any fixed $\ez\in(0,\fz)$, there exists
$\sigma\in(0,\fz)$ such that $|f(x)-f(y)|<\ez$ whenever $d(x,y)<\sigma$.
The next theorem characterizes $H^{p,q}_\at(X)$ via $H^{p,q}_\fin(X)$.

\begin{theorem}\label{thm:fin=at}
Suppose $p\in(\om,1]$. Then  the following statements hold true:
\begin{enumerate}
\item if $q\in(p,\fz)\cap[1,\fz)$, then  $\|\cdot\|_{H^{p,q}_\fin(X)}$ and  $\|\cdot\|_{H^{p,q}_\at(X)}$
are equivalent  (quasi)-norms on $H^{p,q}_\fin(X)$;
\item   $\|\cdot\|_{H^{p,\fz}_\fin(X)}$ and  $\|\cdot\|_{H^{p,\fz}_\at(X)}$ are equivalent (quasi)-norms on
$H^{p,q}_\fin(X)\cap\UC(X)$;
\item $H^{p,\fz}_\fin(X)\cap\UC(X)$ is a dense subspace of $H^{p,\fz}_\at(X)$.
\end{enumerate}
\end{theorem}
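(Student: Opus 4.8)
The plan is to prove Theorem \ref{thm:fin=at} along the classical lines of the ``finite atomic decomposition'' technique (going back to Meda--Sj\"ogren--Vallarino in the Euclidean setting, adapted to spaces of homogeneous type), exploiting the equivalence $H^{p,q}_\at(X)=H^{*,p}(X)=H^p(X)$ established in Theorem \ref{thm:atom} together with the grand maximal function machinery. In all three parts the inequality $\|\cdot\|_{H^{p,q}_\at(X)}\le\|\cdot\|_{H^{p,q}_\fin(X)}$ is trivial, so the whole content is the reverse inequality on the indicated dense subspaces; once (i) and (ii) are proved, (iii) will follow by combining the already-known density of $H^{p,\fz}_\fin(X)$ in $H^{p,\fz}_\at(X)$ with an approximation of atoms by uniformly continuous ones.

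First I would fix $f\in H^{p,q}_\fin(X)$, so $f=\sum_{j=1}^N\lambda_ja_j$ is a \emph{finite} sum of $(p,q)$-atoms; in particular $f\in L^q(X)$ has compact support, say $\supp f\subset B_0:=B(x_0,R)$ for some large ball, and $\int_X f\,d\mu=0$. Since $f\in L^q(X)\cap H^{*,p}(X)$, I would apply the Calder\'on--Zygmund decomposition and atomic decomposition machinery of Section \ref{prat}: by Remark \ref{rem:r4.2}, $f=\sum_{j\in\zz}\sum_{k\in I_j}h^j_k$ both in $(\go{\bz,\gz})'$ and almost everywhere, where each $h^j_k$ is (a constant multiple of) a $(p,\fz)$-atom supported in $B^j_k=B(x^j_k,16A_0^4r^j_k)$ with $\|h^j_k\|_{L^\infty(X)}\lesssim 2^j$, and $\sum_{j,k}|\lambda^j_k|^p\lesssim\|f^*\|_{L^p(X)}^p\sim\|f\|_{H^{p,q}_\at(X)}^p$. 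The key structural facts I would extract are: (a) for $j$ large, the level set $\Omega^j=\{f^\star>2^j\}$ is empty, because $f\in L^q(X)$ with $q\ge 1$ forces $f^\star\in L^q(X)$, hence $f^\star<\infty$ a.e. and in fact, using $|f|\lesssim f^*$ and boundedness of $\CM$, $f^\star$ is essentially bounded on the relevant scales — so only $j\le j_1$ for some finite $j_1$ contribute; (b) for $j$ very negative, $\Omega^j\subset$ a fixed dilate of $B_0$ with $\mu(\Omega^j)$ controlled, so that the ``tail'' $\sum_{j<-m}\sum_k h^j_k$ has small $H^{p,q}_\at$-norm \emph{and} can be absorbed into a single atom because it is supported in a fixed ball, has the right size by the $\ell^p$-summability, and has vanishing integral (being $f$ minus the high-$j$ part, both of which integrate to zero on a fixed ball).

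Concretely, I would split $f=\big(\sum_{-m\le j\le j_1}\sum_{k\in I_j}\lambda^j_ka^j_k\big)+g_m$ where $g_m:=\sum_{j<-m}\sum_{k\in I_j}h^j_k$; the first bracket is already a \emph{finite} linear combination of $(p,q)$-atoms (finite because each $I_j$ with $-m\le j\le j_1$ is finite — here one uses Proposition \ref{prop:ozdec}(vi), the boundedness of $\Omega^j$, to see $\#I_j<\infty$ for each such $j$, which needs $\Omega^j$ bounded; this is where the compact support of $f$ enters). For the remainder, $g_m$ is supported in a fixed dilate of $B_0$ (since $\supp h^j_k\subset\Omega^j$ and these shrink), satisfies $\int_X g_m\,d\mu=0$, and obeys $\|g_m\|_{L^q(X)}\lesssim(\sum_{j<-m}\sum_k|\lambda^j_k|^q\mu(B^j_k)^{1-q})^{1/q}$-type bounds that, together with the $\ell^p$ control and the fixed support, show $g_m$ is — after multiplication by a constant tending to $0$ as $m\to\infty$ — a single $(p,q)$-atom (for $q<\infty$) with small coefficient, or a uniformly continuous $(p,\infty)$-atom (for $q=\infty$, using that a uniform limit of the partial sums is uniformly continuous when $f\in\UC(X)$). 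Choosing $m$ large enough that this coefficient is $\le\ez$, one gets $f$ as a finite atomic sum with $\sum|\text{coeff}|^p\lesssim\|f\|_{H^{p,q}_\at(X)}^p+\ez^p$, yielding $\|f\|_{H^{p,q}_\fin(X)}\lesssim\|f\|_{H^{p,q}_\at(X)}$.

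The main obstacle I anticipate is the $q=\infty$ case of part (ii): there one cannot simply bound $\|g_m\|_{L^\infty(X)}$ by a summable tail, so I would instead work with the uniform continuity directly — if $f\in\UC(X)$, then the partial sums $\sum_{j\ge-m}\sum_k h^j_k$ converge to $f$ \emph{uniformly} on $X$ (not merely a.e.), which requires a quantitative estimate on $\|g_m\|_{L^\infty(X)}\to 0$ obtained from $|f(x)|\lesssim f^*(x)$ and the structure $g_m=f$ on $(\Omega^{-m})^\complement$ plus a piece of size $\lesssim 2^{-m}$ on $\Omega^{-m}$; combined with $\supp g_m$ in a fixed ball and $\int g_m=0$, this exhibits $g_m/(C2^{-m}\mu(\text{fixed ball})^{1/p})$ as a genuine $(p,\infty)$-atom which is moreover uniformly continuous (being a finite sum of the bounded, compactly supported, continuous building blocks plus the uniformly continuous tail $f\chi_{(\Omega^{-m})^\complement}$ — one must check the $\phi^j_k$ can be taken continuous, which follows from their $\dot C^\eta$-regularity in Proposition \ref{prop:chidec}). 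For part (iii), density of $H^{p,\infty}_\fin(X)\cap\UC(X)$ in $H^{p,\infty}_\at(X)$ then follows because $H^{p,q}_\fin(X)$ with $q<\infty$ is dense in $H^{p,q}_\at(X)=H^{p,\infty}_\at(X)$, each finite sum of $(p,q)$-atoms lies in $L^q(X)\cap H^{*,p}(X)$, and by the decomposition above it is approximated in $H^p(X)$-norm by finite sums of $(p,\infty)$-atoms that are uniformly continuous; alternatively one approximates a general $(p,\infty)$-atom in $H^p$-norm by uniformly continuous ones via mollification against $P_k$ from a $1$-$\exp$-ATI, using Lemma \ref{lem:Sklim} and Proposition \ref{prop:HsubL}.
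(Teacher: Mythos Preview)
Your overall plan—feed $f$ through the Calder\'on--Zygmund/atomic machinery of Section~\ref{prat} and then truncate—is the right one, and it is exactly what the paper does. But the way you split the sum is \emph{reversed}, and this is a genuine gap, not a cosmetic issue.

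Recall that $\Omega^j=\{f^\star>2^j\}$ is \emph{decreasing} in $j$. Thus for $j$ very negative, $\Omega^j$ is \emph{large}, not small; your claim ``for $j$ very negative, $\Omega^j\subset$ a fixed dilate of $B_0$'' is false. Concretely, your remainder $g_m:=\sum_{j<-m}\sum_k h^j_k$ equals $g^{-m}$ in the notation of Lemma~\ref{lem:funbgj}, and while $\|g^{-m}\|_{L^\infty(X)}\lesssim 2^{-m}$, its support is contained in $\supp f\cup\Omega^{-m}$, and $\Omega^{-m}$ need not sit inside any fixed ball as $m\to\infty$. So $g_m$ is not a small multiple of a single $(p,q)$-atom. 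Likewise, ``for $j$ large, $\Omega^j$ is empty'' is only true when $f\in L^\infty(X)$ (part~(ii)); for $q<\infty$ in part~(i) there is no finite upper cutoff $j_1$, and there is no reason for each $I_j$ in your middle range $-m\le j\le j_1$ to be finite either, since Proposition~\ref{prop:ozdec}(vi) needs $\Omega^j$ bounded, which is only guaranteed for $j$ above a threshold.

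The paper's split goes the other way. The key step is the pointwise bound $f^\star(x)\le\tilde c[\mu(B(x_1,R))]^{-1/p}$ for $x\notin B(x_1,16A_0^4R)$ (this uses the cancellation of $f$ and is the analogue of the Euclidean decay of the grand maximal function off the support). Choosing $j'$ maximal with $2^{j'}\le\tilde c[\mu(B(x_1,R))]^{-1/p}$, one sets $h:=\sum_{j\le j'}\sum_k h^j_k$ and $\ell:=\sum_{j>j'}\sum_k h^j_k$. The \emph{low} part $h$ is a single $(p,\infty)$-atom with $O(1)$ coefficient: $\|h\|_{L^\infty}\lesssim\sum_{j\le j'}2^j\sim[\mu(B(x_1,R))]^{-1/p}$, and $\supp h\subset B(x_1,16A_0^4R)$ because $\supp\ell\subset\Omega^{j'+1}\subset B(x_1,16A_0^4R)$ by the pointwise bound. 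The \emph{high} part $\ell$ is then truncated in both $j$ and $k$; the remainder $\ell-\ell_N$ is shown to be small in $L^q(X)$ (not $L^\infty$), using $\|\sum_{j>N_1}h^j\|_{L^q}\to 0$ via the weak-type bound for $\CM$, which yields a $(p,q)$-atom with coefficient $\lesssim\varepsilon$. For part~(ii) one does have a finite upper cutoff $j''$ since $f\in L^\infty$, but the further truncation of $\ell$ is done by \emph{radius} of the balls $B^j_k$ (not by $j$), using uniform continuity of $f$ to make each $h^j_k$ with small radius uniformly small in $L^\infty$.

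One more point on (iii): mollifying against the $1$-$\exp$-ATI $P_k$ will not produce atoms, because $P_k$ has no compact support, so $P_ka$ need not be supported in any ball. The paper instead uses a compactly supported approximate identity $\{S_k\}$ (as in \cite{HHL16,HMY08}) so that $S_ka$ is a genuine $(p,\infty)$-atom in $\UC(X)$.
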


\begin{proof}
First, we prove (i).
It suffices to show that $\|f\|_{H^{p,q}_\fin(X)}\ls\|f\|_{H^{p,q}_\at}$ for any $f\in H^{p,q}_\fin(X)$ with $q\in(p,\fz)\cap[1,\fz)$. We may as well assume that
$\|f\|_{H^{*,p}(X)}=1$. Let all the notation be as in the proof that $H^{*,p}(X)\subset H^{p,q}_\at(X)$ of
Theorem \ref{thm:atom}. Then
$$
f=\sum_{j\in\zz}\sum_{k\in I_j}\lz^j_k a^j_k=\sum_{j\in\zz}\sum_{k\in I_j} h^j_k=\sum_{j\in\zz}h_j
$$
both in $(\go{\bz,\gz})'$ and almost everywhere. Here and hereafter, for any $j\in\zz$ and $k\in I_j$, the
quantities $h_j$, $h^j_k$, $\lz^j_k$ and $a^j_k$ are as in \eqref{xxx} and \eqref{x4}.
Since  $f\in H^{p,q}_\fin(X)$, it follows that
there exist $x_1\in X$ and $R\in(0,\fz)$ such that $\supp f\subset B(x_1,R)$. We claim
that there exists a positive constant $\tilde c$ such that, for any $x\notin B(x_1,16A_0^4R)$,
\begin{equation}\label{eq:clfin}
f^\star(x)\le \tilde c[\mu(B(x_1,R))]^{-\frac 1p}.
\end{equation}
We admit \eqref{eq:clfin} temporarily and use it to prove (i) and (ii). Let $j'$ be the
maximal integer such that $2^j\le \tilde c[\mu(B(x_1,R))]^{-\frac 1p}$ and define
\begin{equation}\label{h-ell}
h:=\sum_{j\le j'}\sum_{k\in I_j}\lz^j_ka^j_k\quad\textup{and}\quad
\ell:=\sum_{j>j'}\sum_{k\in I_j}\lz^j_ka^j_k
\end{equation}
In what follows, for the sake of convenience, we elide the fact whether $I_j$ or not is finite and simply write
the summation $\sum_{k\in I_j}$ in \eqref{h-ell} as $\sum_{k=1}^\fz$.
If $j>j'$, then $\Omega^j=\{x\in X:\ f^{\star}(x)>2^j\}\subset B(x_1,16A_0^4R)$, which implies that
$\supp\ell\subset B(x_1,16A_0^4R)$ because $\supp a^j_k\subset \Omega^j$. From $f=h+\ell$, it then follows that
$\supp h\subset B(x_1,16A_0^4)$.
Noticing that
$$
\|h\|_{L^\fz(X)}\le\sum_{j\le j'}\lf\|h^j\r\|_{L^\fz(X)}\ls\sum_{j\le j'}2^j\sim[\mu(B(x_1,R))]^{-\frac 1p}
$$
and $\int_X h(x)\,d\mu(x)=0$, we conclude that $h$ is a harmlessly constant multiple of a $(p,\fz)$-atom.

Next we deal with $\ell$. For any $N:=(N_1,N_2)\in\nn^2$, define
$$\ell_N:=\sum_{j=j'+1}^{N_1}\sum_{k=1}^{N_2}\lz^j_ka^j_k=\sum_{j=j'+1}^{N_1}\sum_{k=1}^{N_2}h^j_k.$$ Then
$\ell_N$ is a finite linear combination of $(p,\fz)$-atoms and
$\sum_{j=j'+1}^{N_1}\sum_{k=1}^{N_2}|\lz^j_k|^p\ls 1$. Notice that
$\supp(\ell-\ell_N)\subset B(x_1,16A_0^4R)$ and $\int_X [\ell(x)-\ell_N(x)]\,d\mu(x)=0$. It suffices to show
that $\|\ell-\ell_N\|_{L^q(X)}\to 0$ can be sufficiently small when $N_1$ and $N_2$ are big enough.
Noticing that
$
\ell=\sum_{j=N_1+1}^\fz h^j+\sum_{j=j'+1}^{N_1}\sum_{k=1}^\fz h^j_k,
$
we have
$$
\|\ell-\ell_N\|_{L^q(X)}\le\lf\|\sum_{j=N_1+1}^\fz h^j\r\|_{L^q(X)}
+\sum_{j=j'+1}^{N_1}\lf\|\sum_{k=N_2+1}^\fz h^j_k\r\|_{L^q(X)}.
$$

For any $j\in\zz$ and $k\in\nn$, we recall that $\supp h^j_k\subset B^j_k\subset\Omega^j$ and $\|h^j\|_{L^\infty(X)}\ls 2^{j}$.
By $f=\sum_{j=-\fz}^\fz h^j$ and
$\supp(\sum_{j=N_1+1}^\fz h^j)\subset\Omega^{N_1}$, we conclude that, for any $z\in\Omega^{N_1}$,
$$
\lf|\sum_{j=N_1+1}^\fz h^j(z)\r|=\lf|f(z)-\sum_{j\le N_1}h^j(z)\r|\le |f(z)|+\sum_{j\le N_1}\lf|h^j(z)\r|
\ls |f(z)|+2^{N_1}.
$$
Notice that, by \cite[Proposition 3.9]{GLY08}, there exists a constant $\wz C>1$ such that
$f^\star\le \wz C\CM(f)$. With $f_1:=f\chi_{\{x\in X:\ |f(x)|>2^{N_1-1}/\wz C\}}$ and $f_2:=f-f_1$, we have
\begin{align*}
2^{N_1 q}\mu\lf(\Omega^{N_1}\r)&\le 2^{N_1 q}\mu\lf(\lf\{x\in X:\ \wz C\CM(f)(x)>2^{N_1}\r\}\r)\\
&\le 2^{N_1 q}\mu\lf(\lf\{x\in X:\ \wz C\CM(f_1)(x)>2^{N_1-1}\r\}\r)
\ls \|f_1\|_{L^q(X)}^q\to 0
\end{align*}
as $N_1\to\fz$, because $\CM$ is bounded from $L^q(X)$ to $L^{q,\fz}(X)$ and $f\in H_\fin^{p,q}(X)\subset
L^q(X)$. Therefore,
$$
\lf\|\sum_{j=N_1+1}^\fz h^j\r\|_{L^q(X)}^q\ls\int_{\Omega^{N_1}}\lf[|f(z)|^q+2^{N_1 q}\r]\,d\mu(z)
\ls\lf\|f\chi_{\Omega^{{N_1}}}\r\|_{L^q(X)}^q+2^{N_1 q}\mu\lf(\Omega^{N_1}\r)\to 0
$$
as $N_1\to\fz$.
Then, for any  $\ez\in(0,\fz)$, we choose $N_1\in\nn$ such that $\|\sum_{j=N_1+1}^\fz h^j\|_{L^q(X)}<\ez/2$.

If we fix $N_1\in\nn$ and $N_1\ge j>j'$, then the fact $\sum_{k=1}^\fz|h^j_k|\ls 2^j\chi_{\Omega^j}\in L^q(X)$
implies that
$$
\lim_{N_2\to 0}\lf\|\sum_{k=N_2+1}^\fz h^j_k\r\|_{L^q(X)}=0.
$$
So, we further choose $N_2\in\nn$ such that $\sum_{j=j'+1}^{N_1}\|\sum_{k=N_2+1}^\fz h^j_k\|_{L^q(X)}<\ez/2$.
In this way, we have $\|\ell-\ell_N\|_{L^q(X)}<\ez$ for large $N$. Then there exist a positive constant
$C_\flat$, independent of $N$ and $\ez$, and a $(p,q)$-atom $a_{(N)}$ such that $\ell-\ell_N=C_\flat\ez
a_{(N)}$. Therefore, we obtain $\|f\|_{H^{p,q}_\fin(X)}\ls 1\sim \|f\|_{H_\at^{p,q}(X)}$ and complete the proof
of (i)  under the assumption \eqref{eq:clfin}.

To obtain (ii), we only need to prove that $\|f\|_{H^{p,\infty}_\fin(X)}\ls\|f\|_{H^{p,\infty}_\at}$ whenever $f\in H^{p,\infty}_\fin(X)\cap \UC(X)$. We may also assume that
$\|f\|_{H^{*,p}(X)}=1$. Notice that $f\in L^\fz(X)$ and $\|f^\star\|_{L^\fz(X)}\ls \|\CM(f)\|_{L^\fz(X)}\le
c_0\|f\|_{L^\fz(X)}$, where $c_0$ is a positive constant independent of $f$. Let $j''>j'$ be the largest
integer such that $2^j\le c_0\|f\|_{L^\fz(X)}$.
We write $f=h+\ell$ with $h$ as in \eqref{h-ell} but now
$\ell=\sum_{j'<j\le j''}\sum_{k=1}^\fz h^j_k$.
As in the proof of (i), we know that  $h$ is a harmlessly positive constant multiple of some $(p,\fz)$-atom.

Now we consider $\ell$. Notice
that $f\in\UC(X)$. Then, for any  $\ez\in(0,\fz)$, there exists $\sigma\in(0,\fz)$ such that $|f(x)-f(y)|\le\ez$
whenever $d(x,y)\le\sigma$. Split $\ell=\ell^\sigma_1+\ell^\sigma_2$ with
$$\ell^\sigma_1:=\sum_{(j,k)\in G_1} h^j_k=\sum_{(j,k)\in G_1}\lz^j_k a^j_k\qquad \textup{and}\qquad
\ell^\sigma_2:=\sum_{(j,k)\in G_2} h^j_k,$$ where
$$
G_1:=\{(j,k):\ 12A_0^3r^j_k\ge\sigma,\ j'<j\le j''\}\quad\textup{and}\quad
G_2:=\{(j,k):\ 12A_0^3r^j_k<\sigma,\ j'<j\le j''\}.
$$
Notice that, for any $j'<j\le j''$,
$\Omega^j$ is bounded. Thus, by Proposition \ref{prop:ozdec}(vi), we find that $G_1$ is a finite set, which
further implies that $\ell^\sigma_1$ is a finite linear combination of $(p,\fz)$-atoms and
$$
\sum_{(j,k)\in G_1}\lf|\lz^j_k\r|^p\ls 1.
$$
To consider $\ell^\sigma_2$, it is obvious that $\supp\ell^\sigma_2\subset B(x_1,16A_0^4R)$ and
$\int_X\ell^\sigma_2(x)\,d\mu(x)=0$, so it remains to estimate
$\|\ell^\sigma_2\|_{L^\fz(X)}$. For any $(j,k)\in G_2$, applying the definition of $h^j_k$ in \eqref{xxx} implies that
$$
\lf|h^j_k\r|\le \lf|b^j_k\r|+\sum_{l\in I_{j+1}}\lf|b^{j+1}_l\phi^j_k\r|+ \sum_{l\in I_{j+1}}\lf|L^{j+1}_{k,l}\phi^{j+1}_l\r|.
$$
By the definition of $b^j_k$, we have
$\supp b^j_k\subset B(x^j_k,2A_0r^j_k)$. Moreover, for any $x\in B(x^j_k,2A_0r^j_k)$,
\begin{align}\label{eq-x7}
\lf|b^j_k(x)\r|&\le\lf|f(x)-\frac 1{\|\phi^j_k\|_{L^1(X)}}\int_{B(x^j_k,2A_0r^j_k)}f(\xi)\phi^j_k(\xi)\,d\mu(\xi)\r|\\
&\le \lf|f(x)-f\lf(x^j_k\r)\r|+\frac 1{\|\phi^j_k\|_{L^1(X)}}\int_{B(x^j_k,2A_0r^j_k)}\lf|f(\xi)-f\lf(x^j_k\r)\r|
\phi^j_k(\xi)\,d\mu(\xi)\ls\ez.\notag
\end{align}
If $b^{j+1}_l\phi^j_k\neq 0$, then
$B(x^j_k,2A_0r^j_k)\cap B(x^{j+1}_l,2A_0r^{j+1}_l)\neq\emptyset$, which further implies that $r^{j+1}_l\le 6A_0^2r^j_k$.
Thus, for any $x\in B(x^{j+1}_l,2A_0r^{j+1}_l)$, we have $d(x,x^{j+1}_l)<12A_0^3r^j_k$
and hence an argument similar to  the estimation of  \eqref{eq-x7} gives
$$
\lf|b^{j+1}_l(x)\r|=\lf|f(x)-\frac 1{\|\phi^{j+1}_l\|_{L^1(X)}}
\int_{B(x^{j+1}_l,2A_0r^{j+1}_l)}f(\xi)\phi^{j+1}_l(\xi)\,d\mu(\xi)\r|\phi^{j+1}_l(x)\ls\ez\phi^{j+1}_l(x),
$$
so that
$$
\sum_{l\in I_{j+1}}\lf|b^{j+1}_l(x)\phi^j_k(x)\r|\ls\ez\phi^j_k(x)\sum_{l\in I_{j+1}}\phi^{j+1}_l(x)\sim\ez\phi^j_k(x)\ls \ez.
$$
Using the definition of $L^{j+1}_{k,l}$ and arguing similarly as  \eqref{eq-x7}, we conclude that, for any
$x\in X$,
$$
\sum_{l\in I_{j+1}}\lf| L^{j+1}_{k,l}\phi^j_k(x)\r|\ls\ez,
$$
where $L^{j+1}_{k,l}$ is as in \eqref{x1}.
Summarizing all gives $\|h^j_k\|_{L^\fz(X)}\ls\ez$. Recalling that $\supp h^j_k\subset B^j_k$ and
$\sum_{k=1}^\fz\chi_{B^j_k}\le L_0$, we obtain
$\|\ell^\sigma_{2}\|_{L^\fz(X)}\ls\ez$. Therefore, there exist a positive constant $\wz C_\flat$, independent of
$\sigma$ and $\ez$, and a $(p,\fz)$-atom $a_{(\sigma)}$ such that $\ell^\sigma_{2}=\wz C_\flat\ez
a_{(\sigma)}$. This proves that $\|f\|_{H^{p,\fz}_\fin(X)}\ls 1$ and hence finishes the
proof (ii) under the assumption \eqref{eq:clfin}.

Now we  prove \eqref{eq:clfin}. Let $x\notin B(x_1,16A_0^4R)$. Suppose that $\vz\in\go{\bz,\gz}$ with
$\|\vz\|_{\CG(x,r,\bz,\gz)}\ls 1$ for some $r\in(0,\fz)$.
First we consider the case $r\ge 4A_0^2d(x,x_1)/3$.
For any $y\in B(x,d(x,x_1))$, we have
 $\|\vz\|_{\CG(y,r,\bz,\gz)}\ls 1$, which implies that $|\langle f,\vz\rangle|\ls f^*(y)$ and hence
\begin{equation}\label{eq:fvz}
|\langle f,\vz\rangle|\ls\lf\{\frac 1{\mu(B(x,d(x,x_1)))}\int_{B(x,d(x,x_1))}\lf[f^*(y)\r]^p
\,d\mu(y)\r\}^{\frac 1p}\ls[\mu(B(x_1,R))]^{-\frac 1p}.
\end{equation}
Next we consider the case $r<4A_0^2d(x_1,x)/3$. Choose a function $\xi$ satisfying
$\chi_{B(x_1,(2A_0)^{-4}d(x_1,x))}\le\xi\le\chi_{B(x_1,(2A_0)^{-3}d(x_1,x))}$ and
$\|\xi\|_{\dot{C}^\eta(X)}\ls [d(x_1,x)]^{-\eta}$. Since $\supp f\subset B(x_1,R)$, it follows that $f\xi=f$.
Let $\wz{\vz}:=\vz\xi$. For any $y\in B(x,d(x,x_1))$, assuming for the moment that
\begin{equation}\label{eq:clfin2}
\lf\|\wz{\vz}\r\|_{\CG(y,r,\bz,\gz)}\ls 1,
\end{equation}
 we obtain
$$
|\langle f,\vz\rangle|=\lf|\int_X f(z)\vz(z)\,d\mu(z)\r|=\lf|\int_X f(z)\xi(z)\vz(z)\,d\mu(z)\r|
=\lf|\langle f,\wz{\vz}\rangle\r|\ls f^*(y),
$$
which implies that \eqref{eq:fvz}  remains true in this case. Therefore, by the arbitrariness of $\vz$ and the fact that
$f^*\sim f^\star$, we obtain \eqref{eq:clfin}.

Now we fix $y\in B(x_1,d(x_1,x))$ and prove \eqref{eq:clfin2}. First we consider the size condition. Indeed, if $\wz{\vz}(z)\neq 0$, then
$d(z,x_1)<(2A_0)^{-3}d(x_1,x)$ and hence
$d(z,y)<(16A_0^2/7)d(x,z)$, which implies that
$$
\lf|\wz{\vz}(z)\r|\le|\vz(z)|\le\frac 1{V_r(x)+V(x,z)}\lf[\frac{r}{r+d(x,z)}\r]^\gz
\sim\frac 1{V_r(y)+V(y,z)}\lf[\frac{r}{r+d(y,z)}\r]^\gz.
$$
To consider the regularity condition of $\wz\vz$, we may assume that $d(z,z')\le(2A_0)^{-10}[r+d(y,z)]$
due to the size condition. For the case $d(z,x_1)>(2A_0)^{-1}d(x_1,x)$, we have $\wz{\vz}(z)=0$ and, by
$y\in B(x_1,d(x_1,x))$  and $r<4A_0^2d(x_1,x)/3$, we further obtain
\begin{align*}
d(z,z')\le(2A_0)^{-10}[r+d(y,z)]&\le (2A_0)^{-10}[r+A_0d(y,x_1)+A_0d(x_1,z)]\\
&\le(2A_0)^{-10}[4A_0^2d(x_1,x)+A_0d(x_1,z)]\le (2A_0)^{-2}d(x_1,z),
\end{align*}
which further implies that $d(z',x_1)\ge \frac 1{A_0}d(x_1,z)-d(z,z')\ge(2A_0)^{-2}d(x_1,x)$ and hence $\wz\vz(z')=0$.
So we only need to consider the case
$d(z,x_1)\le (2A_0)^{-1}d(x_1,x)$. Then we have  $(2A_0)^{-1} d(x_1, x)\le d(z,x)\le 2A_0 d(x_1, x)$ and
$$
d(y,z)\le A_0^2[d(y,x_1)+d(x_1,x)+d(x,z)]\le 2A_0^2d(x_1,x)+A_0^2d(x,z)\le(2A_0)^3d(x,z),
$$
which implies that
$d(z,z')\le(2A_0)^{-1}[r+d(x,z)]$ and $r+d(y,z)\ls \min\{ r+d(x,z), r+d(x, z'), d(x_1,x)\}$.
Therefore, by the regularity of $\vz$ and the definition of $\xi$, we conclude that
\begin{align*}
\lf|\wz{\vz}(z)-\wz{\vz}(z')\r|&\le\xi(z)|\vz(z)-\vz(z')|+|\vz(z')||\xi(z)-\xi(z')|\\
&\ls\lf[\frac{d(z,z')}{r+d(x,z)}\r]^\bz\frac 1{V_r(x)+V(x,z)}\lf[\frac r{r+d(x,z)}\r]^\gz\\
&\quad+\frac 1{V_r(x)+V(x,z')}\lf[\frac r{r+d(x,z')}\r]^\gz\lf[\frac{d(z,z')}{d(x_1,x)}\r]^\bz\\
&\ls\lf[\frac{d(z,z')}{r+d(y,z)}\r]^\bz\frac 1{V_r(y)+V(y,z)}\lf[\frac r{r+d(y,z)}\r]^\gz.
\end{align*}
This proves \eqref{eq:clfin2} and hence finishes the proofs of (i) and (ii).

Now we prove (iii). According to \cite[pp.\ 3347--3348]{HHL16} (see also \cite[Theorem 2.6]{HMY08}), there exists a sequence $\{S_k\}_{k\in\zz}$ of bounded operators on $L^2(X)$ with their kernels satisfying the following conditions:
\begin{enumerate}
\item $S_k(x,y)=0$ if $d(x,y)\ge C_\sharp\dz^k$ and, for any $x,\ y\in X$,
$$
|S_k(x,y)|\ls \frac 1{V_{\dz^k}(x)+V_{\dz^k}(y)},
$$
where $C_\sharp$ is a fixed positive constant greater than $1$;
\item for any $x,\ x',\ y\in X$ with $d(x,x')\le C_\sharp\dz^k$,
$$
|S_k(x,y)-S_k(x',y)|+|S_k(y,x)-S_k(y,x')|
\ls \lf[\frac{d(x,x')}{\dz^k}\r]^\thz\frac 1{V_{\dz^k}(x)+V_{\dz^k}(y)},
$$
where $\thz$ is as in \cite[Theorem 2.4]{HHL16};
\item for any $x\in X$,
$
\int_X S_k(x,y)\,d\mu(y)=1=\int_X S_k(y,x)\,d\mu(y).
$
\end{enumerate}
For any $g\in \bigcup_{p\in[1,\fz]} L^p(X)$ and $x\in X$, define
$$
S_kg(x):=\int_X S_k(x,y)g(y)\,d\mu(y).
$$
Then, for any $(p,\fz)$-atom $a$  supported on $B(z,r)$ with $z\in X$ and $r\in(0,\fz)$,
we observe that $S_ka$ satisfies the following properties:
\begin{enumerate}
\item[(a)] $\|S_ka\|_{L^\infty(X)}\ls\|a\|_{L^\infty(X)}$ and $\lim_{k\to\fz}\|S_ka-a\|_{L^2(X)}=0$;
\item[(b)] when $k$ is sufficiently large, $\supp S_k(a)\subset B(z,2A_0r)$;
\item[(c)] $\int_X S_ka(x)\,d\mu(x)=0$;
\item[(d)] $S_ka\in\UC(X)$.
\end{enumerate}
Consequently,  $S_ka$ is a harmlessly constant multiple of a $(p,\infty)$-atom and hence of a $(p,2)$-atom.
Thus, $\|S_ka-a\|_{H^{p,\infty}_\at(X)}\sim \|S_ka-a\|_{H^{p,2}_\at(X)}\to 0$ as $k\to\infty$.
For any $f\in H^{p,\fz}_\at(X)$, there exists a sequence  $\{f_n\}_{n\in\nn}\subset H^{p,\fz}_\fin(X)$ such
that $\lim_{n\to\fz}\|f_n-f\|_{H^{p,q}_\at(X)}=0$.
Then, for any $n\in\nn$, by the above (a) through (d), we find that $S_k(f_n)\in H^{p,\fz}_\fin(X)\cap \UC(X)$
and $\lim_{k\to\fz}\|S_kf_n-f_n\|_{H^{p,\infty}_\at(X)}=0$.
This proves that $\|S_k f_n-f\|_{H^{p,\infty}_\at(X)}\to 0$ as $n,\ k\to\infty$, which completes the proof of
(iii) and hence of Theorem \ref{thm:fin=at}.
\end{proof}

%==========================================================

\subsection{Criteria of the boundedness of sublinear operators on Hardy spaces}\label{bd2}

In this section, applying the finite atomic characterizations of Hardy spaces,
we obtain two criteria on the boundedness of sublinear operators on Hardy spaces.

Recall that a complete vector space $\CB$ is called a \emph{quasi-Banach space} if its quasi-norm
$\|\cdot\|_\CB$ satisfies the following condition:
\begin{enumerate}
\item for any $f\in\CB$, $\|f\|_\CB=0$ if and only if $f$ is the zero element in $\CB$;
\item for any $\lz\in\cc$ and $f\in\CB$, $\|\lz f\|_\CB=|\lz|\|f\|_\CB$;
\item there exists $C\in[1,\fz)$ such that, for any $f,\ g\in\CB$,
$\|f+g\|_\CB\le C(\|f\|_\CB+\|g\|_\CB)$.
\end{enumerate}
Next we recall the definition of $r$-quasi-Banach spaces (see, for example, \cite{Ky14,ylk17,yz09,yz08,GLY08}).
\begin{definition}\label{dqba}
Suppose that $r\in(0,1]$ and $\CB_r$ is a quasi-Banach space with its quasi-norm $\|\cdot\|_{\CB_r}$. The
space $\CB_r$ is called an \emph{$r$-quasi-Banach space} if there exists $\kz\in[1,\fz)$ such that, for any $m\in\nn$ and
$\{f_j\}_{j=1}^m\subset\CB_r$,
$$
\lf\|\sum_{j=1}^m f_j\r\|_{\CB_r}^r\le\kappa \sum_{j=1}^m \|f_j\|_{\CB_r}^r.
$$
\end{definition}

Obviously, when $p\in(0,1]$, $L^p(X)$ and $H^{*,p}(X)$ are $p$-quasi-Banach-spaces. Let $\CY$ be a linear space
and $\CB_r$ is an $r$-quasi-Banach space with $r\in(0,1]$.
An operator $T:\ \CY\to\CB_r$ is said to be \emph{$\CB_r$-sublinear} if there exists a positive constant
$\kappa\in[1,\fz)$ such that
\begin{enumerate}
\item for any $f,\ g\in\CY$, $\|T(f)-T(g)\|_{\CB_r}\le\kappa\|T(f-g)\|_{\CB_r}$;
\item for any $m\in\nn$, $\{f_j\}_{j=1}^m\subset\CY$ and $\{\lz_j\}_{j=1}^m\subset\cc$,
$$
\lf\|T\lf(\sum_{j=1}^m\lz_jf_j\r)\r\|_{\CB_r}^r\le\kappa \sum_{j=1}^m|\lz_j|^r\|T(f_j)\|_{\CB_r}^r.
$$
\end{enumerate}
(see, for example, \cite[Definition 2.5]{Ky14}, \cite[Definition 1.6.7]{ylk17}, \cite[Remark 1.1(3)]{yz09},
\cite[Definition 1.6]{yz08} and \cite[Definition 5.8]{GLY08}).

The next theorem gives us a criteria for $\CB_r$-sublinear operators that can be extended to bounded
$\CB_r$-sublinear operators from Hardy spaces to $\CB_r$. It can be proved by following the proof of
\cite[Theorem 5.9]{GLY08} with slight modifications, the details being omitted.

\begin{theorem}\label{thm:sub}
Let $p\in(\om,1]$ and $r\in[p,1]$. Suppose that $\CB_r$ is an $r$-quasi-Banach space and either of the following
holds true:
\begin{enumerate}
\item $q\in(p,\fz)\cap[1,\fz)$ and $T:\ H^{p,q}_\fin(X)\to\CB_r$ is a $\CB_r$-sublinear operator with
$$
\sup\{\|T(a)\|_{\CB_r}:\ a \textit{ is any } (p,q)\textit{-atom}\}<\fz;
$$
\item $T:\ H^{p,\fz}_\fin(X)\cap\UC(X)\to\CB_r$ is a $\CB_r$-sublinear operator with
$$
\sup\{\|T(a)\|_{\CB_r}:\ a \textit{ is any } (p,\fz)\textit{-atom}\}<\fz.
$$
\end{enumerate}
Then $T$ can be uniquely extended to a bounded $\CB_r$-sublinear operator from $H_\at^{p,q}(X)$ to $\CB_r$.
\end{theorem}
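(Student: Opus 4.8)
The plan is to deduce Theorem \ref{thm:sub} from the finite atomic characterizations in Theorem \ref{thm:fin=at} by a density argument, following the blueprint of \cite[Theorem 5.9]{GLY08}. Set $M:=\sup\{\|T(a)\|_{\CB_r}:\ a\ \textup{is a}\ (p,q)\textup{-atom}\}$, which is finite by hypothesis. The first (and main) step will be to show that $T$ is bounded on its domain once that domain carries the quasi-norm $\|\cdot\|_{H^{p,q}_\at(X)}$; the second step upgrades this to a bounded operator on all of $H^{p,q}_\at(X)$ by using density together with the completeness of $\CB_r$; the third step records that the resulting extension is again $\CB_r$-sublinear and is the unique such extension.

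For the first step in case (i), I would take $f\in H^{p,q}_\fin(X)$, fix a \emph{finite} representation $f=\sum_{j=1}^N\lz_ja_j$ into $(p,q)$-atoms, and note that each $a_j\in H^{p,q}_\fin(X)$, so that the $\CB_r$-sublinearity of $T$ together with $r\in[p,1]$ gives
\begin{align*}
\|T(f)\|_{\CB_r}^r\le\kappa\sum_{j=1}^N|\lz_j|^r\|T(a_j)\|_{\CB_r}^r\le\kappa M^r\sum_{j=1}^N|\lz_j|^r\le\kappa M^r\lf(\sum_{j=1}^N|\lz_j|^p\r)^{r/p},
\end{align*}
where the last step is the elementary embedding $\ell^p\hookrightarrow\ell^r$. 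Taking the infimum over all finite atomic decompositions of $f$ and invoking Theorem \ref{thm:fin=at}(i) yields $\|T(f)\|_{\CB_r}\ls M\|f\|_{H^{p,q}_\fin(X)}\sim M\|f\|_{H^{p,q}_\at(X)}$. Then, for an arbitrary $f\in H^{p,q}_\at(X)$, I would choose $\{f_n\}_{n\in\nn}\subset H^{p,q}_\fin(X)$ with $\|f_n-f\|_{H^{p,q}_\at(X)}\to0$ (possible since $H^{p,q}_\fin(X)$ is dense in $H^{p,q}_\at(X)$ for $q<\fz$), observe that $\|T(f_n)-T(f_m)\|_{\CB_r}\le\kappa\|T(f_n-f_m)\|_{\CB_r}\ls M\|f_n-f_m\|_{H^{p,q}_\at(X)}$, so $\{T(f_n)\}_n$ is Cauchy in $\CB_r$ and hence converges; one defines $T(f)$ as this limit, verifies independence of the approximating sequence, and passes to the limit in the two conditions defining a $\CB_r$-sublinear operator and in the norm bound to see that the extension is $\CB_r$-sublinear and bounded from $H^{p,q}_\at(X)$ to $\CB_r$ with $\|T(f)\|_{\CB_r}\ls M\|f\|_{H^{p,q}_\at(X)}$. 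Uniqueness is immediate, since any bounded extension must coincide with $T$ on the dense subspace and hence everywhere; and since $H^{p,q}_\at(X)$ does not depend on $q\in(p,\fz]\cap[1,\fz]$, the target space in the conclusion is unambiguous.

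Case (ii) is where I expect the only genuine difficulty to lie: there $T$ is only assumed defined on $H^{p,\fz}_\fin(X)\cap\UC(X)$, whereas the $(p,\fz)$-atoms appearing in a finite decomposition of an element of this space need not themselves be uniformly continuous, so the displayed inequality cannot be applied verbatim. To repair this I would use the regularizing operators $\{S_k\}_{k\in\zz}$ built in the proof of Theorem \ref{thm:fin=at}(iii). Given $f=\sum_{j=1}^N\lz_ja_j\in H^{p,\fz}_\fin(X)\cap\UC(X)$, each $S_k(a_j)$ is, by the properties (a)--(d) recorded there, a harmless constant multiple of a $(p,\fz)$-atom that lies in $\UC(X)$, hence in the domain; consequently $S_kf=\sum_{j=1}^N\lz_jS_k(a_j)$ lies in the domain and, by the same computation as in case (i), $\|T(S_kf)\|_{\CB_r}^r\ls M^r\sum_{j=1}^N|\lz_j|^r$. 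On the other hand, since $f$ is uniformly continuous with compact support and $\{S_k\}_{k\in\zz}$ is an approximation of the identity, one has $\|S_kf-f\|_{L^\fz(X)}\to0$ as $k\to\fz$; moreover $f-S_kf$ is supported in a fixed ball, has vanishing integral, and belongs to $\UC(X)$, so it equals $\ez_k$ times a $(p,\fz)$-atom in $\UC(X)$ with $\ez_k\to0$, which forces $\|T(f-S_kf)\|_{\CB_r}\ls\ez_k M\to0$. Combining these through the $\CB_r$-sublinearity of $T$ on its domain,
\begin{align*}
\|T(f)\|_{\CB_r}^r\le\kappa\lf(\|T(S_kf)\|_{\CB_r}^r+\|T(f-S_kf)\|_{\CB_r}^r\r)\ls M^r\sum_{j=1}^N|\lz_j|^r+\kappa\ez_k^r M^r;
\end{align*}
letting $k\to\fz$ and then taking the infimum over all finite decompositions of $f$, Theorem \ref{thm:fin=at}(ii) gives $\|T(f)\|_{\CB_r}\ls M\|f\|_{H^{p,\fz}_\at(X)}$. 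Finally, since $H^{p,\fz}_\fin(X)\cap\UC(X)$ is dense in $H^{p,\fz}_\at(X)$ by Theorem \ref{thm:fin=at}(iii), the same density-and-completeness argument as in case (i) extends $T$ uniquely to a bounded $\CB_r$-sublinear operator on $H^{p,\fz}_\at(X)=H^{p,q}_\at(X)$. The hard part is thus the bookkeeping around case (ii)—identifying which intermediate functions genuinely lie in the domain $\UC(X)\cap H^{p,\fz}_\fin(X)$ and establishing the uniform smoothing estimate $\|S_kf-f\|_{L^\fz(X)}\to0$—which is exactly where the ``slight modifications'' of \cite[Theorem 5.9]{GLY08} are concentrated.
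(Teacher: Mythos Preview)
Your proposal is correct and follows the same density-and-completeness blueprint that the paper invokes via \cite[Theorem 5.9]{GLY08}. One small remark on case (ii): rather than regularizing the individual atoms via $S_k$, the route in \cite{GLY08} is to observe that the \emph{specific} finite decomposition constructed in the proof of Theorem \ref{thm:fin=at}(ii) already produces atoms lying in $\UC(X)$ whenever $f\in\UC(X)$, since each $h^j_k$ is a compactly supported combination of products of $f$ with the H\"older cutoffs $\phi^j_k,\phi^{j+1}_l$; this bypasses the approximation step altogether. Either approach works, and yours has the virtue of being more modular and not relying on the internal structure of the Calder\'on--Zygmund decomposition.
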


%%%%%%%%%%%%%%%%%%%%%%%%%%%%%%%%%%%%%%%%%%%%%%%%%%%%%%%

\bigskip

\noindent Ziyi He, Dachun Yang (Corresponding author) and Wen Yuan

\medskip

\noindent Laboratory of Mathematics and Complex Systems (Ministry of Education of China),
School of Mathematical Sciences, Beijing Normal University, Beijing 100875, People's Republic of China

\smallskip

\noindent{\it E-mails:} \texttt{ziyihe@mail.bnu.edu.cn} (Z. He)

\noindent\phantom{{\it E-mails:} }\texttt{dcyang@bnu.edu.cn} (D. Yang)

\noindent\phantom{{\it E-mails:} }\texttt{wenyuan@bnu.edu.cn} (W. Yuan)

\bigskip

\noindent Yongsheng Han

\medskip

\noindent Department of Mathematics, Auburn University, Auburn, AL 36849-5310, USA

\smallskip

\noindent{\it E-mail:} \texttt{hanyong@auburn.edu}

\bigskip

\noindent Ji Li

\medskip

\noindent Department of Mathematics, Macquarie University, Sydney, NSW 2109, Australia

\smallskip

\noindent{\it E-mail:} \texttt{ji.li@mq.edu.au}

\bigskip

\noindent Liguang Liu

\medskip

\noindent Department of Mathematics, School of Information, Renmin University of China, Beijing 100872,
People's Republic of China

\smallskip

\noindent{\it E-mail:} \texttt{liuliguang@ruc.edu.cn}
\end{document}